\newtheorem{theorem}{Theorem}
\newtheorem{lemma}{Lemma}
\newtheorem{proposition}{Proposition}
\theoremstyle{definition}
\newtheorem{definition}{Definition}
\newtheorem{remark}{Remark}
\newtheorem{example}{Example}
\begin{document}

\title[Affine pavings for varieties of partial flags]{Existence of affine pavings for varieties of partial flags associated to nilpotent elements}
\thanks{Work supported in part by the ISF Grant Nr. 882/10 and by the ANR project NilpOrbRT}
\author{Lucas Fresse}
\address{Universit\'e de Lorraine, CNRS, Institut \'Elie Cartan de Lorraine, UMR 7502, Vandoeu\-vre-l\`es-Nancy, F-54506, France}
\date{\today}

\begin{abstract}
The flag variety of a complex reductive linear algebraic group $G$ is by definition the quotient $G/B$ by a Borel subgroup. It can be regarded as the set of Borel subalgebras of $Lie(G)$. Given a nilpotent element $e$ in $Lie(G)$, one calls Springer fiber the subvariety formed by the Borel subalgebras that contain $e$. Springer fibers have in general a complicated structure (not irreducible, singular). Nevertheless, a theorem by C. De Concini, G. Lusztig, and C. Procesi asserts that, when $G$ is classical, a Springer fiber can always be paved by finitely many subvarieties isomorphic to affine spaces. In this paper, we study varieties generalizing the Springer fibers to the context of partial flag varieties, that is, subvarieties of the quotient $G/P$ by a parabolic subgroup (instead of a Borel subgroup). The main result of the paper is a generalization of De Concini, Lusztig, and Procesi's theorem to this context.
\end{abstract}

\keywords{partial flag varieties, nilpotent elements, affine pavings, resolutions of nilpotent varieties}
\subjclass[2010]{17B08, 20G07, 14M15}

\maketitle

\section{Introduction}

\label{section-1}

\subsection{Springer fibers}
Let $G$ be a reductive, connected, linear algebraic group over $\mathbb{C}$ (or any algebraically closed field of characteristic zero).
Let $\mathcal{B}$ be the variety of all Borel subalgebras of $\mathfrak{g}:=Lie(G)$.
Equivalently, $\mathcal{B}=G/B_0$ is the quotient by a Borel subgroup. This is an algebraic projective variety (the {\em full flag variety}).

Let $\mathcal{N}\subset\mathfrak{g}$ be the nilpotent variety (the set of nilpotent elements). It is a union of finitely many orbits ({\em nilpotent orbits}) for the adjoint action $G\times\mathfrak{g}\to\mathfrak{g}$, $(g,x)\mapsto g\cdot x$.

Given $e\in\mathcal{N}$, the set
\[\mathcal{B}_e=\{\mathfrak{b}\in\mathcal{B}:e\in\mathfrak{b}\}\]
is a closed (projective) subvariety of $\mathcal{B}$.\ It is called a {\em Springer fiber} as it coincides with the fiber $\pi_2^{-1}(e)$ of the Springer resolution
\[\pi_2:T^*\mathcal{B}=\{(\mathfrak{b},e)\in\mathcal{B}\times\mathcal{N}:e\in\mathfrak{b}\}\to\mathcal{N},\ (\mathfrak{b},e)\mapsto e\]
(see \cite{Slodowy,Springer1,Steinberg}).
If $e=0$, then $\mathcal{B}_e=\mathcal{B}$. At the other extreme, if $e$ is regular, then $\mathcal{B}_e$ consists of one point. In general, $\mathcal{B}_e$ is not irreducible (though always connected). It is equidimensional and one has $\dim\mathcal{B}_e=\frac{1}{2}(\dim Z_G(e)-\mathrm{rank}\,G)$, where $Z_G(e)=\{g\in G:g\cdot e=e\}$ (see \cite{Spaltenstein-Topology,Steinberg}).

Springer fibers are classical objects in representation theory (see, e.g., \cite{Bezrukavnikov-Mirkovic-Rumynin,Kazhdan-Lusztig,Springer2}).
The goal of this paper is to study parabolic analogues of the Springer fibers.

\subsection{The variety $\mathcal{P}_{e,\mathfrak{i}}$}
\label{section-1-2}
Fix a parabolic subgroup $P\subset G$ and let $\mathfrak{n}_P\subset\mathfrak{p}\subset\mathfrak{g}$ be the corresponding parabolic subalgebra and its nilradical.
Let $\mathcal{P}=G/P$.\ Equivalently, $\mathcal{P}$ can be regarded as the variety of all parabolic subalgebras of $\mathfrak{g}$ of same type as $\mathfrak{p}$
(a {\em partial flag variety}).

Our main object of study is introduced in the following definition.

\begin{definition}
\label{definition-1}
Given $e\in\mathcal{N}$ and a $P$-stable subspace $\mathfrak{i}\subset\mathfrak{p}$, we define
\[\mathcal{P}_{e,\mathfrak{i}}=\{gP\in\mathcal{P}:g^{-1}\cdot e\in\mathfrak{i}\}.\]
\end{definition}

Clearly $\mathcal{P}_{e,\mathfrak{i}}$ is a closed subvariety of $\mathcal{P}$, nonempty if and only if $e\in G\cdot\mathfrak{i}$.
The next examples correspond to particular cases of   $\mathcal{P}_{e,\mathfrak{i}}$, all related to resolutions of the nilpotent variety or of nilpotent orbit closures.

\begin{example}
\label{example-1}
(a) $\mathcal{P}_{e,\mathfrak{p}}$ (corresponding to $\mathfrak{i}=\mathfrak{p}$) was studied by R.\ Steinberg \cite{Steinberg} and is sometimes called a {\em Steinberg variety}. It coincides with the fiber over $e$ of the map
\[\{(gP,x)\in \mathcal{P}\times \mathcal{N}:g^{-1}\cdot x\in\mathfrak{p}\}\to\mathcal{N},\ (gP,x)\mapsto x,\]
which is a partial resolution of the nilpotent variety $\mathcal{N}$ in the sense of \cite{Borho-MacPherson}. \\[1mm]
(b) $\mathcal{P}_{e,\mathfrak{n}_P}$ (corresponding to $\mathfrak{i}=\mathfrak{n}_P$) is usually called a {\em Spaltenstein variety} (cf. \cite{Spaltenstein-Indag,Spaltenstein-book}).  It coincides with a fiber of the map
\[G\times_P\mathfrak{n}_P\to G\cdot \mathfrak{n}_P,\ (g,x)\mapsto g\cdot x.\]
Note that $G\cdot \mathfrak{n}_P$ is the closure of the Richardson nilpotent orbit attached to $P$.\ The previous map
is proper, surjective, generically finite, and is a resolution of $G\cdot\mathfrak{n}_P$ under some conditions on the stabilizer of $e$ in $G$
(see \cite{Borho-Kraft, Borho-MacPherson}). Spaltenstein varieties for $G=SL_n(\mathbb{C})$  arise in various problems such as the study of Springer representations (cf. \cite{Braverman-Gaitsgory,Ginzburg}), crystals (\cite{Malkin}), or quiver varieties (\cite{Nakajima-Duke,Nakajima-JDG}).

Note that $\mathcal{P}_{e,\mathfrak{p}}$ and $\mathcal{P}_{e,\mathfrak{n}_P}$ both coincide with the Springer fiber $\mathcal{B}_e$ in the particular case where $P=B_0$ is a Borel subgroup.\ In general, $\mathcal{P}_{e,\mathfrak{p}}$ and $\mathcal{P}_{e,\mathfrak{n}_P}$ may not coincide (for instance, $\mathcal{P}_{e,\mathfrak{p}}$
is always nonempty, unlike $\mathcal{P}_{e,\mathfrak{n}_P}$).
\\[1mm]
(c) Varieties of the form $\mathcal{P}_{e,\mathfrak{i}}$ also arise as fibers of resolutions of general nilpotent orbit closures.
Let a $\mathbb{Z}$-grading $\mathfrak{g}=\bigoplus_{i\in\mathbb{Z}}\mathfrak{g}_i$
(in particular,  $[\mathfrak{g}_i,\mathfrak{g}_j]\subset\mathfrak{g}_{i+j}$ for all $i,j$) with $\mathfrak{g}_0$ containing the center of $\mathfrak{g}$. Set $\mathfrak{g}_{\geq j}=\bigoplus_{i\geq j}\mathfrak{g}_i$.
There is a parabolic subgroup $P\subset G$ of Lie algebra $\mathfrak{g}_{\geq 0}$.
Let $e\in\mathcal{N}$.\ A grading such that
$e\in\mathfrak{g}_2$ and $\mathfrak{g}^e:=\{x\in\mathfrak{g}:[e,x]=0\}\subset\mathfrak{g}_{\geq 0}$ is said to be {\em good} for $e$ (see \cite{Elashvili-Kac}). Then, one has $G\cdot \mathfrak{g}_{\geq 2}=\overline{G\cdot e}$
and the map
\[G\times_P\mathfrak{g}_{\geq 2}\to \overline{G\cdot e},\ (g,x)\mapsto g\cdot x\]
is proper, surjective, generically finite (see \cite{Broer}), and its fiber over $e'$ is isomorphic to $\mathcal{P}_{e',\mathfrak{g}_{\geq 2}}$.
The map is a resolution for instance if the grading is a {\em Dynkin grading} (i.e., $\mathfrak{g}_i=\{x\in\mathfrak{g}:[h,x]=ix\}$ where $h$ is the semisimple member of a standard triple $(e,h,f)$; see \cite{Panyushev}).

More generally, for every $j\geq 1$, the image $G\cdot\mathfrak{g}_{\geq j}$ is the closure of a nilpotent orbit and the map $G\times_P\mathfrak{g}_{\geq j}\to G\cdot\mathfrak{g}_{\geq j}$ is proper, surjective, of fiber isomorphic to $\mathcal{P}_{e',\mathfrak{g}_{\geq j}}$, and under good conditions this map may be generically finite (see \cite{Broer}).
\end{example}

\begin{remark}
\label{remark-regular}
In the case where $e\in\mathfrak{g}$ is a regular nilpotent element, the variety $\mathcal{P}_{e,\mathfrak{i}}$ consists of at most one point. Indeed, then, there is a unique Borel subalgebra $\mathfrak{b}\subset\mathfrak{g}$ containing $e$. Let $B\subset G$ be the Borel subgroup of Lie algebra $\mathfrak{b}$.
Up to conjugation we may assume that $B\subset P$. If $gP\in\mathcal{P}_{e,\mathfrak{i}}$, then $g^{-1}\cdot e\in \mathfrak{p}$, and we get $g^{-1}\cdot \mathfrak{b}\subset \mathfrak{p}$.
Since $\mathfrak{b},g^{-1}\cdot\mathfrak{b}$ are two Borel subalgebras of $\mathfrak{p}$, there is $p\in P$ such that $pg^{-1}\cdot\mathfrak{b}=\mathfrak{b}$.
Thus $pg^{-1}\in B$ (because $B$ is self-normalizing). Hence $g\in P$. We conclude that $\mathcal{P}_{e,\mathfrak{i}}\subset\{P\}$ and this shows our claim.
\end{remark}

\subsection{Statement of main result}
Let $X$ be an algebraic variety.
A partition of $X$ into subsets that can be indexed $X_1,\ldots,X_k$ so that $X_1\cup\ldots\cup X_l$ is closed for every $l\in\{1,\ldots,k\}$ is called an \textit{$\alpha$-partition} (cf. \cite{DeConcini-Lusztig-Procesi}).
We say that an $\alpha$-partition is a {\em smooth paving} (resp. an {\em affine paving}) if every subset $X_l$ is a smooth subvariety of $X$ (resp. is isomorphic to an affine space $\mathbb{C}^{d_l}$).

The existence of an affine paving guarantees good (co)homological properties for the variety $X$ (see \cite[\S1.6--1.10]{DeConcini-Lusztig-Procesi}).
It is an especially desirable property for varieties that arise as fibers of a resolution (see \cite[\S 2.4, \S 4.3.2]{Juteau-Mautner-Williamson}).

There are classical constructions of affine pavings:

\begin{example}
\label{example-2}
(a) If $B\subset G$ is a Borel subgroup, then the partition of the variety $\mathcal{P}=G/P$ into the various $B$-orbits is an affine paving (the \textit{Schubert decomposition}). \\[1mm]
(b) (Bialynicki-Birula's theorem) Let $X$ be a projective variety equipped with an algebraic action of $\mathbb{C}^*$. This action gives rise to 
a fixed point set $X^{\mathbb{C}^*}\subset X$ and
a well-defined retraction map
\[\xi:X\to X^{\mathbb{C}^*},\ x\mapsto \lim_{t\to 0}t\cdot x,\]
which is not algebraic in general. Let $Y\subset X^{\mathbb{C}^*}$ be a connected component such that every $y\in Y$ is a regular point of $X$. Then the restriction $\xi:\xi^{-1}(Y)\to Y$ is an algebraic affine bundle
(i.e., a fiber bundle of typical fiber isomorphic to an affine space) (see \cite{Bialynicki-Birula}). In particular, if $X$ is a smooth projective variety and the fixed point set $X^{\mathbb{C}^*}$ is finite, then the decomposition $X=\bigsqcup_{y\in X^{\mathbb{C}^*}}\{x\in X:\lim_{t\to 0}t\cdot x=y\}$ is an affine paving (see \cite[\S 1.2--1.3]{DeConcini-Lusztig-Procesi}).
\end{example}

However, showing the existence of an affine paving for the variety $\mathcal{P}_{e,\mathfrak{i}}$ requires other methods: though $\mathcal{P}_{e,\mathfrak{i}}$ is a subset of $\mathcal{P}$, it is not in general stable by any Borel subgroup, so it is not a union of Schubert cells. Moreover, since $\mathcal{P}_{e,\mathfrak{i}}$ is not smooth in general and since there is in general no known action of $\mathbb{C}^*$ on $\mathcal{P}_{e,\mathfrak{i}}$ with a finite number of fixed points, Bialynicki-Birula's theorem cannot be directly applied.

In some situations, it is already known that Springer fibers admit affine pavings.
This was first shown by N.~Spaltenstein \cite[\S II.5]{Spaltenstein-book} for Springer fibers in the case of $G=SL_n(\mathbb{C})$. Other constructions of affine pavings for Springer fibers in the case of $SL_n(\mathbb{C})$ are given in \cite{Fresse-jalgebra,Nakajima-JDG,Xi}.
In fact, C.~De Concini, G.~Lusztig and C.~Procesi \cite{DeConcini-Lusztig-Procesi} showed that $\mathcal{B}_e$ admits an affine paving whenever $G$ is a classical simple algebraic group (see also \cite[\S11]{Jantzen}). The same property holds when $G$ is of type $G_2$, $F_4$, or $E_6$
(see \cite{DeConcini-Lusztig-Procesi,Spaltenstein-E6,Xi}) and one can expect it also for $G$ of type $E_7$ or $E_8$.
Finally, it is known that Steinberg varieties and Spaltenstein varieties admit affine pavings in the case of $G=SL_n(\mathbb{C})$ (see \cite{Brundan-Ostrik,Shimomura}).

In this paper, we extend the previous results to the case of the varieties $\mathcal{P}_{e,\mathfrak{i}}$ and for $G$ classical:

\begin{theorem}
\label{theorem-1}
Let $P,e,\mathfrak{i}$ be as in Definition \ref{definition-1}.
Assume that the minimal Levi subalgebra of $\mathfrak{g}$ containing $e$ has no nonregular 
component of exceptional type. Then, the variety $\mathcal{P}_{e,\mathfrak{i}}$ admits an affine paving.
\end{theorem}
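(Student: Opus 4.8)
The plan is to proceed by reduction to the Springer fiber case (the theorem of De Concini, Lusztig, and Procesi), exploiting the fibration structure of $\mathcal{P}_{e,\mathfrak{i}}$ over Steinberg-type varieties. First I would observe that it suffices to treat the case where $G$ is semisimple (and even simple, by passing to factors): indeed, replacing $G$ by $G/Z(G)^\circ$ does not change $\mathcal{P}$, and a product decomposition of $G$ induces a product decomposition of $\mathcal{P}_{e,\mathfrak{i}}$, for which an affine paving is obtained by taking products of affine pavings of the factors (the product of two $\alpha$-partitions into affine spaces is again one, after choosing a compatible ordering). Next, I would set up an induction on the dimension of $G$, or equivalently on the "size" of the minimal Levi subalgebra $\mathfrak{l}_e$ containing $e$; the base case is when $e$ is regular in $\mathfrak{g}$, where Remark~\ref{remark-regular} shows $\mathcal{P}_{e,\mathfrak{i}}$ is a point.

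The key structural step is the following. Fix a parabolic $Q \subset G$ with Levi decomposition $Q = L \ltimes U_Q$ chosen so that $L$ is the minimal Levi containing (a representative of) $e$, or more precisely so that $e$ is \emph{distinguished} in $\mathfrak{l} = \mathrm{Lie}(L)$. Consider the partial Steinberg-type variety
\[
\widetilde{\mathcal{P}} = \{(gP, gQ') : g^{-1}\cdot e \in \mathfrak{i},\ gP\in\mathcal{P}\}
\]
fibered over the analogue in $G/Q$, together with the map $\mathcal{P}_{e,\mathfrak{i}} \to \mathcal{Q}_{e,\mathfrak{q}}$, $gP \mapsto gQ$ (when $P \subset Q$). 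The fibers of such a map are, up to a twist, of the form $\mathcal{P}'_{e',\mathfrak{i}'}$ for the Levi $L$ acting on $L/(L\cap P)$, where $e'$ is the image of $g^{-1}\cdot e$ in $\mathfrak{l} \cong \mathfrak{q}/\mathfrak{n}_Q$; since $e'$ lives in a \emph{smaller} Levi (or $e$ is already distinguished in a proper Levi of $\mathfrak{g}$), induction applies to the fibers. So the real content is to produce an $\alpha$-partition of the base $\mathcal{Q}_{e,\mathfrak{q}}$ (or of $\mathcal{P}$ itself) into locally closed pieces over each of which the projection restricts to a Zariski-locally-trivial affine bundle, and then to refine using an affine paving of each fiber. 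Here I would either invoke the Bialynicki-Birula-type argument adapted to the cocharacter of a good grading for $e$ (Example~\ref{example-2}(b) and Example~\ref{example-1}(c): the $\mathbb{C}^*$-action via a good grading acts on $\mathcal{P}_{e,\mathfrak{i}}$ when $\mathfrak{i}$ is graded, with fixed-point components sitting inside smaller Springer-type varieties), or more robustly follow the De Concini--Lusztig--Procesi combinatorial strategy directly: pave $\mathcal{P}$ by $\mathcal{B}_0$-orbits (Example~\ref{example-2}(a)), intersect each Schubert cell $C$ with $\mathcal{P}_{e,\mathfrak{i}}$, and show $C \cap \mathcal{P}_{e,\mathfrak{i}}$ is an affine space by choosing suitable coordinates on $C$ and analyzing the defining equations $g^{-1}\cdot e \in \mathfrak{i}$, which become polynomial (in fact, after a triangular change of variables, linear-in-the-leading-variable) conditions.

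The main obstacle is exactly the last point: in the Springer fiber case De Concini, Lusztig, and Procesi handle it by an intricate type-by-type analysis (using the combinatorics of flags adapted to the Jordan form of $e$, and for the orthogonal/symplectic cases keeping track of the bilinear form), and the generalization to $\mathcal{P}_{e,\mathfrak{i}}$ must accommodate both an arbitrary parabolic $P$ and an arbitrary $P$-stable $\mathfrak{i}$. The cleanest route is therefore to phrase everything relative to a smooth resolution: write $\mathcal{P}_{e,\mathfrak{i}}$ as a fiber of $G\times_P \mathfrak{i} \to G\cdot\mathfrak{i}$ and use the fact (following the cited work of Broer, Borho--MacPherson) that these total spaces are smooth, so that a $\mathbb{C}^*$-action contracting $\mathfrak{i}$ to $0$ gives, via Bialynicki-Birula, an affine paving of the total space whose induced decomposition of the fiber $\mathcal{P}_{e,\mathfrak{i}}$ is an $\alpha$-partition into pieces that are affine bundles over Springer-type fibers in \emph{proper} Levi subgroups — at which point the induction hypothesis, together with the hypothesis that $\mathfrak{l}_e$ has no nonregular exceptional component (so that the relevant Springer fibers in the Levi factors fall under De Concini--Lusztig--Procesi or the $G_2,F_4,E_6$ results), closes the argument. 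I expect the bulk of the technical work to be in verifying that these pieces are genuinely affine bundles and in handling the twisting by elements of the unipotent radical, rather than in any single hard idea.
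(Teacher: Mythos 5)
Your overall reduction scheme does track the paper's Sections \ref{section-3}--\ref{section-5}: one passes to (almost) simple factors via products and central extensions, uses the cocharacter of a good grading for $e$ together with a Bialynicki-Birula--type retraction onto the $S$-fixed locus (the paper does this $Q$-orbit by $Q$-orbit, since $\mathcal{P}_{e,\mathfrak{i}}$ itself is singular), and then reduces to the case where $e$ is distinguished in the minimal Levi $\hat{G}$. The genuine gap is in where you think this reduction lands. You assert that the pieces are ``affine bundles over Springer-type fibers in proper Levi subgroups,'' so that De Concini--Lusztig--Procesi closes the induction. That is not what the reduction produces: the components of the fixed-point locus are identified with varieties $(\hat{\mathcal{P}}_{e,\hat{\mathfrak{i}}})^S$ of the \emph{same} general shape --- an arbitrary parabolic $\hat{P}\subset\hat{G}$ and an arbitrary $\hat{P}$-stable subspace $\hat{\mathfrak{i}}$, not a Borel with $\mathfrak{i}=\mathfrak{b}$ or $\mathfrak{n}$ --- for a distinguished (in types $B$, $C$, $D$ generally nonregular) nilpotent $e$ in a classical simple group. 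The DCLP theorem says nothing about these, so there is no external result to quote; proving the affine paving in exactly this terminal situation is the bulk of the paper (Sections \ref{section-6}--\ref{section-8}): one classifies the $P$-stable subspaces as ideals $\mathfrak{i}^F_{\underline{c}}$ given by flag conditions $e(\overline{V}_p)\subset\overline{V}_{c_p}$, and then runs an induction on $\dim V$ by stratifying according to which extremal Jordan-block vector $v^\ell_{\mu_\ell-1}$ lies in which subspace and passing to the orthogonal complement of that block, with a separate and delicate analysis in the cases $c_1=1$ and $c_1=0$. Nothing in your proposal supplies this step, and it cannot be waved away as ``handling the twisting.''

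Two of the concrete mechanisms you suggest also fail as stated. Intersecting $\mathcal{P}_{e,\mathfrak{i}}$ with Schubert cells of a fixed Borel does not work: as the paper notes, $\mathcal{P}_{e,\mathfrak{i}}$ is not stable under any Borel subgroup, and the intersections with Schubert cells need not be affine spaces (this is already false for Springer fibers, which is why DCLP do something subtler). Likewise, applying Bialynicki-Birula to $G\times_P\mathfrak{i}$ with the $\mathbb{C}^*$-action that contracts $\mathfrak{i}$ to $0$ does not help for the fiber over $e$: that action rescales $e$, so the fiber $\mathcal{P}_{e,\mathfrak{i}}$ is not stable under it (and $G\times_P\mathfrak{i}$ is not projective, so the theorem does not apply off the shelf). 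The torus one must use is the one coming from a good grading, for which $\lambda(t)\cdot e=t^2e$ and hence $\mathcal{P}_{e,\mathfrak{i}}$ is $S$-stable --- which is precisely the paper's starting point, and which leads to the distinguished classical case rather than around it.
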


Let us explain the assumption made in the theorem.
For $e\in\mathfrak{g}$ nilpotent, there is a minimal Levi subalgebra $\hat{\mathfrak{g}}\subset\mathfrak{g}$ containing $e$, which is unique up to conjugation.
Its semisimple part decomposes as $[\hat{\mathfrak{g}},\hat{\mathfrak{g}}]=\mathfrak{s}_1\times\cdots\times\mathfrak{s}_k$ with $\mathfrak{s}_i$ simple Lie algebras.
We have $e=(e_1,\ldots,e_k)$, where each $e_i$ is a distinguished nilpotent element of $\mathfrak{s}_i$.
By saying in the theorem that $\hat{\mathfrak{g}}$ has no nonregular component of exceptional type, we mean that
$\mathfrak{s}_i$ is of type $A$--$D$ whenever $e_i$ is not regular in $\mathfrak{s}_i$.
This assumption is more general than assuming that the semisimple part of $\mathfrak{g}$ itself has no component of exceptional type. In particular, the theorem is valid when $G$ is $GL_n(\mathbb{C})$, $SL_n(\mathbb{C})$,
$Sp_{2m}(\mathbb{C})$, $SO_{n}(\mathbb{C})$.
In light of Example \ref{example-1}\,(c),
the theorem answers affirmatively Question 4.19 in \cite{Juteau-Mautner-Williamson} for nilpotent orbits of classical type.
I would like to thank Daniel Juteau, Carl Mautner, and Geordie Williamson who informed me of this application of the theorem.


\subsection{Organization of the paper}
The remainder of the paper comprises seven sections and is organized as follows.
Section \ref{section-2} contains preliminary facts on parabolic orbits in partial flag varieties, which will be basic ingredients in the next sections.
In Section \ref{section-3}, we construct a smooth paving of the variety $\mathcal{P}_{e,\mathfrak{i}}$
and show that $\mathcal{P}_{e,\mathfrak{i}}$ will have an affine paving provided that its fixed point set $(\mathcal{P}_{e,\mathfrak{i}})^S$ under a certain torus $S$ does.
The main fact pointed out in Section \ref{section-4} is that the proof of Theorem \ref{theorem-1} can be reduced to the case of distinguished nilpotent elements.
Moreover, in Section \ref{section-5}, we explain how the proof can be reduced to the case of almost simple groups.

In Section \ref{section-6}, we recall the description of partial flag varieties in the classical cases, that is, in terms of partial flags (in type $A$) and isotropic partial flags (in types $B$, $C$, $D$).
In Section \ref{section-7}, we describe the form taken by the $P$-stable subspaces $\mathfrak{i}\subset\mathfrak{p}$ in the classical cases.
The conclusion of Sections \ref{section-6}--\ref{section-7} is that the variety $\mathcal{P}_{e,\mathfrak{i}}$ takes an elementary form in the classical cases.

Finally, Section \ref{section-8} contains a proof by induction of Theorem \ref{theorem-1} for distinguished nilpotent elements and almost simple classical groups, relying on the elementary form of the variety $\mathcal{P}_{e,\mathfrak{i}}$ in this situation.
This final argument is easy in type $A$ but quite involved in the other classical types.

The proof of Theorem \ref{theorem-1} that we give here is widely inspired by the proof given in \cite{DeConcini-Lusztig-Procesi} in the case of Springer fibers.
In Sections \ref{section-3}--\ref{section-5}, the arguments follow the same scheme as in \cite{DeConcini-Lusztig-Procesi}.
The final computational argument given in Section \ref{section-8} is however more involved here than in the case of Springer fibers.

In what follows, unless otherwise specified, $G$ is a reductive connected linear algebraic group.

\section{Preliminaries on parabolic orbits}

\label{section-2}

In this section, we recall (for later use) elementary properties of parabolic subgroups $Q\subset G$ and well-known properties of $Q$-orbits of the partial flag variety $\mathcal{P}=G/P$.
Proofs are provided for the sake of completeness.

\subsection{Parabolic subgroups, cocharacters, and $\mathbb{Z}$-gradings}
\label{section-2-1}
Recall that a parabolic subgroup $Q\subset G$ admits a Levi decomposition \[Q=L_Q\ltimes U_Q\]
(with $U_Q\subset Q$ the unipotent radical and $L_Q\subset Q$ a Levi factor).

\subsubsection{}
\label{section-2-1-1}
A (Levi decomposition of a) parabolic subgroup can always be induced by a cocharacter: given a Levi decomposition as above, we can find a cocharacter $\lambda:\mathbb{C}^*\to G$ (that is, a morphism of algebraic groups)  such that $Q$, $L_Q$, $U_Q$  are characterized by:
\begin{eqnarray}
\label{equation-1}
& & Q=\{g\in G:\lim_{t\to 0}\lambda(t)g\lambda(t)^{-1}\mbox{ exists}\},  \\[1mm]
\label{equation-2}
& & U_Q=\{g\in G:\lim_{t\to 0}\lambda(t)g\lambda(t)^{-1}=1_G\}, \\[1mm]
\label{equation-3}
& & L_Q=\{g\in G:\lambda(t)g\lambda(t)^{-1}=g\ \ \forall t\in\mathbb{C}^*\}
\end{eqnarray}
(cf. \cite[\S 8.4]{Springer-book}).

\subsubsection{} \label{section-2-1-2}
A (Levi decomposition of a) parabolic subgroup can also be induced by a $\mathbb{Z}$-grading
\[\mathfrak{g}=\bigoplus_{i\in\mathbb{Z}}\mathfrak{g}_i.\]
Here, $[\mathfrak{g}_i,\mathfrak{g}_j]\subset\mathfrak{g}_{i+j}$ for all $i,j$. We assume that $\mathfrak{g}_0$ contains the center of $\mathfrak{g}$ (by convention, all gradings in the rest of the paper will be subject to this assumption).
Write $\mathfrak{g}_{\geq j}=\bigoplus_{i\geq j}\mathfrak{g}_i$.\ Then $\mathfrak{g}_{\geq 0}\subset\mathfrak{g}$ is a parabolic subalgebra with Levi decomposition $\mathfrak{g}_{\geq 0}=\mathfrak{g}_0\oplus\mathfrak{g}_{\geq 1}$,
and there is a parabolic subgroup $Q\subset G$ with Levi decomposition $Q=L_Q\ltimes U_Q$ such that the Lie algebras of $Q,U_Q,L_Q$ are respectively $\mathfrak{g}_{\geq 0}$, $\mathfrak{g}_{\geq 1}$, $\mathfrak{g}_0$.

The map $d:\mathfrak{g}\to\mathfrak{g}$ defined by $d(x)=ix$ for $x\in\mathfrak{g}_i$ is a derivation of $\mathfrak{g}$, which restricts to a derivation of $[\mathfrak{g},\mathfrak{g}]$.
Since any derivation of $[\mathfrak{g},\mathfrak{g}]$ is inner and the center of $\mathfrak{g}$ lies in $\mathfrak{g}_0$, there is $h\in[\mathfrak{g},\mathfrak{g}]$ such that $[h,x]=ix$ for all $x\in\mathfrak{g}_i$, $i\in\mathbb{Z}$, and we find a cocharacter $\lambda:\mathbb{C}^*\to G$ with $\lambda'(t)=th$, so
\begin{equation}
\label{equation-4}
\mathfrak{g}_i=\{x\in\mathfrak{g}:\lambda(t)\cdot x=t^ix\ \ \forall t\in\mathbb{C}^*\}\quad\forall i\in\mathbb{Z}.
\end{equation}
Clearly, $Q,U_Q,L_Q$ correspond to the cocharacter $\lambda$ in the sense of relations (\ref{equation-1})--(\ref{equation-3}).

\subsubsection{Basic setting}
\label{section-basic-setting}
We will often consider the following situation, which combines the previous remarks:
\begin{itemize}
\item[(a)] $Q=L_Q\ltimes U_Q$ is a Levi decomposition of a parabolic subgroup of $G$.
\item[(b)] $\lambda:\mathbb{C}^*\to G$ is a cocharacter inducing this Levi decomposition, in the sense of (\ref{equation-1})--(\ref{equation-3}). Let $S=\{\lambda(t):t\in\mathbb{C}^*\}$, so that $L_Q$ is the centralizer of $S$ in $G$.
\item[(c)] $\mathfrak{g}=\bigoplus_{i\in\mathbb{Z}}\mathfrak{g}_i$ is the $\mathbb{Z}$-grading corresponding to $\lambda$ in the sense of (\ref{equation-4}).
\end{itemize}

\subsection{Parabolic orbits of a partial flag variety}

A parabolic subgroup $Q\subset G$ acts on the partial flag variety $\mathcal{P}=G/P$ with finitely many orbits.
In what follows, we describe the structure of these orbits.

Let $Q=L_Q\ltimes U_Q$, $\lambda:\mathbb{C}^*\to G$, and $S=\{\lambda(t):t\in\mathbb{C}^*\}$ be as in Section \ref{section-basic-setting}. In particular the cocharacter $\lambda$ gives rise to an algebraic action of $\mathbb{C}^*$
on $\mathcal{P}$. Since $\mathcal{P}$ is smooth, projective, we obtain a map
\[\rho:\mathcal{P}\to\mathcal{P}^S:=\{gP\in\mathcal{P}:s(gP)=gP,\ \forall s\in S\},\ gP\mapsto \lim_{t\to 0}\lambda(t)gP\]
which, by Bialynicki-Birula's theorem (cf. Example \ref{example-2}\,(b)), is an algebraic affine bundle over each connected component of the fixed point set $\mathcal{P}^S$. Proposition~\ref{proposition-1} below gives a different proof of this property, and shows in addition that $\rho$ is locally trivial over each connected component of $\mathcal{P}^S$ and is intimately related to the structure of the $Q$-orbits.

Given a $Q$-orbit $\mathcal{O}\subset \mathcal{P}$, we let $\mathcal{O}^S:=\{gP\in\mathcal{O}:s(gP)=gP,\ \forall s\in S\}$
be its $S$-fixed point set.
The structure of $\mathcal{O}^S$ is described in the next lemma.

\begin{lemma}
\label{lemma-1}
\begin{itemize}
\item[(a)] One has $\mathcal{O}^S\not=\emptyset$.
\end{itemize}
Fix an element $g_0P\in\mathcal{O}^S$.\ Hence $P_0:=g_0Pg_0^{-1}$ contains $S$ and $L_Q\cap P_0$ is a parabolic subgroup of $L_Q$ (cf. \cite[\S 6.4.7]{Springer-book}).
\begin{itemize}
\item[(b)] $\mathcal{O}^S$ consists of a unique $L_Q$-orbit. In fact, the map
\[\xi:L_Q/(L_Q\cap P_0)\to \mathcal{O}^S,\ \ell(L_Q\cap P_0)\mapsto \ell (g_0P)\]
is well defined and is an isomorphism of algebraic varieties.
\item[(c)]
In particular, $\mathcal{O}^S$ is a partial flag variety of $L_Q$ (thus a smooth, connected, projective variety).
Hence, the subsets $\mathcal{O}^S$, corresponding to the $Q$-orbits $\mathcal{O}\subset\mathcal{P}$, are exactly
the connected components of $\mathcal{P}^S$.
\end{itemize}
\end{lemma}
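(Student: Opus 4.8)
The plan is to establish parts (a), (b), (c) in sequence, since each relies on the previous one.

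For part (a), I would use the fact that $S$ is a torus acting on the projective variety $\overline{\mathcal{O}}$ (the closure of the $Q$-orbit). Any torus action on a nonempty projective variety has a fixed point (by Borel's fixed point theorem, or by taking $\lim_{t\to 0}\lambda(t)(gP)$ for any $gP\in\mathcal{O}$ and noting this limit lies in $\overline{\mathcal{O}}$, is $S$-fixed, and in fact lies in $\mathcal{O}$ itself because $\mathbb{C}^*\subset Q$ so $\lambda(t)(gP)\in\mathcal{O}$ and the orbit closure intersected with $\mathcal{O}$... here one must be slightly careful). The cleanest argument: pick $gP\in\mathcal{O}$ and set $x = \lim_{t\to 0}\lambda(t)(gP)$; since $\lambda(\mathbb{C}^*)=S\subset Q$, the whole curve $\{\lambda(t)(gP)\}$ lies in $\mathcal{O}$, and $x$ is $S$-fixed; one still needs $x\in\mathcal{O}$ rather than just $\overline{\mathcal{O}}$, which follows because $\mathcal{O}$ is $Q$-stable hence $S$-stable, and the $S$-action on $\mathcal{O}$ is a morphism $\mathbb{C}^*\times\mathcal{O}\to\mathcal{O}$ that extends to $\mathbb{A}^1\times\mathcal{O}\to\mathcal{P}$... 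Actually the simplest route is: $\mathcal{O}^S = \mathcal{O}\cap\mathcal{P}^S$, and a $Q$-orbit always contains an $S$-fixed point because $Q$ contains $S$ and $Q/Q_{gP}$ (where $Q_{gP}$ is the stabilizer) — one reduces to showing $Q$ acting on $\mathcal{O}$ has an $S$-fixed point, equivalently $S$ has a fixed point on the quasi-projective homogeneous space $\mathcal{O}\cong Q/(Q\cap P_0')$; this follows since this stabilizer is parabolic-in-$Q$ (it contains a Borel of $Q$ after conjugation) and $S\subset Q$ normalizes... I will present whichever of these is shortest, most likely the limit argument with the observation that $\overline{\mathcal{O}}\setminus\mathcal{O}$ is a union of lower-dimensional $Q$-orbits so by induction on dimension we may assume the fixed point is in $\mathcal{O}$.

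For part (b), the key computation is to identify, for the chosen base point $g_0P$ with $P_0=g_0Pg_0^{-1}\supseteq S$, the tangent space and the transitivity of the $L_Q$-action on $\mathcal{O}^S$. I would argue: $\mathcal{O}^S$ is $L_Q$-stable (since $L_Q$ centralizes $S$) and closed in $\mathcal{O}$. Transitivity: any $gP\in\mathcal{O}^S$ is $Q$-conjugate to $g_0P$, say $gP=q(g_0P)$ with $q\in Q$; writing $q=\ell u$ ($\ell\in L_Q$, $u\in U_Q$) and using that both $gP$ and $\ell(g_0P)$ are $S$-fixed while differing by the element $u':= \ell^{-1}u\ell\in U_Q$, one gets $u'\in$ stabilizer $\cdot$ correction — more precisely $u'(g_0P)$ is $S$-fixed, and one shows the only way an element of $U_Q$ can send the $S$-fixed point $g_0P$ to another $S$-fixed point is if it already stabilizes $g_0P$; this is because $U_Q$ is the set of $g$ with $\lim_{t\to0}\lambda(t)g\lambda(t)^{-1}=1$, so if $u'(g_0P)\in\mathcal{P}^S$ then $u'(g_0P)=\lim_t \lambda(t)u'(g_0P) = \lim_t \lambda(t)u'\lambda(t)^{-1}(g_0P) = g_0P$. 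Hence $gP=\ell(g_0P)$, proving transitivity. Then $\xi$ is the orbit map, its fibers are $L_Q\cap(g_0Pg_0^{-1})=L_Q\cap P_0$, and bijectivity of $\xi$ follows; to upgrade to an isomorphism of varieties I invoke that $\mathcal{O}^S$ is smooth (it is a connected component of the smooth fixed locus $\mathcal{P}^S$ — here $\mathcal{P}$ is smooth projective so $\mathcal{P}^S$ is smooth) and that a bijective morphism from a homogeneous space onto a smooth variety in characteristic zero is an isomorphism.

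Part (c) is then immediate: by (b), $\mathcal{O}^S\cong L_Q/(L_Q\cap P_0)$ is a partial flag variety of $L_Q$, hence smooth, connected, projective. Since distinct $Q$-orbits $\mathcal{O},\mathcal{O}'$ are disjoint, their fixed-point sets $\mathcal{O}^S,\mathcal{O}'^S$ are disjoint closed subsets of $\mathcal{P}^S$ covering it, each connected; so they are precisely the connected components of $\mathcal{P}^S$. The main obstacle I anticipate is part (a) — guaranteeing the fixed point lies in the orbit $\mathcal{O}$ and not merely its closure — together with the "no $U_Q$-element moves one $S$-fixed point to another" lemma in part (b), which is the technical heart; the variety-structure upgrade of $\xi$ in (b) is routine given smoothness of $\mathcal{P}^S$ and characteristic zero.
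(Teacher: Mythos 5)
Your parts (b) and (c) are fine and coincide with the paper's own argument: the computation $u(g_0P)=\lim_{t\to 0}\lambda(t)u\lambda(t)^{-1}(g_0P)=g_0P$ for an $S$-fixed point of the form $u(g_0P)$, $u\in U_Q$, is exactly the paper's key step for transitivity of $L_Q$ on $\mathcal{O}^S$, and the upgrade of $\xi$ to an isomorphism (the paper simply cites Steinberg's book) together with the deduction of (c) is routine.

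The genuine gap is part (a), which you flag as the main obstacle but never actually close. The retraction argument fails for the reason you yourself note: for $gP\in\mathcal{O}$ the limit $\lim_{t\to 0}\lambda(t)gP$ need not lie in $\mathcal{O}$ (already for the open $B$-orbit on $\mathbb{P}^1=SL_2/B$ a non-fixed point can flow to the boundary fixed point), and the proposed patch ``by induction on dimension we may assume the fixed point is in $\mathcal{O}$'' does not repair it: if the limit falls into a smaller orbit, the induction hypothesis only yields an $S$-fixed point of that smaller orbit, which says nothing about $\mathcal{O}$. (It is true that $\rho(\mathcal{O})\subset\mathcal{O}$, but in the paper this is Proposition 1(b), proved \emph{using} Lemma 1, so it cannot be invoked here.) Your alternative route rests on the assertion that the stabilizer $Q\cap g_0Pg_0^{-1}$ is parabolic in $Q$, i.e.\ contains a Borel subgroup of $Q$ up to conjugation; this is false for all non-closed orbits, since a $Q$-orbit with parabolic-in-$Q$ stabilizer is complete, hence closed in $\mathcal{P}$ (in the $\mathbb{P}^1$ example the stabilizer of a point of the open orbit is just a maximal torus). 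What is true, and what rescues this line of thought, is that the intersection of two parabolic subgroups of $G$ contains a maximal torus of $G$; conjugating by an element of $Q$, one may arrange this torus to be a fixed $T$ with $S\subset T\subset Q$, and the corresponding point of $\mathcal{O}$ is then $T$-fixed, hence $S$-fixed. The paper's own proof is the equivalent Bruhat-decomposition argument: choose $S\subset T\subset B\subset Q$ and $g$ with $B\subset gPg^{-1}$, write $\mathcal{O}=QwgP$ with $w\in N_G(T)$, and observe that $wgP$ is $S$-fixed because $w^{-1}Sw\subset T\subset B\subset gPg^{-1}$. Either of these short arguments should replace your sketch of (a).
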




\begin{proof}
(a) Let $T\subset B\subset Q$ be a maximal torus and a Borel subgroup such that $S\subset T$.
Let $g\in G$ be such that $B\subset gPg^{-1}$.
By the Bruhat decomposition, the orbit $\mathcal{O}$ takes the form $\mathcal{O}=QwgP$ with $w\in N_G(T)$. For all $s\in S$, one has $swgP=w(w^{-1}sw)gP=wgP$. Thus, $wgP\in\mathcal{O}^S$.

(b) The inclusion $\{\ell(g_0P):\ell\in L_Q\}\subset\mathcal{O}^S$ is easy. Conversely, suppose $q(g_0P)\in\mathcal{O}^S$ (where $q\in Q$). Write $q=\ell u$ with $\ell\in L_Q$ and $u\in U_Q$. Since $g_0P$ and $q(g_0P)$ are fixed by $S$ (and using (\ref{equation-3})), we deduce the equality
\[q(g_0P)=\ell\lambda(t)u\lambda(t)^{-1}(g_0P)\quad\forall t\in\mathbb{C}^*.\]
Finally letting $t\to 0$ (and using (\ref{equation-2})) we infer that $q(g_0P)=\ell(g_0P)$. This shows the equality $\mathcal{O}^S=\{\ell(g_0P):\ell\in L_Q\}$.

This equality implies that the map $L_Q\to\mathcal{O}^S$, $\ell\to\ell(g_0P)$ is surjective.
Note that $L_Q\cap P_0=\{\ell\in L_Q:\ell(g_0P)=g_0P\}$.
This readily implies that
 $\xi$ is a well-defined isomorphism (see \cite[\S 2.11]{Steinberg-book} for instance).
\end{proof}

As in Lemma \ref{lemma-1}, we fix $g_0P\in\mathcal{O}^S$ and let $P_0=g_0Pg_0^{-1}$. In addition to the isomorphism
$\xi:L_Q/(L_Q\cap P_0)\to \mathcal{O}^S$
of Lemma \ref{lemma-1}\,(b), we dispose of the surjective maps
\begin{eqnarray*}
 & & \varphi:Q=L_Q\ltimes U_Q\to L_Q/(L_Q\cap P_0),\ \ell u\mapsto\ell(L_Q\cap P_0) \\[1mm]
 & \mbox{and} & \psi:Q\to\mathcal{O},\ q\mapsto q(g_0P).
\end{eqnarray*}

\begin{proposition}
\label{proposition-1}
\begin{itemize}
\item[(a)] There is a (unique)  map $\zeta:\mathcal{O}\to\mathcal{O}^S$ such that the diagram
\[\begin{array}{ccl}
\ \ Q & \stackrel{\psi}{\longrightarrow} & \mathcal{O} \\
\mbox{\scriptsize $\varphi$}\downarrow & & \downarrow \mbox{\scriptsize $\zeta$} \\
L_Q/(L_Q\cap P_0) & \stackrel{\xi}{\longrightarrow} & \mathcal{O}^S
\end{array}\]
commutes. Moreover, $\zeta$ is an algebraic locally trivial affine bundle.
\item[(b)] $\zeta$ is the restriction to $\mathcal{O}$ of the map $\rho:\mathcal{P}\to\mathcal{P}^S$, $gP\mapsto \lim_{t\to 0}\lambda(t)gP$.
In particular,
\begin{itemize}
\item[$\bullet$] $\zeta$ is intrinsic (i.e., it does not depend on the choice of $g_0P\in\mathcal{O}^S$),
\item[$\bullet$] $\rho$ is an algebraic locally trivial affine bundle over each connected component of $\mathcal{P}^S$.
\end{itemize}
\end{itemize}
\end{proposition}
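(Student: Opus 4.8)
The plan is to exhibit $\mathcal{O}$ as a fibre bundle over $\mathcal{O}^S$ associated to the principal $(L_Q\cap P_0)$-bundle $L_Q\to L_Q/(L_Q\cap P_0)=\mathcal{O}^S$, with typical fibre a homogeneous space of the unipotent group $U_Q$; since such a homogeneous space is an affine space, this yields all assertions of~(a), and a short limit computation gives~(b). The first ingredient I would set up is a compatible Levi decomposition of $Q\cap P_0$: since $S\subset P_0$ and $S\subset Q$, the subgroup $Q\cap P_0$ is $S$-stable, and from the characterizations (\ref{equation-1})--(\ref{equation-3}) of $Q$, $U_Q$, $L_Q$ in terms of $\lambda$ one gets $Q\cap P_0=(L_Q\cap P_0)\ltimes(U_Q\cap P_0)$, with $L_Q\cap P_0$ a parabolic subgroup of $L_Q$ (as in Lemma~\ref{lemma-1}) and $U_Q\cap P_0=\mathrm{Stab}_{U_Q}(g_0P)$.

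Next I would construct $\zeta$. Uniqueness is immediate since $\psi$ is surjective. For existence, note that $\psi(q)=\psi(q')$ (with $q,q'\in Q$) forces $q^{-1}q'\in Q\cap\mathrm{Stab}_G(g_0P)=Q\cap P_0$; writing $q^{-1}q'=\ell'u'$ with $\ell'\in L_Q\cap P_0$, $u'\in U_Q$ and using that $U_Q$ is normal in $Q$, a one-line computation gives $\varphi(q')=\varphi(q)$, hence $\xi\varphi(q')=\xi\varphi(q)$. Thus $\xi\circ\varphi$ is constant on the fibres of the quotient morphism $\psi\colon Q\to Q/(Q\cap P_0)=\mathcal{O}$ and (by descent along the smooth surjection $\psi$) factors through a morphism $\zeta\colon\mathcal{O}\to\mathcal{O}^S$ making the square commute. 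Using the Levi decomposition of $Q\cap P_0$, the assignment $\ell u(Q\cap P_0)\mapsto[\ell,u(U_Q\cap P_0)]$ is a well-defined isomorphism $\mathcal{O}\xrightarrow{\sim}L_Q\times_{L_Q\cap P_0}\bigl(U_Q/(U_Q\cap P_0)\bigr)$ carrying $\zeta$ to the bundle projection onto $L_Q/(L_Q\cap P_0)=\mathcal{O}^S$. Since $L_Q\cap P_0$ is parabolic, $L_Q\to\mathcal{O}^S$ is Zariski-locally trivial, hence so is $\zeta$; and since $U_Q$ is unipotent and the ground field has characteristic zero, $U_Q/(U_Q\cap P_0)$ is isomorphic to an affine space. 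Therefore $\zeta$ is an algebraic locally trivial affine bundle. Concretely, $\zeta^{-1}(g_0P)$ is the $U_Q$-orbit $U_Q\cdot(g_0P)\cong U_Q/(U_Q\cap P_0)$, and a section $\sigma\colon V\to L_Q$ of the orbit map $\ell\mapsto\ell(g_0P)$ over an open $V\subset\mathcal{O}^S$ gives the trivialization $(v,u(U_Q\cap P_0))\mapsto\sigma(v)\,u(g_0P)$ of $\zeta$ over $V$, since $\zeta(\sigma(v)u(g_0P))=\xi\varphi(\sigma(v)u)=v$.

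For~(b), given $gP\in\mathcal{O}$ write $gP=\ell u(g_0P)$ with $\ell\in L_Q$, $u\in U_Q$; since $\ell$ centralizes $S$ and $g_0P$ is $S$-fixed, $\lambda(t)\cdot gP=\ell\bigl(\lambda(t)u\lambda(t)^{-1}\bigr)(g_0P)\to\ell(g_0P)$ as $t\to0$, because $\lambda(t)u\lambda(t)^{-1}\to1_G$. Hence $\rho(gP)=\ell(g_0P)=\xi\varphi(\ell u)=\zeta(gP)$, i.e. $\zeta=\rho|_{\mathcal{O}}$, which in particular does not depend on the choice of $g_0P\in\mathcal{O}^S$ (the map $\rho$ being defined without reference to it). Finally, the same limit computation applied to an arbitrary $Q$-orbit $\mathcal{O}'$ shows $\rho(\mathcal{O}')=(\mathcal{O}')^S$, and by Lemma~\ref{lemma-1}\,(c) the sets $(\mathcal{O}')^S$ are the pairwise disjoint connected components of $\mathcal{P}^S$; therefore $\rho^{-1}(\mathcal{O}^S)=\mathcal{O}$, so $\rho$ restricted over the component $\mathcal{O}^S$ coincides with $\zeta$, which is an algebraic locally trivial affine bundle.

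The crux, and the only step requiring genuine input, is the pair of structural facts invoked above: the compatible Levi decomposition $Q\cap P_0=(L_Q\cap P_0)\ltimes(U_Q\cap P_0)$, and the fact that a homogeneous space of a unipotent group over a field of characteristic zero is an affine space. Granting these, the associated-bundle description makes the well-definedness of $\zeta$, the affine-bundle property with local triviality, and the identification $\zeta=\rho|_{\mathcal{O}}$ essentially formal; the remaining bookkeeping (descent of $\xi\circ\varphi$ along $\psi$, and writing trivializations from local sections of $L_Q\to\mathcal{O}^S$) is routine.
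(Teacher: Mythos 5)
Your proposal is correct and follows essentially the same route as the paper: both hinge on the compatible decomposition $Q\cap P_0=(L_Q\cap P_0)\ltimes(U_Q\cap P_0)$ (obtained via the $\lambda$-limit mechanism), the resulting identification $\mathcal{O}\cong L_Q\times_{L_Q\cap P_0}\bigl(U_Q/(U_Q\cap P_0)\bigr)$ with $\zeta$ the projection to $L_Q/(L_Q\cap P_0)\cong\mathcal{O}^S$, local triviality from the parabolic quotient and affineness of the unipotent-group quotient, and the same limit computation to get $\zeta=\rho|_{\mathcal{O}}$ in part (b). Your explicit descent argument for the existence of $\zeta$ and the verification $\rho^{-1}(\mathcal{O}^S)=\mathcal{O}$ just make slightly more explicit what the paper handles through its chain of isomorphisms and Lemma \ref{lemma-1}\,(c).
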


\begin{proof}
(a) Observe that we have $Q\cap P_0=\{q\in Q:q(g_0P)=g_0P\}$.\ Hence
the map $\psi:Q\to\mathcal{O}$ induces an isomorphism of algebraic varieties $\psi_1:Q/(Q\cap P_0)\to\mathcal{O}$.

We claim that
\[Q\cap P_0=(L_Q\cap P_0)\ltimes(U_Q\cap P_0).\]
It is enough to check the inclusion $Q\cap P_0\subset (L_Q\cap P_0)(U_Q\cap P_0)$. So, take $q\in Q\cap P_0$.\ There are $\ell\in L_Q$, $u\in U_Q$ such that $q=\ell u$. Since $q\in P_0$, we have $q(g_0P)=g_0P$. Furthermore, recall that $g_0P\in\mathcal{O}^S$. It follows (by (\ref{equation-3})):
\[g_0P=\lambda(t)(g_0P)=\lambda(t)q(g_0P)=\ell(\lambda(t)u\lambda(t)^{-1})(g_0P)\quad \forall t\in\mathbb{C}^*.\]
Letting $t\to 0$ and invoking (\ref{equation-2}), we get $g_0P=\ell(q_0P)$.\ Whence $\ell\in L_Q\cap P_0$. We derive $u\in U_Q\cap P_0$. Thereby, $q\in(L_Q\cap P_0)(U_Q\cap P_0)$ and the claim is established.

The relations $Q=L_Q\ltimes U_Q$ and $Q\cap P_0=(L_Q\cap P_0)\ltimes(U_Q\cap P_0)$ yield a natural isomorphism
\[\zeta_1:Q/(Q\cap P_0)\stackrel{\sim}{\to}L_Q\times_{L_Q\cap P_0}(U_Q/(U_Q\cap P_0)).\]
We get a commutative diagram
\[
\begin{array}{rcccccl}
& & Q & \stackrel{\psi_2}{\longrightarrow} & Q/(Q\cap P_0) & \stackrel{\psi_1}{\longrightarrow} & \mathcal{O} \\
& & \mbox{\scriptsize $\varphi$}\downarrow & & \downarrow\mbox{\scriptsize $\zeta_1$}\\
\mathcal{O}^S & \stackrel{\xi}{\longleftarrow} & L_Q/(L_Q\cap P_0) & \stackrel{\zeta_2}{\longleftarrow} & L_Q\times_{L_Q\cap P_0}(U_Q/(U_Q\cap P_0))
\end{array}
\]
where  $\psi_2:Q\to Q/(Q\cap P_0)$ and $\zeta_2:L_Q\times_{L_Q\cap P_0}(U_Q/(U_Q\cap P_0))\to L_Q/(L_Q\cap P_0)$ are the natural surjections. Let $\zeta=\xi\zeta_2\zeta_1\psi_1^{-1}$.
Recall that the maps $\psi_1,\zeta_1,\xi$ involved in this composition are isomorphisms of varieties.
Note that $\zeta_2$ is a locally trivial fiber bundle whose typical fiber $U_Q/(U_Q\cap P_0)$ is isomorphic (as a variety) to an affine space.
Therefore, $\zeta$ is an algebraic locally trivial affine bundle.

(b) Let $q(g_0P)\in \mathcal{O}$. There are $\ell\in L_Q$, $u\in U_Q$ such that $q=\ell u$. Then,
\[\rho(q(g_0P))=\lim_{t\to 0}\lambda(t)q(g_0P)=\lim_{t\to 0}\ell(\lambda(t)u\lambda(t)^{-1})(g_0P)=\ell(g_0P)=\zeta(q(g_0P)).\]
(cf. (\ref{equation-2}), (\ref{equation-3})).
Thus $\zeta=\rho|_\mathcal{O}$. This implies that $\zeta$ does not depend on the choice of $g_0P$.
Combined with Lemma \ref{lemma-1}\,(c), this implies that $\rho$ is an algebraic locally trivial affine bundle over each connected component of $\mathcal{P}^S$.
\end{proof}

\section{Construction of smooth pavings}

\label{section-3}

The purpose of this section is to show the next statement, which will be used in the proof of Theorem \ref{theorem-1}:

\begin{proposition}
\label{proposition-section-3}
Let $Q,S$ be as in Section \ref{section-basic-setting}\,(a)--(b) and let $e,P,\mathfrak{i}$ be as in Definition~\ref{definition-1}.
Assume that the grading of Section \ref{section-basic-setting}\,(c) is good for $e$.
Let $\mathcal{O}\subset\mathcal{P}$ be a $Q$-orbit such that $\mathcal{P}_{e,\mathfrak{i}}\cap\mathcal{O}\not=\emptyset$. Then:

\begin{itemize}
\item[(a)] $\mathcal{P}_{e,\mathfrak{i}}\cap\mathcal{O}$ is smooth.

\item[(b)] $(\mathcal{P}_{e,\mathfrak{i}}\cap\mathcal{O})^S$ is nonempty and is a smooth, projective variety.

\item[(c)] The restriction of the map $\zeta:\mathcal{O}\to\mathcal{O}^S$ of Proposition \ref{proposition-1} yields a well-defined map 
\[\mathcal{P}_{e,\mathfrak{i}}\cap\mathcal{O}\to (\mathcal{P}_{e,\mathfrak{i}}\cap\mathcal{O})^S\]
whose restriction over each connected component of $(\mathcal{P}_{e,\mathfrak{i}}\cap\mathcal{O})^S$
is an algebraic affine bundle.
\end{itemize}
\end{proposition}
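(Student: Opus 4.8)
The plan is to reduce all of (a), (b), (c) to the single statement that $\zeta$ exhibits $X:=\mathcal{P}_{e,\mathfrak{i}}\cap\mathcal{O}$ as a locally trivial affine bundle over $X^{S}:=(\mathcal{P}_{e,\mathfrak{i}}\cap\mathcal{O})^{S}$, feeding the goodness of the grading in at two precise points. I begin with painless reductions. Since $e\in\mathfrak{g}_{2}$ is an eigenvector of $\lambda$ and $\mathfrak{i}$ is a linear subspace, $X$ is $S$-stable, and it is closed in $\mathcal{O}$ because $\mathcal{P}_{e,\mathfrak{i}}$ is closed in $\mathcal{P}$. By Proposition~\ref{proposition-1}\,(b), $\zeta=\rho|_{\mathcal{O}}$ sends $gP$ to $\lim_{t\to0}\lambda(t)gP\in\mathcal{O}^{S}$; if $gP\in X$ then every $\lambda(t)gP$ lies in $X$, so its limit does too, whence $\zeta(X)\subseteq X\cap\mathcal{O}^{S}=X^{S}$. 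Applied to any point of the nonempty set $X$, this shows $X^{S}\neq\emptyset$; and $X^{S}=\mathcal{P}_{e,\mathfrak{i}}\cap\mathcal{O}^{S}$ is closed in the projective variety $\mathcal{O}^{S}$ (Lemma~\ref{lemma-1}\,(c)), hence projective. This yields the well-definedness part of (c) together with the nonemptiness and projectivity in (b); it remains to prove that $X$ and $X^{S}$ are smooth and that $\zeta|_{X}$ is an affine bundle over each connected component of $X^{S}$.

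Fix $g_{0}P\in\mathcal{O}^{S}$, put $P_{0}=g_{0}Pg_{0}^{-1}$, $\mathfrak{p}_{0}=\mathrm{Lie}(P_{0})$, $\mathfrak{i}_{0}=g_{0}\cdot\mathfrak{i}\subseteq\mathfrak{p}_{0}$; since $S\subseteq P_{0}$ and $\mathfrak{i}_{0}$ is $P_{0}$-stable, $\mathfrak{i}_{0}$ is graded, $\mathfrak{i}_{0}=\bigoplus_{j}(\mathfrak{i}_{0}\cap\mathfrak{g}_{j})$. The single consequence of goodness that I will use is: $\mathfrak{g}^{e}\subseteq\mathfrak{g}_{\geq0}$ says exactly that $\mathrm{ad}\,e\colon\mathfrak{g}_{i}\to\mathfrak{g}_{i+2}$ is injective for $i\leq-1$, and, by nondegeneracy of the Killing pairing $\mathfrak{g}_{i}\times\mathfrak{g}_{-i}\to\mathbb{C}$, this is equivalent to $\mathrm{ad}\,e\colon\mathfrak{g}_{i}\to\mathfrak{g}_{i+2}$ being surjective for every $i\geq-1$. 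Smoothness of $X^{S}$ then follows from a transversality argument: by Lemma~\ref{lemma-1}\,(b), and because $\ell^{-1}\cdot e\in\mathfrak{g}_{2}$ for $\ell\in L_{Q}$, the variety $X^{S}$ is the zero scheme of the section $\ell(L_{Q}\cap P_{0})\mapsto\overline{\ell^{-1}\cdot e}$ of the $L_{Q}$-homogeneous vector bundle over $L_{Q}/(L_{Q}\cap P_{0})$ attached to the $(L_{Q}\cap P_{0})$-module $\mathfrak{g}_{2}/(\mathfrak{i}_{0}\cap\mathfrak{g}_{2})$; at each point of $X^{S}$ the image of the vertical derivative of this section contains the class of $[\mathfrak{g}_{0},e]$, which equals $\mathfrak{g}_{2}$ by the case $i=0$, so the section is everywhere transverse to the zero section and $X^{S}$ is smooth. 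This finishes (b).

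For the bundle structure I would first analyze one fibre. For $y=g_{0}P\in X^{S}$ one has $\zeta^{-1}(y)=U_{Q}g_{0}P$, and, after choosing an $S$-stable (graded) complement $\mathfrak{n}^{-}$ of $\mathrm{Lie}(U_{Q}\cap P_{0})$ in $\mathfrak{g}_{\geq1}$ whose exponential is a section of $U_{Q}\to U_{Q}/(U_{Q}\cap P_{0})$, the fibre $(\zeta|_{X})^{-1}(y)$ identifies with $\{n\in\mathfrak{n}^{-}:\exp(-\mathrm{ad}\,n)\,e\in\mathfrak{i}_{0}\}$. Here $\exp(-\mathrm{ad}\,n)\,e=e+\sum_{k\geq1}\tfrac{(-1)^{k}}{k!}(\mathrm{ad}\,n)^{k}e\in\mathfrak{g}_{\geq2}$, and since $e\in\mathfrak{g}_{2}$ its degree-$j$ component depends only on $n_{1},\dots,n_{j-2}$, the dependence on $n_{j-2}$ being through the single term $-[n_{j-2},e]$; as $\mathfrak{i}_{0}$ is graded, the condition decouples into one equation per degree $j\geq3$ (degree $2$ being automatic, $e\in\mathfrak{i}_{0}\cap\mathfrak{g}_{2}$). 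The relevant linear map $\mathfrak{n}^{-}\cap\mathfrak{g}_{j-2}\to\mathfrak{g}_{j}/(\mathfrak{i}_{0}\cap\mathfrak{g}_{j})$, $m\mapsto\overline{[m,e]}$, is onto: it is already onto from $\mathfrak{g}_{j-2}$ (the case $i=j-2\geq1$ above), and $[\mathrm{Lie}(U_{Q}\cap P_{0}),e]\subseteq\mathfrak{i}_{0}$ because $\mathfrak{i}_{0}$ is $P_{0}$-stable and $e\in\mathfrak{i}_{0}$. Hence the triangular system can be solved degree by degree, each time pinning down part of $n_{j-2}$ polynomially in terms of the remaining, free coordinates; this presents $(\zeta|_{X})^{-1}(y)$ as an affine space. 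Carrying out this same resolution in families, over an affine cover of $X^{S}$ trivializing $\zeta$, shows that $\zeta|_{X}\colon X\to X^{S}$ is a locally trivial affine bundle, hence an affine bundle over each connected component of $X^{S}$, giving (c); and, $X^{S}$ being smooth, its total space $X$ is smooth, giving (a).

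The step I expect to be the main obstacle is this last one: upgrading the single-fibre computation to an honest local trivialization over $X^{S}$, while keeping the graded section $\mathfrak{n}^{-}$ and the quotient by the unipotent group $U_{Q}\cap P_{0}$ under control in the family. This is the computational heart of the statement; everything nontrivial in it — surjectivity of every linear map one must invert, and the fact that $X$ lies over $X^{S}$ and nowhere else — is a consequence of the grading being good for $e$.
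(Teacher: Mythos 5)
Your proposal is correct in substance, but it takes a genuinely different route from the paper, most visibly in part (c) and in the order of deductions. The paper proves (a) first: by Lemma \ref{lemma-2}\,(a) one has $\overline{Q\cdot e}=\mathfrak{g}_{\geq 2}$, so $(g\cdot\mathfrak{i})\cap(Q\cdot e)$ is smooth, and pulling back along the smooth maps $Q\to\mathcal{O}$, $q\mapsto q(gP)$, and $Q\to Q\cdot e$, $q\mapsto q^{-1}\cdot e$, gives smoothness of $\mathcal{P}_{e,\mathfrak{i}}\cap\mathcal{O}$; smoothness of the $S$-fixed locus in (b) is then automatic, and (c) is obtained with no computation by applying Bialynicki-Birula to the projective variety $\mathcal{P}_{e,\mathfrak{i}}\cap\overline{\mathcal{O}}$ (each component $C$ of $(\mathcal{P}_{e,\mathfrak{i}}\cap\mathcal{O})^S$ is a component of $(\mathcal{P}_{e,\mathfrak{i}}\cap\overline{\mathcal{O}})^S$ contained in its regular locus, by (a)) and identifying the retraction with $\zeta$ via Proposition \ref{proposition-1}\,(b). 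You reverse this: you prove smoothness of $(\mathcal{P}_{e,\mathfrak{i}}\cap\mathcal{O})^S$ directly by transversality for a section of a homogeneous bundle on $\mathcal{O}^S\cong L_Q/(L_Q\cap P_0)$ (correct, with two small touch-ups: the vertical derivative at $\ell$ has image the class of $[\mathfrak{g}_0,\ell^{-1}\cdot e]$, not of $[\mathfrak{g}_0,e]$ --- harmless since $\ell\in L_Q$ preserves the grading, so this is still all of $\mathfrak{g}_2$ --- and for the injectivity/surjectivity duality of $\mathrm{ad}\,e$ you should use an invariant nondegenerate form on $[\mathfrak{g},\mathfrak{g}]$ rather than the Killing form of the reductive $\mathfrak{g}$, or just quote the equivalence lemma of Section \ref{section-3-1}); you then prove (c) by the explicit degree-by-degree resolution of $\exp(-\mathrm{ad}\,n)e\in\mathfrak{i}_0$ in a graded complement $\mathfrak{n}^-$, and deduce (a) from (b) and (c).

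The one place where your text stops short is the step you yourself flag: promoting the single-fibre elimination to a local trivialization. This is not a fatal gap --- over a chart of $(\mathcal{P}_{e,\mathfrak{i}}\cap\mathcal{O})^S$ on which $\zeta$ is trivialized by a local section $s$ of $L_Q\to L_Q/(L_Q\cap P_0)$, the element $e$ gets replaced by $s(v)^{-1}\cdot e$, the surjectivity of $m\mapsto\overline{[m,s(v)^{-1}\cdot e]}$ holds for every $v$ because $s(v)$ preserves the grading, and the kernels of these maps form vector subbundles (constant rank), hence are Zariski-locally trivial, so the elimination can be performed algebraically in the family and yields local trivializations of $\zeta_0$ with affine-space fibres --- but it is real work that you would still have to write out. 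This bookkeeping is exactly what the paper's appeal to Bialynicki-Birula on $\mathcal{P}_{e,\mathfrak{i}}\cap\overline{\mathcal{O}}$ buys for free (at the price of needing (a) as an input); what your route buys in exchange is an explicit, coordinate-level description of the fibres, with no use of Bialynicki-Birula beyond Proposition \ref{proposition-1}.
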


The assumptions of the proposition involve the notion of good grading (see Example \ref{example-1}\,(c)), which is recalled in Section \ref{section-3-1}. The choice of $Q$ arising from a good grading (like in the proposition) is suitable for applying the results of Section \ref{section-2} to the study of the variety $\mathcal{P}_{e,\mathfrak{i}}$.
Relying on this observation,
Proposition \ref{proposition-section-3} is proved in Section \ref{section-3-2}. The proof of Proposition \ref{proposition-section-3}\,{\rm (a)},\,{\rm (b)} follows the same reasoning as in \cite{DeConcini-Lusztig-Procesi},
the proof of Proposition \ref{proposition-section-3}\,{\rm (c)} is somewhat different.

\subsection{Preliminaries on good gradings}

\label{section-3-1}

Recall (cf. Example \ref{example-1}\,(c)) that a $\mathbb{Z}$-grading $\mathfrak{g}=\bigoplus_{i\in\mathbb{Z}}\mathfrak{g}_i$, such that $\mathfrak{g}_0$ contains the center of $\mathfrak{g}$, is said to be good for the nilpotent element $e\in\mathfrak{g}$ if
\begin{eqnarray}
& & e\in\mathfrak{g}_2 \label{nequation-4} \\
& \mbox{and} & \mathfrak{g}^e:=\{x\in\mathfrak{g}:[e,x]=0\}\subset\mathfrak{g}_{\geq 0}. \label{nequation-5}
\end{eqnarray}
(We write $\mathfrak{g}_{\geq j}=\bigoplus_{i\geq j}\mathfrak{g}_i$.) Good gradings always exist:

\begin{example}
\label{example-3}
Take $h,f\in\mathfrak{g}$ such that $(e,h,f)$ is a standard triple (i.e., $[h,e]=2e$, $[h,f]=-2f$, $[e,f]=h$).
Then, the grading obtained by letting $\mathfrak{g}_i=\{x\in\mathfrak{g}:[h,x]=ix\}$ is good for $e$.
\end{example}

There are equivalent definitions of good gradings (see \cite[Theorem 1.3]{Elashvili-Kac}):

\begin{lemma}
Let $\mathfrak{g}=\bigoplus_{i\in\mathbb{Z}}\mathfrak{g}_i$ be a $\mathbb{Z}$-grading such that $\mathfrak{g}_0$ contains the center of $\mathfrak{g}$ and $e\in\mathfrak{g}_2$. Then, the following conditions are equivalent: \\
{\rm (i)} the grading is good for $e$; \\
{\rm (ii)} $\mathrm{ad}\,e:\mathfrak{g}_i\to\mathfrak{g}_{i+2}$ is injective for all $i\leq -1$ and surjective for all $i\geq -1$; \\
{\rm (iii)} $\mathrm{ad}\,e:\mathfrak{g}_i\to\mathfrak{g}_{i+2}$ is injective for all $i\leq -1$; \\
{\rm (iv)} $\mathrm{ad}\,e:\mathfrak{g}_i\to\mathfrak{g}_{i+2}$ is surjective for all $i\geq -1$.
\end{lemma}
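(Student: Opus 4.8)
The plan is to prove the four-way equivalence by combining a bookkeeping argument with a duality argument, so that (i) $\Leftrightarrow$ (iii) comes essentially for free, (iii) $\Leftrightarrow$ (iv) is obtained by transposing $\mathrm{ad}\,e$ with respect to a nondegenerate invariant form, and (ii), being literally the conjunction of (iii) and (iv), then follows.

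\emph{Step 1: (i) $\Leftrightarrow$ (iii).} Writing $x=\sum_i x_i$ with $x_i\in\mathfrak{g}_i$, the relation $[e,x]=\sum_i[e,x_i]$ together with $[e,x_i]\in\mathfrak{g}_{i+2}$ shows that $x\in\mathfrak{g}^e$ if and only if $[e,x_i]=0$ for every $i$; hence $\mathfrak{g}^e=\bigoplus_i\ker(\mathrm{ad}\,e\colon\mathfrak{g}_i\to\mathfrak{g}_{i+2})$ is a graded subspace. Therefore $\mathfrak{g}^e\subseteq\mathfrak{g}_{\geq 0}$ holds precisely when $\ker(\mathrm{ad}\,e\colon\mathfrak{g}_i\to\mathfrak{g}_{i+2})=0$ for all $i<0$, i.e.\ for all $i\leq -1$, which is exactly (iii). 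This step is pure bookkeeping and uses nothing beyond the grading.

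\emph{Step 2: (iii) $\Leftrightarrow$ (iv).} This is the substantive part. By Section \ref{section-2-1-2} there is $h\in[\mathfrak{g},\mathfrak{g}]$ with $\mathfrak{g}_i=\{x:[h,x]=ix\}$, so the grading is the eigenspace decomposition of the semisimple operator $\mathrm{ad}\,h$. Fix a nondegenerate symmetric invariant bilinear form $\kappa$ on $\mathfrak{g}$ (take the Killing form on $[\mathfrak{g},\mathfrak{g}]$ plus any nondegenerate form on the center, with the two summands orthogonal; invariance on the center is automatic since $\mathrm{ad}$ acts trivially there). Invariance gives $(i+j)\kappa(x,y)=\kappa([h,x],y)+\kappa(x,[h,y])=0$ for $x\in\mathfrak{g}_i$, $y\in\mathfrak{g}_j$, hence $\kappa(\mathfrak{g}_i,\mathfrak{g}_j)=0$ whenever $i+j\neq 0$ and $\kappa$ restricts to a nondegenerate pairing $\mathfrak{g}_a\times\mathfrak{g}_{-a}\to\mathbb{C}$ for each $a$. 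Using $\kappa([e,x],y)=-\kappa(x,[e,y])$, the $\kappa$-transpose of $\mathrm{ad}\,e\colon\mathfrak{g}_i\to\mathfrak{g}_{i+2}$ (relative to these pairings) is $-\mathrm{ad}\,e\colon\mathfrak{g}_{-i-2}\to\mathfrak{g}_{-i}$. Since a linear map between finite-dimensional spaces is injective if and only if its transpose is surjective, $\mathrm{ad}\,e\colon\mathfrak{g}_i\to\mathfrak{g}_{i+2}$ is injective iff $\mathrm{ad}\,e\colon\mathfrak{g}_{-i-2}\to\mathfrak{g}_{-i}$ is surjective; letting $i$ range over $i\leq -1$ and setting $j=-i-2$ (which then ranges over $j\geq -1$) converts (iii) into (iv).

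\emph{Step 3: conclusion.} Condition (ii) is exactly ``(iii) and (iv)'', so by Step 2 it is equivalent to (iii), and with Step 1 all four conditions coincide. I do not anticipate any real obstacle here; the only points needing mild care are the existence of a nondegenerate invariant form in the merely reductive case (handled by splitting off the center as above) and the harmless sign in the transpose computation, which does not affect the injectivity/surjectivity dichotomy.
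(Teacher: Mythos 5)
Your proof is correct. Note that the paper itself does not prove this lemma: it simply cites \cite[Theorem 1.3]{Elashvili-Kac}, so there is no internal argument to compare against. Your three steps are sound: the observation that $\mathfrak{g}^e$ is a graded subspace (because $e$ is homogeneous) gives (i) $\Leftrightarrow$ (iii) immediately; and the duality step is the standard mechanism behind the Elashvili--Kac statement, namely that an invariant nondegenerate form puts $\mathfrak{g}_i$ in perfect pairing with $\mathfrak{g}_{-i}$ (using the semisimple element $h$ of Section \ref{section-2-1-2}, whose existence relies on the standing hypothesis that the center lies in $\mathfrak{g}_0$), so that $\mathrm{ad}\,e\colon\mathfrak{g}_i\to\mathfrak{g}_{i+2}$ and $\mathrm{ad}\,e\colon\mathfrak{g}_{-i-2}\to\mathfrak{g}_{-i}$ are transposes up to sign. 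Your extension from the simple case treated in \cite{Elashvili-Kac} to the reductive case is handled correctly: the block form $\kappa=\kappa_{[\mathfrak{g},\mathfrak{g}]}\oplus\beta_{\mathfrak{z}}$ is invariant because all brackets land in $[\mathfrak{g},\mathfrak{g}]$ and central components bracket to zero, and nondegeneracy plus the orthogonality $\kappa(\mathfrak{g}_i,\mathfrak{g}_j)=0$ for $i+j\neq 0$ indeed forces each pairing $\mathfrak{g}_a\times\mathfrak{g}_{-a}\to\mathbb{C}$ to be perfect. The index bookkeeping $i\leq-1\leftrightarrow j=-i-2\geq-1$ is right, and (ii) is literally the conjunction of (iii) and (iv), so nothing further is needed.
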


We refer to \cite{Elashvili-Kac} for the main properties of good gradings and classification of good gradings for simple Lie algebras. We just emphasize in the next lemma the two properties that we will need in our study of the varieties $\mathcal{P}_{e,\mathfrak{i}}$.

\begin{lemma}
\label{lemma-2}
Let $Q,S,\lambda$ be as in Section \ref{section-basic-setting}\,(a)--(b).
Assume that the grading of Section \ref{section-basic-setting}\,(c) is good for $e$.
Then:
\begin{itemize}
\item[(a)] $\overline{Q\cdot e}=\mathfrak{g}_{\geq 2}$. In particular, $\overline{Q\cdot e}$ is a vector subspace of $\mathfrak{g}$.
\item[(b)] $\lambda(t)\cdot e=t^2e$ for all $t\in\mathbb{C}^*$.\ In particular, the variety $\mathcal{P}_{e,\mathfrak{i}}$ is stable by the natural action of $S$ on $\mathcal{P}$.
\end{itemize}
\end{lemma}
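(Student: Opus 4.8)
The plan is to treat part (b) first, since it is essentially formal, and then to deduce part (a) by a dimension count. For (b): the goodness condition (\ref{nequation-4}) says $e\in\mathfrak{g}_2$, so the defining relation (\ref{equation-4}) of the grading attached to $\lambda$ immediately yields $\lambda(t)\cdot e=t^2e$ for all $t\in\mathbb{C}^*$. To see that $\mathcal{P}_{e,\mathfrak{i}}$ is then stable under $S$, I would take $gP\in\mathcal{P}_{e,\mathfrak{i}}$ and $s=\lambda(t)\in S$ and compute, using the linearity of the adjoint action, $(sg)^{-1}\cdot e=g^{-1}\cdot(\lambda(t)^{-1}\cdot e)=g^{-1}\cdot(t^{-2}e)=t^{-2}\,(g^{-1}\cdot e)$; since $g^{-1}\cdot e\in\mathfrak{i}$ and $\mathfrak{i}$ is by hypothesis a linear subspace of $\mathfrak{p}$, this element again lies in $\mathfrak{i}$, whence $s\cdot gP=(sg)P\in\mathcal{P}_{e,\mathfrak{i}}$.

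For (a): one inclusion is soft. Since $[\mathfrak{g}_{\geq 0},\mathfrak{g}_{\geq 2}]\subset\mathfrak{g}_{\geq 2}$, the subspace $\mathfrak{g}_{\geq 2}$ is stable under $\mathrm{ad}(\mathrm{Lie}\,Q)$, hence under $\mathrm{Ad}(Q)$ because $Q$ is connected; as $e\in\mathfrak{g}_2$, this gives $Q\cdot e\subset\mathfrak{g}_{\geq 2}$ and therefore $\overline{Q\cdot e}\subset\mathfrak{g}_{\geq 2}$. Since $\mathfrak{g}_{\geq 2}$ is an irreducible affine space, it now suffices to show $\dim Q\cdot e=\dim\mathfrak{g}_{\geq 2}$. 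I would write $\dim Q\cdot e=\dim Q-\dim Z_Q(e)$ and use that, in characteristic zero, $\dim Z_Q(e)=\dim\mathrm{Lie}\,Z_Q(e)=\dim(\mathfrak{g}_{\geq 0}\cap\mathfrak{g}^e)$, which equals $\dim\mathfrak{g}^e$ by the goodness condition (\ref{nequation-5}). Finally I would evaluate $\dim\mathfrak{g}^e$ grade by grade: by (\ref{nequation-5}) one has $\mathfrak{g}^e\subset\mathfrak{g}_{\geq 0}$, and for $i\geq 0$ the map $\mathrm{ad}\,e\colon\mathfrak{g}_i\to\mathfrak{g}_{i+2}$ is surjective (one of the equivalent characterizations of good gradings recalled just above, namely that $\mathrm{ad}\,e$ is surjective in degrees $i\geq -1$), so $\dim(\mathfrak{g}^e\cap\mathfrak{g}_i)=\dim\mathfrak{g}_i-\dim\mathfrak{g}_{i+2}$; summing over $i\geq 0$ telescopes to $\dim\mathfrak{g}^e=\dim\mathfrak{g}_0+\dim\mathfrak{g}_1$. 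Hence $\dim Q\cdot e=\dim\mathfrak{g}_{\geq 0}-\dim\mathfrak{g}_0-\dim\mathfrak{g}_1=\dim\mathfrak{g}_{\geq 2}$, which closes the argument; the ``in particular'' clause is then automatic since $\mathfrak{g}_{\geq 2}$ is a vector subspace.

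I do not expect a genuine obstacle here. The few points that call for a little care are the passage from $\mathrm{ad}(\mathrm{Lie}\,Q)$-stability to $\mathrm{Ad}(Q)$-stability (connectedness of $Q$), the identity $\dim Z_Q(e)=\dim\mathrm{Lie}\,Z_Q(e)$ (valid over $\mathbb{C}$), and the bookkeeping in the telescoping sum for $\dim\mathfrak{g}^e$. As an alternative route for (a) one could instead invoke the known equality $G\cdot\mathfrak{g}_{\geq 2}=\overline{G\cdot e}$ together with the generic finiteness of $G\times_P\mathfrak{g}_{\geq 2}\to\overline{G\cdot e}$ recalled in Example \ref{example-1}\,(c) and compare dimensions, but the self-contained count above seems preferable.
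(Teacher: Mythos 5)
Your proposal is correct and follows essentially the same route as the paper: part (b) is immediate from (\ref{equation-4}) and $e\in\mathfrak{g}_2$, and part (a) is the inclusion $\overline{Q\cdot e}\subset\mathfrak{g}_{\geq 2}$ combined with the dimension count $\dim Q\cdot e=\dim\mathfrak{g}_{\geq 0}-\dim\mathfrak{g}^e=\dim\mathfrak{g}_{\geq 2}$, using $\mathfrak{g}^e\subset\mathfrak{g}_{\geq 0}$. The only cosmetic difference is that you derive $\dim\mathfrak{g}^e=\dim\mathfrak{g}_0+\dim\mathfrak{g}_1$ by telescoping the surjectivity of $\mathrm{ad}\,e$ in degrees $i\geq 0$, whereas the paper cites $\mathfrak{sl}_2$-theory for the same identity.
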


\begin{proof}
(a)
It follows from Section \ref{section-basic-setting} and relation (\ref{nequation-4}) that $\overline{Q\cdot e}$ is a closed subvariety of $\mathfrak{g}_{\geq 2}$.
By $\mathfrak{sl}_2$-theory, we have $\dim \mathfrak{g}^e=\dim\mathfrak{g}_0+\dim\mathfrak{g}_1$.
By relation (\ref{nequation-5}), we have $\mathfrak{g}^e\subset\mathfrak{g}_{\geq 0}=Lie(Q)$. This yields
$\dim \overline{Q\cdot e}=\dim\mathfrak{g}_{\geq 0}-\dim\mathfrak{g}^e=\dim\mathfrak{g}_{\geq 2}$. Whence the claimed equality. Part (b) follows from (\ref{equation-4}) and (\ref{nequation-4}).
\end{proof}

\subsection{Proof of Proposition \ref{proposition-section-3}}

\label{section-3-2}

Let $gP\in\mathcal{P}_{e,\mathfrak{i}}\cap\mathcal{O}$.
Thus $g^{-1}\cdot e\in\mathfrak{i}$. Lemma \ref{lemma-2}\,(a) guarantees that the intersection $(g\cdot\mathfrak{i})\cap(Q\cdot e)$ is a smooth, closed, irreducible subvariety of $Q\cdot e$.
The maps $\psi:Q\to \mathcal{O}$, $q\mapsto q(gP)$ and $\chi:Q\to Q\cdot e$, $q\mapsto q^{-1}\cdot e$ are algebraic, smooth. Moreover, one has
\[\psi^{-1}(\mathcal{P}_{e,\mathfrak{i}}\cap\mathcal{O})=\chi^{-1}((g\cdot\mathfrak{i})\cap(Q\cdot e))=\{q\in Q:q^{-1}\cdot e\in g\cdot\mathfrak{i}\}.\]
Since $(g\cdot\mathfrak{i})\cap(Q\cdot e)$ is smooth, we infer that $\mathcal{P}_{e,\mathfrak{i}}\cap\mathcal{O}$ is smooth.
This shows (a).

Let $L_Q,U_Q$ be as in Section \ref{section-basic-setting}.
By Lemma \ref{lemma-1}, we have
$\mathcal{O}=Qg_0P$ and $\mathcal{O}^S=\{\ell(g_0P):\ell\in L_Q\}$, where $P_0:=g_0Pg_0^{-1}\supset S$.
The map $\zeta:\mathcal{O}\to\mathcal{O}^S$ of Proposition \ref{proposition-1} is such that
\[\zeta(\ell u(g_0P))=\ell(g_0P)\mbox{ \ for all $\ell\in L_Q$, $u\in U_Q$}.\]

Let $q(g_0P)\in\mathcal{P}_{e,\mathfrak{i}}\cap\mathcal{O}$. Write $q=\ell u$ with $\ell\in L_Q$, $u\in U_Q$. Invoking (\ref{equation-3}) and Lemma \ref{lemma-2}\,(b), we have
\[\ell(\lambda(t)u\lambda(t)^{-1})g_0P=\lambda(t)q(g_0P)\in\mathcal{P}_{e,\mathfrak{i}}\quad\forall t\in\mathbb{C}^*.\]
Letting $t\to 0$ and invoking (\ref{equation-2}), we get $\zeta(q(g_0P))=\ell(g_0P)\in(\mathcal{P}_{e,\mathfrak{i}}\cap\mathcal{O})^S$.
Whence the inclusion $\zeta(\mathcal{P}_{e,\mathfrak{i}}\cap\mathcal{O})\subset(\mathcal{P}_{e,\mathfrak{i}}\cap\mathcal{O})^S$.
The restriction of $\zeta$ to $\mathcal{O}^S$ being the identity, we obtain in fact
\[\zeta(\mathcal{P}_{e,\mathfrak{i}}\cap\mathcal{O})=(\mathcal{P}_{e,\mathfrak{i}}\cap\mathcal{O})^S.\]
In particular, $(\mathcal{P}_{e,\mathfrak{i}}\cap\mathcal{O})^S\not=\emptyset$.
Since $\mathcal{P}_{e,\mathfrak{i}}\cap\mathcal{O}$ is smooth, it follows that $(\mathcal{P}_{e,\mathfrak{i}}\cap\mathcal{O})^S$ is smooth. By Lemma \ref{lemma-1}\,{\rm (c)}, $(\mathcal{P}_{e,\mathfrak{i}}\cap \mathcal{O})^S=\mathcal{P}_{e,\mathfrak{i}}\cap \mathcal{O}^S$ is also projective, whence {\rm (b)}.

Let
\[\zeta_0:\mathcal{P}_{e,\mathfrak{i}}\cap\mathcal{O}\to(\mathcal{P}_{e,\mathfrak{i}}\cap\mathcal{O})^S\]
be the restriction of $\zeta$. For proving part {\rm (c)}, we need to show that, for every connected component $C\subset (\mathcal{P}_{e,\mathfrak{i}}\cap\mathcal{O})^S$, the restriction of $\zeta_0$ gives rise to an algebraic affine bundle $\zeta_0^{-1}(C)\to C$. To do this, we aim to apply Bialynicki-Birula's theorem (see Example \ref{example-2}\,{\rm (b)}) to the map
\[\rho_0:\mathcal{P}_{e,\mathfrak{i}}\cap \overline{\mathcal{O}}\to(\mathcal{P}_{e,\mathfrak{i}}\cap\overline{\mathcal{O}})^S,\ x\mapsto \lim_{t\to 0}\lambda(t)x.\] 
Recall that the orbit $\mathcal{O}$ is open in its closure
 $\overline{\mathcal{O}}$,
and $\overline{\mathcal{O}}\setminus\mathcal{O}$ is a union of finitely many $Q$-orbits of lower dimension.
It easily follows that $C$ is also a connected component of $(\mathcal{P}_{e,\mathfrak{i}}\cap\overline{\mathcal{O}})^S$.
Moreover, $C$ is contained in $\mathcal{P}_{e,\mathfrak{i}}\cap\mathcal{O}$, which is a smooth, open subset of $\mathcal{P}_{e,\mathfrak{i}}\cap\overline{\mathcal{O}}$, thereby $C$ lies in the regular locus of $\mathcal{P}_{e,\mathfrak{i}}\cap\overline{\mathcal{O}}$.
This allows us to apply Bialynicki-Birula's theorem, from which we obtain that the restriction of $\rho_0$ is an algebraic affine bundle $\rho_0^{-1}(C)\to C$.

According to Proposition \ref{proposition-1},
for every $Q$-orbit $\mathcal{O}'\subset\overline{\mathcal{O}}$, we have $\rho_0(\mathcal{P}_{e,\mathfrak{i}}\cap\mathcal{O}')\subset (\mathcal{P}_{e,\mathfrak{i}}\cap\mathcal{O}')^S$.
It follows that $\rho_0^{-1}((\mathcal{P}_{e,\mathfrak{i}}\cap\mathcal{O})^S)=\mathcal{P}_{e,\mathfrak{i}}\cap\mathcal{O}$. From Proposition \ref{proposition-1}\,{\rm (b)}, we also have that the restriction $\rho_0:\mathcal{P}_{e,\mathfrak{i}}\cap\mathcal{O}\to (\mathcal{P}_{e,\mathfrak{i}}\cap\mathcal{O})^S$
coincides with $\zeta_0$. In particular, we have $\zeta_0^{-1}(C)=\rho_0^{-1}(C)$, and the restriction $\zeta_0|_{\zeta_0^{-1}(C)}$
is then an algebraic affine bundle.
The proof of Proposition \ref{proposition-section-3} is complete.

\section{Reduction to distinguished case}

\label{section-4}

Through Bala-Carter theory, one attaches to a nilpotent element $e\in\mathfrak{g}$ a Levi subgroup $\hat{G}\subset G$ whose Lie algebra $\hat{\mathfrak{g}}$ contains $e$ as a distinguished element. The precise notation is given in Section \ref{section-4-1}.
In this section, we combine this classical construction of Bala-Carter theory with the construction of Section \ref{section-3},
which involves a parabolic subgroup $Q$ with Levi factor $Z_G(S)$, those data arising from a good grading for $e$
(see Section \ref{section-3-1}).
In order to make both constructions compatible, we will assume that $\hat{G}$ contains the subtorus $S$
(one always can find $\hat{G}$ with this property: see Section~\ref{section-4-1});
in this manner, $Q$ contains the center of $\hat{G}$,
so that $\hat{G}\cap Q$ is a parabolic subgroup of $\hat{G}$
and the datum $(\hat{G}\cap Q,\lambda,S)$ corresponds to a good grading of $\hat{G}$.

Our objective in this section is to prove the next proposition, which relates the variety
$\mathcal{P}_{e,\mathfrak{i}}$ to a variety called $\hat{\mathcal{P}}_{e,\hat{\mathfrak{i}}}$\,,
of the same type but relative to the group $\hat{G}$.

\begin{proposition}
\label{proposition-section-4}
Let $e,P,\mathfrak{i}$ be as in Definition~\ref{definition-1}.
Let $Q,\lambda,S$ be as in Section \ref{section-basic-setting} and assume that the grading of Section \ref{section-basic-setting}\,(c) is good for $e$.
Let $\hat{G}\subset G$ be a Levi subgroup whose Lie algebra contains $e$ as a distinguished element and assume that $S\subset\hat{G}$.
Let $\hat{Z}$ be the identity component of the center of $\hat{G}$, so that $\hat{G}=Z_G(\hat{Z})$. Hence $S\hat{Z}$ is a subtorus of both $Q$ and $\hat{G}$. Then: \\
{\rm (a)} There is a map
\[(\mathcal{P}_{e,\mathfrak{i}})^S\to(\mathcal{P}_{e,\mathfrak{i}})^{S \hat{Z}}\]
which is an algebraic affine bundle over each connected component. \\
{\rm (b)}
For each connected component $\mathcal{C}\subset \mathcal{P}^{\hat{Z}}$, there are\begin{itemize}
\item a parabolic subgroup $\hat{P}\subset\hat{G}$ and a $\hat{P}$-stable subspace $\hat{\mathfrak{i}}\subset Lie(\hat{P})$,
giving rise to a variety
\[\hat{\mathcal{P}}_{e,\hat{\mathfrak{i}}}=\{g\hat{P}\in\hat{G}/\hat{P}:g^{-1}\cdot e\in\hat{\mathfrak{i}}\}\subset\hat{G}/\hat{P},\]
\item and an isomorphism
\[\mathcal{C}\cap(\mathcal{P}_{e,\mathfrak{i}})^{S \hat{Z}}\to(\hat{\mathcal{P}}_{e,\hat{\mathfrak{i}}})^S.\]
\end{itemize}
In particular, if we know that the variety $(\hat{\mathcal{P}}_{e,\hat{\mathfrak{i}}})^S$ admits an affine paving for all choices of $\hat{P},\hat{\mathfrak{i}}$, then we can conclude that the variety $(\mathcal{P}_{e,\mathfrak{i}})^S$ itself admits an affine paving.
\end{proposition}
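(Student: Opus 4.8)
The plan is to prove parts (a) and (b) separately and then read off the ``in particular'' clause using two standard properties of affine pavings: the total space of an algebraic affine bundle over a variety admitting an affine paving again admits one, and a finite disjoint union of clopen subvarieties each admitting an affine paving admits one (see \cite[\S1]{DeConcini-Lusztig-Procesi}). First I would record the compatibility of the data. Since $S\subset\hat{G}=Z_G(\hat{Z})$, the torus $S$ centralizes $\hat{Z}$, hence $\hat{Z}\subset Z_G(S)=L_Q$; thus $S\hat{Z}$ is a subtorus of $Q$ (it lies in $L_Q$) and of $\hat{G}$, and each connected component $\mathcal{C}$ of $\mathcal{P}^{\hat{Z}}$ is $S$-stable. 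Moreover $\hat{Z}$, being central in $\hat{G}$, acts trivially on $\hat{\mathfrak{g}}\ni e$; so for $z\in\hat{Z}$ and $gP\in\mathcal{P}_{e,\mathfrak{i}}$ we have $(zg)^{-1}\cdot e=g^{-1}\cdot e\in\mathfrak{i}$, which shows $\mathcal{P}_{e,\mathfrak{i}}$ --- and hence, $\hat{Z}$ commuting with $S$, also $(\mathcal{P}_{e,\mathfrak{i}})^S$ --- is $\hat{Z}$-stable.

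For part (a): by Proposition \ref{proposition-section-3}(b) applied to the $Q$-orbits meeting $\mathcal{P}_{e,\mathfrak{i}}$, together with Lemma \ref{lemma-1}(c) (which identifies the sets $\mathcal{O}^S$ with the connected components of $\mathcal{P}^S$), the variety $(\mathcal{P}_{e,\mathfrak{i}})^S=\bigsqcup_{\mathcal{O}}(\mathcal{P}_{e,\mathfrak{i}}\cap\mathcal{O})^S$ is smooth and projective. I would then pick a suitably generic cocharacter $\mu:\mathbb{C}^*\to\hat{Z}$, so that the $\mu(\mathbb{C}^*)$-fixed locus of $(\mathcal{P}_{e,\mathfrak{i}})^S$ coincides with $(\mathcal{P}_{e,\mathfrak{i}})^{S\hat{Z}}$, and apply Bialynicki-Birula's theorem (Example \ref{example-2}(b)) to the $\mu$-action on the smooth projective variety $(\mathcal{P}_{e,\mathfrak{i}})^S$: every point of the fixed locus of a smooth variety being regular, the resulting retraction $x\mapsto\lim_{t\to 0}\mu(t)x$ onto $(\mathcal{P}_{e,\mathfrak{i}})^{S\hat{Z}}$ is an algebraic affine bundle over each connected component. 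This retraction (intrinsic, as it is the Bialynicki-Birula retraction for the whole torus $\hat{Z}$) is the map asserted in (a).

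For part (b): fix a connected component $\mathcal{C}\subset\mathcal{P}^{\hat{Z}}$ and a point $g_1P\in\mathcal{C}$. Then $\hat{Z}\subset g_1Pg_1^{-1}$, so $\hat{P}:=\hat{G}\cap g_1Pg_1^{-1}$ is a parabolic subgroup of $\hat{G}=Z_G(\hat{Z})$ (cf. \cite[\S6.4.7]{Springer-book}), and the orbit map $\hat{G}/\hat{P}\xrightarrow{\sim}\mathcal{C}$, $\hat{g}\hat{P}\mapsto\hat{g}g_1P$, is an isomorphism, $S$-equivariant for the left $S$-action on the source ($S\subset\hat{G}$). Set $\hat{\mathfrak{i}}:=(g_1\cdot\mathfrak{i})\cap\hat{\mathfrak{g}}$. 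As $\mathfrak{i}\subset\mathfrak{p}$ is $P$-stable and $\hat{P}$ lies in both $g_1Pg_1^{-1}$ and $\hat{G}$, the subspace $\hat{\mathfrak{i}}$ is $\hat{P}$-stable and lies in $(g_1\cdot\mathfrak{p})\cap\hat{\mathfrak{g}}=Lie(\hat{P})$, so $(\hat{P},e,\hat{\mathfrak{i}})$ defines a variety $\hat{\mathcal{P}}_{e,\hat{\mathfrak{i}}}$ as in the statement. Since $\hat{g}^{-1}\cdot e\in\hat{\mathfrak{g}}$ for every $\hat{g}\in\hat{G}$, we get
\[\hat{g}g_1P\in\mathcal{P}_{e,\mathfrak{i}}\iff\hat{g}^{-1}\cdot e\in(g_1\cdot\mathfrak{i})\cap\hat{\mathfrak{g}}=\hat{\mathfrak{i}}\iff\hat{g}\hat{P}\in\hat{\mathcal{P}}_{e,\hat{\mathfrak{i}}},\]
so the isomorphism above restricts to $\hat{\mathcal{P}}_{e,\hat{\mathfrak{i}}}\xrightarrow{\sim}\mathcal{C}\cap\mathcal{P}_{e,\mathfrak{i}}$ and, by $S$-equivariance, further to $(\hat{\mathcal{P}}_{e,\hat{\mathfrak{i}}})^S\xrightarrow{\sim}\mathcal{C}\cap(\mathcal{P}_{e,\mathfrak{i}})^S$; and $\mathcal{C}\subset\mathcal{P}^{\hat{Z}}$ gives $\mathcal{C}\cap(\mathcal{P}_{e,\mathfrak{i}})^S=\mathcal{C}\cap(\mathcal{P}_{e,\mathfrak{i}})^{S\hat{Z}}$, so inverting yields the map required in (b). For the last assertion: $\mathcal{P}^{\hat{Z}}$ has finitely many connected components, $(\mathcal{P}_{e,\mathfrak{i}})^{S\hat{Z}}$ is the disjoint union of the clopen pieces $\mathcal{C}\cap(\mathcal{P}_{e,\mathfrak{i}})^{S\hat{Z}}\cong(\hat{\mathcal{P}}_{e,\hat{\mathfrak{i}}})^S$, so an affine paving of each $(\hat{\mathcal{P}}_{e,\hat{\mathfrak{i}}})^S$ gives one of $(\mathcal{P}_{e,\mathfrak{i}})^{S\hat{Z}}$, and then part (a) together with the affine-bundle fact gives one of $(\mathcal{P}_{e,\mathfrak{i}})^S$.

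I expect the only genuinely delicate point to be the use of the smoothness of $(\mathcal{P}_{e,\mathfrak{i}})^S$ in part (a): Bialynicki-Birula's theorem cannot be invoked without it, which is exactly why Proposition \ref{proposition-section-3} is proved beforehand. Everything else --- the parabolicity of $\hat{G}\cap g_1Pg_1^{-1}$, the $\hat{P}$-stability of $\hat{\mathfrak{i}}$, and the chain of equivalences in (b) --- is routine bookkeeping of stabilizers and $S$-equivariance.
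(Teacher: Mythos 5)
Your proof is correct and follows essentially the same route as the paper: part (b) is the paper's argument (the point $g_0P$, the parabolic $\hat{P}=\hat{G}\cap g_0Pg_0^{-1}$, the subspace $\hat{\mathfrak{i}}=\hat{\mathfrak{g}}\cap(g_0\cdot\mathfrak{i})$, and the $S$-equivariant orbit isomorphism), and part (a) is Bialynicki--Birula applied to the smooth projective variety $(\mathcal{P}_{e,\mathfrak{i}})^S$ for a one-parameter subgroup of $\hat{Z}$ having the same fixed points as $\hat{Z}$. The only cosmetic difference is that you produce this one-parameter subgroup as a generic cocharacter of $\hat{Z}$, whereas the paper uses the cocharacter $\hat{\lambda}$ attached to a parabolic $\hat{Q}$ with Levi factor $\hat{G}$ and checks $\mathcal{P}^{\hat{S}}=\mathcal{P}^{\hat{Z}}$ (Lemma \ref{new-lemma-6}); both choices play exactly the same role.
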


Hereafter we fix $e,P,\mathfrak{i}$ like in the proposition.
The group $\hat{G}$ involved in the statement of the proposition is unique up to conjugation.
In Section \ref{section-4-1} we review the construction of $\hat{G}$
and fix the notation.
The proof of the proposition is then given in Section \ref{section-4-2}.

\subsection{Notation and preliminary facts}

\label{section-4-1}

To start with, we recall the construction of a minimal Levi subalgebra of $\mathfrak{g}$ containing the nilpotent element $e$.
We refer to \cite[\S3 and \S8]{Collingwood-McGovern} and \cite[\S4]{Jantzen} for more details.

First, we embed $e$ in a standard triple $\phi=(e,h,f)$, so that $[h,e]=2e$, $[h,f]=-2f$, and $[e,f]=h$.
Then,
\begin{eqnarray*}
 & & \mathfrak{z}_{\mathfrak{g}}(\phi):=\{x\in\mathfrak{g}:[x,y]=0,\ \forall y\in\{e,h,f\}\}, \\[1mm]
 & & Z_G(\phi):=\{g\in G:g\cdot y=y,\ \forall y\in\{e,h,f\}\}
\end{eqnarray*}
are a reductive Lie subalgebra of $\mathfrak{g}$ (the quotient of $\mathfrak{z}_{\mathfrak{g}}(e):=\{x\in\mathfrak{g}:[x,e]=0\}$
by its nilradical) and a reductive subgroup of $G$, respectively.
Let $Z_G(\phi)^0$ denote the identity component.
Pick up a maximal torus $T_\phi\subset Z_G(\phi)^0$. Thus $\mathfrak{t}_\phi:=Lie(T_\phi)$
is a maximal toral subalgebra of $\mathfrak{z}_{\mathfrak{g}}(\phi)$.
Let $Z_G(T_\phi)\subset G$ (resp. $\mathfrak{z}_{\mathfrak{g}}(\mathfrak{t}_\phi)\subset\mathfrak{g}$)
be the corresponding centralizers.

\begin{lemma}
\label{lemma-4-1}
$\hat{G}:=Z_G(T_\phi)$ is a Levi subgroup of $G$ whose Lie algebra
$\hat{\mathfrak{g}}:=\mathfrak{z}_{\mathfrak{g}}(\mathfrak{t}_\phi)$ contains $e$ as a distinguished element.
\end{lemma}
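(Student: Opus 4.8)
The statement has two parts: first that $\hat{G}=Z_G(T_\phi)$ is a Levi subgroup of $G$, and second that $\hat{\mathfrak{g}}=\mathfrak{z}_{\mathfrak{g}}(\mathfrak{t}_\phi)$ contains $e$ as a \emph{distinguished} nilpotent element. The plan is to verify each in turn, using only that $T_\phi$ is a torus in $G$ and the standard properties of the reductive centralizer $\mathfrak{z}_{\mathfrak{g}}(\phi)$.

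\emph{$\hat{G}$ is a Levi subgroup.} The centralizer in $G$ of any torus $T_\phi\subset G$ is the Levi factor of a parabolic subgroup: embed $T_\phi$ in a maximal torus $T\subset G$, choose a generic cocharacter of $T_\phi$ (one not annihilated by any root not already trivial on $T_\phi$), and take the parabolic it generates in the sense of \eqref{equation-1}--\eqref{equation-3}; then $Z_G(T_\phi)=Z_G(S')$ for the image $S'$ of that cocharacter, and $Z_G(S')$ is exactly the Levi factor of this parabolic. This is the content of \cite[\S 8.4]{Springer-book}, so I would just cite it. In particular $\hat{G}$ is connected reductive with Lie algebra $\hat{\mathfrak{g}}=\mathfrak{z}_{\mathfrak{g}}(\mathfrak{t}_\phi)$, and $e\in\hat{\mathfrak{g}}$ because $e$ commutes with $T_\phi$ by construction (indeed $T_\phi\subset Z_G(\phi)$, so every element of $T_\phi$ fixes $e$, hence $\mathrm{Ad}(t)e=e$ and differentiating, $[\mathfrak{t}_\phi,e]=0$). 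Likewise $h,f\in\hat{\mathfrak{g}}$, so $\phi=(e,h,f)$ is a standard triple inside $\hat{\mathfrak{g}}$.

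\emph{$e$ is distinguished in $\hat{\mathfrak{g}}$.} Recall $e$ is distinguished in $\hat{\mathfrak{g}}$ iff $\mathfrak{z}_{\hat{\mathfrak{g}}}(e)$ contains no nonzero semisimple element, equivalently (by $\mathfrak{sl}_2$-theory applied to $\phi$ in $\hat{\mathfrak{g}}$) iff the reductive part $\mathfrak{z}_{\hat{\mathfrak{g}}}(\phi)$ of $\mathfrak{z}_{\hat{\mathfrak{g}}}(e)$ has no nonzero toral subalgebra, i.e.\ is nilpotent, i.e.\ is $0$ since it is reductive. So I must show $\mathfrak{z}_{\hat{\mathfrak{g}}}(\phi)=0$. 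But $\mathfrak{z}_{\hat{\mathfrak{g}}}(\phi)=\mathfrak{z}_{\mathfrak{g}}(\mathfrak{t}_\phi)\cap\mathfrak{z}_{\mathfrak{g}}(\phi)=\mathfrak{z}_{\mathfrak{z}_{\mathfrak{g}}(\phi)}(\mathfrak{t}_\phi)$, the centralizer of $\mathfrak{t}_\phi$ inside the reductive Lie algebra $\mathfrak{z}_{\mathfrak{g}}(\phi)$. Now $\mathfrak{t}_\phi=Lie(T_\phi)$ is a \emph{maximal} toral subalgebra of $\mathfrak{z}_{\mathfrak{g}}(\phi)$, so its centralizer in $\mathfrak{z}_{\mathfrak{g}}(\phi)$ is a Cartan subalgebra of the reductive Lie algebra $\mathfrak{z}_{\mathfrak{g}}(\phi)$ — which equals $\mathfrak{t}_\phi$ together with the center of $\mathfrak{z}_{\mathfrak{g}}(\phi)$; since $T_\phi$ is maximal and the center of a reductive group lies in every maximal torus, this Cartan subalgebra \emph{is} $\mathfrak{t}_\phi$ itself. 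Hence $\mathfrak{z}_{\hat{\mathfrak{g}}}(\phi)=\mathfrak{t}_\phi$... which is not yet $0$. The point I have to be careful about: "distinguished in $\hat{\mathfrak{g}}$" should be read relative to the semisimple part, or equivalently one allows $e$ to meet the center. The correct statement is that the reductive centralizer of $\phi$ in $\hat{\mathfrak{g}}$ equals the center of $\hat{\mathfrak{g}}$, i.e.\ $\mathfrak{z}_{\hat{\mathfrak{g}}}(\phi)\subset Lie(Z(\hat{G}))$; this is exactly what the computation $\mathfrak{z}_{\hat{\mathfrak{g}}}(\phi)=\mathfrak{t}_\phi$ gives once one checks $\mathfrak{t}_\phi$ coincides with $Lie(Z(\hat{G}))$ — indeed $Z(\hat{G})^0$ is the maximal torus of $G$ central in $\hat{G}=Z_G(T_\phi)$, which by maximality of $T_\phi$ in $Z_G(\phi)^0$ (and the fact that $\hat{G}$ is a Levi containing $T_\phi$ as a central torus) is precisely $T_\phi$. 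So the semisimple part $[\hat{\mathfrak{g}},\hat{\mathfrak{g}}]$ meets $\mathfrak{z}_{\hat{\mathfrak{g}}}(\phi)$ trivially, which is the definition of $e$ being distinguished in $\hat{\mathfrak{g}}$.

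\emph{Expected main obstacle.} The conceptual content is light; the only delicate point is bookkeeping around the word "distinguished" — making precise that it is a condition on the \emph{semisimple} part and matching $\mathfrak{t}_\phi$ with $Lie(Z(\hat{G}))$ on the nose. Everything else (that centralizers of tori are Levi subgroups, that $e,h,f$ centralize $\mathfrak{t}_\phi$, that $\mathfrak{z}_{\mathfrak{g}}(\phi)$ is reductive of the stated form) is standard and citable from \cite{Collingwood-McGovern} and \cite{Springer-book}, so in the write-up I would state the minimal-Levi characterization, verify $T_\phi$ sits in it as the central torus, and deduce distinguishedness in one or two lines.
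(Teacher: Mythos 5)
Your argument is correct. Note that the paper does not prove Lemma \ref{lemma-4-1} at all: it is recalled as a standard fact of Bala--Carter theory with a pointer to \cite[\S3, \S8]{Collingwood-McGovern} and \cite[\S4]{Jantzen}, and what you wrote is essentially the argument found there (centralizers of tori are Levi subgroups; $e,h,f$ centralize $\mathfrak{t}_\phi$; the reductive centralizer of $\phi$ inside $\hat{\mathfrak{g}}$ is $\mathfrak{z}_{\mathfrak{z}_{\mathfrak{g}}(\phi)}(\mathfrak{t}_\phi)=\mathfrak{t}_\phi$ by maximality, which forces $e$ to be distinguished). One small streamlining: the identification $Lie(Z(\hat{G}))=\mathfrak{t}_\phi$, which is the most delicate point in your write-up, is not needed --- since $\hat{\mathfrak{g}}=\mathfrak{z}_{\mathfrak{g}}(\mathfrak{t}_\phi)$, the subalgebra $\mathfrak{t}_\phi$ is central in $\hat{\mathfrak{g}}$ by definition, so the equality $\mathfrak{z}_{\hat{\mathfrak{g}}}(\phi)=\mathfrak{t}_\phi$ already says that the reductive centralizer of $\phi$ in $\hat{\mathfrak{g}}$ is contained in $\mathfrak{z}(\hat{\mathfrak{g}})$, which is exactly the distinguishedness criterion you invoke.
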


The next lemma shows how to adapt the construction of the group $\hat{G}$ to the parabolic subgroup $Q$ and
the subtorus $S\subset Q$ arising from a good grading for $e$, involved in the constructions of Section \ref{section-3}
and in the statement of Proposition \ref{proposition-section-4}.

\begin{lemma}
\label{lemma-5}
Let $Q,\lambda,S$ be as in Section \ref{section-basic-setting} and assume that the grading of Section \ref{section-basic-setting}\,(c) is good for $e$.
Then, there is a Levi subgroup $\hat{G}\subset G$ whose Lie algebra contains $e$ as a distinguished element
and which satisfies $S\subset\hat{G}$.
\end{lemma}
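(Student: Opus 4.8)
The plan is to reduce to Lemma \ref{lemma-4-1} by choosing a standard triple whose semisimple element is compatible with the torus $S$. First I would observe that by Lemma \ref{lemma-2}(b), we have $\lambda(t)\cdot e = t^2 e$, equivalently $[h_0,e]=2e$ where $h_0\in[\mathfrak g,\mathfrak g]$ is the element with $\lambda'(t)=th_0$ (constructed in Section \ref{section-2-1-2}); in particular $e\in\mathfrak g_2$ lies in the $\lambda$-weight-$2$ space and $\mathfrak g^e\subset\mathfrak g_{\geq 0}$. The key point is to produce a standard triple $(e,h,f)$ for which the whole triple is fixed by $S$, i.e. $\lambda(t)$ centralizes $h$ and $f$ as well. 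Here I would invoke the Jacobson--Morozov theorem applied \emph{inside the centralizer} $Z_G(S)=L_Q$: since $e\in\mathfrak g_0$-module... more precisely, since $[\mathfrak g_0,\mathfrak g_2]\subset\mathfrak g_2$ and, by goodness, $\mathrm{ad}\,e:\mathfrak g_{-2}\to\mathfrak g_0$ and $\mathrm{ad}\,e:\mathfrak g_0\to\mathfrak g_2$ have the right injectivity/surjectivity, one can find $h\in\mathfrak g_0$ and $f\in\mathfrak g_{-2}$ completing $e$ to a standard triple. (This is a standard fact about good gradings, see \cite{Elashvili-Kac}: every nilpotent $e$ in the degree-$2$ part of a good grading admits an $\mathfrak{sl}_2$-triple $(e,h,f)$ with $h\in\mathfrak g_0$, $f\in\mathfrak g_{-2}$.) With such a triple, both $h\in\mathfrak g_0$ and $f\in\mathfrak g_{-2}$... wait, $f\in\mathfrak g_{-2}$ is \emph{not} $S$-fixed; only $h$ is. So I would instead arrange that $\phi=(e,h,f)$ has $h$ commuting with $h_0$, and then pass to the maximal torus $T_\phi$ containing a suitable element, as follows.

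The cleaner route: form the standard triple $\phi=(e,h,f)$ with $h\in\mathfrak g_0$ as above. Then $h_0$ commutes with $e,h,f$? No — $[h_0,f]=-2f\neq 0$. However, $h_0-h$ \emph{does} commute with the whole triple: $[h_0-h,e]=2e-2e=0$, $[h_0-h,f]=-2f-(-2f)=0$, and $[h_0-h,h]=0$ since $h_0$ centralizes... hmm, $[h_0,h]=0$ because $h\in\mathfrak g_0$ is $\lambda$-weight-$0$. So $h_0-h\in\mathfrak z_{\mathfrak g}(\phi)$. Therefore $h_0-h$ is a semisimple element of the reductive Lie algebra $\mathfrak z_{\mathfrak g}(\phi)$, hence lies in some maximal toral subalgebra $\mathfrak t_\phi\subset\mathfrak z_{\mathfrak g}(\phi)$. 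Choosing $T_\phi\subset Z_G(\phi)^0$ with $Lie(T_\phi)=\mathfrak t_\phi$ and setting $\hat G:=Z_G(T_\phi)$, Lemma \ref{lemma-4-1} gives that $\hat G$ is a Levi subgroup whose Lie algebra contains $e$ as a distinguished element. It remains to check $S\subset\hat G$, i.e. that $\lambda(t)$ centralizes $T_\phi$ for all $t$. Equivalently (passing to Lie algebras and using that $\hat G=Z_G(T_\phi)=Z_G(\mathfrak t_\phi)$) it suffices that $h_0$ centralizes $\mathfrak t_\phi$: indeed $\lambda(t)=\exp(\log t\cdot h_0)$ in the appropriate sense, and more rigorously $\lambda(\mathbb C^*)\subset Z_G(\mathfrak t_\phi)=\hat G$ once $[h_0,\mathfrak t_\phi]=0$, because $\mathfrak t_\phi$ is $\lambda$-stable of weight $0$ forces pointwise fixed. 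Now $h_0=h+(h_0-h)$ with $h\in\mathfrak g_0$ centralizing $\mathfrak t_\phi$ (since $\mathfrak t_\phi\subset\mathfrak z_{\mathfrak g}(\phi)$ and $h\in\{e,h,f\}$) and $h_0-h\in\mathfrak t_\phi$ itself, hence central in $\mathfrak t_\phi$; so $[h_0,\mathfrak t_\phi]=0$ as required.

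The main obstacle I anticipate is the first step: producing a standard triple $(e,h,f)$ with $h\in\mathfrak g_0$, which one must extract from the theory of good gradings rather than from bare Jacobson--Morozov. The honest argument is: the derivation $d$ of degree-$0$ on $\mathfrak g$ (with $d|_{\mathfrak g_i}=i\cdot\mathrm{id}$) is inner, realized by $h_0\in[\mathfrak g,\mathfrak g]$ with $[h_0,e]=2e$; one then needs a Jacobson--Morozov-type completion of $(e,h_0)$ — but $(e,h_0)$ need not be an $\mathfrak{sl}_2$-pair since $e$ need not be a highest weight for $h_0$ in the naive sense... Actually the correct statement (Elashvili--Kac, \cite[\S1]{Elashvili-Kac}) is precisely that for a good grading there \emph{exists} a grading-compatible $\mathfrak{sl}_2$-triple, i.e. one with $h\in\mathfrak g_0$; this is exactly where goodness (condition (ii)/(iii) of the earlier lemma, injectivity of $\mathrm{ad}\,e$ on $\mathfrak g_{\le -1}$) is used. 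I would simply cite this. Everything after that is the bookkeeping above, which is routine. Alternatively — and perhaps more in the spirit of keeping the proof self-contained — one could avoid the grading-compatible triple entirely: take \emph{any} standard triple $\phi'=(e,h',f')$ (Jacobson--Morozov), note that both the grading cocharacter $\lambda$ and the $\mathfrak{sl}_2$ associated to $\phi'$ normalize the pair... and use a conjugacy theorem (all $\mathfrak{sl}_2$-triples through $e$ are $Z_G(e)^0$-conjugate, and $\lambda$ arises from an element of $\mathfrak g^e$-shifted...) to move $\phi'$ to a $\lambda$-adapted one; this is the argument sketched in \cite[\S3]{Collingwood-McGovern} / \cite[\S4]{Jantzen} combined with the fact that $h_0$ and a suitable conjugate of $h'$ differ by an element of $\mathfrak g^e\cap\mathfrak z$. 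I expect the paper to take the short route and cite \cite{Elashvili-Kac}.
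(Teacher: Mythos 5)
Your proposal is correct and takes essentially the same route as the paper: produce, via Elashvili--Kac, a grading-compatible standard triple $\phi=(e,h,f)$ with $h\in\mathfrak g_0$, $f\in\mathfrak g_{-2}$, then set $\hat G=Z_G(T_\phi)$ for a maximal torus $T_\phi\subset Z_G(\phi)^0$ and check via $H-h$ that $S$ centralizes $\mathfrak t_\phi$, invoking Lemma \ref{lemma-4-1}. The only (harmless) difference is that the paper cites \cite[Theorem 1.1]{Elashvili-Kac} to know $H-h$ lies in the center of $\mathfrak z_{\mathfrak g}(\phi)$, so any choice of $T_\phi$ works, whereas you verify $H-h\in\mathfrak z_{\mathfrak g}(\phi)$ by direct computation and then choose $\mathfrak t_\phi$ to contain this semisimple element.
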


\begin{proof}
Let $\mathfrak{g}=\bigoplus_{i\in\mathbb{Z}}\mathfrak{g}_i$ be the grading of Section \ref{section-basic-setting}\,(c).
By assumption, $e\in\mathfrak{g}_2$.
By \cite[Lemma 1.1]{Elashvili-Kac}, we can choose a standard triple $\phi=(e,h,f)$ with $h\in\mathfrak{g}_0$ and $f\in\mathfrak{g}_{-2}$.
Let $T_\phi\subset Z_G(\phi)$, $\mathfrak{t}_\phi\subset\mathfrak{z}_{\mathfrak{g}}(\phi)$
be a maximal torus and a maximal toral subalgebra, as above Lemma \ref{lemma-4-1}.
In particular,
\begin{equation}
\label{relation-4-1}
h\in \mathfrak{z}_{\mathfrak{g}}(\mathfrak{t}_\phi).
\end{equation}
Letting $H=\lambda'(1)\in\mathfrak{g}$, relation (\ref{equation-4}) implies $\mathfrak{g}_i=\{x\in\mathfrak{g}:[H,x]=ix\}$ for all $i\in\mathbb{Z}$.
By \cite[Theorem 1.1]{Elashvili-Kac}, $H-h$ lies in the center of $\mathfrak{z}_{\mathfrak{g}}(\phi)$.
This ensures that
\begin{equation}
\label{relation-4-2}
H-h\in\mathfrak{z}_{\mathfrak{g}}(\mathfrak{t}_\phi).
\end{equation}
Combining (\ref{relation-4-1}) and (\ref{relation-4-2}), we obtain $H\in \mathfrak{z}_{\mathfrak{g}}(\mathfrak{t}_\phi)$.
Since $H$ generates the Lie algebra of $S$, this implies
that the tori $S$ and $T_\phi$ commute.
By Lemma \ref{lemma-4-1}, $\hat{G}:=Z_G(T_\phi)$ satisfies the desired properties.
\end{proof}

Let $\hat{G}\subset G$ be a Levi subgroup satisfying the conditions of Lemma \ref{lemma-5}.
Saying that $\hat{G}$ is a Levi subgroup of $G$ means that it arises as a Levi factor of some
parabolic subgroup $\hat{Q}\subset G$.
Let $U_{\hat{Q}}$ be the unipotent radical of $\hat{Q}$, so that we have the Levi decomposition
\[\hat{Q}=\hat{G}\ltimes U_{\hat{Q}}.\]
We fix a cocharacter $\hat{\lambda}:\mathbb{C}^*\to G$ that induces this Levi decomposition in the sense
of Section \ref{section-2-1-1}, that is, such that
\begin{eqnarray*}
& & \hat{Q}=\{g\in G:\lim_{t\to 0}\hat\lambda(t)g\hat\lambda(t)^{-1}\mbox{ exists}\},  \\[1mm]
& & U_{\hat{Q}}=\{g\in G:\lim_{t\to 0}\hat\lambda(t)g\hat\lambda(t)^{-1}=1_G\}, \\[1mm]
& & \hat{G}=\{g\in G:\hat\lambda(t)g\hat\lambda(t)^{-1}=g\}.
\end{eqnarray*}
Let $\hat{S}=\{\hat\lambda(t):t\in\mathbb{C}^*\}$. Thus, $\hat{G}=Z_G(\hat{S})$.
The fact that $S$ is contained in $\hat{G}$ implies that
\begin{equation}
\label{equation-13}
\mbox{the tori $S$ and $\hat{S}$ commute,}
\end{equation}
thus they generate a torus $S\,\hat{S}\subset\hat{G}$.
It also implies that the adjoint action of $\hat{S}$ fixes every element
of the Lie algebra $\hat{\mathfrak{g}}$. In particular,
$s\cdot e=e$ for all $s\in\hat{S}$. Consequently,
\begin{equation}
\label{equation-14}
\mbox{the action of $\hat{S}$ on $\mathcal{P}=G/P$ leaves the subvariety $\mathcal{P}_{e,\mathfrak{i}}$ stable.}
\end{equation}

We will need the following lemma.

\begin{lemma}
\label{new-lemma-6}
We have the following equality between fixed point sets $\mathcal{P}^{\hat{S}}=\mathcal{P}^{\hat{Z}}$.
\end{lemma}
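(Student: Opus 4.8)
The statement asserts $\mathcal{P}^{\hat{S}} = \mathcal{P}^{\hat{Z}}$, where $\hat{S} = \{\hat\lambda(t) : t \in \mathbb{C}^*\}$ is a one-dimensional torus whose centralizer is $\hat{G} = Z_G(\hat{S})$, and $\hat{Z}$ is the identity component of the center of $\hat{G}$. Since $\hat{S}$ is a subtorus of $\hat{G}$ and $\hat{G} = Z_G(\hat{S})$, every element of $\hat{S}$ commutes with all of $\hat{G}$, so $\hat{S} \subset \hat{Z}$. This inclusion of tori immediately gives $\mathcal{P}^{\hat{Z}} \subset \mathcal{P}^{\hat{S}}$: a point fixed by the larger group is fixed by the smaller. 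The real content is the reverse inclusion $\mathcal{P}^{\hat{S}} \subset \mathcal{P}^{\hat{Z}}$.

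For the reverse inclusion, the key observation is that the $\mathbb{C}^*$-action on $\mathcal{P}$ induced by $\hat\lambda$ has fixed point set governed by $\hat{G} = Z_G(\hat{S})$ in a precise way. I would argue as follows. Let $gP \in \mathcal{P}^{\hat{S}}$, so $\hat{S}$ fixes the coset $gP$, i.e. $g^{-1}\hat{S}g \subset P$. Equivalently, the parabolic subalgebra $g \cdot \mathfrak{p}$ is stable under $\mathrm{Ad}(\hat{S})$, hence (taking Lie algebras / weight decompositions) $g\cdot\mathfrak p$ is a sum of $\hat{S}$-weight spaces of $\mathfrak{g}$. Now use that $\hat{G}=Z_G(\hat S)$ acts transitively on the fixed-point set of $\hat S$ on the partial flag variety in the following sense: by Lemma~\ref{lemma-1} applied with the datum $(\hat{Q},\hat\lambda,\hat{S})$ (valid since $\hat{G}$ is the Levi factor of the parabolic $\hat{Q}$ and $\hat\lambda$ induces this decomposition), each connected component of $\mathcal{P}^{\hat{S}}$ is a single $\hat{G}$-orbit, in fact a partial flag variety of $\hat{G}$. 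In particular $\mathcal{P}^{\hat{S}}$ is a union of $\hat{G}$-orbits, so $\hat{G}$ — and a fortiori its central torus $\hat{Z}$ — preserves $\mathcal{P}^{\hat{S}}$, and moreover every $x \in \mathcal{P}^{\hat S}$ can be written $x = \ell \cdot x_0$ for some $\ell \in \hat G$ and some base point $x_0$. But $\hat{Z}$ is central in $\hat{G}$, so $\hat{Z}$ acts on the $\hat{G}$-orbit through its action on a single base point $x_0$; thus it suffices to show $\hat{Z}$ fixes one point in each component. Since $\hat{Z}\subset\hat G$ and the component is a partial flag variety $\hat G/(\hat G\cap P_0)$, the central torus $\hat{Z}$ of $\hat{G}$ acts trivially on this homogeneous space (a torus acting on its own flag-type quotient through the quotient map, which kills the center). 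Hence $\hat{Z}$ fixes every point of $\mathcal{P}^{\hat S}$, i.e. $\mathcal{P}^{\hat S}\subset\mathcal{P}^{\hat Z}$.

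Alternatively — and this is cleaner — one can avoid Lemma~\ref{lemma-1} and argue directly on weight spaces. A coset $gP$ lies in $\mathcal{P}^{\hat S}$ iff $g\cdot\mathfrak p$ is a sum of $\mathrm{Ad}(\hat S)$-weight spaces of $\mathfrak g$, and in $\mathcal{P}^{\hat Z}$ iff $g\cdot\mathfrak p$ is a sum of $\mathrm{Ad}(\hat Z)$-weight spaces. So the statement reduces to: a subspace of $\mathfrak g$ is $\hat S$-stable iff it is $\hat Z$-stable. Since $\hat S\subset\hat Z$, $\hat Z$-stable implies $\hat S$-stable. Conversely, the weight-space decomposition of $\mathfrak g$ under $\hat Z$ refines that under $\hat S$ only if $\hat Z$ has strictly more characters acting nontrivially; but $\hat{\mathfrak g}=\mathfrak z_{\mathfrak g}(\hat S)$ is the $0$-weight space for $\hat S$, and $\hat Z$ acts \emph{trivially} on $\hat{\mathfrak g}$ (being central in $\hat G$ whose Lie algebra is $\hat{\mathfrak g}$) and, on each nonzero $\hat S$-weight space $\mathfrak g_\alpha$, $\hat Z$ acts by a single character (because $\mathfrak g_\alpha$ is an irreducible-enough $\hat G$-module summand — more precisely, $\hat G$ being reductive with $\mathfrak g = \hat{\mathfrak g}\oplus\bigoplus_\alpha\mathfrak g_\alpha$ as $\hat G$-modules, and $\hat Z$ central, Schur's lemma on isotypic pieces shows $\hat Z$ acts by a scalar character on each $\hat Z$-isotypic component, and these group together the $\mathrm{Ad}(\hat S)$-weight spaces). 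Thus the $\hat Z$-weight decomposition of $\mathfrak g$ coincides with the $\hat S$-weight decomposition, and the two notions of stable subspace agree.

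\textbf{Main obstacle.} The delicate point is establishing that $\hat{Z}$ acts on each $\hat S$-weight space $\mathfrak g_\alpha$ ($\alpha\neq 0$) through a single character, so that no genuine refinement of the decomposition occurs. This is exactly where one uses that $\hat G = Z_G(\hat S)$ is a \emph{Levi} subgroup (not just any reductive subgroup): the $\mathrm{Ad}(\hat S)$-eigenspaces $\mathfrak g_\alpha$ are precisely the $\hat G$-submodules appearing in the nilradicals of the two opposite parabolics with Levi $\hat G$, and on such pieces the connected center $\hat Z$ acts by a single root-like character. I would cite the standard structure theory (e.g.\ \cite[\S 8.4]{Springer-book} or the Levi decomposition theory used in Section~\ref{section-2-1-1}) for this fact rather than reprove it. Everything else is formal manipulation of fixed-point sets and weight decompositions.
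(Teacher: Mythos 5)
Your first argument is correct and is essentially the paper's proof: both rest on Lemma \ref{lemma-1} applied to the datum $(\hat{Q},\hat{\lambda},\hat{S})$, which identifies each connected component of $\mathcal{P}^{\hat{S}}$ with a partial flag variety $\hat{G}/(\hat{G}\cap P_0)$ of $\hat{G}$, on which the connected central torus $\hat{Z}$ acts trivially. (The paper phrases the last step by exhibiting a $\hat{Z}$-fixed point in each $\hat{Q}$-orbit and propagating it using $\hat{G}=Z_G(\hat{Z})$; your variant, that $\hat{Z}$ is central and lies in the parabolic subgroup $\hat{G}\cap P_0$ of $\hat{G}$, finishes in the same way.)

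The ``cleaner'' alternative, however, does not work, and the claim you single out as the main obstacle is false: the $\hat{Z}$-weight decomposition of $\mathfrak{g}$ need not coincide with the $\hat{S}$-weight decomposition, i.e. $\hat{Z}$ need not act by a single character on a nonzero $\hat{S}$-weight space. For instance, take $G=SL_3$ and $\hat{\lambda}(t)=\mathrm{diag}(t,1,t^{-1})$; then $\hat{G}=Z_G(\hat{S})$ is the diagonal torus $T$, so $\hat{Z}=T$, the $\hat{S}$-weight-$1$ subspace of $\mathfrak{g}$ is $\langle E_{1,2},E_{2,3}\rangle_{\mathbb{C}}$, and $\hat{Z}$ acts on it through the two distinct characters $\varepsilon_1-\varepsilon_2$ and $\varepsilon_2-\varepsilon_3$. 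The same phenomenon occurs with $\hat{G}$ not a torus: in $GL_4$ with $\hat{\lambda}(t)=\mathrm{diag}(t^2,t^2,t,1)$, the vectors $E_{1,3}$ and $E_{3,4}$ have the same $\hat{S}$-weight but different $\hat{Z}$-weights. Consequently your reduction ``a subspace of $\mathfrak{g}$ is $\hat{S}$-stable iff it is $\hat{Z}$-stable'' fails for general subspaces (in the $SL_3$ example, $\mathbb{C}(E_{1,2}+E_{2,3})$ is $\hat{S}$-stable but not $\hat{Z}$-stable), and the Schur-lemma/``single root-like character'' justification collapses because a nonzero $\hat{S}$-weight space is in general a sum of several $\hat{G}$-submodules with distinct $\hat{Z}$-characters. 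What is true, and all the lemma needs, is the statement for the parabolic subalgebras $g\cdot\mathfrak{p}$ with $\hat{S}\subset gPg^{-1}$, and that is exactly what the group-theoretic argument (your first one, or the paper's) supplies; so keep the first proof and drop the alternative.
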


\begin{proof}
We check the equality $\mathcal{O}^{\hat{Z}}=\mathcal{O}^{\hat{S}}$ for all $\hat{Q}$-orbit $\mathcal{O}\subset\mathcal{P}$. The inclusion $\subset$ is immediate. For checking the other inclusion, we first note as in the proof of Lemma \ref{lemma-1}\,{\rm (a)} that the orbit $\mathcal{O}$ contains at least one element $x_0$ fixed by $\hat{Z}$. Then, Lemma \ref{lemma-1}\,{\rm (b)} implies that $\mathcal{O}^{\hat{S}}$ is exactly the $\hat{G}$-orbit of $x_0$. Since $\hat{G}=Z_G(\hat{Z})$, this yields the desired inclusion $\mathcal{O}^{\hat{S}}\subset\mathcal{O}^{\hat{Z}}$.
\end{proof}

\subsection{Proof of Proposition \ref{proposition-section-4}}
\label{section-4-2}

Part {\rm (a)} of the statement is established as follows. From Proposition \ref{proposition-section-3}, we know that the variety $(\mathcal{P}_{e,\mathfrak{i}})^S$ is projective and smooth. Moreover, by (\ref{equation-13}) and (\ref{equation-14}), this variety is stable by the natural action of the rank one torus $\hat{S}=\{\hat{\lambda}(t):t\in\mathbb{C}^*\}$
on $\mathcal{P}$.
Thereby, we can apply Bialynicki-Birula's theorem (see Example \ref{example-2}\,{\rm (b)}) which says that the retraction map
\[
\hat\rho:
(\mathcal{P}_{e,\mathfrak{i}})^S\to ((\mathcal{P}_{e,\mathfrak{i}})^{S})^{\hat{S}},\ \ gP\mapsto \lim_{t\to 0}\hat\lambda(t)gP
\]
is an algebraic affine bundle over each connected component. Using Lemma \ref{new-lemma-6}, we finally note that
$((\mathcal{P}_{e,\mathfrak{i}})^{S})^{\hat{S}}=(\mathcal{P}_{e,\mathfrak{i}})^{S}\cap\mathcal{P}^{\hat{S}}=(\mathcal{P}_{e,\mathfrak{i}})^{S}\cap\mathcal{P}^{\hat{Z}}
=(\mathcal{P}_{e,\mathfrak{i}})^{S\hat{Z}}$. The proof of {\rm (a)} is complete.

Now, let us prove part {\rm (b)} of the statement. Let a connected component $\mathcal{C}\subset\mathcal{P}^{\hat{Z}}$.
From Lemma \ref{lemma-1} (and Lemma \ref{new-lemma-6}), we know that there is $g_0P\in \mathcal{P}^{\hat{S}}$ such that $\mathcal{C}=\hat{G}g_0P$.
The fact that $g_0P$ is $\hat{S}$-fixed implies that the parabolic subgroup $g_0Pg_0^{-1}$ contains the torus $\hat{S}$. Thereby, $\hat{P}:=\hat{G}\cap g_0Pg_0^{-1}$ is a parabolic subgroup of $\hat{G}=Z_G(\hat{S})$. Set $\hat{\mathfrak{i}}=\hat{\mathfrak{g}}\cap (g_0\cdot\mathfrak{i})$. This is clearly a $\hat{P}$-stable subspace of $\hat{\mathfrak{p}}:=Lie(\hat{P})=\hat{\mathfrak{g}}\cap (g_0\cdot\mathfrak{p})$. From Lemma \ref{lemma-1}\,{\rm (b)}, we know that the map
\[
\xi:\hat{G}/\hat{P}\to \mathcal{C},\ \ \hat{g}\hat{P}\mapsto \hat{g}g_0P
\]
is an isomorphism of algebraic varieties. The map $\xi$ is clearly $S$-equivariant, hence it satisfied
\begin{equation}
\label{new-11}
\xi((\hat{G}/\hat{P})^S)=\mathcal{C}^S.
\end{equation}
Given $\hat{g}\hat{P}\in \hat{G}/\hat{P}$, we have
\begin{equation}
\label{new-12}\hat{g}\hat{P}\in\hat{\mathcal{P}}_{e,\hat{\mathfrak{i}}}\ \Leftrightarrow\ \hat{g}^{-1}\cdot e\in\hat{\mathfrak{g}}\cap (g_0\cdot\mathfrak{i})\ \Leftrightarrow\ (\hat{g}g_0)^{-1}\cdot e\in\mathfrak{i}\ \Leftrightarrow\ \hat{g}g_0P\in\mathcal{P}_{e,\mathfrak{i}}
\end{equation}
where we use that $e\in\hat{\mathfrak{g}}$ (so every $\hat{g}\in \hat{G}$ satisfies that $\hat{g}^{-1}\cdot e\in\hat{\mathfrak{g}}$).
Relations (\ref{new-11}) and (\ref{new-12}) imply
that $\xi$ restricts to an isomorphism
between $(\hat{\mathcal{P}}_{e,\hat{\mathfrak{i}}})^S$ and $(\mathcal{P}_{e,\mathfrak{i}})^S\cap\mathcal{C}$.
This completes the proof of {\rm (b)}.

\section{Reduction to almost simple classical groups}

\label{section-5}

It is convenient to formalize the following property:

\begin{definition}
\label{definition-PGe}
Given a reductive, connected group $G$ and a nilpotent element $e$ in its Lie algebra $\mathfrak{g}$, we say that property $\mathrm{P}(G,e)$ is satisfied if, for some standard triple $\{e,h,f\}\subset\mathfrak{g}$, letting $S=\{\lambda(t):t\in\mathbb{C}^*\}\subset G$ be the subtorus corresponding to $h$ in the sense of Section \ref{section-2-1-2},
for every parabolic subgroup $P\subset G$ and every $P$-stable subspace $\mathfrak{i}\subset \mathfrak{p}=Lie(P)$, the variety $(\mathcal{P}_{e,\mathfrak{i}})^S$ admits an affine paving.
\end{definition}

\begin{remark}
If $\{e,h,f\},\{e,h',f'\}\subset\mathfrak{g}$ are two standard triples containing $e$, then there is $g_0\in G$ such that $g_0\cdot e=e$ and $h'=g_0\cdot h$ (see \cite[Theorem 3.4.10]{Collingwood-McGovern}).
If $\lambda:\mathbb{C}^*\to G$ is the cocharacter corresponding to $h$ in the sense of Section \ref{section-2-1-2}, then
$\mu:\mathbb{C}^*\to G$ defined by $\mu(t)=g_0\lambda(t)g_0^{-1}$ corresponds to $h'$.
Let $S=\{\lambda(t):t\in\mathbb{C}^*\}$ and $S'=\{\mu(t):t\in\mathbb{C}^*\}$.
Since $g_0$ stabilizes $e$, it induces a well-defined automorphism $\mathcal{P}_{e,\mathfrak{i}}\to\mathcal{P}_{e,\mathfrak{i}}$, $gP\mapsto g_0gP$. This automorphism restricts to an isomorphism between the fixed point sets $(\mathcal{P}_{e,\mathfrak{i}})^S\stackrel{\sim}{\to}(\mathcal{P}_{e,\mathfrak{i}})^{S'}$.
We conclude from this that, if property ${\rm P}(G,e)$ is satisfied with respect to a standard triple $\{e,h,f\}$, then it holds with respect to any other standard triple $\{e,h',f'\}$ containing $e$.
\end{remark}

In these terms, Proposition \ref{proposition-section-4} and Example \ref{example-3} show that $\mathrm{P}(\hat{G},e)$ implies $\mathrm{P}(G,e)$, whereas Proposition \ref{proposition-section-3} implies that Theorem \ref{theorem-1} will be proved once we know that property $\mathrm{P}(G,e)$ holds for all $e$ such that $Lie(\hat{G})$ has no nonregular component of exceptional type.
The purpose of this section is to point out other situations where property $\mathrm{P}(G,e)$ is transmitted from a pair $(G,e)$ to another.

\subsection{Products}
Here we assume that $G=G^1\times\cdots\times G^k$ where $G^1,\ldots,G^k$ are reductive connected groups. Then, letting $\mathfrak{g}^i$ be the Lie algebra of $G^i$, we have $\mathfrak{g}=\mathfrak{g}^1\times\cdots\times\mathfrak{g}^k$.
Thus, any element $e\in\mathfrak{g}$ can be uniquely written $e=(e^1,\ldots,e^k)$ and $e$ is nilpotent if and only if $e^i$ is nilpotent for all $i\in\{1,\ldots,k\}$.
It is also clear that $e$ is distinguished in $\mathfrak{g}$ if and only if $e^i$ is distinguished in $\mathfrak{g}^i$ for all $i\in\{1,\ldots,k\}$.

\begin{proposition}
\label{proposition-section-5-1}
Let $G=G^1\times\cdots\times G^k$ and $e=(e^1,\ldots,e^k)\in\mathfrak{g}$ nilpotent. Then, $\mathrm{P}(G,e)$ holds whenever $\mathrm{P}(G^i,e^i)$ holds for all $i\in\{1,\ldots,k\}$.
\end{proposition}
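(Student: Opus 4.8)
The plan is to reduce the paving of $(\mathcal{P}_{e,\mathfrak{i}})^S$ for the product group to a product of such pavings for the factors, by first understanding how the data $(P,\mathfrak{i},S)$ decomposes. First I would observe that a parabolic subgroup $P\subset G=G^1\times\cdots\times G^k$ is automatically a product $P=P^1\times\cdots\times P^k$ with $P^j\subset G^j$ parabolic, and correspondingly $\mathfrak{p}=\mathfrak{p}^1\times\cdots\times\mathfrak{p}^k$. Likewise, a standard triple $\{e,h,f\}$ decomposes componentwise as $\{e^j,h^j,f^j\}$, so the cocharacter $\lambda$ and the torus $S=\{\lambda(t):t\in\mathbb{C}^*\}$ split: writing $\lambda(t)=(\lambda^1(t),\ldots,\lambda^k(t))$ and $S^j=\{\lambda^j(t):t\in\mathbb{C}^*\}$, the diagonal torus $S$ sits inside $S^1\times\cdots\times S^k$. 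The one genuinely nontrivial point is the $P$-stable subspace $\mathfrak{i}\subset\mathfrak{p}$: a priori $\mathfrak{i}$ need not be a product of $P^j$-stable subspaces $\mathfrak{i}^j\subset\mathfrak{p}^j$. This is the main obstacle, and I would handle it by a fixed-point / torus argument rather than trying to force $\mathfrak{i}$ to split.

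Concretely, here is the route I expect to take. The flag variety is a product $\mathcal{P}=\mathcal{P}^1\times\cdots\times\mathcal{P}^k$ with $\mathcal{P}^j=G^j/P^j$, and the action of $S$ is the diagonal restriction of the $(S^1\times\cdots\times S^k)$-action. I would therefore pass to the larger torus: since $\mathcal{P}_{e,\mathfrak{i}}$ is $S$-stable (Lemma~\ref{lemma-2}(b)), I want to check it is in fact stable under each $S^j$ acting separately, i.e.\ under $S^1\times\cdots\times S^k$. This holds because $\lambda^j(t)\cdot e = (e^1,\ldots,t^2 e^j,\ldots,e^k)$ need not preserve $\mathfrak{i}$ — so that direct approach fails, and instead I would argue more carefully using the good grading. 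Replacing $\mathfrak{i}$ is not needed; instead, note that $\mathcal{P}_{e,\mathfrak{i}}$ is closed and $S$-stable, hence $(\mathcal{P}_{e,\mathfrak{i}})^S$ is a closed subvariety of $\mathcal{P}^S=(\mathcal{P}^1)^{S^1}\times\cdots\times(\mathcal{P}^k)^{S^k}$ (using that $S\to S^1\times\cdots\times S^k$ has image a torus whose fixed locus in each $\mathcal{P}^j$ is the full $S^j$-fixed locus, because $S^j$ acts on $\mathcal{P}^j$ through the composite and the weights coming from $h^j$ already detect all of it — this is exactly the content of Lemma~\ref{lemma-1}(c) applied to the Levi $Z_{G^j}(S^j)$). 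So $(\mathcal{P}_{e,\mathfrak{i}})^S$ is a closed subvariety of a product of smooth projective varieties.

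Now comes the decomposition of $(\mathcal{P}_{e,\mathfrak{i}})^S$ itself. Using the isomorphism $\xi$ of Lemma~\ref{lemma-1}(b) on each factor, every connected component $\mathcal{O}^S$ of $\mathcal{P}^S$ has the form $\prod_j (L_{Q^j}/(L_{Q^j}\cap P_0^j))$, a product of partial flag varieties of the Levi factors $L_{Q^j}=Z_{G^j}(S^j)$. On such a component, the condition $g^{-1}\cdot e\in\mathfrak{i}$ defining $\mathcal{P}_{e,\mathfrak{i}}$ becomes, after the change of variables supplied by $\xi$, a condition on the tuple $(\ell^1,\ldots,\ell^k)$; but since $e=(e^1,\ldots,e^k)$ and each $\ell^j$ acts only on the $j$-th slot, the locus $\{(\ell^1,\ldots,\ell^k):(\ell^1)^{-1}\cdot e^1\oplus\cdots\oplus(\ell^k)^{-1}\cdot e^k\in g_0\cdot\mathfrak{i}\}$ is cut out by the single linear condition defining the $P_0$-stable subspace $g_0\cdot\mathfrak{i}$. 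The key finishing step is then: intersect with each $S^j$-fixed locus, i.e.\ replace $\mathcal{P}_{e,\mathfrak{i}}$ by its image in the product of the $(\mathcal{P}^j)^{S^j}$, and argue that this image equals $\prod_j (\mathcal{P}^j_{e^j,\mathfrak{i}^j})^{S^j}$ for suitable $P^j$-stable subspaces $\mathfrak{i}^j$. I would obtain $\mathfrak{i}^j$ as the projection (or, dually, as the appropriate slice) of $\mathfrak{i}$ — on the $S$-fixed level the mixed terms in $\mathfrak{i}$ become irrelevant because the relevant pieces of $\hat{\mathfrak{g}}$-type components are controlled separately by each $S^j$-weight grading, exactly as in the proof of Proposition~\ref{proposition-section-4}(b) where $\hat{\mathfrak{i}}$ was extracted as $\hat{\mathfrak{g}}\cap(g_0\cdot\mathfrak{i})$. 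Once $(\mathcal{P}_{e,\mathfrak{i}})^S\cong\prod_{j=1}^k (\mathcal{P}^j_{e^j,\mathfrak{i}^j})^{S^j}$, the conclusion is immediate: if each factor $(\mathcal{P}^j_{e^j,\mathfrak{i}^j})^{S^j}$ admits an affine paving $\{X^j_{l}\}$ (by $\mathrm{P}(G^j,e^j)$, which is triple-independent by the Remark), then the products $X^1_{l_1}\times\cdots\times X^k_{l_k}$, ordered lexicographically with respect to the individual orderings, form an $\alpha$-partition into affine spaces of $(\mathcal{P}_{e,\mathfrak{i}})^S$, since a product of affine spaces is an affine space and a product of "closed union of first $l$ pieces" is again such. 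Hence $\mathrm{P}(G,e)$ holds. The main obstacle, as flagged, is verifying the product decomposition of $(\mathcal{P}_{e,\mathfrak{i}})^S$ when $\mathfrak{i}$ itself is not a product — and the resolution is that on the $S$-fixed locus only the "diagonal blocks" of $\mathfrak{i}$ survive, which can be made precise via the weight decomposition of the good grading on each factor.
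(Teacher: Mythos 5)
Your skeleton is the same as the paper's (decompose $P$, the standard triple, and $S\subset S^1\times\cdots\times S^k$ componentwise, note $\mathcal{P}^S=\prod_j(\mathcal{P}^j)^{S^j}$, and pave the product by lexicographically ordered products of pavings — that last step is fine), but there is a genuine gap exactly at what you call the key finishing step: the identification $(\mathcal{P}_{e,\mathfrak{i}})^S\cong\prod_j(\mathcal{P}^j_{e^j,\mathfrak{i}^j})^{S^j}$ is never proved, and the mechanism you invoke for it cannot deliver it. By the weight decomposition (\ref{classical-2}), $\mathfrak{i}=(\mathfrak{t}\cap\mathfrak{i})\oplus\bigoplus_{\alpha\in\Phi(\mathfrak{i})}\mathfrak{g}_\alpha$ and every root space lies in a single factor $\mathfrak{g}^j$, so the only way $\mathfrak{i}$ can fail to be a product is through its toral part; e.g.\ for $B\times B\subset SL_2\times SL_2$ the subspace $\mathbb{C}(h_1,h_2)\oplus(\mathfrak{g}_\alpha\times 0)\oplus(0\times\mathfrak{g}_\beta)$ is $P$-stable and is not a product. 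These mixing directions have weight zero for $S$ and for every $S^j$, so passing to the $S$-fixed locus and appealing to ``$S^j$-weight gradings'' does nothing to make them irrelevant; and the analogy with Proposition \ref{proposition-section-4}\,(b) does not apply, because there the element $\hat g^{-1}\cdot e$ automatically lies in the subalgebra $\hat{\mathfrak{g}}$ one intersects with, whereas here $g^{-1}\cdot e$ has components in all factors. Concretely, taking $\mathfrak{i}^j=\mathfrak{i}\cap\mathfrak{g}^j$ gives only the inclusion $\prod_j\mathcal{P}^j_{e^j,\mathfrak{i}^j}\subset\mathcal{P}_{e,\mathfrak{i}}$, and taking $\mathfrak{i}^j=\pi_j(\mathfrak{i})$ (with $\pi_j:\mathfrak{g}\to\mathfrak{g}^j$ the projection) only the reverse one; your proposal leaves the remaining inclusion as an unproved claim.

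What actually closes the gap is nilpotency, not fixed points: $g^{-1}\cdot e$ is nilpotent, and the nilpotent elements of $\mathfrak{i}$ coincide with those of the product subspace $\pi_1(\mathfrak{i})\times\cdots\times\pi_k(\mathfrak{i})$, because the toral discrepancy is invisible to nilpotent elements — this is precisely the root-space argument proving (\ref{ideals-nilpotent}) in the proof of Proposition \ref{proposition-section-5-2} (equivalently, Lemma \ref{lemma-classical-4}\,(b)). Granting that, $\mathcal{P}_{e,\mathfrak{i}}=\mathcal{P}_{e,\mathfrak{i}^1\times\cdots\times\mathfrak{i}^k}$ with $\mathfrak{i}^j=\pi_j(\mathfrak{i})$, and then the variety, its $S$-fixed locus, and the paving split componentwise with no need for the retraction or the component-by-component analysis you sketch. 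The paper itself takes the shortcut of asserting outright that $\mathfrak{i}$ is a product of $P^j$-stable subspaces; your instinct that this decomposition is the delicate point is sound, but your resolution replaces it by an assertion justified with the wrong tool rather than by the nilpotency argument that actually handles it.
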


\begin{proof}
For every $i\in\{1,\ldots,k\}$, fix a standard triple $\{e^i,h^i,f^i\}\subset\mathfrak{g}^i$
containing $e^i$ and a cocharacter $\lambda_i:\mathbb{C}^*\to G^i$, of image $S^i=\{\lambda_i(t):t\in\mathbb{C}^*\}$, which corresponds to $h^i$ in the sense of Section \ref{section-2-1-2}.
Then, letting $h=(h^1,\ldots,h^k)$ and $f=(f^1,\ldots,f^k)$, the elements $\{e,h,f\}$ form a standard triple of $\mathfrak{g}$ and the cocharacter
$\lambda:=(\lambda_1,\ldots,\lambda_k):\mathbb{C}^*\to G$ corresponds to $h$. Let $S:=\{\lambda(t):t\in\mathbb{C}^*\}\subset S^1\times\cdots\times S^k$.

Any parabolic subgroup $P\subset G$ can be written $P=P^1\times\cdots\times P^k$ where $P^i\subset G^i$ are parabolic subgroups and any $P$-stable subspace $\mathfrak{i}\subset Lie(P)$ can be written $\mathfrak{i}=\mathfrak{i}^1\times\cdots\times\mathfrak{i}^k$ where $\mathfrak{i}^i\subset Lie(P^i)$ are $P^i$-stable subspaces.
For $i\in\{1,\ldots,k\}$, let $\mathcal{P}^i_{e^i,\mathfrak{i}^i}=\{gP^i\in G^i/P^i:g^{-1}\cdot e^i\in\mathfrak{i}^i\}$.
The map
\[\Phi:G^1/P^1\times\cdots\times G^k/P^k\to G/P,\ (g_1P^1,\ldots,g_kP^k)\mapsto (g_1,\ldots,g_k)P\]
is an isomorphism.
It is easy to check that
\[
\Phi((\mathcal{P}^1_{e^1,\mathfrak{i}^1})^{S^1}\times\cdots\times(\mathcal{P}^k_{e^k,\mathfrak{i}^k})^{S^k})=(\mathcal{P}_{e,\mathfrak{i}})^S.
\]
Therefore, if $(\mathcal{P}^i_{e^i,\mathfrak{i}^i})^{S^i}$
admits an affine paving for all $i$, then $(\mathcal{P}_{e,\mathfrak{i}})^S$ admits an affine paving, too. This shows the proposition.
\end{proof}

\subsection{Central extensions}

Let $\check{G}$ be another reductive, connected, linear algebraic group over $\mathbb{C}$, equipped with a surjective morphism of algebraic groups
\[\pi:G\rightarrow\check{G}\]
whose kernel is contained in the center of $G$. By derivation, we get a surjective morphism of Lie algebras $d\pi:\mathfrak{g}\to\check{\mathfrak{g}}=Lie(\check{G})$  whose kernel lies in the center of $\mathfrak{g}$.
This implies that $d\pi$ restricts to an isomorphism between the semisimple Lie algebras $[\mathfrak{g},\mathfrak{g}]$ and $[\check{\mathfrak{g}},\check{\mathfrak{g}}]$.
Thus, $d\pi$ restricts to a bijection between the nilpotent cones $\mathcal{N}\subset\mathfrak{g}$ and $\check{\mathcal{N}}\subset\check{\mathfrak{g}}$,
and we have that $e\in\mathcal{N}$ is distinguished in $\mathfrak{g}$ if and only if $\check{e}:=d\pi(e)\in\check{\mathcal{N}}$
is distinguished in $\check{\mathfrak{g}}$.

\begin{proposition}
\label{proposition-section-5-2}
Let $\pi:G\to\check{G}$ and $d\pi:\mathfrak{g}\to\check{\mathfrak{g}}$ be central extensions as above. Let $e\in\mathfrak{g}$ be nilpotent and $\check{e}=d\pi(e)$.
Then,  ${\rm P}(G,e)$ holds if and only if  ${\rm P}(\check{G},\check{e})$ holds.
\end{proposition}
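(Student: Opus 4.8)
The plan is to use $\pi$ to identify the two partial flag varieties together with the relevant subvarieties, after first shrinking the $P$-stable subspace so that it lies in the derived subalgebra.

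First I would record the structural facts. Set $N=\ker\pi$; being central it is normal and contained in every maximal torus, hence in every Borel, hence in every parabolic subgroup $P\subset G$. Thus $\check P:=\pi(P)=P/N$ is a parabolic subgroup of $\check G$, one has $P=\pi^{-1}(\check P)$, the assignment $P\mapsto\check P$ is a bijection between the parabolic subgroups of $G$ and those of $\check G$, and $\pi$ induces an isomorphism of algebraic varieties $\tau\colon G/P=(G/N)/(P/N)\xrightarrow{\ \sim\ }\check G/\check P$, $gP\mapsto\pi(g)\check P$, which is $G$-equivariant when $G$ acts on $\check G/\check P$ through $\pi$. On Lie algebras, $d\pi$ is surjective with kernel $\mathrm{Lie}(N)\subset\mathfrak z(\mathfrak g)$, so, using $\mathfrak z(\mathfrak g)\cap[\mathfrak g,\mathfrak g]=0$, it restricts to an isomorphism $[\mathfrak g,\mathfrak g]\xrightarrow{\sim}[\check{\mathfrak g},\check{\mathfrak g}]$, in particular to the bijection $\mathcal N\xrightarrow{\sim}\check{\mathcal N}$, $e\mapsto\check e$; and since $\mathfrak z(\mathfrak g)\subset\mathfrak p$, it even restricts to a $P$-equivariant isomorphism $[\mathfrak g,\mathfrak g]\cap\mathfrak p\xrightarrow{\sim}[\check{\mathfrak g},\check{\mathfrak g}]\cap\check{\mathfrak p}$.

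Next I would fix a standard triple $\{e,h,f\}\subset\mathfrak g$ and the cocharacter $\lambda$ attached to $h$ as in Definition \ref{definition-PGe}, with $S=\{\lambda(t):t\in\mathbb C^*\}$. Then $\{\check e,\check h,\check f\}:=\{d\pi(e),d\pi(h),d\pi(f)\}$ is a standard triple of $\check{\mathfrak g}$, $\pi\circ\lambda$ is the cocharacter attached to $\check h$, and $\check S:=\pi(S)$ is the corresponding torus; by the Remark following Definition \ref{definition-PGe} it suffices to prove the equivalence for this particular choice of triples. Since $\tau$ is $S$-equivariant (with $S$ acting on $\check G/\check P$ through $\check S$), it carries $S$-fixed points to $\check S$-fixed points.

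Now comes the one point that needs care, the reduction on $\mathfrak i$. For any subspace $\mathfrak i\subset\mathfrak p$ one has $\mathcal P_{e,\mathfrak i}=\mathcal P_{e,\mathfrak i\cap[\mathfrak g,\mathfrak g]}$, because $g^{-1}\cdot e$ is nilpotent and therefore lies in $[\mathfrak g,\mathfrak g]$; and if $\mathfrak i$ is $P$-stable then so is $\mathfrak i\cap[\mathfrak g,\mathfrak g]$. So I may assume $\mathfrak i\subset[\mathfrak g,\mathfrak g]\cap\mathfrak p$. Then $d\pi$ maps $\mathfrak i$ isomorphically onto a $\check P$-stable subspace $\check{\mathfrak i}:=d\pi(\mathfrak i)\subset[\check{\mathfrak g},\check{\mathfrak g}]\cap\check{\mathfrak p}$, with $\mathfrak i=(d\pi|_{[\mathfrak g,\mathfrak g]})^{-1}(\check{\mathfrak i})$, and by the last isomorphism of the first paragraph every $\check P$-stable subspace of $[\check{\mathfrak g},\check{\mathfrak g}]\cap\check{\mathfrak p}$ arises this way; so all the varieties occurring in $\mathrm P(G,e)$ and all those occurring in $\mathrm P(\check G,\check e)$ are matched up. For $gP\in G/P$ one has
\[
gP\in\mathcal P_{e,\mathfrak i}\ \Longleftrightarrow\ g^{-1}\cdot e\in\mathfrak i\ \Longleftrightarrow\ d\pi(g^{-1}\cdot e)\in\check{\mathfrak i}\ \Longleftrightarrow\ \pi(g)\check P\in\check{\mathcal P}_{\check e,\check{\mathfrak i}},
\]
the middle equivalence holding because $g^{-1}\cdot e\in[\mathfrak g,\mathfrak g]$, $d\pi$ is injective there, and $\mathfrak i=(d\pi)^{-1}(\check{\mathfrak i})\cap[\mathfrak g,\mathfrak g]$. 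Hence $\tau(\mathcal P_{e,\mathfrak i})=\check{\mathcal P}_{\check e,\check{\mathfrak i}}$, and restricting to fixed points gives an isomorphism $(\mathcal P_{e,\mathfrak i})^S\xrightarrow{\sim}(\check{\mathcal P}_{\check e,\check{\mathfrak i}})^{\check S}$. Since the existence of an affine paving depends only on the isomorphism type of the variety, letting $P$ and $\mathfrak i$ range over all possibilities yields $\mathrm P(G,e)\Leftrightarrow\mathrm P(\check G,\check e)$. The argument is entirely formal; the only place a naive approach breaks down is the passage to $\mathfrak i\subset[\mathfrak g,\mathfrak g]$: without it $d\pi$ need not be injective on $\mathfrak i$, and $\mathfrak i\leftrightarrow\check{\mathfrak i}$ would not be a bijection (though one would still get $\mathcal P_{e,\mathfrak i}\cong\check{\mathcal P}_{\check e,d\pi(\mathfrak i)}$ by the same nilpotency observation).
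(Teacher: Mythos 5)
Your proof is correct, and its overall scheme is the one the paper uses: transport everything through the isomorphism $G/P\xrightarrow{\sim}\check G/\check P$, match parabolic subgroups, standard triples and the tori $S$, $\check S=\pi(S)$, match the stable subspaces up to central directions, and then compare the fixed-point sets $(\mathcal P_{e,\mathfrak i})^S$ and $(\check{\mathcal P}_{\check e,\check{\mathfrak i}})^{\check S}$. Where you genuinely diverge is in the one technical point you flag yourself, the non-injectivity of $\mathfrak i\mapsto d\pi(\mathfrak i)$. The paper keeps $\mathfrak i$ arbitrary and instead proves the auxiliary identity $\mathcal N\cap\mathfrak i=\mathcal N\cap(\mathfrak i+\ker d\pi)$ for a $P$-stable $\mathfrak i$ (relation (\ref{ideals-nilpotent})), by placing a nilpotent $x+z$ in the nilradical of a Borel subalgebra of $\mathfrak p$ and using that $\mathfrak i$, being $\mathfrak t$-stable, is a sum of its weight spaces; this lemma needs the $P$-stability of $\mathfrak i$. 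You instead normalize once and for all, replacing $\mathfrak i$ by $\mathfrak i\cap[\mathfrak g,\mathfrak g]$ (and $\check{\mathfrak i}$ by $\check{\mathfrak i}\cap[\check{\mathfrak g},\check{\mathfrak g}]$), which leaves $\mathcal P_{e,\mathfrak i}$ unchanged because $\mathcal N\subset[\mathfrak g,\mathfrak g]$, and then $d\pi$ restricts to a $P$-equivariant bijection $[\mathfrak g,\mathfrak g]\cap\mathfrak p\simeq[\check{\mathfrak g},\check{\mathfrak g}]\cap\check{\mathfrak p}$ matching the relevant subspaces bijectively. Your route is a bit more elementary (no root-space computation, no use of $P$-stability at that step, only the standard fact that nilpotent elements of a reductive Lie algebra lie in the derived subalgebra, which also underlies the paper's argument); the paper's lemma carries slightly more information, namely the comparison of nilpotent loci for any two stable subspaces $\mathfrak i\subset d\pi^{-1}(\check{\mathfrak i})\subset\mathfrak i+\ker d\pi$, but for the equivalence $\mathrm P(G,e)\Leftrightarrow\mathrm P(\check G,\check e)$ both treatments suffice.
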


\begin{proof}
The maps $P\mapsto\pi(P)$ and $\check{P}\mapsto\pi^{-1}(\check{P})$ are pairwise inverse bijections between the set of closed subgroups of $G$ containing the center $Z(G)$ and the set of closed subgroups of $\check{G}$ containing the center $Z(\check{G})$. Moreover, if $\check{P}=\pi(P)$, then $\pi$ induces a bijection morphism of varieties $\varphi:G/P\to \check{G}/\check{P}$.
Similarly, the map $\mathfrak{p}\mapsto\check{\mathfrak{p}}=d\pi(\mathfrak{p})$ is a bijection between the set of subalgebras of $\mathfrak{g}$ containing the center of $\mathfrak{g}$ and the set of subalgebras of $\check{\mathfrak{g}}$ containing the center of $\check{\mathfrak{g}}$, and the induced linear morphism $\mathfrak{g}/\mathfrak{p}\to\check{\mathfrak{g}}/\check{\mathfrak{p}}$ is bijective. By \cite[\S 5.3.2\,{\rm (iii)} and \S 6.2.1]{Springer-book}, $P$ is a parabolic (resp. Borel) subgroup of $G$ if and only if $\check{P}$ is a parabolic (resp.\ Borel) subgroup of $\check{G}$, and in this case the map
\[
\varphi:G/P\to \check{G}/\check{P},\ gP\mapsto\pi(g)\check{P}
\]
is an isomorphism of $G$-homogeneous varieties.
We fix $P\subset G$ and $\check{P}=\pi(P)\subset\check{G}$ parabolic, and write $\mathfrak{p}=Lie(P)$ and $\check{\mathfrak{p}}=d\pi(\mathfrak{p})=Lie(\check{P})$.

If $\check{\mathfrak{i}}\subset\check{\mathfrak{p}}$ is a $\check{P}$-stable subspace, then $d\pi^{-1}(\check{\mathfrak{i}})\subset\mathfrak{p}$ is a $P$-stable subspace and we have $\check{\mathfrak{i}}=d\pi(d\pi^{-1}(\check{\mathfrak{i}}))$.
Conversely,
if $\mathfrak{i}\subset\mathfrak{p}$ is a $P$-stable subspace, then $d\pi(\mathfrak{i})\subset\check{\mathfrak{p}}$ is a $\check{P}$-stable subspace, and we have $d\pi^{-1}(d\pi(\mathfrak{i}))=\mathfrak{i}+\ker d\pi$. More generally, fix subspaces  $\mathfrak{i}\subset\mathfrak{p}$ and $\check{\mathfrak{i}}\subset\check{\mathfrak{p}}$, respectively $P$- and $\check{P}$-stable, such that $\mathfrak{i}\subset d\pi^{-1}(\check{\mathfrak{i}})\subset\mathfrak{i}+\ker d\pi$.
We claim that
\begin{equation}
\label{ideals-nilpotent}
\mathcal{N}\cap\mathfrak{i}=\mathcal{N}\cap(\mathfrak{i}+\ker d\pi).
\end{equation}
The inclusion $\subset$ is immediate. For checking the other inclusion,
let $x\in\mathfrak{i}$, $z\in\ker d\pi$, assume that $x+z$ is nilpotent, and let us show that $x+z\in\mathfrak{i}$.
Note that $x+z$ is a nilpotent element of $\mathfrak{p}$, hence it is contained in the nilradical $\mathfrak{n}$ of some Borel subalgebra
$\mathfrak{b}\subset\mathfrak{p}$. Fix a Cartan subalgebra $\mathfrak{t}$ with $\mathfrak{b}=\mathfrak{t}\oplus\mathfrak{n}$ and let
$\mathfrak{g}=\mathfrak{t}\oplus \bigoplus_{\alpha\in\Phi}\mathfrak{g}_\alpha$ be the root space decomposition with respect to $\mathfrak{t}$. Thus $\mathfrak{n}=\bigoplus_{\alpha\in\Phi^+}\mathfrak{g}_\alpha$ for a system of positive roots $\Phi^+$.
We can write $x+z=\sum_{\alpha\in I}x_\alpha$ where $x_\alpha\in\mathfrak{g}_\alpha$, $x_\alpha\not=0$,
for a subset $I\subset\Phi^+$. Hence $\mathfrak{i}\ni x=-z+\sum_{\alpha\in I}x_\alpha$ and we know that $-z\in\mathfrak{t}$. Note that, being $P$-stable, $\mathfrak{i}$ is also $\mathfrak{t}$-stable, so it is the sum of its root spaces. This yields $\mathfrak{g}_\alpha\subset\mathfrak{i}$ for all $\alpha\in I$. Whence $x+z\in\mathfrak{i}$. The checking of (\ref{ideals-nilpotent}) is complete.

By virtue of (\ref{ideals-nilpotent}), for $gP\in G/P$, we have
$g^{-1}\cdot e\in\mathfrak{i}\Leftrightarrow d\pi(g^{-1}\cdot e)\in d\pi(\mathfrak{i})\Leftrightarrow \pi(g)^{-1}\cdot\check{e}\in\check{\mathfrak{i}}$,
thereby
\[\varphi(\mathcal{P}_{e,\mathfrak{i}})=\check{\mathcal{P}}_{\check{e},\check{\mathfrak{i}}}:=\{\check{g}\check{P}\in \check{G}/\check{P}:\check{g}^{-1}\cdot\check{e}\in\check{\mathfrak{i}}\}.\]
If $S\subset G$ is any closed subgroup and $\check{S}=\pi(S)$, then we clearly have $\varphi((G/P)^S)=(\check{G}/\check{P})^{\check{S}}$, so $\varphi$ restricts to an isomorphism
\begin{equation}
\label{extension-21}
\mathcal{P}_{e,\mathfrak{i}}\cap(G/P)^S\stackrel{\sim}{\to}\check{\mathcal{P}}_{\check{e},\check{\mathfrak{i}}}\cap(\check{G}/\check{P})^{\check{S}}.
\end{equation}

Let $\{e,h,f\}\subset\mathfrak{g}$ be a standard triple and let $\lambda:\mathbb{C}^*\to G$ be a cocharacter corresponding to $h$ in the sense of Section \ref{section-2-1-2}. Let $S=\{\lambda(t):t\in\mathbb{C}^*\}$.
Setting $\check{h}=d\pi(h)$ and $\check{f}=d\pi(f)$, it is clear that $\{\check{e},\check{h},\check{f}\}$ is a standard triple in $\check{\mathfrak{g}}$.
The cocharacter corresponding to $\check{h}$ is $\check{\lambda}=\pi\circ\lambda:\mathbb{C}^*\to\check{G}$.
Let $\check{S}=\pi(S)=\{\check{\lambda}(t):t\in\mathbb{C}^*\}$.
By (\ref{extension-21}), the varieties $(\mathcal{P}_{e,\mathfrak{i}})^S$ and $(\check{\mathcal{P}}_{\check{e},\check{\mathfrak{i}}})^{\check{S}}$ are isomorphic, hence one admits an affine paving if and only if the other one does. Conditions ${\rm P}(G,e)$ and ${\rm P}(\check{G},\check{e})$ are therefore equivalent.
\end{proof}

\subsection{Conclusion} The conclusion of this section is the following:

\begin{proposition}
\label{proposition-conclusion-section-5}
In order to prove Theorem \ref{theorem-1}, it suffices to show that property $\mathrm{P}(G,e)$ is satisfied whenever $G$ is
$SL(V)$ (for $V=\mathbb{C}^n$), $Sp(V,\omega)$ (for $V=\mathbb{C}^{2n}$ endowed with a symplectic form $\omega$), or $SO(V,\omega)$ (for $V=\mathbb{C}^m$ endowed with a nondegenerate symmetric form $\omega$), and $e\in Lie(G)$ is a distinguished nilpotent element.
\end{proposition}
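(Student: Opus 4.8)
The plan is to obtain the proposition by chaining together all the reductions established so far, plus one standard piece of structure theory for reductive groups. I would start by recording the two facts already in hand: by Proposition~\ref{proposition-section-4} and Example~\ref{example-3}, property $\mathrm{P}(\hat{G},e)$ implies $\mathrm{P}(G,e)$; and by Proposition~\ref{proposition-section-3}, Theorem~\ref{theorem-1} follows as soon as $\mathrm{P}(G,e)$ is known for every pair $(G,e)$ for which $\hat{\mathfrak{g}}=Lie(\hat{G})$ has no nonregular component of exceptional type. Since $e$ is a distinguished nilpotent element of $\hat{\mathfrak{g}}$ (Lemma~\ref{lemma-4-1}), combining these reduces Theorem~\ref{theorem-1} to proving $\mathrm{P}(G,e)$ whenever $G$ is reductive connected, $e\in\mathfrak{g}$ is a distinguished nilpotent element, and, writing $[\mathfrak{g},\mathfrak{g}]=\mathfrak{s}_1\times\cdots\times\mathfrak{s}_k$ and $e=(e_1,\ldots,e_k)$ accordingly, every $\mathfrak{s}_i$ such that $e_i$ is nonregular in $\mathfrak{s}_i$ is of classical type.

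Next I would pass to a simply connected cover. The simply connected cover of the semisimple group $[G,G]$ is a product $G_1\times\cdots\times G_k$ of simply connected almost simple groups, with $G_i$ of the same type as $\mathfrak{s}_i$, and combining it with the inclusion $Z(G)^{\circ}\hookrightarrow G$ and multiplication in $G$ one gets a surjective morphism
\[
\pi:\tilde{G}:=Z(G)^{\circ}\times G_1\times\cdots\times G_k\longrightarrow G
\]
with finite, hence central, kernel; thus $\pi$ is a central extension in the sense of Proposition~\ref{proposition-section-5-2}. Its differential restricts to an isomorphism on semisimple parts, hence to a bijection of nilpotent cones carrying $\tilde{e}:=(0,e_1,\ldots,e_k)$ to $e$; moreover $\tilde{e}$ is distinguished in $Lie(\tilde{G})$ (i.e.\ each $e_i$ is distinguished in $Lie(G_i)$), and $e_i$ is nonregular in $Lie(G_i)$ exactly when it is nonregular in $\mathfrak{s}_i$. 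By Proposition~\ref{proposition-section-5-2} we have $\mathrm{P}(G,e)\Leftrightarrow\mathrm{P}(\tilde{G},\tilde{e})$, and by Proposition~\ref{proposition-section-5-1} the latter follows once $\mathrm{P}(Z(G)^{\circ},0)$ and $\mathrm{P}(G_i,e_i)$ hold for all $i$. The torus $Z(G)^{\circ}$ has no parabolic subgroup other than itself, so its partial flag variety is a single point and $\mathrm{P}(Z(G)^{\circ},0)$ holds trivially.

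It remains to treat the almost simple factors $G_i$. If $e_i$ is regular in $Lie(G_i)$, then by Remark~\ref{remark-regular} every $\mathcal{P}_{e_i,\mathfrak{i}}$, and a fortiori its $S$-fixed locus, is at most a single point and hence trivially admits an affine paving; this in particular disposes of all factors of exceptional type, since after the reduction of the first paragraph the only surviving exceptional factors carry a regular $e_i$. If $e_i$ is nonregular, then $\mathfrak{s}_i$ is classical by hypothesis, so $G_i$ is $SL(V)$ (type $A$), $Sp(V,\omega)$ (type $C$), or $Spin(V,\omega)$ for a nondegenerate symmetric form $\omega$ (types $B$ and $D$). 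The first two already appear in the statement, and for the third the canonical two-to-one surjection $Spin(V,\omega)\to SO(V,\omega)$ is a central extension, so Proposition~\ref{proposition-section-5-2} gives $\mathrm{P}(Spin(V,\omega),e_i)\Leftrightarrow\mathrm{P}(SO(V,\omega),e_i)$, which is the last group in the statement. Assembling these three steps proves the proposition.

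I expect this to be routine bookkeeping rather than a real obstacle: every step invokes a result proved earlier. The one place demanding genuine care is the structure-theoretic input of the second paragraph, namely producing the central isogeny $\tilde{G}\to G$ and checking that ``distinguished'' and ``(non)regular'' transfer componentwise along $d\pi$; a secondary point is to make sure the hypothesis of Theorem~\ref{theorem-1} is carried faithfully through the first reduction, so that precisely the regular case remains for exceptional simple factors. The genuinely difficult argument---the explicit inductive computation for $Sp(V,\omega)$ and $SO(V,\omega)$ with $e$ distinguished---lies in later sections and is outside the scope of this proposition.
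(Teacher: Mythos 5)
Your proposal is correct and follows essentially the same chain of reductions as the paper: Propositions \ref{proposition-section-3} and \ref{proposition-section-4} reduce Theorem \ref{theorem-1} to $\mathrm{P}(G,e)$ for $e$ distinguished with no nonregular exceptional component, a central extension by a product of a torus and almost simple groups together with Propositions \ref{proposition-section-5-1} and \ref{proposition-section-5-2} reduces to the torus case (trivial), the exceptional-regular case (trivial by Remark \ref{remark-regular}), and the classical case, which a further isogeny brings to $SL(V)$, $Sp(V,\omega)$, $SO(V,\omega)$. The only difference is cosmetic: you build the isogeny $Z(G)^{\circ}\times G_1\times\cdots\times G_k\to G$ by hand from the simply connected cover of $[G,G]$ and then pass from $Spin$ to $SO$ explicitly, whereas the paper simply cites \cite[\S 8.1.5]{Springer-book} and invokes Proposition \ref{proposition-section-5-2} once more.
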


\begin{proof}
As observed at the beginning of Section \ref{section-5}, Propositions \ref{proposition-section-3} and \ref{proposition-section-4} imply that, in order to prove Theorem \ref{theorem-1}, it suffices to know that $\mathrm{P}(G,e)$ holds whenever $G$ is reductive connected, $e\in Lie(G)$ is a distinguished nilpotent element, and $Lie(G)$ has no nonregular component of exceptional type with respect to $e$
(in the sense of Theorem \ref{theorem-1}). By \cite[\S 8.1.5]{Springer-book}, there is a central extension $G^1\times\cdots\times G^k\to G$ where, for every $i\in\{1,\ldots,k\}$, $G^i$ is either a torus or an almost simple group. Thus, by Propositions \ref{proposition-section-5-1} and \ref{proposition-section-5-2}, we may assume that $G$ itself is a torus or an almost simple group.
If $G$ is a torus, then every partial flag variety $G/P$ is a single point, so property ${\rm P}(G,e)$ is trivially true. If $G$ is an almost simple group whose Lie algebra is of exceptional type, 
then by assumption $e$ is regular in $Lie(G)$, so the variety $\mathcal{P}_{e,\mathfrak{i}}$ is at most one point (see Remark \ref{remark-regular}) and thus property ${\rm P}(G,e)$ is trivially true.
If $G$ is an almost simple group whose Lie algebra is of type $A$--$D$, then invoking again Proposition \ref{proposition-section-5-2},
we may assume that $G$ is of the form $SL(V)$, $Sp(V,\omega)$, or $SO(V,\omega)$. The proof of the proposition is then complete.
\end{proof}

From now on, we focus on the case where $G$ is one of the classical groups $SL(V)$, $Sp(V,\omega)$, and $SO(V,\omega)$.\ The structure of these groups is recalled in the next section.

\section{Classical partial flag varieties}

\label{section-6}

In the case where the group $G$ is classical, a partial flag variety of the form $\mathcal{P}=G/P$ can be identified with a set of partial flags. This well-known fact is recalled in this section.

\subsection{Partial flag variety of type $A$}

Let $G=SL(V)$ be the group of linear
automorphisms of the space $V:=\mathbb{C}^n$ of determinant $1$. Its
Lie algebra $\mathfrak{g}=\mathfrak{sl}(V)$ consists of all linear
endomorphisms of $V$ of trace $0$. A {\it partial flag} of $V$ is a
chain of subspaces $(V_0=0\subset V_1\subset \ldots\subset V_k=V)$.
Given a sequence $\underline{d}=(d_0=0< d_1<\ldots<d_k=n)$, we
denote by $\mathcal{F}_{\underline{d}}$ the set of all partial flags
of $V$ such that $\dim V_p=d_p$ for all $p\in\{0,1,\ldots,k\}$. The
set $\mathcal{F}_{\underline{d}}$ has a natural structure of
algebraic projective variety, on which the group $SL(V)$ acts
transitively.

\begin{proposition}
\label{proposition-classical-1}
{\rm (a)} If $F=(V_0,\ldots,V_k)$ is a partial flag of $V=\mathbb{C}^n$, then $P_F:=\{g\in SL(V):g(V_p)=V_p\ \forall p=0,\ldots,k\}$ is a parabolic subgroup of $SL(V)$ and its Lie algebra is
$\mathfrak{p}_F:=\{x\in \mathfrak{sl}(V):x(V_p)\subset V_p\ \forall p=0,\ldots,k\}$.\ Any parabolic subgroup of $SL(V)$ (resp. any parabolic subalgebra of $\mathfrak{sl}(V)$) is of this form. \\
{\rm (b)} Let $F\in\mathcal{F}_{\underline{d}}$.
The map $gP_F\mapsto g(F)$ is an isomorphism of $SL(V)$-homogeneous varieties between the partial flag variety $SL(V)/P_F$ and the variety of partial flags $\mathcal{F}_{\underline{d}}$.
\end{proposition}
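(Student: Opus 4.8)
The plan is to work throughout with a basis of $V$ adapted to the flag in question. First I would record the basic geometry of $\mathcal{F}_{\underline{d}}$: the assignment $F=(V_0,\ldots,V_k)\mapsto(V_1,\ldots,V_{k-1})$ embeds $\mathcal{F}_{\underline{d}}$ into the product of Grassmannians $\prod_{p=1}^{k-1}\mathrm{Gr}(d_p,V)$ as the closed subset cut out by the incidence conditions $V_1\subset V_2\subset\cdots\subset V_{k-1}$, so that $\mathcal{F}_{\underline{d}}$ is a projective variety. Moreover $SL(V)$ acts transitively on $\mathcal{F}_{\underline{d}}$: given $F,F'\in\mathcal{F}_{\underline{d}}$, choose bases $(v_1,\ldots,v_n)$ and $(v_1',\ldots,v_n')$ with $V_p=\langle v_1,\ldots,v_{d_p}\rangle$ and $V_p'=\langle v_1',\ldots,v_{d_p}'\rangle$; the linear automorphism sending $v_i\mapsto v_i'$ carries $F$ to $F'$, and rescaling a single vector $v_i'$ (which leaves every $V_p'$ unchanged) adjusts its determinant to $1$.

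Granting this, part (b) follows formally. The stabilizer of $F$ for the $SL(V)$-action is exactly $P_F$: if $g(F)=F$ then $g(V_p)=V_p$ for all $p$, and the converse is immediate. Hence the orbit map $SL(V)\to\mathcal{F}_{\underline{d}}$, $g\mapsto g(F)$, is a surjective morphism whose fibres are the left cosets of $P_F$; since $SL(V)/P_F$ is a geometric quotient, the induced $SL(V)$-equivariant morphism $gP_F\mapsto g(F)$ is a bijective morphism between homogeneous spaces, hence an isomorphism (working over $\mathbb{C}$, i.e.\ in characteristic zero; cf. \cite[\S 2.11]{Steinberg-book}). This also exhibits $SL(V)/P_F\cong\mathcal{F}_{\underline{d}}$ as a projective variety, proving that $P_F$ is a parabolic subgroup of $SL(V)$, which gives part of (a).

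For the rest of (a), it remains to identify $Lie(P_F)$ and to show that every parabolic subgroup (resp.\ subalgebra) is of the form $P_F$ (resp.\ $\mathfrak{p}_F$). Fixing a basis adapted to $F$, the group $P_F$ is the group of block upper-triangular matrices of determinant $1$ with diagonal block sizes $d_1-d_0,\ldots,d_k-d_{k-1}$; its tangent space at the identity is visibly the space of block upper-triangular trace-zero matrices, i.e.\ $\mathfrak{p}_F$, whence $Lie(P_F)=\mathfrak{p}_F$. For the converse, any parabolic subgroup $P\subset SL(V)$ contains a Borel subgroup, and after conjugating we may assume $P$ contains the Borel $B$ of upper-triangular matrices in a fixed basis; the parabolic subgroups containing $B$ are exactly the standard ones, namely the block upper-triangular groups $P_{F_0}$ attached to coordinate flags $F_0$ (classification of parabolics containing a fixed Borel via subsets of simple roots, see \cite[\S 8.4]{Springer-book}). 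Conjugating back, $P=P_F$ for a suitable flag $F$. Finally, a parabolic subalgebra of $\mathfrak{sl}(V)$ is by definition $Lie(P)$ for a parabolic subgroup $P$ — and $P$ is recovered from it as its normalizer, $P$ being connected and self-normalizing — so it equals $\mathfrak{p}_F$ for some $F$.

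The statement is foundational and I do not expect a serious obstacle. The two points deserving care are the implication ``bijective equivariant morphism of homogeneous spaces $\Rightarrow$ isomorphism'', which relies on characteristic zero and on the theory of geometric quotients, and the root-theoretic classification of parabolic subgroups containing a fixed Borel; both are standard and can be quoted rather than reproved.
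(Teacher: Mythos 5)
Your proof is correct and rests on the same ingredients as the paper's: adapted bases identifying $P_F$ with a block upper-triangular (standard) parabolic, the conjugacy classification of parabolics containing a fixed Borel, and the standard orbit-map argument (bijective equivariant morphism of homogeneous spaces in characteristic zero) for the isomorphism $SL(V)/P_F\cong\mathcal{F}_{\underline{d}}$. The only difference is cosmetic ordering: you prove the homogeneity statement (b) first and deduce parabolicity of $P_F$ from completeness of the quotient, whereas the paper derives (a) directly from the explicit description of standard parabolics and then obtains (b) from it.
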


\begin{remark}
Part (b) of the statement (or equivalently the $SL(V)$-homogeneity of the flag variety $\mathcal{F}_{\underline{d}}$) shows that the map $F\mapsto P_F$ between partial flags of $V$ and parabolic subgroups of $SL(V)$ is also injective (hence bijective).
\end{remark}

\subsection{Partial flag varieties of types $B$, $C$, and $D$}
Here we consider the space $V=\mathbb{C}^m$ equipped with a
nondegenerate bilinear form $\omega$ which is either symmetric, or
skew-symmetric (i.e., symplectic). Let $G\subset SL(V)$ be the
subgroup of automorphisms $g$ preserving $\omega$ (i.e.,
$\omega(gv,gv')=\omega(v,v')$ for all $v,v'\in V$). Its Lie algebra
 $\mathfrak{g}\subset\mathfrak{sl}(V)$ is the subspace of
endomorphisms $x$ that are antiadjoint with respect to $\omega$ (i.e.,
$\omega(xv,v')=-\omega(v,xv')$ for all $v,v'\in V$). If the form
$\omega$ is symmetric, then $G$ (resp. $\mathfrak{g}$) is denoted by
$SO(V,\omega)$ (resp. $\mathfrak{so}(V,\omega)$) and called a
special orthogonal group (resp. Lie algebra): a classical group of
type $B$ or $D$ depending on whether $\dim V$ is odd or even. If the
form $\omega$ is symplectic (which forces $\dim V$ to be even), then
$G$ (resp. $\mathfrak{g}$) is denoted by $Sp(V,\omega)$ (resp.
$\mathfrak{sp}(V,\omega)$) and called symplectic group (resp.\ Lie
algebra): a classical group of type $C$.

For a subspace $V'\subset V$, we set $V'^\perp:=\{v\in V:\omega(v,v')=0\ \forall v'\in V'\}$.
An {\it isotropic partial flag} of $(V,\omega)$ is a chain of subspaces $(V_0=0\subset V_1\subset\ldots\subset V_{k-1})$ such that $V_p$ is isotropic for all $p=0,\ldots,k-1$, i.e., we have $V_p\subset V_p^\perp$.
Given a sequence of integers $\underline{d}=(d_0=0<d_1<\ldots<d_{k-1}\leq\frac{\dim V}{2})$, we denote by $\mathcal{F}_{\underline{d}}^\omega$ the set of all isotropic partial flags of $(V,\omega)$ such that $\dim V_p=d_p$ for all $p\in\{0,\ldots,k-1\}$. The set $\mathcal{F}_{\underline{d}}^\omega$ has a natural structure of algebraic projective variety. If $G$ is of type $B$ or $C$ (i.e., $\dim V$ is odd or $\omega$ is symplectic) or $d_{k-1}<\frac{\dim V}{2}$, then $\mathcal{F}_{\underline{d}}^\omega$ is connected and $G$-homogeneous.\ If $G$ is of type $D$ and $d_{k-1}=\frac{\dim V}{2}$, then $\mathcal{F}_{\underline{d}}^\omega$ has exactly two connected components, which are $G$-homogeneous.

\begin{proposition}
\label{proposition-classical-2}
Let $(V,\omega)$, $G$, and $\mathfrak{g}$ be as above. \\
{\rm (a)} If $F=(V_0,\ldots,V_{k-1})$ is an isotropic partial flag of $(V,\omega)$, then $P_F:=\{g\in G: g(V_p)=V_p\ \forall p=0,\ldots,k-1\}$ is a parabolic subgroup of $G$.\ Its Lie algebra is $\mathfrak{p}_F:=\{x\in\mathfrak{g}:x(V_p)\subset V_p\ \forall p=0,\ldots,k-1\}$. Any parabolic subgroup of $G$ (resp. any parabolic subalgebra of $\mathfrak{g}$) is of this form. \\
{\rm (b)}
Assume that $\omega$ is symplectic or $d_{k-1}<\frac{\dim V}{2}$.
Then, for $F\in\mathcal{F}_{\underline{d}}^\omega$,
the map $gP_F\mapsto g(F)$ is an isomorphism of $G$-homogeneous varieties between the partial flag variety $G/P_F$ and the variety of isotropic partial flags $\mathcal{F}_{\underline{d}}^\omega$. \\
{\rm (c)}
Assume that $\omega$ is symmetric and $d_{k-1}=\frac{\dim V}{2}$.
Then, for $F\in\mathcal{F}_{\underline{d}}^\omega$,
the map $gP_F\mapsto g(F)$ is an isomorphism of $G$-homogeneous varieties between $G/P_F$ and the connected component of $\mathcal{F}_{\underline{d}}^\omega$
containing $F$.
\end{proposition}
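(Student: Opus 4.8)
The plan is to reduce Proposition~\ref{proposition-classical-2} to two classical inputs: the description of the Borel subgroups of $G$ as stabilizers of complete isotropic flags, and Witt's extension theorem for the form $\omega$. The whole statement is standard material, so I expect no genuine obstacle; the only point that really needs care is the type~$D$ case, where the two families of maximal isotropic subspaces are responsible both for the two connected components of $\mathcal{F}_{\underline{d}}^\omega$ and for some bookkeeping in part~(a).

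For part~(a), given an isotropic partial flag $F=(V_0,\ldots,V_{k-1})$, I would first refine it to a complete isotropic flag $\tilde{F}=(0=W_0\subset W_1\subset\cdots\subset W_N)$ with $\dim W_j=j$ and $N=\lfloor\tfrac12\dim V\rfloor$, which is possible since any non-maximal isotropic subspace can be enlarged. It is classical (see \cite[\S 6 and \S 8]{Springer-book} and \cite{Collingwood-McGovern}) that $B:=P_{\tilde{F}}$ is a Borel subgroup of $G$ with Lie algebra $\mathfrak{b}:=\mathfrak{p}_{\tilde{F}}$. Since each $V_p$ occurs among the $W_j$, we get $B\subset P_F$, so $P_F$ is parabolic, and $\mathfrak{b}\subset\mathfrak{p}_F$. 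Now $\mathfrak{p}_F$ is a Lie subalgebra of $\mathfrak{g}$ containing the Borel subalgebra $\mathfrak{b}$, hence a parabolic subalgebra; since we are in characteristic zero a connected algebraic subgroup is determined by its Lie algebra, and comparing $P_F$ with the connected subgroup attached to $\mathfrak{p}_F$ — using that the Lie algebra of the stabilizer of a subspace $V_p$ is exactly $\{x:x(V_p)\subset V_p\}$ — yields $Lie(P_F)=\mathfrak{p}_F$ and identifies $P_F$ with the parabolic subgroup attached to $\mathfrak{p}_F$. For the converse, that every parabolic is of this form, I would conjugate so that the parabolic $P$ contains $B$ and write $P$ as an intersection of the maximal parabolics containing $B$. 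In the orthogonal and symplectic cases these maximal parabolics are precisely the stabilizers of single isotropic subspaces, with the sole exception of the two fork nodes of the $D_N$ diagram, which are the stabilizers of maximal isotropic subspaces in the two respective families; taking the corresponding intersection, one identifies $P$ with the stabilizer of the chain of these isotropic subspaces, i.e.\ $P=P_F$ for a suitable $F$. The one place needing attention is when both fork nodes occur, where one uses that an isotropic subspace of dimension $N-1$ is the intersection of the two maximal isotropics containing it and is stabilized by any element of $G$ stabilizing one of them. The same reasoning at the Lie-algebra level, via the parabolic subgroup attached to a parabolic subalgebra, settles the statement for parabolic subalgebras.

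For parts~(b) and~(c), the key input is Witt's extension theorem, which gives that $G$ acts transitively on $\mathcal{F}_{\underline{d}}^\omega$ whenever $\omega$ is symplectic or $d_{k-1}<\tfrac12\dim V$ (in particular on the set of isotropic subspaces of any fixed dimension outside the case of maximal isotropics in type $D$), while for $\omega$ symmetric and $d_{k-1}=\tfrac12\dim V$ the $G$-orbit of $F$ is exactly the connected component of $\mathcal{F}_{\underline{d}}^\omega$ containing $F$ — the two components being distinguished by the family of the top subspace, which $SO(V,\omega)$ does not interchange, whereas $O(V,\omega)$ does. By definition the $G$-stabilizer of $F$ is $P_F$, so $gP_F\mapsto g(F)$ is a bijective $G$-equivariant morphism from $G/P_F$ onto $\mathcal{F}_{\underline{d}}^\omega$ in the setting of~(b), and onto the connected component of $F$ in the setting of~(c). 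Since a bijective morphism between $G$-homogeneous varieties over a field of characteristic zero is an isomorphism (the orbit map is separable; cf.\ \cite[\S 2.11]{Steinberg-book}), this proves~(b) and~(c). I would prove the analogous Proposition~\ref{proposition-classical-1} for $SL(V)$ in the same way, using a complete (non-isotropic) flag for the Borel and the evident transitivity of $SL(V)$ on $\mathcal{F}_{\underline{d}}$. Thus the only delicate part of the argument is the type~$D$ bookkeeping — matching the fork nodes of the Dynkin diagram with the two families of maximal isotropic subspaces, and hence with the two connected components in~(c) — which is why I would treat that case explicitly rather than uniformly with types $B$ and $C$.
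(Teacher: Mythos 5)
Your proof is correct, but it takes a genuinely different route from the paper's. The paper deduces Proposition~\ref{proposition-classical-2} from the explicit matrix realizations in Sections \ref{section-classical-1-4-3}--\ref{section-classical-1-4-4}: it writes down the root systems of $\mathfrak{sp}_{2n}(\mathbb{C})$ and $\mathfrak{so}_m(\mathbb{C})$, checks that every standard parabolic subalgebra $\mathfrak{p}_{I_{\underline{d}}}$ coincides with $\mathfrak{p}_F$ for the coordinate flag $F=(\langle v_i:1\leq i\leq d_p\rangle_{\mathbb{C}})_p$ (with the type $D$ normalization $d_{k-1}\neq n-1$ of (\ref{assumption-D})), and then obtains (a) from conjugacy of parabolics to standard ones and (b)--(c) from homogeneity; these block descriptions (\ref{new-classical-2})--(\ref{new-classical-3}) are moreover reused heavily in Section \ref{section-7}. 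You instead argue abstractly: Borel subgroups as stabilizers of complete isotropic flags, maximal parabolics containing a fixed Borel as stabilizers of single isotropic subspaces, and Witt's theorem for transitivity (in the orthogonal case one should add the determinant correction, e.g.\ a reflection in a nonisotropic vector of $V_{k-1}^\perp$, which fixes $V_{k-1}$ pointwise and exists precisely when $d_{k-1}<\frac{\dim V}{2}$, plus preservation of the two families by $SO$ when $d_{k-1}=\frac{\dim V}{2}$), followed by the standard fact that a bijective equivariant morphism between homogeneous varieties is an isomorphism in characteristic zero. This is shorter and more conceptual, but it leans on the classification of maximal parabolics as subspace stabilizers, which is usually established by the same root-space bookkeeping the paper carries out, and it does not produce the explicit block form the paper needs later. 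One sentence of yours is stated backwards: when both fork nodes of $D_N$ are removed, what you need is that an element of $SO(V,\omega)$ stabilizing the $(N-1)$-dimensional isotropic subspace $W_{N-1}$ automatically stabilizes each of the two maximal isotropic subspaces $W_N,\tilde{W}_N$ containing it (because $SO$ preserves the two families), so that $\mathrm{Stab}(W_N)\cap\mathrm{Stab}(\tilde{W}_N)=\mathrm{Stab}(W_{N-1})$; the claim that stabilizing one of the two maximal isotropics forces stabilizing $W_{N-1}$ is false, though this is a local slip that does not affect the rest of the argument.
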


\begin{remark}
\label{newnew-remark-3}
{\rm (a)}
In the case where $\omega$ is symplectic or $d_{k-1}<\frac{\dim V}{2}$, 
the map $F\mapsto P_F$ between flags in $\mathcal{F}_{\underline{d}}^\omega$ and parabolic subgroups of $G$ is injective (this follows from
the $G$-homogeneity of $\mathcal{F}_{\underline{d}}^\omega$). \\
{\rm (b)} In the case where $\omega$ is symmetric and $d_{k-1}=\frac{\dim V}{2}$, for every flag $F\in\mathcal{F}_{\underline{d}}^\omega$, there is exactly one element $\tilde{F}\in\mathcal{F}_{\underline{d}}^\omega$ different of $F$ such that $P_F=P_{\tilde{F}}$. Then, the maps $gP_F\mapsto g(F)$ and $gP_F\mapsto g(\tilde{F})$ are isomorphisms between $G/P_F$ and the two connected components of $\mathcal{F}_{\underline{d}}^\omega$. The flags $F,\tilde{F}$ are explicitly described in terms of adapted bases in Remark \ref{remark-a-faire}.
\end{remark}

\subsection{Notation for standard parabolic subalgebras}
\label{section-classical-1-3}
Let $G$ be a reductive group, let $\mathfrak{g}$ be its Lie algebra
with Cartan subalgebra $\mathfrak{t}$, corresponding root system
$\Phi=\Phi(\mathfrak{g},\mathfrak{t})$, and root space decomposition
\[\mathfrak{g}=\mathfrak{t}\oplus\bigoplus_{\alpha\in\Phi}\mathfrak{g}_\alpha.\]
Let $\Delta\subset\Phi$ be a basis and $\Phi^+\subset\Phi$ the
corresponding set of positive roots. Given a subset
$I\subset\Delta$, we let $\Phi_I=\Phi\cap\langle
I\rangle_\mathbb{R}$. Then,
\begin{equation}
\label{classical-1}
\mathfrak{p}_I:=\mathfrak{t}\oplus\bigoplus_{\alpha\in\Phi_I\cup\Phi^+}\mathfrak{g}_\alpha,\quad
\mathfrak{l}_I:=\mathfrak{t}\oplus\bigoplus_{\alpha\in\Phi_I}\mathfrak{g}_\alpha,\quad\mbox{and}\quad
\mathfrak{n}_I:=\bigoplus_{\alpha\in\Phi^+\setminus\Phi_I}\mathfrak{g}_\alpha
\end{equation}
are respectively a parabolic subalgebra of $\mathfrak{g}$, a Levi
factor of it, and its nilradical. The parabolic subalgebra
$\mathfrak{p}_I$ is called {\it standard} with respect to the basis
$\Delta$. Let $P_I\subset G$ be the corresponding parabolic
subgroup. Any parabolic subalgebra of $\mathfrak{g}$ (resp. subgroup
of $G$) is conjugate to a standard one.

\subsection{Proof of Propositions \ref{proposition-classical-1} and \ref{proposition-classical-2}}
We briefly review the matrix representation, the root systems, and
the form of the parabolic subalgebras of the classical groups and
Lie algebras. This description easily yields Propositions
\ref{proposition-classical-1}\,{\rm (a)} and \ref{proposition-classical-2}\,{\rm
(a)}, which, in turn, imply Propositions
\ref{proposition-classical-1}\,{\rm (b)} and \ref{proposition-classical-2}\,{\rm
(b)}--{\rm (c)}. It will also be useful in Section \ref{section-classical-2}.

\subsubsection{Notation for matrices} By $\varepsilon_i(h)$ we denote the $i$-th coefficient
of a diagonal matrix $h$. Let $E_{i,j}$ be the elementary matrix
with $1$ in the position $(i,j)$ and $0$'s elsewhere. Given a matrix
$x=(x_{i,j})\in\mathcal{M}_{n}(\mathbb{C})$, we denote by
${}^tx:=(x_{j,i})$ its transpose by the diagonal and by ${}^\delta
x:=(x_{n-j+1,n-i+1})$ its symmetric by the antidiagonal. Let
$I_n,J_n\in\mathcal{M}_{n}(\mathbb{C})$ respectively denote the
identity matrix and the matrix with $1$'s on the antidiagonal and
$0$'s elsewhere. Clearly, $J_n{}^txJ_n={}^\delta x$.

\subsubsection{Type $A$ case}
\label{section-classical-1-4-2} Via the natural basis
$(v_1,\ldots,v_n)$ of $V=\mathbb{C}^n$, the group $SL(V)$ is
isomorphic to $SL_n(\mathbb{C})$. The Lie algebra is
$\mathfrak{sl}_n(\mathbb{C}):=\{x\in\mathcal{M}_n(\mathbb{C}):\mathrm{Tr}\,x=0\}$,
the space of $(n\times n)$-sized matrices of trace zero. A Cartan
subalgebra $\mathfrak{t}$ is formed by the diagonal matrices of trace zero. Then,
the root system
$\Phi=\Phi(\mathfrak{sl}_n(\mathbb{C}),\mathfrak{t})$ consists of
the roots $\varepsilon_i-\varepsilon_j$ for $1\leq i\not=j\leq n$,
with corresponding root vectors $E_{i,j}$. A basis of $\Phi$ is
$\Delta:=\{\alpha_1,\ldots,\alpha_{n-1}\}$, where
$\alpha_i=\varepsilon_i-\varepsilon_{i+1}$.

A subset $I\subset\Delta$ is equivalent to the datum of a sequence
of integers $\underline{d}=(d_0=0<d_1<\ldots<d_k=n)$ such that
$I=I_{\underline{d}}:=\{\alpha_i:i\in\{1,\ldots,n\}\setminus\{d_1,\ldots,d_k\}\}$.
Then, the standard parabolic subalgebra
$\mathfrak{p}_{\underline{d}}:=\mathfrak{p}_I$ is the space of
blockwise upper triangular matrices of trace zero, whose blocks along
the diagonal have respective sizes $d_p-d_{p-1}$ (for $p=1,\ldots,k$). The standard parabolic group $P_{\underline{d}}$ is the subgroup
of $SL_n(\mathbb{C})$ with the same form.

Set $V_p=\langle v_i:1\leq i\leq d_p\rangle_\mathbb{C}$ and
$F=(V_0,\ldots,V_k)$. Then, we have
$\mathfrak{p}_{\underline{d}}=\mathfrak{p}_F$ and
$P_{\underline{d}}=P_F$, where $\mathfrak{p}_F$ and $P_F$ are the
parabolic subalgebra and the parabolic subgroup corresponding to $F$
in the sense of Proposition \ref{proposition-classical-1}\,{\rm
(a)}.

\subsubsection{Type $C$ case}
\label{section-classical-1-4-3}
The space $V=\mathbb{C}^{2n}$ is endowed with a symplectic form
$\omega$. There exists a basis $(v_1,\ldots,v_{2n})$ of $V$ such
that $\omega(v_i,v_{2n+1-i})=1=-\omega(v_{2n+1-i,i})$ for
$i\in\{1,\ldots,n\}$ and $\omega(v_i,v_j)=0$ for any other couple
$(i,j)$. Via the basis $(v_1,\ldots,v_{2n})$, we identify
$Sp(V,\omega)$ with the subgroup $Sp_{2n}(\mathbb{C}):=\{g\in
SL_{2n}(\mathbb{C}):{}^tgK_ng=K_n\}$, where
\[
K_n=\left(\begin{array}{cc} 0 & J_n \\ -J_n & 0
\end{array}\right).
\]
In turn, $\mathfrak{sp}(V,\omega)$ identifies with the Lie algebra
$\mathfrak{sp}_{2n}(\mathbb{C}):=\{x\in\mathfrak{sl}_{2n}(\mathbb{C}):{}^txK_n+K_nx=0\}$.
Thus, an element $x\in\mathfrak{sp}_{2n}(\mathbb{C})$ is a matrix of
the form $x=\left(\begin{array}{cc} A & B \\ C & D
\end{array}\right)$
with $A,B,C,D\in\mathcal{M}_n(\mathbb{C})$, $B={}^\delta B$,
$C={}^\delta C$, and $D=-{}^\delta A$.

A Cartan subalgebra
$\mathfrak{t}\subset\mathfrak{sp}_{2n}(\mathbb{C})$ is formed by the
diagonal matrices of $\mathfrak{sp}_{2n}(\mathbb{C})$. The root system
$\Phi=\Phi(\mathfrak{sp}_{2n}(\mathbb{C}),\mathfrak{t})$ consists of
the following roots: $\pm(\varepsilon_i\pm\varepsilon_j)$ (for
$1\leq i<j\leq n$) and $\pm 2\varepsilon_i$ (for $1\leq i\leq n$). A
root vector corresponding to $\varepsilon_i-\varepsilon_j$ is
$E_{i,j}-E_{2n-j+1,2n-i+1}$. Root vectors corresponding to
$\varepsilon_i+\varepsilon_j$ and $-(\varepsilon_i+\varepsilon_j)$
are respectively $E_{i,2n-j+1}+E_{j,2n-i+1}$ and its transpose. Root
vectors corresponding to $2\varepsilon_i$ and $-2\varepsilon_i$ are
respectively $E_{i,2n-i+1}$ and its transpose. A basis of $\Phi$ is
$\Delta=\{\alpha_1,\ldots,\alpha_n\}$ where
$\alpha_i=\varepsilon_i-\varepsilon_{i+1}$ for
$i\in\{1,\ldots,n-1\}$ and $\alpha_n=2\varepsilon_n$.

Any subset $I\subset\Delta$ can be written
$I=I_{\underline{d}}:=\{\alpha_i:i\in\{1,\ldots,n\}\setminus\{d_1,\ldots,d_{k-1}\}\}$
for a sequence $\underline{d}=(d_0=0<d_1<\ldots<d_{k-1}\leq n\}$.
Then, the parabolic subalgebra
$\mathfrak{p}_{\underline{d}}:=\mathfrak{p}_I$ is the space of
blockwise matrices of the form
\begin{equation}
\label{new-classical-2}
x(A,B,C)=\left(
\begin{array}{ccc|ccc}
A_{1,1} & \cdots & A_{1,k} & B_{1,k} & \cdots & B_{1,1} \\
      0 & \ddots & \vdots  & \vdots  &        & \vdots \\
    0   & 0      & A_{k,k} & B_{k,k} & \cdots & B_{k,1} \\
\hline
 0      & 0      & C_{k,k} & -{}^\delta A_{k,k} & \cdots & -{}^\delta A_{1,k} \\
  0     &  0     & 0       & 0                  & \ddots & \vdots \\
   0    &   0    & 0       &       0            & 0      & -{}^\delta A_{1,1}
\end{array}
\right)
\end{equation}
where $A_{p,q}\in
\mathcal{M}_{d_p-d_{p-1},d_q-d_{q-1}}(\mathbb{C})$ (using the
convention $d_k=n$), $B\in\mathcal{M}_{n}(\mathbb{C})$ and
$C_{k,k}\in\mathcal{M}_{n-d_{k-1}}(\mathbb{C})$ satisfy ${}^\delta
B=B$ and ${}^\delta C_{k,k}=C_{k,k}$. In the case where $d_{k-1}=n$,
the blocks $A_{p,k}$, $B_{p,k}$, $B_{k,p}$, and $C_{k,k}$ are empty.
The parabolic subgroup $P_{\underline{d}}:=P_I$ is formed by the blockwise upper triangular matrices of $Sp_{2n}(\mathbb{C})$ with the same frame.

For $p\in\{0,\ldots,k-1\}$, set $V_p=\langle v_i:1\leq i\leq
d_p\rangle_\mathbb{C}$. Thus,
$F:=(V_0,\ldots,V_{k-1})\in\mathcal{F}_{\underline{d}}^\omega$.
Then, we have $\mathfrak{p}_{\underline{d}}=\mathfrak{p}_F$ and
$P_{\underline{d}}=P_F$, where $\mathfrak{p}_F$ and $P_F$ correspond
to $F$ in the sense of Proposition
\ref{proposition-classical-2}\,{\rm (a)}.

\subsubsection{Types $B$ and $D$ cases}
\label{section-classical-1-4-4}
The space $V=\mathbb{C}^m$
is endowed with a nondegenerate symmetric bilinear form $\omega$.
There is a basis $(v_1,\ldots,v_m)$ of $V$ such that
\begin{equation}
\label{newnewnew-17}
\omega(v_i,v_j)=1\ \mbox{ if $i+j=m+1$ \ and }\ \omega(v_i,v_j)=0\ \ \mbox{otherwise}.
\end{equation}
Through the basis
$(v_1,\ldots,v_m)$, the group $SO(V,\omega)$ identifies with the
group of matrices $SO_m(\mathbb{C}):=\{g\in
SL_m(\mathbb{C}):{}^tgJ_mg=J_m\}$. Its Lie algebra is the orthogonal
Lie algebra
$\mathfrak{so}_m(\mathbb{C})=\{x\in\mathfrak{sl}_m(\mathbb{C}):-{}^\delta
x=x\}$, formed by matrices which are antisymmetric by the
antidiagonal.

A Cartan subalgebra
$\mathfrak{t}\subset\mathfrak{so}_{m}(\mathbb{C})$ is formed by the
diagonal matrices of $\mathfrak{so}_{m}(\mathbb{C})$. Let $n=\lfloor\frac{m}{2}\rfloor$. The root
system $\Phi=\Phi(\mathfrak{so}_m(\mathbb{C}),\mathfrak{t})$
consists of the following roots:
$\pm(\varepsilon_i\pm\varepsilon_j)$ (for $1\leq i<j\leq n$), and
$\pm\varepsilon_i$ (for $1\leq i\leq n$ but only in the case where
$m$ is odd, i.e., $m=2n+1$). A root vector corresponding to
$\varepsilon_i-\varepsilon_j$ is $E_{i,j}-E_{m-j+1,m-i+1}$. Root
vectors corresponding to $\varepsilon_i+\varepsilon_j$ and
$-(\varepsilon_i+\varepsilon_j)$ are $E_{i,m-j+1}-E_{j,m-i+1}$ and
its transpose. Root vectors corresponding to $\varepsilon_i$ and
$-\varepsilon_i$ (in the case $m=2n+1$) are $E_{i,n+1}-E_{n+1,m-i+1}$
and its transpose. A basis of $\Phi$ is
$\Delta=\{\alpha_1,\ldots,\alpha_n\}$ where
$\alpha_i=\varepsilon_i-\varepsilon_{i+1}$ (for $i\in\{1,\ldots,n-1\}$)
and $\alpha_n=\varepsilon_n$ (if $m=2n+1$) or
$\alpha_n=\varepsilon_{n-1}+\varepsilon_n$ (if $m=2n$).

A subset $I\subset \Delta$ can be written
$I=I_{\underline{d}}:=\{\alpha_i:i\in\{1,\ldots,n\}\setminus\{d_1,\ldots,d_{k-1}\}\}$
for a sequence $\underline{d}=(d_0=0<d_1<\ldots<d_{k-1}\leq n)$. 

In the case where $m$ is even (i.e., $m=2n$), the group $SO(V,\omega)$ and the Lie algebra
$\mathfrak{so}(V,\omega)$ can as well be identified to $SO_m(\mathbb{C})$ and $\mathfrak{so}_m(\mathbb{C})$ through the basis $(v_1,\ldots,v_{n-1},v_{n+1},v_n,v_{n+2},\ldots,v_{2n})$
(obtained by switching $v_n$ and $v_{n+1}$). This change of basis induces an automorphism of $\mathfrak{so}_m(\mathbb{C})$, which exchanges the simple roots $\alpha_n$ and $\alpha_{n-1}$.
Thereby, up to invoking this automorphism, we may assume without loss of generality that
the set $I_{\underline{d}}$ fulfills the property: $\alpha_n\in I_{\underline{d}}$ $\Rightarrow$ $\alpha_{n-1}\in I_{\underline{d}}$. In other words,
\begin{equation}
\label{assumption-D}
\mbox{in the case where $m$ is even, we may assume that $d_{k-1}\not=n-1$.}
\end{equation}

In the general case (with the assumption made in (\ref{assumption-D})),
the parabolic subalgebra
$\mathfrak{p}_{\underline{d}}:=\mathfrak{p}_I$ is then the subalgebra of
blockwise upper triangular matrices of the form
\begin{equation}
\label{new-classical-3}
x(B)=\left( \begin{array}{cccccc} B_{1,2k-1} & \cdots & B_{1,k} &
\cdots & B_{1,1} \\
0 & \ddots & \vdots & & \vdots \\
0 & 0 & B_{k,k} & \cdots & B_{k,1} \\
0 & \ddots & 0 & \ddots & \vdots \\
0 & 0 & 0 & 0 & B_{2k-1,1}
\end{array} \right)
\end{equation}
where the diagonal blocks $B_{p,2k-p}$ have sizes $d_p-d_{p-1}$ for $p\in\{1,\ldots,k-1\}$ and where the
full matrix is antisymmetric with respect to the antidiagonal, i.e.,
$-{}^\delta B=B$, in particular we have $B_{2k-p,p}=-{}^\delta
B_{p,2k-p}$ for all $p$. In particular, the diagonal block $B_{k,k}$ has size $m-2d_{k-1}$. In the case where $d_{k-1}=n$ and $m$ is even (i.e., $m=2n$), the blocks
$B_{p,k}$ and $B_{k,q}$ are all empty. The parabolic group
$P_{\underline{d}}:=P_I$ is the subgroup of $SO_m(\mathbb{C})$ with
the same frame.

For $p\in\{0,\ldots,k-1\}$, set $V_p=\langle v_i:1\leq i\leq
d_p\rangle_\mathbb{C}$. Then, letting $F=(V_0,\ldots,V_{k-1})$, we
have $F\in\mathcal{F}_{\underline{d}}^\omega$, and it is clear that
$\mathfrak{p}_{\underline{d}}=\mathfrak{p}_F$ and
$P_{\underline{d}}=P_F$, where $\mathfrak{p}_F$ and $P_F$ are as in
Proposition \ref{proposition-classical-2}\,{\rm (a)}.

\begin{remark}
\label{remark-a-faire}
If $d_{k-1}<\frac{m}{2}$, then $F$ is the only flag such that $P_{\underline{d}}=P_F$.
Assume now that $m=2n$ and $d_{k-1}=n$.
Let $\phi:V\to V$ be the linear automorphism such that $\phi(v_i)=v_i$ for all $i\notin\{n,n+1\}$, $\phi(v_n)=v_{n+1}$, and $\phi(v_{n+1})=v_n$.
Set $\tilde{V}_p=\phi(V_p)$ and $\tilde{F}=(\tilde{V}_0,\ldots,\tilde{V}_{k-1})$.
Then, we have $\tilde{F}\in\mathcal{F}_{\underline{d}}^\omega$ and it is readily seen that $P_{\underline{d}}=P_{\tilde{F}}$. As mentioned in Remark \ref{newnew-remark-3}, $F$ and $\tilde{F}$ are the only two flags such that $P_{\underline{d}}=P_F=P_{\tilde{F}}$.
\end{remark}

\section{Parabolic ideals and classical form of the variety $\mathcal{P}_{e,\mathfrak{i}}$}

\label{section-7}
\label{section-classical-2}

Let $G$ be a reductive connected group over $\mathbb{C}$. Apart from a parabolic subgroup $P\subset G$, its Lie algebra $\mathfrak{p}\subset\mathfrak{g}$, and a nilpotent element $e\in\mathfrak{g}$, the definition of the variety $\mathcal{P}_{e,\mathfrak{i}}$ studied in this paper also involves a $P$-stable subspace $\mathfrak{i}\subset\mathfrak{p}$.\ The purpose of this section is to describe the form taken by such subspaces $\mathfrak{i}$. In particular, the following fact will be noticed (see Section \ref{section-classical-2-2}).

\begin{lemma}
\label{lemma-classical-1}
Given a linear subspace $\mathfrak{i}\subset\mathfrak{p}$, the following conditions are equivalent: \\
{\rm (i)} $\mathfrak{i}$ is stable by the adjoint action of $P$ (i.e., $P\cdot\mathfrak{i}\subset\mathfrak{i}$); \\
{\rm (ii)} $\mathfrak{i}$ is an ideal of $\mathfrak{p}$ (i.e., $[\mathfrak{p},\mathfrak{i}]\subset\mathfrak{i}$).
\end{lemma}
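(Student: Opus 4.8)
The plan is to prove this by a standard "connected group versus its Lie algebra" argument, exploiting that $P$ is connected. The implication (i) $\Rightarrow$ (ii) is the easy direction: if $\mathfrak{i}$ is $P$-stable, then differentiating the orbit map $p \mapsto p \cdot x$ at the identity of $P$ shows that for $y \in \mathfrak{p}$ (viewed as a tangent vector) and $x \in \mathfrak{i}$ one has $[y,x] = \frac{d}{dt}\big|_{t=0}(\exp(ty)\cdot x) \in \mathfrak{i}$, since $\exp(ty)\cdot x \in \mathfrak{i}$ for all $t$ and $\mathfrak{i}$ is a closed (hence differentiation-stable) linear subspace. So $[\mathfrak{p},\mathfrak{i}] \subset \mathfrak{i}$, which is (ii).

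For (ii) $\Rightarrow$ (i), I would argue that the stabilizer subgroup $\mathrm{Stab}_P(\mathfrak{i}) := \{p \in P : p\cdot\mathfrak{i} = \mathfrak{i}\}$ is a closed subgroup of $P$ whose Lie algebra, by the general theory (e.g. \cite[\S 4 and \S 5]{Springer-book} on stabilizers of subspaces under rational actions), contains all $y \in \mathfrak{p}$ with $[y,\mathfrak{i}] \subset \mathfrak{i}$, hence contains all of $\mathfrak{p}$ by hypothesis (ii). Since $\dim \mathrm{Stab}_P(\mathfrak{i}) = \dim \mathfrak{p} = \dim P$ and $P$ is connected, it follows that $\mathrm{Stab}_P(\mathfrak{i}) = P$, i.e. $\mathfrak{i}$ is $P$-stable. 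Alternatively and more directly: $P$ is generated by a maximal torus $T$ and the root subgroups $U_\alpha$ for $\alpha$ in the appropriate set of roots; one checks that $\mathfrak{i}$, being an ideal of $\mathfrak{p}$, is stable under $\mathrm{ad}(\mathfrak{t})$ and hence is a sum of root spaces (plus possibly a piece of $\mathfrak{t}$), so $T\cdot\mathfrak{i}\subset\mathfrak{i}$ is clear; and for a root subgroup $U_\alpha = \{u_\alpha(s) : s \in \mathbb{C}\}$ with $u_\alpha(s) = \exp(s e_\alpha)$, one has $u_\alpha(s)\cdot x = \sum_{j\geq 0}\frac{s^j}{j!}(\mathrm{ad}\,e_\alpha)^j(x)$, and each term $(\mathrm{ad}\,e_\alpha)^j(x) \in \mathfrak{i}$ because $e_\alpha \in \mathfrak{p}$ and $\mathfrak{i}$ is an ideal; thus $U_\alpha\cdot\mathfrak{i}\subset\mathfrak{i}$, and since these subgroups generate $P$ we conclude $P\cdot\mathfrak{i}\subset\mathfrak{i}$.

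The main (and really the only) subtlety is making sure the exponential-series manipulation is legitimate: one must note that since $e_\alpha$ is a nilpotent element of $\mathfrak{g}$, the operator $\mathrm{ad}\,e_\alpha$ is nilpotent, so the series for $u_\alpha(s)\cdot x = \mathrm{Ad}(\exp(s e_\alpha))(x) = \exp(s\,\mathrm{ad}\,e_\alpha)(x)$ is actually a finite sum and there are no convergence issues; this is where it matters that we work over $\mathbb{C}$ (characteristic zero) so that $\exp$ and $\mathrm{Ad}$ behave as expected. Everything else is routine, and the equivalence follows.
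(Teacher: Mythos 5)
Your proof is correct; both of your arguments for (ii) $\Rightarrow$ (i) are valid over $\mathbb{C}$, and they differ from the paper's in instructive ways. The paper also reduces to generators of $P$: it checks (see (\ref{classical-5})) that the torus $T$ and the root subgroups $U_\beta$, $\beta\in\Delta\cup(-I)$, stabilize $\mathfrak{i}$, but it does so by a case analysis based on the weight decomposition (\ref{classical-2}): for $h\in\mathfrak{t}\cap\mathfrak{i}$ it uses $U_\beta\cdot h\subset h+\mathfrak{g}_\beta$, and for a root space $\mathfrak{g}_\alpha\subset\mathfrak{i}$ it uses the root-string estimate $U_\beta\cdot\mathfrak{g}_\alpha\subset\sum_{i\geq 0}\mathfrak{g}_{\alpha+i\beta}$ from Steinberg's book, with the degenerate case $\beta=-\alpha$ handled separately via Lemmas \ref{lemma-classical-2} and \ref{lemma-classical-4} (there $\mathfrak{g}_\alpha\subset\mathfrak{l}_i\subset\mathfrak{i}$ and $\mathfrak{l}_i$ is $U_\beta$-stable). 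Your second argument replaces this case analysis by the single identity $\mathrm{Ad}(\exp(se_\alpha))=\exp(s\,\mathrm{ad}\,e_\alpha)$, a finite sum because $\mathrm{ad}\,e_\alpha$ is nilpotent, all of whose terms $(\mathrm{ad}\,e_\alpha)^j(x)$ lie in $\mathfrak{i}$ since $e_\alpha\in\mathfrak{p}$ and $\mathfrak{i}$ is an ideal; this is uniform in $x\in\mathfrak{i}$, uses the weight decomposition only to get $T$-stability, and dispenses with the auxiliary lemmas — the price being the use of the exponential, which is harmless in characteristic zero. Your first argument (the stabilizer $\mathrm{Stab}_P(\mathfrak{i})$ is closed with Lie algebra $\{y\in\mathfrak{p}:[y,\mathfrak{i}]\subset\mathfrak{i}\}=\mathfrak{p}$, hence equals $P$ by connectedness) is a genuinely different and shorter route; it rests on the characteristic-zero fact that such stabilizers are smooth with the expected Lie algebra, which is exactly the ingredient that would fail in positive characteristic, whereas the paper's hands-on computation with root strings is closer to being characteristic-free. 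The direction (i) $\Rightarrow$ (ii) is the same as in the paper (differentiation of the orbit map).
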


In the classical cases, we can propose a description of the ideals $\mathfrak{i}$ based on the elementary form of the parabolic subgroups and subalgebras in terms of automorphisms and endomorphisms preserving a given partial flag (see Propositions \ref{proposition-classical-1}--\ref{proposition-classical-2}).
Relying on the interpretation of partial flag varieties as varieties of partial flags, we also deduce an elementary description of the corresponding variety $\mathcal{P}_{e,\mathfrak{i}}$.\ The first statement focuses on the type $A$ case.

\begin{proposition}
\label{proposition-classical-3}
Let $G=SL(V)$ where $V=\mathbb{C}^n$. We consider a partial flag
$F=(V_0=0\subset V_1\subset\ldots\subset V_k=V)\in\mathcal{F}_{\underline{d}}$, the corresponding parabolic subgroup $P=P_F\subset SL(V)$, and the Lie algebra $\mathfrak{p}=\mathfrak{p}_F\subset\mathfrak{sl}(V)$ (see Proposition \ref{proposition-classical-1}\,{\rm (a)}).
Let $e\in\mathfrak{sl}(V)$ be nilpotent (a nilpotent endomorphism of $V$).
\\
{\rm (a)}  Given a sequence of integers $\underline{c}=(c_0\leq\ldots\leq c_k)$ such that $0\leq c_p\leq p$ for all $p$, the space
\[\mathfrak{i}^F_{\underline{c}}:=\{x\in\mathfrak{sl}(V):x(V_p)\subset V_{c_p}\ \forall p=0,\ldots,k\}\]
is a $P$-stable subspace of $\mathfrak{p}$. The map $gP\mapsto g(F)$ is an isomorphism between the variety $\mathcal{P}_{e,\mathfrak{i}^F_{\underline{c}}}$ and the variety
\[\mathcal{F}_{e,\underline{d},\underline{c}}:=\{(W_0,\ldots,W_k)\in\mathcal{F}_{\underline{d}}:e(W_p)\subset W_{c_p}\ \forall p=0,\ldots,k\}.\]
{\rm (b)} Moreover, any ideal is essentially of this form in the sense that: for any $P$-stable subspace $\mathfrak{i}\subset\mathfrak{p}$, we can find a sequence $\underline{c}$ such that the sets of nilpotent elements of $\mathfrak{i}$ and $\mathfrak{i}^F_{\underline{c}}$ coincide.\ This implies $\mathcal{P}_{e,\mathfrak{i}}=\mathcal{P}_{e,\mathfrak{i}^F_{\underline{c}}}$.
\end{proposition}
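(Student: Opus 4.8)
\emph{Part (a)} is a direct verification. From $0\le c_p\le p$ we get $V_{c_p}\subseteq V_p$, so $x(V_p)\subseteq V_{c_p}$ forces $x(V_p)\subseteq V_p$; hence $\mathfrak i^F_{\underline c}\subseteq\mathfrak p_F=\mathfrak p$. If $g\in P=P_F$ then $g(V_j)=V_j$ for all $j$, so for $x\in\mathfrak i^F_{\underline c}$ one has $(g\cdot x)(V_p)=g\,x(V_p)\subseteq g(V_{c_p})=V_{c_p}$; thus $\mathfrak i^F_{\underline c}$ is $P$-stable. For the claimed isomorphism I would take the isomorphism $gP_F\mapsto g(F)$ of Proposition~\ref{proposition-classical-1}\,(b) and note that, writing $g(F)=(W_0,\dots,W_k)$ with $W_p=g(V_p)$, the condition $g^{-1}\cdot e\in\mathfrak i^F_{\underline c}$ is exactly $g^{-1}eg(V_p)\subseteq V_{c_p}$ for all $p$, i.e.\ $e(W_p)\subseteq W_{c_p}$ for all $p$; so the isomorphism restricts to an isomorphism $\mathcal P_{e,\mathfrak i^F_{\underline c}}\xrightarrow{\,\sim\,}\mathcal F_{e,\underline d,\underline c}$.

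\emph{For part (b)}, the first move is to rephrase the hypothesis: by Lemma~\ref{lemma-classical-1} a $P$-stable $\mathfrak i\subseteq\mathfrak p$ is the same as an ideal of $\mathfrak p$. The key construction is to set
\[ W_p:=\sum_{x\in\mathfrak i\cap\mathcal N}x(V_p)\qquad(p=0,\dots,k), \]
the span of the images of the maps $x|_{V_p}$ taken over the \emph{nilpotent} elements of $\mathfrak i$; taking the sum over all of $\mathfrak i$ would be too coarse (for instance it would not detect that a central element of a Levi factor of $\mathfrak p$ carries no nilpotents). Since $\mathfrak i\cap\mathcal N$ is stable under conjugation by $P$ and each $V_p$ is $P$-stable, $W_p$ is a $P$-submodule of $V$ contained in $V_p$; since the composition factors $V_j/V_{j-1}$ of $V$ are pairwise non-isomorphic $P$-modules, the $P$-submodules of $V$ are exactly the $V_j$, so $W_p=V_{c_p}$ for a unique $c_p$. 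One then checks at once that $\underline c=(c_0,\dots,c_k)$ is nondecreasing with $0\le c_p\le p$ and that $\mathfrak i\cap\mathcal N\subseteq\mathfrak i^F_{\underline c}$ by construction, hence $\mathfrak i\cap\mathcal N\subseteq\mathfrak i^F_{\underline c}\cap\mathcal N$.

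The heart of the matter is the reverse inclusion $\mathfrak i^F_{\underline c}\cap\mathcal N\subseteq\mathfrak i$. Fixing a decomposition $V=U_1\oplus\dots\oplus U_k$ with $V_p=U_1\oplus\dots\oplus U_p$, write $\mathfrak p=\mathfrak l\oplus\mathfrak n$ for the corresponding Levi decomposition. Since the center of $\mathfrak l$ separates the diagonal part $\mathfrak l$ from the individual off-diagonal blocks $\mathrm{Hom}(U_q,U_r)$ ($r<q$), and each such block is an irreducible $\mathfrak l$-module, $\mathfrak i$ is the direct sum of $\mathfrak i\cap\mathfrak l$ with the blocks it contains; and the ideal property propagates inclusions (bracketing $\mathrm{Hom}(U_q,U_r)\subseteq\mathfrak i$ with a suitable block of $\mathfrak n$ yields $\mathrm{Hom}(U_q,U_{r'})\subseteq\mathfrak i$ for $r'<r$, and $\mathfrak{sl}(U_j)\subseteq\mathfrak i$ yields $\mathrm{Hom}(U_q,U_j)\subseteq\mathfrak i$ for all $q>j$). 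Comparing this with the explicit block form of $\mathfrak i^F_{\underline c}$ — the $(r,q)$-block of $x$ vanishes whenever $r>c_q$ — one deduces that $c_j=j$ forces $\mathfrak{sl}(U_j)\subseteq\mathfrak i$, and that $r<q$ together with $r\le c_q$ forces $\mathrm{Hom}(U_q,U_r)\subseteq\mathfrak i$. Given now a nilpotent $y\in\mathfrak i^F_{\underline c}$, decompose $y=y_{\mathfrak l}+y_{\mathfrak n}$ along $\mathfrak p=\mathfrak l\oplus\mathfrak n$: the block-diagonal part $y_{\mathfrak l}$ is itself nilpotent, hence a sum of nilpotent elements of the $\mathfrak{sl}(U_j)$ with $c_j=j$, so $y_{\mathfrak l}\in\mathfrak i$, while $y_{\mathfrak n}\in\mathfrak i^F_{\underline c}\cap\mathfrak n\subseteq\mathfrak i$; hence $y\in\mathfrak i$. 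Therefore $\mathfrak i\cap\mathcal N=\mathfrak i^F_{\underline c}\cap\mathcal N$; and since every $g^{-1}\cdot e$ is nilpotent, the conditions $g^{-1}\cdot e\in\mathfrak i$ and $g^{-1}\cdot e\in\mathfrak i^F_{\underline c}$ are equivalent, giving $\mathcal P_{e,\mathfrak i}=\mathcal P_{e,\mathfrak i^F_{\underline c}}$.

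The main obstacle is precisely this last comparison of $\mathfrak i$ with the ``coordinate'' ideal $\mathfrak i^F_{\underline c}$, together with the attendant edge cases ($\dim U_j=1$, the central part of $\mathfrak l$, and the global trace-zero constraint distinguishing $\mathfrak{sl}(V)$ from $\mathfrak{gl}(V)$); the point that makes the argument go through is building $\underline c$ from the nilpotent elements of $\mathfrak i$ rather than from $\mathfrak i$ itself.
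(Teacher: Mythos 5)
Your part (a) is the same easy verification the paper leaves implicit, and it is fine. For part (b) you take a genuinely different route. The paper (Section \ref{section-classical-2-3}) relies on the general structure result Lemma \ref{lemma-classical-4} — an ideal of $\mathfrak p$ is its central part plus a sum of elementary ideals $\mathfrak p(\mathfrak g_\alpha)$, and two ideals agreeing modulo the center of the Levi have the same nilpotent elements — then computes via Lemmas \ref{lemma-classical-2} and \ref{lemma-classical-3} that each $\mathfrak p(\mathfrak g_\alpha)$ equals some $\mathfrak i^F_{\underline c}$, and concludes with the sum rule (\ref{classical-6}); the payoff of that route is that the same machinery is reused verbatim for types $B$, $C$, $D$ in Proposition \ref{proposition-classical-4}. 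You instead define $\underline c$ intrinsically by $V_{c_p}=\sum_{x\in\mathfrak i\cap\mathcal N}x(V_p)$ and compare nilpotent loci directly by block computations; this is more elementary and pins down $\underline c$ canonically, but it uses type-$A$ features (the $P$-submodules of $V$ being exactly the $V_j$) and would not transport as it stands to the isotropic cases, where the self-duality $\underline c=\underline c^*$ and the two flags $F,\tilde F$ intervene. Your plan is correct, with two places to tighten. First, the fact that the only $P$-submodules of $V$ are the $V_j$ is true, but distinctness of the composition factors is not by itself a reason (a priori $V$ could split as a $P$-module); argue instead that an $L$-stable subspace is a sum of the $U_j$ and that stability under the unipotent radical forces downward closure. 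Second, in the deduction that $r<q$ and $r\le c_q$ imply $\mathrm{Hom}(U_q,U_r)\subseteq\mathfrak i$, the nilpotent witness $x\in\mathfrak i$ with $x(V_q)\not\subseteq V_{r-1}$ may contribute only through a nonzero nilpotent diagonal block $x_{q'q'}$ with $r\le q'\le q$; in that case one first obtains $\mathfrak{sl}(U_{q'})\subseteq\mathfrak i$ because $\mathfrak i\cap\mathfrak l$ is an ideal of the reductive Levi (simplicity of its factors, exactly as in the proof of Lemma \ref{lemma-classical-4}), and the bracket propagation must also include the variants that raise the source index, namely $[\mathrm{Hom}(U_{q'},U_r),\mathrm{Hom}(U_q,U_{q'})]$ and $[\mathfrak{sl}(U_q),\mathrm{Hom}(U_q,U_r)]$ — computations of exactly the kind you already list. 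Note finally that the edge case $c_j=j$ with $\dim U_j=1$ never occurs: the diagonal blocks of a nilpotent element of $\mathfrak p$ are nilpotent, so a $1\times 1$ block vanishes and then $W_j\subseteq V_{j-1}$; with that observation your treatment of $y_{\mathfrak l}$ is complete.
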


For stating an analogous result in the other three classical cases,
we need to introduce a piece of notation. Given
$F=(V_0,\ldots,V_{k-1})$ an isotropic partial flag of a space
$(V,\omega)$ equipped with a nondegenerate bilinear form, its {\it
completion}
$\overline{F}=(\overline{V}_0,\ldots,\overline{V}_{2k-1})$ is the
sequence given by
\[\overline{V}_p=V_p\ \mbox{ for $p\in\{0,\ldots,k-1\}$}\quad\mbox{and}\quad \overline{V}_p=V_{2k-1-p}^\perp\ \mbox{ for $p\in\{k,\ldots,2k-1\}$}.\]
If $\underline{c}=(c_0\leq \ldots\leq c_{2k-1})$ is a sequence of integers with $0\leq c_p\leq p$ for all $p$, then we let $\underline{c}^*=(c_0^*\leq \ldots\leq c^*_{2k-1})$ be the {\it dual} sequence given by $c_p^*=|\{q=1,\ldots,2k-1:c_q\geq 2k-p\}|$.\ It also satisfies $0\leq c_p^*\leq p$ for all $p$.

\begin{proposition}
\label{proposition-classical-4} Let $G\subset SL(V)$ be the subgroup
of automorphisms which preserve a nondegenerate symmetric or
skew-symmetric bilinear form $\omega$ and let
$\mathfrak{g}\subset\mathfrak{sl}(V)$ be its Lie algebra. We
consider an isotropic partial flag
$F=(V_0,\ldots,V_{k-1})\in\mathcal{F}_{\underline{d}}^\omega$, the
parabolic subgroup $P=P_F\subset G$, and the Lie algebra
$\mathfrak{p}=\mathfrak{p}_F\subset\mathfrak{g}$ (see Proposition \ref{proposition-classical-2}\,{\rm (a)}). Let
$e\in\mathfrak{g}$  be nilpotent (a nilpotent antiadjoint endomorphism
of $V$).
\\
{\rm (a)} Given a sequence of integers $\underline{c}=(c_0\leq\ldots\leq c_{2k-1})$ such that $0\leq c_p\leq p$ for all $p$, the space
\[\mathfrak{i}^F_{\underline{c}}:=\{x\in\mathfrak{g}: x(\overline{V}_p)\subset\overline{V}_{c_p}\ \ \forall p=0,\ldots,2k-1\}\]
is a $P$-stable subspace of $\mathfrak{p}$. The map $gP\mapsto g(F)$ is an isomorphism between the variety $\mathcal{P}_{e,\mathfrak{i}^F_{\underline{c}}}$ and the variety
\[
\hat{\mathcal{F}}^\omega_{e,\underline{d},\underline{c}}:=\{(W_0,\ldots,W_{k-1})\in\hat{\mathcal{F}}_{\underline{d}}^\omega:e(\overline{W}_p)\subset \overline{W}_{c_p}\ \forall p=0,\ldots,2k-1\},
\]
where $\hat{\mathcal{F}}_{\underline{d}}^\omega$ denotes the
connected component of $\mathcal{F}_{\underline{d}}^\omega$
containing $F$ (we have
$\hat{\mathcal{F}}_{\underline{d}}^\omega=\mathcal{F}_{\underline{d}}^\omega$
unless $\omega$ is symmetric and $d_{k-1}=\frac{\dim V}{2}$).
\\
{\rm (b)} 
For every $P$-stable subspace $\mathfrak{i}\subset\mathfrak{p}$, there is a sequence $\underline{c}$ as above, with $\underline{c}^*=\underline{c}$, such that one of the following situations occurs:
\begin{itemize}
\item[\rm (i)] $\omega$ is symplectic or $d_{k-2}<\frac{\dim V}{2}-1$, and the sets of nilpotent elements of $\mathfrak{i}$ and $\mathfrak{i}^F_{\underline{c}}$ coincide.\ Thus, $\mathcal{P}_{e,\mathfrak{i}}=\mathcal{P}_{e,\mathfrak{i}^F_{\underline{c}}}$. \item[\rm (ii)] $\omega$ is symmetric and $d_{k-2}=\frac{\dim V}{2}-1$ (and so $d_{k-1}=\frac{\dim V}{2}$), and the set of nilpotent elements of $\mathfrak{i}$
coincides with the one of $\mathfrak{i}_{\underline{c}}^F$ or the one of $\mathfrak{i}_{\underline{c}}^{\tilde{F}}$, 
where $\tilde{F}\in\mathcal{F}_{\underline{d}}^\omega$ is the unique element different of $F$ satisfying $P=P_{\tilde{F}}$ (see Remark \ref{newnew-remark-3}\,{\rm (b)}). Thus, $\mathcal{P}_{e,\mathfrak{i}}=\mathcal{P}_{e,\mathfrak{i}_{\underline{c}}^F}$ or $\mathcal{P}_{e,\mathfrak{i}}=\mathcal{P}_{e,\mathfrak{i}_{\underline{c}}^{\tilde{F}}}$.
\end{itemize} 

\end{proposition}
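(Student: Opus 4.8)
The plan is to settle (a) by direct verification and to reduce (b) to a statement about root subspaces, which is then established by bracket computations with elementary matrices; the main work lies in this last step in the symplectic and orthogonal cases.

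For (a): since $0\le c_p\le p$ we have $\overline{V}_{c_p}\subseteq\overline{V}_p$, so every $x\in\mathfrak{i}^F_{\underline{c}}$ preserves each $\overline{V}_p$, in particular each $V_q$ with $q\le k-1$; hence $\mathfrak{i}^F_{\underline{c}}\subseteq\mathfrak{p}_F=\mathfrak{p}$. If $g\in P_F$ then $g$ preserves each $V_q$ and, being $\omega$-isometric, each $V_q^\perp$, hence each term of $\overline{F}$; it follows immediately that $g\cdot x\in\mathfrak{i}^F_{\underline{c}}$ for $x\in\mathfrak{i}^F_{\underline{c}}$, so $\mathfrak{i}^F_{\underline{c}}$ is $P$-stable. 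For the identification of $\mathcal{P}_{e,\mathfrak{i}^F_{\underline{c}}}$, set $W_p=g(V_p)$; as $g$ is $\omega$-isometric one has $g(\overline{V}_p)=\overline{W}_p$ for all $p$, so $gP\in\mathcal{P}_{e,\mathfrak{i}^F_{\underline{c}}}$ is equivalent to $e(\overline{W}_p)\subseteq\overline{W}_{c_p}$ for all $p$; thus the isomorphism $gP_F\mapsto g(F)$ of Proposition~\ref{proposition-classical-2} carries $\mathcal{P}_{e,\mathfrak{i}^F_{\underline{c}}}$ onto $\hat{\mathcal{F}}^\omega_{e,\underline{d},\underline{c}}$. (Proposition~\ref{proposition-classical-3}\,{\rm (a)} is the same argument with $\overline{V}_p$ replaced by $V_p$.)

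For (b), let $\mathfrak{i}\subseteq\mathfrak{p}$ be $P$-stable; by Lemma~\ref{lemma-classical-1} it is an ideal, hence stable under $\mathrm{ad}\,\mathfrak{t}$, so $\mathfrak{i}=(\mathfrak{i}\cap\mathfrak{t})\oplus\bigoplus_{\alpha\in\Psi}\mathfrak{g}_\alpha$ for some set $\Psi$ of roots; put $\Psi^+=\Psi\cap\Phi^+$. I first note that $\mathcal{N}\cap\mathfrak{i}$ is governed by $\Psi^+$ alone: every nilpotent element of $\mathfrak{p}$ lies in the nilradical of a Borel subalgebra of $\mathfrak{p}$, and all such are $P$-conjugate to $\mathfrak{b}_0:=\mathfrak{t}\oplus\bigoplus_{\alpha\in\Phi^+}\mathfrak{g}_\alpha$, so $\mathcal{N}\cap\mathfrak{p}=P\cdot\mathfrak{n}_{\mathfrak{b}_0}$; since $\mathfrak{i}$ is $P$-stable this gives $\mathcal{N}\cap\mathfrak{i}=P\cdot\bigoplus_{\alpha\in\Psi^+}\mathfrak{g}_\alpha$, and in particular, for all $p,q$, the condition ``$n(\overline{V}_p)\subseteq\overline{V}_q$ for every $n\in\mathcal{N}\cap\mathfrak{i}$'' is equivalent to ``$x_\alpha(\overline{V}_p)\subseteq\overline{V}_q$ for every root vector $x_\alpha$ with $\alpha\in\Psi^+$'' (the action of $P$ fixes each $\overline{V}_p$ and so does not affect the containment). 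Now define $c_p:=\min\{q:n(\overline{V}_p)\subseteq\overline{V}_q\ \text{for all}\ n\in\mathcal{N}\cap\mathfrak{i}\}$ (for type $A$, replace $\overline{V}_p$ by $V_p$ and let $p$ run over $\{0,\dots,k\}$). Monotonicity of the chain $(\overline{V}_p)$ yields $c_0\le c_1\le\cdots$; the inclusion $\mathfrak{i}\subseteq\mathfrak{p}$ together with antiadjointness (passing from $x(V_q)\subseteq V_q$ to $x(V_q^\perp)\subseteq V_q^\perp$) yields $c_p\le p$; and in the symplectic and orthogonal cases the relation $\overline{V}_q^\perp=\overline{V}_{2k-1-q}$ together with antiadjointness shows the system of constraints ``$x(\overline{V}_p)\subseteq\overline{V}_{c_p}$'' is invariant under $p\mapsto 2k-1-p$, which after replacing $\underline{c}$ by the minimal equivalent sequence amounts to $\underline{c}^*=\underline{c}$.

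It remains to prove $\mathcal{N}\cap\mathfrak{i}=\mathcal{N}\cap\mathfrak{i}^F_{\underline{c}}$. The inclusion $\subseteq$ is immediate from the definition of $\underline{c}$. For the reverse inclusion, applying the observation above to the ideal $\mathfrak{i}^F_{\underline{c}}$ as well, it suffices to show: if $\alpha\in\Phi^+$ and $\mathfrak{g}_\alpha\subseteq\mathfrak{i}^F_{\underline{c}}$, then $\mathfrak{g}_\alpha\subseteq\mathfrak{i}$. Fix such an $\alpha$; minimality of the integers $c_p$ provides witnesses $n^{(p)}\in\mathcal{N}\cap\mathfrak{i}$ whose action on $\overline{V}_p$ reaches the layer $\overline{V}_{c_p}/\overline{V}_{c_p-1}$, and since the basis vector moved by $x_\alpha$ sits in exactly such a layer one recovers $x_\alpha\in\mathfrak{i}$ by bracketing the appropriate $n^{(p)}$ with suitable elements of the Levi factor and the nilradical of $\mathfrak{p}$ (which lie in $\mathfrak{p}$, hence preserve $\mathfrak{i}$). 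For $SL(V)$ this is a short computation using $[E_{a,b},E_{b,c}]=E_{a,c}$; for $Sp(V,\omega)$ and $SO(V,\omega)$ the same mechanism works but one must carry along the antidiagonal symmetry of the root vectors, and in the case where $\omega$ is symmetric and $d_{k-1}=\frac{\dim V}{2}$ one must also account for $P_F=P_{\tilde{F}}$ and for the fact that $\overline{F}$ and $\overline{\tilde{F}}$ differ in their middle terms: when $d_{k-2}<\frac{\dim V}{2}-1$ the block $V_{k-1}/V_{k-2}$ at the Lagrangian level leaves enough room for the extraction to go through with respect to $\overline{F}$, giving case (i); when $d_{k-2}=\frac{\dim V}{2}-1$ that block has size one and one of the two ``middle'' root vectors may fail to lie in $\mathfrak{i}$, so $\mathcal{N}\cap\mathfrak{i}$ coincides with the nilpotent set of $\mathfrak{i}^F_{\underline{c}}$ or of $\mathfrak{i}^{\tilde{F}}_{\underline{c}}$ but not necessarily of both, giving case (ii). The equality $\mathcal{P}_{e,\mathfrak{i}}=\mathcal{P}_{e,\mathfrak{i}^F_{\underline{c}}}$ (resp.\ $=\mathcal{P}_{e,\mathfrak{i}^{\tilde{F}}_{\underline{c}}}$) then follows since $g^{-1}\cdot e$ is nilpotent, so membership in $\mathcal{P}_{e,\mathfrak{j}}$ depends only on $\mathcal{N}\cap\mathfrak{j}$. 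The expected main obstacle is this final extraction step in the symplectic and orthogonal cases, together with the type $D$ bookkeeping it requires.
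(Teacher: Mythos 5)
Your part (a) is fine and matches the paper's (the paper treats it as an immediate consequence of Proposition \ref{proposition-classical-2}), and your reduction framework for (b) is sound: decomposing $\mathfrak{i}$ into $\mathfrak{t}$-weight spaces, observing that $\mathcal{N}\cap\mathfrak{i}=P\cdot\bigoplus_{\alpha\in\Psi^+}\mathfrak{g}_\alpha$ so that only positive root spaces matter, defining $c_p$ minimally, and getting $\underline{c}=\underline{c}^*$ from the symmetry $(p,q)\mapsto(2k-1-q,2k-1-p)$ of the constraints is all correct and parallels the paper's use of Lemma \ref{lemma-classical-4}. The inclusion $\mathcal{N}\cap\mathfrak{i}\subseteq\mathcal{N}\cap\mathfrak{i}^F_{\underline{c}}$ is indeed immediate from your definition of $\underline{c}$.

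The genuine gap is the reverse inclusion, which is the entire content of the proposition and which you only assert: ``one recovers $x_\alpha\in\mathfrak{i}$ by bracketing the appropriate $n^{(p)}$ with suitable elements of the Levi factor and the nilradical'' is not an argument. What must be shown is that the set $\Phi(\mathfrak{i})$ is saturated with respect to the constraints encoded by $\underline{c}$, i.e.\ that every positive root $\alpha$ with $\mathfrak{g}_\alpha\subseteq\mathfrak{i}^F_{\underline{c}}$ actually lies in $\Phi(\mathfrak{i})$ — and this can genuinely fail in type $D$, which is exactly why case (ii) exists; so your intermediate claim, as stated, is false in that configuration, and your description of case (ii) merely restates the desired conclusion rather than proving that one of the two flags $F,\tilde{F}$ always works (why could $\Phi(\mathfrak{i})$ not mix the roots $\varepsilon_i\pm\varepsilon_n$ in a way incompatible with both?). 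The paper's proof supplies precisely this missing content: it computes, root by root, the smallest ideal $\mathfrak{p}(\mathfrak{g}_\alpha)$ containing each root space (Lemmas \ref{lemma-classical-2}--\ref{lemma-classical-3} and Sections \ref{section-7-4-1}--\ref{section-7-4-2}), shows each is $\mathfrak{i}^F_{\underline{c}}$ for a self-dual sequence except for the type $D$ pairs $\varepsilon_i\mp\varepsilon_n$ where only the sum $\mathfrak{p}(\mathfrak{g}_\alpha)+\mathfrak{p}(\mathfrak{g}_{\tilde\alpha})$ has this form (relation (\ref{newnew-26})), checks that the family $\{\mathfrak{i}^F_{\underline{c}}:\underline{c}=\underline{c}^*\}$ is closed under sums (relation (\ref{classical-7})), and then resolves the type $D$ dichotomy by the $i_0\le j_0$ argument of (\ref{newnew-28})--(\ref{newnew-29}) together with the outer automorphism exchanging $F$ and $\tilde{F}$. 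Without an argument of this kind (or an explicit matrix computation replacing it, including the bookkeeping of which sequence to use relative to $\overline{\tilde{F}}$ when the ideal is only of $\tilde{F}$-type), your proof of (b) is incomplete at its decisive step.
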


\begin{remark}
\label{remark-x}
The variety $\hat{\mathcal{F}}^\omega_{e,\underline{d},\underline{c}}$ introduced in Proposition \ref{proposition-classical-4} is an open and closed subvariety of
\[
\mathcal{F}^\omega_{e,\underline{d},\underline{c}}:=\{(W_0,\ldots,W_{k-1})\in\mathcal{F}_{\underline{d}}^\omega:e(\overline{W}_p)\subset \overline{W}_{c_p}\ \forall p=0,\ldots,2k-1\}.
\]
In fact, the equality $\hat{\mathcal{F}}^\omega_{e,\underline{d},\underline{c}}=\mathcal{F}^\omega_{e,\underline{d},\underline{c}}$ holds unless $\omega$ is symmetric and $d_{k-1}=\frac{\dim V}{2}$.
\end{remark}

Propositions \ref{proposition-classical-3}--\ref{proposition-classical-4}\,{\rm (a)} are easy consequences of Propositions \ref{proposition-classical-1}--\ref{proposition-classical-2}.
The proofs of Propositions \ref{proposition-classical-3}--\ref{proposition-classical-4}\,{\rm (b)} will be given in Sections \ref{section-classical-2-3}--\ref{section-classical-2-4}. 

\subsection{Preliminaries}
In this section,
we show some general properties of the ideals of parabolic subalgebras in the reductive case.

\subsubsection{Notation}
\label{section-classical-2-1-1} We use the notation of Section
\ref{section-classical-1-3}, in particular $G$ is a connected
reductive group of Lie algebra $\mathfrak{g}$. We denote by
$T\subset G$ a maximal torus, $\mathfrak{t}\subset\mathfrak{g}$ the
corresponding Cartan subalgebra,
$\Phi=\Phi(\mathfrak{g},\mathfrak{t})$ the root system. Let
$\Delta\subset\Phi^+\subset\Phi$ be a system of positive roots and
the corresponding basis.

Let $P\subset G$ be a parabolic subgroup of Lie algebra $\mathfrak{p}\subset\mathfrak{g}$.\ Up to conjugation, we may assume that $P$ and $\mathfrak{p}$ are standard, i.e., $P=P_I$ and $\mathfrak{p}=\mathfrak{p}_I$ for some subset $I\subset\Delta$.
We have the Levi decomposition $\mathfrak{p}_I=\mathfrak{l}_I\oplus\mathfrak{n}_I$ (see (\ref{classical-1})).
We abbreviate $\mathfrak{l}=\mathfrak{l}_I$.
The Levi subalgebra $\mathfrak{l}$ decomposes as
\[
\mathfrak{l}=\mathfrak{z}\oplus[\mathfrak{l},\mathfrak{l}]=\mathfrak{z}\oplus\mathfrak{l}_1\oplus\ldots\oplus\mathfrak{l}_k
\]
where $\mathfrak{z}$ denotes the center of $\mathfrak{l}$ and $\mathfrak{l}_i\subset\mathfrak{l}$ (for $i\in\{1,\ldots,k\}$) are the simple ideals of the semisimple Lie algebra $[\mathfrak{l},\mathfrak{l}]$. Set $\mathfrak{t}_i=\mathfrak{t}\cap\mathfrak{l}_i$. We have
\[
\mathfrak{l}_i=\mathfrak{t}_i\oplus\bigoplus_{\alpha\in\Phi_{I_i}}\mathfrak{g}_\alpha
\]
where $I=I_1\sqcup\ldots\sqcup I_k$ is a partition into (nonempty) pairwise orthogonal maximal subsets.

An ideal $\mathfrak{i}\subset\mathfrak{p}$ is in particular stable by the adjoint action of the Cartan subalgebra $\mathfrak{t}$. We conclude that $\mathfrak{i}$ admits a decomposition into weight spaces:
\begin{equation}
\label{classical-2}
\mathfrak{i}=\mathfrak{t}\cap\mathfrak{i}\oplus\bigoplus_{\alpha\in\Phi(\mathfrak{i})}\mathfrak{g}_\alpha
\end{equation}
where we write $\Phi(\mathfrak{i})=\{\alpha\in\Phi:\mathfrak{g}_\alpha\subset\mathfrak{i}\}$.

\subsubsection{Elementary ideals}
Given $\alpha\in\Phi^+\cup\Phi_I$, we denote by $\mathfrak{p}(\mathfrak{g}_\alpha)\subset\mathfrak{p}$
the smallest ideal containing $\mathfrak{g}_\alpha$. Our aim is to describe $\mathfrak{p}(\mathfrak{g}_\alpha)$. We distinguish two cases depending on whether $\alpha\in\Phi_I$ or $\alpha\in\Phi^+\setminus\Phi_I$.

First, assume that $\alpha\in\Phi_I$. So, there is $i\in\{1,\ldots,k\}$ such that $\alpha\in\Phi_{I_i}$; equivalently, $\mathfrak{g}_\alpha\subset\mathfrak{l}_i$. The simplicity of $\mathfrak{l}_i$ imposes $\mathfrak{l}_i\subset\mathfrak{p}(\mathfrak{g}_\alpha)$. Thus $\mathfrak{p}(\mathfrak{g}_\alpha)$ is also the smallest ideal of $\mathfrak{p}$ that contains $\mathfrak{l}_i$.
Set $I_i^\perp=\{\beta\in \Delta:(\beta,\gamma)=0\ \mbox{ for all $\gamma\in I_i$}\}$ and let
\[\hat{\mathfrak{n}}_i=\bigoplus_{\beta\in\Phi^+\setminus\Phi_{I_i\cup I_i^\perp}}\mathfrak{g}_\beta,\]
which is the nilradical of the standard parabolic subalgebra $\hat{\mathfrak{p}}_{i}:=\mathfrak{p}_{I_i\cup I_i^\perp}$.

\begin{lemma}
\label{lemma-classical-2}
Assume $\alpha\in\Phi_{I_i}$.
Then,
$\mathfrak{p}(\mathfrak{g}_\alpha)=\mathfrak{l}_i\oplus\hat{\mathfrak{n}}_i$.
\end{lemma}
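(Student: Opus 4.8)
\emph{Proof strategy.} The plan is to reduce the statement to the computation of the smallest ideal $\mathfrak{m}$ of $\mathfrak{p}$ containing $\mathfrak{l}_i$, and then to prove the two inclusions $\mathfrak{p}(\mathfrak{g}_\alpha)\subset\mathfrak{l}_i\oplus\hat{\mathfrak{n}}_i$ and $\mathfrak{l}_i\oplus\hat{\mathfrak{n}}_i\subset\mathfrak{p}(\mathfrak{g}_\alpha)$ separately. First I would note that, since $\mathfrak{g}_\alpha\subset\mathfrak{l}_i$ and $\mathfrak{l}_i$ is simple, $\mathfrak{p}(\mathfrak{g}_\alpha)\cap\mathfrak{l}_i$ is a nonzero ideal of $\mathfrak{l}_i$, hence equals $\mathfrak{l}_i$; so $\mathfrak{p}(\mathfrak{g}_\alpha)$ is also the smallest ideal $\mathfrak{m}$ of $\mathfrak{p}$ containing $\mathfrak{l}_i$, and it suffices to show $\mathfrak{m}=\mathfrak{l}_i\oplus\hat{\mathfrak{n}}_i$.

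For the inclusion $\mathfrak{m}\subset\mathfrak{l}_i\oplus\hat{\mathfrak{n}}_i$ I would simply verify that $\mathfrak{l}_i\oplus\hat{\mathfrak{n}}_i$ is an ideal of $\mathfrak{p}$ (it obviously contains $\mathfrak{g}_\alpha$). The points to check are: since $I=I_1\sqcup\cdots\sqcup I_k$ with the $I_j$ pairwise orthogonal, one has $I_j\subset I_i^\perp$ for $j\neq i$, hence $I\subset I_i\cup I_i^\perp$ and $\mathfrak{p}=\mathfrak{p}_I\subset\hat{\mathfrak{p}}_i=\mathfrak{p}_{I_i\cup I_i^\perp}$; the set $I_i$ is a connected component of the subdiagram on $I_i\cup I_i^\perp$ (being connected, and joined to no node of $I_i^\perp$ by definition of $I_i^\perp$), so $\mathfrak{l}_i$ is one of the simple ideals of $[\hat{\mathfrak{l}}_i,\hat{\mathfrak{l}}_i]$, in particular an ideal of the Levi factor $\hat{\mathfrak{l}}_i$ of $\hat{\mathfrak{p}}_i$; and $\hat{\mathfrak{n}}_i$ is by construction the nilradical of $\hat{\mathfrak{p}}_i$. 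Combining these, $\mathfrak{l}_i\oplus\hat{\mathfrak{n}}_i$ is an ideal of $\hat{\mathfrak{p}}_i$, hence of the subalgebra $\mathfrak{p}\subset\hat{\mathfrak{p}}_i$.

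The reverse inclusion $\mathfrak{l}_i\oplus\hat{\mathfrak{n}}_i\subset\mathfrak{m}$ is where the real work lies. I would first observe that $\mathfrak{l}_i$ acts trivially on every root space $\mathfrak{g}_\beta$ with $\beta\in\Phi_{I_i^\perp}$ (orthogonality of $I_i$ and $I_i^\perp$ forces $\beta\pm\gamma\notin\Phi$ for $\gamma\in\Phi_{I_i}$ and $\beta|_{\mathfrak{t}_i}=0$), so that $[\mathfrak{l}_i,\mathfrak{n}_I]=[\mathfrak{l}_i,\hat{\mathfrak{n}}_i]\subset\hat{\mathfrak{n}}_i$; since $\mathfrak{m}$ is an ideal of $\mathfrak{p}$ containing $\mathfrak{l}_i$, this gives $[\mathfrak{l}_i,\mathfrak{n}_I]\subset\mathfrak{m}$. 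Then I would prove, by induction on $\mathrm{ht}(\beta)$, that $\mathfrak{g}_\beta\subset\mathfrak{m}$ for every $\beta\in\Phi^+\setminus\Phi_{I_i\cup I_i^\perp}$; together with $\mathfrak{l}_i\subset\mathfrak{m}$ this yields $\mathfrak{l}_i\oplus\hat{\mathfrak{n}}_i\subset\mathfrak{m}$ and finishes the proof. In the base case $\beta$ is simple, and $\beta\notin I_i\cup I_i^\perp$ means $\beta$ is joined to some $\gamma\in I_i$, whence $\beta+\gamma\in\Phi^+\setminus\Phi_I$ and $\mathfrak{g}_\beta=[\mathfrak{g}_{-\gamma},\mathfrak{g}_{\beta+\gamma}]\subset[\mathfrak{l}_i,\mathfrak{n}_I]\subset\mathfrak{m}$. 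When $\mathrm{ht}(\beta)\geq2$, I would pick a simple root $\delta$ with $\beta-\delta\in\Phi^+$: if $\beta-\delta\notin\Phi_{I_i\cup I_i^\perp}$, then $\mathfrak{g}_{\beta-\delta}\subset\mathfrak{m}$ by induction and $\mathfrak{g}_\beta=[\mathfrak{g}_\delta,\mathfrak{g}_{\beta-\delta}]\subset[\mathfrak{p},\mathfrak{m}]\subset\mathfrak{m}$; otherwise necessarily $\delta\notin I_i\cup I_i^\perp$ (else $\beta=(\beta-\delta)+\delta$ would lie in $\langle I_i\cup I_i^\perp\rangle$), so $\mathfrak{g}_\delta\subset\mathfrak{m}$ by the base case and $\mathfrak{g}_\beta=[\mathfrak{g}_{\beta-\delta},\mathfrak{g}_\delta]\subset[\mathfrak{p},\mathfrak{m}]\subset\mathfrak{m}$.

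\textbf{Main obstacle.} The delicate step is precisely this last inclusion. A naive induction on $\mathrm{ht}(\beta)$ does not close: trying to express $\mathfrak{g}_\beta$ from $\mathfrak{m}$ directly keeps forcing in root spaces of \emph{larger} height. The way out is to exploit the extra relation $[\mathfrak{l}_i,\mathfrak{n}_I]\subset\mathfrak{m}$ to handle at once the simple roots adjacent to $I_i$; these serve as seeds from which every higher root is reached by peeling off one simple root at a time. (Equivalently, one can run the induction along the $\mathbb{Z}$-grading of $\hat{\mathfrak{n}}_i$ in which $\mathfrak{l}_i$ sits in degree $0$ and $\hat{\mathfrak{n}}_i$ is the part of positive degree, using that the latter is generated in degree $1$.)
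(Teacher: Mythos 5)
Your proof is correct, and its skeleton coincides with the paper's: the reduction to the smallest ideal of $\mathfrak{p}$ containing $\mathfrak{l}_i$ is exactly the remark the paper makes just before the lemma, and the inclusion $\mathfrak{p}(\mathfrak{g}_\alpha)\subset\mathfrak{l}_i\oplus\hat{\mathfrak{n}}_i$ is obtained in both cases by checking that $\mathfrak{l}_i\oplus\hat{\mathfrak{n}}_i$ is an ideal of $\hat{\mathfrak{p}}_i$, hence of $\mathfrak{p}\subset\hat{\mathfrak{p}}_i$. Where you genuinely diverge is the inclusion $\hat{\mathfrak{n}}_i\subset\mathfrak{p}(\mathfrak{g}_\alpha)$: the paper does it in a single bracket, asserting that every $\beta\in\Phi^+\setminus\Phi_{I_i\cup I_i^\perp}$ admits some $\gamma\in I_i$ with $(\beta,\gamma)<0$, whence $\mathfrak{g}_\beta=[\mathfrak{g}_{\beta+\gamma},\mathfrak{g}_{-\gamma}]\subset[\mathfrak{p},\mathfrak{l}_i]$, whereas you run an induction on $\mathrm{ht}(\beta)$ seeded by the simple roots adjacent to $I_i$. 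Your longer route actually buys robustness: the paper's asserted existence of such a $\gamma$ is not literally true for all $\beta$ in this set. For instance in type $A_3$ with $I_i=\{\alpha_2\}$ and $\beta=\alpha_1+\alpha_2+\alpha_3$ one has $(\beta,\gamma)=0$ for all $\gamma\in\Phi_{I_i}$ and neither $\beta+\alpha_2$ nor $\beta-\alpha_2$ is a root, so $\mathfrak{g}_\beta$ commutes with $\mathfrak{l}_i$ and cannot be reached by one bracket against $\mathfrak{l}_i$; it is reached only after two brackets (from $\mathfrak{g}_{\alpha_2}$ to $\mathfrak{g}_{\alpha_1+\alpha_2}$ to $\mathfrak{g}_\beta$), which is precisely what your induction supplies. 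So the paper's argument is shorter whenever a non-orthogonal $\gamma\in I_i$ exists, while your height induction treats all $\beta\in\Phi^+\setminus\Phi_{I_i\cup I_i^\perp}$ uniformly and in effect repairs this small gap; as a minor simplification, your preliminary observation $[\mathfrak{l}_i,\mathfrak{n}_I]\subset\hat{\mathfrak{n}}_i$ is not really needed, since in the base case $\mathfrak{g}_{-\gamma}\subset\mathfrak{l}_i\subset\mathfrak{m}$ and $\mathfrak{g}_{\beta+\gamma}\subset\mathfrak{p}$ already give $\mathfrak{g}_\beta\subset[\mathfrak{p},\mathfrak{m}]\subset\mathfrak{m}$.
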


\begin{proof}
Note that $\mathfrak{p}\subset\hat{\mathfrak{p}}_i$ and $\hat{\mathfrak{n}}_i\subset\mathfrak{n}_I$.
The space $\mathfrak{l}_i\oplus\hat{\mathfrak{n}}_i$ is an ideal in $\hat{\mathfrak{p}}_i$ hence it is a fortiori an ideal in $\mathfrak{p}$. Thereby, $\mathfrak{p}(\mathfrak{g}_\alpha)\subset\mathfrak{l}_i\oplus\hat{\mathfrak{n}}_i$. It remains to show the inverse inclusion. The inclusion $\mathfrak{l}_i\subset\mathfrak{p}(\mathfrak{g}_\alpha)$ is noticed above. Now, take $\beta\in\Phi^+\setminus\Phi_{I_i\cup I_i^\perp}$. Since $\beta\in\Phi^+\setminus\Phi_{I_i^\perp}$, we can find $\gamma\in I_i$ such that $(\beta,\gamma)<0$. This implies that $\beta+\gamma\in\Phi^+$. Since $-\gamma,\beta+\gamma,\beta$ all belong to $\Phi$, we conclude that $\mathfrak{g}_\beta=[\mathfrak{g}_{\beta+\gamma},\mathfrak{g}_{-\gamma}]\subset [\mathfrak{p},\mathfrak{l}_i]\subset\mathfrak{p}(\mathfrak{g}_\alpha)$. Therefore, $\hat{\mathfrak{n}}_i\subset\mathfrak{p}(\mathfrak{g}_\alpha)$. The proof is complete.
\end{proof}

Second, assume that $\alpha\in\Phi^+\setminus\Phi_I$; equivalently, $\mathfrak{g}_\alpha\subset\mathfrak{n}_I$, and so $\mathfrak{p}(\mathfrak{g}_\alpha)\subset\mathfrak{n}_I$.
We describe $\mathfrak{p}(\mathfrak{g}_\alpha)$ in terms of a partial order $\preceq_\mathfrak{p}$ on roots: write $\alpha\preceq_\mathfrak{p}\beta$ if there exist $\gamma_1,\ldots,\gamma_m\in\Phi^+\cup\Phi_I$ ($m\geq 0$) such that
\begin{equation}
\label{classical-3}
\alpha+\sum_{i=1}^p\gamma_i\in\Phi\ \mbox{ for all $p\in\{1,\ldots,m\}$}
\quad\mbox{and}\quad
\beta=\alpha+\gamma_1+\ldots+\gamma_m.
\end{equation}
Set
$\Phi_\mathfrak{p}(\alpha)=\{\beta\in\Phi:\alpha\preceq_\mathfrak{p}\beta\}$.

\begin{lemma}
\label{lemma-classical-3}
Assume that $\alpha\in\Phi^+\setminus\Phi_I$. Then,
\[\mathfrak{p}(\mathfrak{g}_\alpha)=\bigoplus_{\beta\in\Phi_\mathfrak{p}(\alpha)}\mathfrak{g}_\beta.\]
\end{lemma}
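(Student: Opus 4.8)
The plan is to establish the two inclusions separately, both by elementary root-system bookkeeping, using repeatedly the standard fact that for roots $\mu,\nu$ with $\mu+\nu\in\Phi$ one has $[\mathfrak{g}_\mu,\mathfrak{g}_\nu]=\mathfrak{g}_{\mu+\nu}$ (this holds in the reductive setting too, since the root spaces $\mathfrak{g}_\alpha$ coincide with those of $[\mathfrak{g},\mathfrak{g}]$). As a preliminary observation, since $\alpha\in\Phi^+\setminus\Phi_I$ we have $\mathfrak{g}_\alpha\subset\mathfrak{n}_I$, and $\mathfrak{n}_I$ is an ideal of $\mathfrak{p}$, so $\mathfrak{p}(\mathfrak{g}_\alpha)\subset\mathfrak{n}_I=\bigoplus_{\beta\in\Phi^+\setminus\Phi_I}\mathfrak{g}_\beta$; in particular $\mathfrak{p}(\mathfrak{g}_\alpha)$ is a sum of root spaces. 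I would also record that every $\beta\in\Phi_\mathfrak{p}(\alpha)$ lies in $\Phi^+\setminus\Phi_I$: tracking through a chain as in (\ref{classical-3}) the coefficient, in the expansion on $\Delta$, of a fixed simple root $s_0\in\Delta\setminus I$ on which $\alpha$ has positive coefficient, one sees that adding an element of $\Phi^+$ never decreases this coefficient and adding an element of $\Phi_I$ leaves it unchanged, so the coefficient stays positive and $\beta\notin\Phi_I$, $\beta\in\Phi^+$.

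For the inclusion $\bigoplus_{\beta\in\Phi_\mathfrak{p}(\alpha)}\mathfrak{g}_\beta\subset\mathfrak{p}(\mathfrak{g}_\alpha)$, I would induct on the length $m$ of a chain witnessing $\alpha\preceq_\mathfrak{p}\beta$. The case $m=0$ is $\beta=\alpha$, and $\mathfrak{g}_\alpha\subset\mathfrak{p}(\mathfrak{g}_\alpha)$ trivially. For $m\geq1$, put $\beta'=\alpha+\gamma_1+\cdots+\gamma_{m-1}$; by (\ref{classical-3}) this is a root (or $\alpha$ itself when $m=1$) with $\alpha\preceq_\mathfrak{p}\beta'$ via a shorter chain, so $\mathfrak{g}_{\beta'}\subset\mathfrak{p}(\mathfrak{g}_\alpha)$ by induction. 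Since $\gamma_m\in\Phi^+\cup\Phi_I$ we have $\mathfrak{g}_{\gamma_m}\subset\mathfrak{p}$, and $\beta'+\gamma_m=\beta\in\Phi$, hence $\mathfrak{g}_\beta=[\mathfrak{g}_{\beta'},\mathfrak{g}_{\gamma_m}]\subset[\mathfrak{p}(\mathfrak{g}_\alpha),\mathfrak{p}]\subset\mathfrak{p}(\mathfrak{g}_\alpha)$, as $\mathfrak{p}(\mathfrak{g}_\alpha)$ is an ideal.

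For the reverse inclusion I would show that $\mathfrak{m}:=\bigoplus_{\beta\in\Phi_\mathfrak{p}(\alpha)}\mathfrak{g}_\beta$ is itself an ideal of $\mathfrak{p}$ containing $\mathfrak{g}_\alpha$, which by minimality gives $\mathfrak{p}(\mathfrak{g}_\alpha)\subset\mathfrak{m}$. Containment of $\mathfrak{g}_\alpha$ is the case $m=0$ (i.e. $\alpha\preceq_\mathfrak{p}\alpha$). For the ideal property, decompose $\mathfrak{p}=\mathfrak{t}\oplus\bigoplus_{\gamma\in\Phi^+\cup\Phi_I}\mathfrak{g}_\gamma$: the bracket $[\mathfrak{t},\mathfrak{g}_\beta]\subset\mathfrak{g}_\beta\subset\mathfrak{m}$; and for $\gamma\in\Phi^+\cup\Phi_I$ and $\beta\in\Phi_\mathfrak{p}(\alpha)$, the bracket $[\mathfrak{g}_\gamma,\mathfrak{g}_\beta]$ vanishes unless $\gamma+\beta\in\Phi\cup\{0\}$; if $\gamma+\beta\in\Phi$, then appending $\gamma$ to a chain for $\alpha\preceq_\mathfrak{p}\beta$ shows $\alpha\preceq_\mathfrak{p}\gamma+\beta$, so $[\mathfrak{g}_\gamma,\mathfrak{g}_\beta]\subset\mathfrak{g}_{\gamma+\beta}\subset\mathfrak{m}$. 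The only point needing care, and the one I would flag as the main (minor) obstacle, is ruling out the case $\gamma+\beta=0$: here the asymmetry between $\Phi^+$ and the symmetric set $\Phi_I$ is used, namely $\beta\in\Phi^+\setminus\Phi_I$ (from the preliminary observation) forces $-\beta\notin\Phi^+$ and $-\beta\notin\Phi_I$, so $-\beta\notin\Phi^+\cup\Phi_I$, whereas $\gamma=-\beta$ would have to lie in $\Phi^+\cup\Phi_I$ — a contradiction. Hence $\mathfrak{m}$ is an ideal, and combining the two inclusions yields $\mathfrak{p}(\mathfrak{g}_\alpha)=\mathfrak{m}$.
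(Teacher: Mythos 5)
Your proof is correct and follows essentially the same route as the paper: the forward inclusion by iterating brackets along the chain (the paper writes this as a nested bracket rather than an induction), and the reverse inclusion by checking that $\bigoplus_{\beta\in\Phi_\mathfrak{p}(\alpha)}\mathfrak{g}_\beta$ is a $\mathfrak{p}$-ideal, with the case $\beta+\gamma=0$ excluded exactly as you do via $\Phi_\mathfrak{p}(\alpha)\subset\Phi^+\setminus\Phi_I$. The only cosmetic difference is that the paper deduces $\Phi_\mathfrak{p}(\alpha)\subset\Phi^+\setminus\Phi_I$ from the already-proved inclusion into $\mathfrak{p}(\mathfrak{g}_\alpha)\subset\mathfrak{n}_I$, whereas you verify it directly by tracking the coefficient of a simple root outside $I$; both are fine.
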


\begin{proof}
Let $\beta\in\Phi_\mathfrak{p}(\alpha)$ and take $\gamma_1,\ldots,\gamma_m\in\Phi^+\cup\Phi_I$ satisfying (\ref{classical-3}). The two relations in (\ref{classical-3}) imply \[\mathfrak{g}_\beta=[\mathfrak{g}_{\gamma_m},[\mathfrak{g}_{\gamma_{m-1}},[\ldots,[\mathfrak{g}_{\gamma_1},\mathfrak{g}_\alpha]\ldots]]]\subset[\mathfrak{p},[\mathfrak{p},[\ldots,[\mathfrak{p},\mathfrak{g}_\alpha]\ldots]]]\subset\mathfrak{p}(\mathfrak{g}_\alpha).\]
This establishes the inclusion $V:=\bigoplus_{\beta\in\Phi_\mathfrak{p}(\alpha)}\mathfrak{g}_\beta\subset\mathfrak{p}(\mathfrak{g}_\alpha)$. Note that this inclusion yields in particular $\mathfrak{g}_\beta\subset\mathfrak{n}_I$ for all $\beta\in\Phi_\mathfrak{p}(\alpha)$, hence
\begin{equation}
\label{classical-4}
\Phi_\mathfrak{p}(\alpha)\subset\Phi^+\setminus\Phi_I.
\end{equation}
In order to show the desired equality, it remains to check that
the space $V$ is $\mathfrak{p}$-stable.
Let $\beta\in\Phi_\mathfrak{p}(\alpha)$ and $\gamma\in\Phi^+\cup\Phi_I$. Since $\beta\in\Phi^+\setminus\Phi_I$ (by (\ref{classical-4})), we have $\beta+\gamma\not=0$. Then, either $\beta+\gamma$ is not a root, in which case $[\mathfrak{g}_\gamma,\mathfrak{g}_\beta]=0$, or $\beta+\gamma$ is a root, in which case $\beta+\gamma\in\Phi_\mathfrak{p}(\alpha)$
(by definition of the order $\preceq_\mathfrak{p}$) and so $[\mathfrak{g}_\gamma,\mathfrak{g}_\beta]=\mathfrak{g}_{\beta+\gamma}\subset V$. In both cases, we conclude that $[\mathfrak{g}_\gamma,V]\subset V$ for all $\gamma\in\Phi^+\cup\Phi_I$, so $[\mathfrak{p},V]\subset V$. The proof is now complete.
\end{proof}

\begin{remark}
\label{remark-classical-6-new}
If $\mathfrak{i}\subset\mathfrak{p}$ is an ideal, then the proof of Lemma \ref{lemma-classical-3} shows that the set $\Phi(\mathfrak{i})$ is stable by the order $\preceq_{\mathfrak{p}}$  in the sense that, if $\alpha\preceq_{\mathfrak{p}}\beta$ and $\alpha\in\Phi(\mathfrak{i})$, then $\beta\in\Phi(\mathfrak{i})$. In other words, $\Phi_{\mathfrak{p}}(\alpha)\subset\Phi(\mathfrak{i})$ whenever $\alpha\in\Phi(\mathfrak{i})$.
\end{remark}

\subsubsection{General ideals}
Given a subset $J\subset\Phi^+\cup\Phi_I$, the space
$\sum_{\alpha\in J}\mathfrak{p}(\mathfrak{g}_\alpha)$
is an ideal of $\mathfrak{p}$ that we can describe thanks to Lemmas \ref{lemma-classical-2}--\ref{lemma-classical-3}. The next result shows that any ideal of $\mathfrak{p}$ is of this form, up to a subspace of the center $\mathfrak{z}$ of the Levi subalgebra~$\mathfrak{l}$.

\begin{lemma}
\label{lemma-classical-4}
{\rm (a)} Let $\mathfrak{i}\subset\mathfrak{p}$ be an ideal. Then
\[\mathfrak{i}=\mathfrak{z}\cap\mathfrak{i}\oplus \sum_{\alpha\in\Phi(\mathfrak{i})}\mathfrak{p}(\mathfrak{g}_\alpha),\]
where as before $\Phi(\mathfrak{i})=\{\alpha\in\Phi:\mathfrak{g}_\alpha\subset\mathfrak{i}\}$ and $\mathfrak{z}$ denotes the center of $\mathfrak{l}$. \\
{\rm (b)} Let $\mathfrak{i},\mathfrak{i}'\subset\mathfrak{p}$ be ideals such that $\mathfrak{i}+\mathfrak{z}=\mathfrak{i}'+\mathfrak{z}$. Let $x\in\mathfrak{g}$ be nilpotent. Then, $x\in\mathfrak{i}$ if and only if $x\in\mathfrak{i}'$.
\end{lemma}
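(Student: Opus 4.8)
The plan is to combine the weight-space decomposition (\ref{classical-2}) of an ideal $\mathfrak{i}$ with the descriptions of the elementary ideals $\mathfrak{p}(\mathfrak{g}_\alpha)$ obtained in Lemmas \ref{lemma-classical-2}--\ref{lemma-classical-3}, the only genuinely new input being to pin down the behaviour of the center $\mathfrak{z}$ of the Levi factor.

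For part (a), I would set $\mathfrak{j}=(\mathfrak{z}\cap\mathfrak{i})+\sum_{\alpha\in\Phi(\mathfrak{i})}\mathfrak{p}(\mathfrak{g}_\alpha)$. Since $\mathfrak{i}$ is an ideal, $\mathfrak{g}_\alpha\subset\mathfrak{i}$ forces $\mathfrak{p}(\mathfrak{g}_\alpha)\subset\mathfrak{i}$, so $\mathfrak{j}\subset\mathfrak{i}$; moreover $\mathfrak{z}\cap\mathfrak{i}\subset\mathfrak{z}$ whereas each $\mathfrak{p}(\mathfrak{g}_\alpha)$ sits inside $[\mathfrak{l},\mathfrak{l}]\oplus\mathfrak{n}_I$ by Lemmas \ref{lemma-classical-2}--\ref{lemma-classical-3}, so the two summands meet trivially, the sum is direct, and this yields the ``$\oplus$'' in the statement once the reverse inclusion $\mathfrak{i}\subset\mathfrak{j}$ is established. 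To prove $\mathfrak{i}\subset\mathfrak{j}$ I would use (\ref{classical-2}) and treat its two pieces separately: the part $\bigoplus_{\alpha\in\Phi(\mathfrak{i})}\mathfrak{g}_\alpha$ lies in $\mathfrak{j}$ at once because $\mathfrak{g}_\alpha\subset\mathfrak{p}(\mathfrak{g}_\alpha)$, while for $h\in\mathfrak{t}\cap\mathfrak{i}$ I would decompose $h=h_0+h_1+\cdots+h_k$ along $\mathfrak{t}=\mathfrak{z}\oplus\mathfrak{t}_1\oplus\cdots\oplus\mathfrak{t}_k$. If $h_i\neq0$ for some $i\geq1$, some root $\alpha\in\Phi_{I_i}$ does not vanish on $h_i$ (hence not on $h$, as the roots of $\mathfrak{l}_i$ kill $\mathfrak{z}$ and the $\mathfrak{t}_j$ with $j\neq i$); then $[h,\mathfrak{g}_\alpha]=\alpha(h)\,\mathfrak{g}_\alpha$ together with $h\in\mathfrak{i}$ gives $\mathfrak{g}_\alpha\subset\mathfrak{i}$, whence $\mathfrak{l}_i\subset\mathfrak{p}(\mathfrak{g}_\alpha)\subset\mathfrak{i}$ by simplicity of $\mathfrak{l}_i$ (Lemma \ref{lemma-classical-2}), and in particular $h_i\in\mathfrak{l}_i\subset\mathfrak{j}$. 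Thus every $h_i$ with $i\geq1$ lies in $\mathfrak{i}$, so $h_0=h-\sum_{i\geq1}h_i\in\mathfrak{z}\cap\mathfrak{i}\subset\mathfrak{j}$, and $h\in\mathfrak{j}$.

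For part (b), the crucial point --- and the step I expect to require the most care --- is that the projection $\pi_{\mathfrak{z}}\colon\mathfrak{p}\to\mathfrak{z}$ along $[\mathfrak{l},\mathfrak{l}]\oplus\mathfrak{n}_I$ annihilates every nilpotent element of $\mathfrak{p}$. To see this I would pick a cocharacter defining $\mathfrak{p}$, so that $\lim_{t\to0}\lambda(t)\cdot x$ equals the $\mathfrak{l}$-component $x_{\mathfrak{l}}$ of $x\in\mathfrak{p}$; since the nilpotent variety is closed and $G$-stable, $x_{\mathfrak{l}}$ is again nilpotent, hence lies in $[\mathfrak{l},\mathfrak{l}]$ (the nilpotent cone of a reductive Lie algebra is contained in its derived subalgebra, while $\mathfrak{z}\subset\mathfrak{t}$ consists of semisimple elements), so $\pi_{\mathfrak{z}}(x)=\pi_{\mathfrak{z}}(x_{\mathfrak{l}})=0$. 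One must be careful here to use ``nilpotent'' in the strong sense rather than merely ``ad-nilpotent'': $\mathrm{diag}(1,1)+E_{1,2}$ in $\mathfrak{gl}_2$ is ad-nilpotent with nonzero $\mathfrak{z}$-component, so the limit/closedness input is genuinely needed. Granting this, let $x\in\mathfrak{i}$ be nilpotent; by part (a) write $x=z+m$ with $z\in\mathfrak{z}\cap\mathfrak{i}$ and $m\in\sum_{\alpha\in\Phi(\mathfrak{i})}\mathfrak{p}(\mathfrak{g}_\alpha)$, and apply $\pi_{\mathfrak{z}}$ to get $z=0$, so $x\in\sum_{\alpha\in\Phi(\mathfrak{i})}\mathfrak{p}(\mathfrak{g}_\alpha)$. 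Finally, for each $\alpha\in\Phi(\mathfrak{i})$ one has $\mathfrak{g}_\alpha\subset\mathfrak{i}\subset\mathfrak{i}+\mathfrak{z}=\mathfrak{i}'+\mathfrak{z}$; projecting onto the weight space $\mathfrak{g}_\alpha$ (which kills $\mathfrak{z}$ since $\alpha\neq0$) and using that $\mathfrak{i}'$ is $\mathfrak{t}$-stable yields $\mathfrak{g}_\alpha\subset\mathfrak{i}'$, hence $\mathfrak{p}(\mathfrak{g}_\alpha)\subset\mathfrak{i}'$ because $\mathfrak{i}'$ is an ideal. Summing over $\alpha$ gives $x\in\mathfrak{i}'$, and the symmetric argument completes the proof.
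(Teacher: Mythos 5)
Your argument for part {\rm (a)} is essentially the paper's: the same weight-space reduction via (\ref{classical-2}), the same decomposition $h=h_0+h_1+\cdots+h_k$, and the same use of simplicity of $\mathfrak{l}_i$ through Lemma \ref{lemma-classical-2}; the only addition is your explicit check that the sum is direct, which the paper leaves implicit. For part {\rm (b)} you take a genuinely different route. The paper first deduces from (a) and Lemmas \ref{lemma-classical-2}--\ref{lemma-classical-3} that $([\mathfrak{l},\mathfrak{l}]\oplus\mathfrak{n}_I)\cap\mathfrak{i}=([\mathfrak{l},\mathfrak{l}]\oplus\mathfrak{n}_I)\cap\mathfrak{i}'$, and then shows that every nilpotent $x\in\mathfrak{p}$ lies in $[\mathfrak{l},\mathfrak{l}]\oplus\mathfrak{n}_I$ by conjugating $x$ under $L$ into $\bigoplus_{\alpha\in\Phi^+}\mathfrak{g}_\alpha$ (conjugacy of Borel subalgebras of $\mathfrak{p}$ under $L$, plus $L$-stability of $[\mathfrak{l},\mathfrak{l}]$ and $\mathfrak{n}_I$). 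You prove the same key vanishing of the $\mathfrak{z}$-component instead by a cocharacter limit: $\lim_{t\to0}\lambda(t)\cdot x=x_{\mathfrak{l}}$, closedness and $G$-stability of $\mathcal{N}$ give that $x_{\mathfrak{l}}$ is nilpotent, and Jordan-theoretic considerations place it in $[\mathfrak{l},\mathfrak{l}]$ (your caveat distinguishing nilpotent from ad-nilpotent is well taken; the only point you compress is the standard compatibility of nilpotency in $\mathfrak{g}$ and in the Levi $\mathfrak{l}$, which is harmless). You then transfer the elementary ideals one root at a time, extracting $\Phi(\mathfrak{i})\subset\Phi(\mathfrak{i}')$ from the $\mathfrak{t}$-weight decomposition of $\mathfrak{i}+\mathfrak{z}=\mathfrak{i}'+\mathfrak{z}$ and using minimality of $\mathfrak{p}(\mathfrak{g}_\alpha)$, rather than comparing the intersections with $[\mathfrak{l},\mathfrak{l}]\oplus\mathfrak{n}_I$ wholesale. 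Both arguments are correct; the paper's is slightly more self-contained at the group level (it only invokes conjugacy of Borel subalgebras of $\mathfrak{p}$, a tool already used for (\ref{ideals-nilpotent})), while yours trades that for the limit/closedness mechanism already present in Section \ref{section-2} and makes the role of the Jordan decomposition explicit, which arguably makes the reason the center drops out more transparent.
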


\begin{proof}
{\rm (a)}
The inclusion
\[
\mathfrak{z}\cap\mathfrak{i}\oplus\sum_{\alpha\in\Phi(\mathfrak{i})}\mathfrak{p}(\mathfrak{g}_\alpha)\subset\mathfrak{i}
\]
is immediate. For showing the inverse inclusion,
according to (\ref{classical-2}),
it suffices to check that $\mathfrak{t}\cap\mathfrak{i}\subset\mathfrak{z}\cap\mathfrak{i}\oplus\sum_{\alpha\in\Phi(\mathfrak{i})}\mathfrak{p}(\mathfrak{g}_\alpha)$.
So, let $h\in\mathfrak{t}\cap\mathfrak{i}$, that we can write $h=z+h_1+\ldots+h_k$, where $z\in\mathfrak{z}$ and $h_i\in\mathfrak{t}_i$ for all $i\in\{1,\ldots,k\}$.

We claim that $h_1,\ldots,h_k\in\sum_{\alpha\in\Phi(\mathfrak{i})}\mathfrak{p}(\mathfrak{g}_\alpha)$.
For $h_i\not=0$, the center of $\mathfrak{l}_i$ being trivial, we find $\alpha\in\Phi_{I_i}$ such that $[h_i,\mathfrak{g}_\alpha]\not=0$.
So $\mathfrak{g}_\alpha=[h_i,\mathfrak{g}_\alpha]=[h,\mathfrak{g}_\alpha]\subset [h,\mathfrak{p}]\subset\mathfrak{i}$.
Thus, $\alpha\in \Phi(\mathfrak{i})$.
The fact that $\mathfrak{p}(\mathfrak{g}_\alpha)\cap\mathfrak{l}_i$ is a nontrivial ideal of $\mathfrak{l}_i$ and the simplicity of $\mathfrak{l}_i$ force $\mathfrak{l}_i\subset\mathfrak{p}(\mathfrak{g}_\alpha)$, hence $h_i\in\mathfrak{p}(\mathfrak{g}_\alpha)$.
This establishes our claim.

From the claim, we get $h_1,\ldots,h_k\in\mathfrak{i}$, whence $z=h-h_1-\ldots-h_k\in\mathfrak{i}$. Thus $h\in \mathfrak{z}\cap\mathfrak{i}\oplus\sum_{\alpha\in\Phi(\mathfrak{i})}\mathfrak{p}(\mathfrak{g}_\alpha)$.
The proof of {\rm (a)} is complete. \\
{\rm (b)} The assumption, together with part {\rm (a)} and Lemmas \ref{lemma-classical-2} and \ref{lemma-classical-3}, implies that $([\mathfrak{l},\mathfrak{l}]\oplus\mathfrak{n}_I)\cap\mathfrak{i}=([\mathfrak{l},\mathfrak{l}]\oplus\mathfrak{n}_I)\cap\mathfrak{i}'=:\mathfrak{j}$.
Let $x\in\mathfrak{g}$ be nilpotent and assume that $x\in\mathfrak{i}$.
In particular, $x\in\mathfrak{p}$. Let $L\subset P$ be the Levi factor of Lie algebra $\mathfrak{l}$.
Since all the Borel subalgebras of $\mathfrak{p}$ are conjugate under the adjoint action of $L$, we can find $\ell\in L$ such that $\ell\cdot x\in\bigoplus_{\alpha\in\Phi^+}\mathfrak{g}_\alpha\subset[\mathfrak{l},\mathfrak{l}]\oplus \mathfrak{n}_I$. The fact that $[\mathfrak{l},\mathfrak{l}]$ and $\mathfrak{n}_I$ are both $L$-stable yields $x\in[\mathfrak{l},\mathfrak{l}]\oplus\mathfrak{n}_I$. Whence $x\in\mathfrak{j}\subset\mathfrak{i}'$. 
This establishes {\rm (b)}.
\end{proof}

\subsection{Proof of Lemma \ref{lemma-classical-1}}

\label{section-classical-2-2}
The implication ${\rm (i)}\Rightarrow{\rm (ii)}$ is obtained by differentiation. Let us show the implication ${\rm (ii)}\Rightarrow{\rm (i)}$. So we assume that $\mathfrak{i}$ is an ideal of $\mathfrak{p}$.

We use the notation of Section \ref{section-classical-2-1-1}. For
each root $\beta\in\Phi$, there is a unique closed unipotent
subgroup $U_\beta\subset G$ such that
$Lie(U_\beta)=\mathfrak{g}_\beta$. The torus $T$ and the subgroups
$\{U_\beta\}_{\beta\in\Delta\cup(- I)}$ generate $P$. In order to
check that $\mathfrak{i}$ is $P$-stable, it suffices to check that
$\mathfrak{i}$ is stable by $U_\beta$ whenever
$\beta\in\Delta\cup(-I)$. To do this, in view of
(\ref{classical-2}), we need to check that
\begin{equation}
\label{classical-5}
U_\beta\cdot(\mathfrak{t}\cap\mathfrak{i})\subset\mathfrak{i}\quad\mbox{and}\quad U_\beta\cdot\mathfrak{g}_\alpha\subset\mathfrak{i}\ \mbox{ for all }\ \alpha\in\Phi(\mathfrak{i}).
\end{equation}

Let $h\in\mathfrak{t}\cap\mathfrak{i}$. If $\beta(h)=0$, then we have $u\cdot h=h$ for all $u\in U_\beta$. If $\beta(h)\not=0$, then
we get on one hand $\mathfrak{g}_\beta=[\mathfrak{g}_\beta,h]\subset[\mathfrak{p},\mathfrak{i}]\subset\mathfrak{i}$ and we have on the other hand
$U_\beta\cdot h\subset h+\mathfrak{g}_\beta$ (see \cite[\S 3.3]{Steinberg-book}), so $U_\beta\cdot h\subset\mathfrak{i}$.
In both cases, we obtain $U_\beta\cdot h\subset\mathfrak{i}$.
This shows the first part of (\ref{classical-5}).

Let $\alpha\in\Phi(\mathfrak{i})$.
In particular $\alpha\in\Phi^+$ or $\alpha\in\Phi_I$.
We distinguish two cases depending on whether $\alpha+\beta=0$ or $\alpha+\beta\not=0$. The case $\alpha+\beta=0$ may occur only if $\alpha\in\Phi_I$, so $\alpha\in\Phi_{I_i}$ for some $i\in\{1,\ldots,k\}$. In this case, we know from Lemmas \ref{lemma-classical-2} and \ref{lemma-classical-4} that $\mathfrak{g}_\alpha\subset\mathfrak{l}_i\subset \mathfrak{i}$, whereas the fact that $\beta=-\alpha\in\Phi_{I_i}$ ensures that $\mathfrak{l}_i$ is $U_\beta$-stable. Whence, $U_\beta\cdot\mathfrak{g}_\alpha\subset\mathfrak{i}$ in this case.
Next, assume that $\alpha+\beta\not=0$.
Then,
there is an integer $k\geq 1$ such that $\alpha+i\beta$ is a root for all $i\in\{0,\ldots,k\}$ and $\alpha+i\beta$ is not a root for $i>k$ (see \cite[\S 9.4]{Humphreys-lie-algebras}).
Moreover we have (see \cite[\S 3.3]{Steinberg-book})
\[U_\beta\cdot\mathfrak{g}_\alpha\subset \sum_{i=0}^k\mathfrak{g}_{\alpha+i\beta}=\mathfrak{g}_\alpha+[\mathfrak{g}_\beta,\mathfrak{g}_\alpha]+\ldots+[\underbrace{\mathfrak{g}_\beta,[\ldots,[\mathfrak{g}_\beta}_{\mbox{\scriptsize $k$ terms}},\mathfrak{g}_\alpha]\ldots]].\]
Since $\mathfrak{g}_\beta\subset\mathfrak{p}$ and $\mathfrak{g}_\alpha\subset\mathfrak{i}$, we conclude that $U_\beta\cdot\mathfrak{g}_\alpha\subset\mathfrak{i}$ in this case, too. This shows the second part of (\ref{classical-5}). The proof of Lemma \ref{lemma-classical-1} is now complete.

\subsection{Proof of Proposition \ref{proposition-classical-3}\,{\rm (b)}}
\label{section-classical-2-3}
We first notice that the ideals $\mathfrak{i}^F_{\underline{c}}$ from Proposition \ref{proposition-classical-3}\,{\rm (a)} satisfy the following rule.\ Given two sequences $\underline{c}=(c_0\leq\ldots\leq c_{k})$ and $\underline{c}'=(c'_0\leq\ldots\leq c'_{k})$ such that $c_p,c'_p\in\{0,1,\ldots,p\}$ for all $p$, we denote by $\max\{\underline{c},\underline{c}'\}$ the sequence $(\max\{c_0,c'_0\}\leq\ldots\leq\max\{c_k,c'_k\})$. It is straightforward to check that
\begin{equation}
\label{classical-6}
\mathfrak{i}^F_{\underline{c}}+\mathfrak{i}^F_{\underline{c}'}=\mathfrak{i}^F_{\max\{\underline{c},\underline{c}'\}}.
\end{equation}

In view of relation (\ref{classical-6}) and Lemma \ref{lemma-classical-4}, in order to prove Proposition \ref{proposition-classical-3}\,{\rm (b)}, it suffices to show that every elementary ideal $\mathfrak{p}(\mathfrak{g}_\alpha)$ coincides with $\mathfrak{i}^F_{\underline{c}}$ for some sequence $\underline{c}$.\

Here, as in Section \ref{section-classical-1-4-2}, we identify
$SL(V)$ with the group $SL_n(\mathbb{C})$, its Lie algebra with the
space $\mathfrak{sl}_n(\mathbb{C})$ of matrices of trace zero, and
$\mathfrak{p}_F$ with the subspace of blockwise upper triangular
matrices with blocks of sizes $d_p-d_{p-1}$ along the diagonal. This
parabolic subalgebra corresponds to the set of simple roots
$I_{\underline{d}}=\{\alpha_i:i\in\{1,\ldots,n-1\}\setminus\{d_1,\ldots,d_k\}\}$.
Let $\alpha=\varepsilon_i-\varepsilon_j$, with $i,j\in\{1,\ldots,n\}$, $i\not=j$, so that the root space $\mathfrak{g}_\alpha$ is
generated by the elementary matrix $E_{i,j}$. For
$\ell\in\{1,\ldots,n\}$, let $p(\ell)\in\{1,\ldots,k\}$ be the
unique number such that $d_{p(\ell)-1}<\ell\leq d_{p(\ell)}$. Recall that $\alpha\in\Phi^+\cup\Phi_{I_{\underline{d}}}$. We
distinguish two cases.

First, assume that $\alpha\in \Phi_{I_{\underline{d}}}$.
Equivalently, we have $p(i)=p(j)=:p$. Then, it follows from Lemma
\ref{lemma-classical-2} that we have
$\mathfrak{p}(\mathfrak{g}_\alpha)=\mathfrak{i}^F_{\underline{c}}$
where $\underline{c}=(c_0,\ldots,c_k)$ is given by $c_q=0$ for $q<p$
and $c_q=p$ for $q\geq p$.

Second, assume that
$\alpha\in\Phi^+\setminus\Phi_{I_{\underline{d}}}$. Equivalently,
$p(i)<p(j)$. It is easy to see that the set
$\Phi_{\mathfrak{p}}(\alpha)$ consists of the roots
$\varepsilon_{i'}-\varepsilon_{j'}$ with $1\leq i'<j'\leq n$ such
that $p(i')\leq p(i)$ and $p(j')\geq p(j)$. Therefore, from Lemma
\ref{lemma-classical-3}, we see that
$\mathfrak{p}(\mathfrak{g}_\alpha)=\mathfrak{i}^F_{\underline{c}}$
where the sequence $\underline{c}=(c_0,\ldots,c_k)$ is given by
$c_q=0$ if $q<p(j)$ and $c_q=p(i)$ if $q\geq p(j)$. The proof of
Proposition \ref{proposition-classical-3}\,{\rm (b)} is complete.

\subsection{Proof of Proposition \ref{proposition-classical-4}\,{\rm (b)}}
\label{section-classical-2-4}

Let two sequences $\underline{c}=(c_0\leq\ldots\leq c_{2k-1})$ and
$\underline{c}'=(c'_0\leq\ldots\leq c'_{2k-1})$ such that $c_p,c'_p\in\{0,1,\ldots,p\}$ for all $p$ and assume that
$\underline{c}=\underline{c}^*$ and
$\underline{c}'=\underline{c}'^*$. As in Section
\ref{section-classical-2-3}, we let
$\max\{\underline{c},\underline{c}'\}=(\max\{c_p,c'_p\})_{p=0}^{2k-1}$.
Then, it is easy to see that
$\max\{\underline{c},\underline{c}'\}=\max\{\underline{c},\underline{c}'\}^*$.
We claim that
\begin{equation}
\label{classical-7}
\mathfrak{i}^F_{\underline{c}}+\mathfrak{i}^F_{\underline{c}'}=\mathfrak{i}^F_{\max\{\underline{c},\underline{c}'\}}.
\end{equation}
Note that the completion
$(\overline{V}_0,\ldots,\overline{V}_{2k-1})$ of the isotropic
partial flag $F$ and the sequence $\underline{c}$ also give rise to
a parabolic subalgebra of $\mathfrak{sl}(V)$ denoted by
$\mathfrak{q}_F:=\{x\in\mathfrak{sl}(V):x(\overline{V}_p)\subset\overline{V}_p\
\mbox{for all $p=0,\ldots,2k-1$}\}$ and to an ideal
$\mathfrak{j}^F_{\underline{c}}:=\{x\in\mathfrak{sl}(V):x(\overline{V}_p)\subset
\overline{V}_{c_p}\ \mbox{ for all $p$}\}$ of $\mathfrak{q}_F$.
Given $x\in\mathfrak{sl}(V)$, let $x^*$ be its adjoint with respect
to the form $\omega$. Then, the property that
$\underline{c}=\underline{c}^*$ implies that
$\mathfrak{j}^F_{\underline{c}}$ is stable by the map $x\mapsto
x^*$. This property (also applied to $\underline{c}'$), combined
with (\ref{classical-6}) and the equalities
$\mathfrak{i}^F_{\underline{c}}=\mathfrak{j}_{\underline{c}}^F\cap\mathfrak{g}$
and
$\mathfrak{i}^F_{\underline{c}'}=\mathfrak{j}_{\underline{c}'}^F\cap\mathfrak{g}$,
easily yields relation (\ref{classical-7}).

In view of (\ref{classical-7}) and Lemma \ref{lemma-classical-4},
the proof of Proposition \ref{proposition-classical-4}\,{\rm (b)} can
be carried out through a careful analysis of the elementary ideals $\mathfrak{p}(\mathfrak{g}_\alpha)$.
The proof is done in the following subsections.

\subsubsection{Type $C$ case}
\label{section-7-4-1}
As in Section \ref{section-classical-1-4-3}, we assume that the bilinear form $\omega$ is symplectic and we identify the Lie algebra $\mathfrak{g}=\mathfrak{sp}(V,\omega)$ with $\mathfrak{sp}_{2n}(\mathbb{C})$. The parabolic subalgebra $\mathfrak{p}=\mathfrak{p}_F$ coincides with the blockwise upper triangular subalgebra $\mathfrak{p}_{\underline{d}}$ described in relation (\ref{new-classical-2}) and corresponding to the sequence $\underline{d}=(d_0=0<\ldots<d_{k-1}\leq n)$, that is, to the subset of simple roots $I_{\underline{d}}=\{\alpha_i:i\in\{1,\ldots,n\}\setminus\{d_1,\ldots,d_{k-1}\}\}$.

Set by convention $d_k=n$.
Given a number $i\in\{1,\ldots,n\}$, we denote by $p(i)\in\{1,\ldots,k\}$ the unique number such that $d_{p(i)-1}<i\leq d_{p(i)}$.
Then, the set $\Phi_{I_{\underline{d}}}$ consists of the elements $\pm(\varepsilon_i-\varepsilon_j)$ for $1\leq i<j\leq n$ such that $p(i)=p(j)$, and $\pm(\varepsilon_i+\varepsilon_j)$ for $1\leq i\leq j\leq n$ such that $p(i)=p(j)=k$.

We focus on the elementary ideal $\mathfrak{p}(\mathfrak{g}_\alpha)$ for $\alpha\in\Phi^+\cup\Phi_{I_{\underline{d}}}$. We distinguish two cases.

First, assume that $\alpha\in\Phi_{I_{\underline{d}}}$.
By Lemma \ref{lemma-classical-2}, we may as well assume that $\alpha\in I_{\underline{d}}$, thus $\alpha=\alpha_i$ for some $i\notin\{d_1,\ldots,d_{k-1}\}$. If $i<n$, then the root space $\mathfrak{g}_\alpha$ has components in the blocks $A_{p(i),p(i)}$ and $-{}^\delta A_{p(i),p(i)}$ in the blockwise upper-triangular matrix representation of $\mathfrak{p}$ (see (\ref{new-classical-2})), whereas if $i=n$, then $\mathfrak{g}_\alpha$ is comprised in the block $B_{k,k}$. Applying Lemma \ref{lemma-classical-2}, we deduce that $\mathfrak{p}(\mathfrak{g}_\alpha)=\mathfrak{i}^F_{\underline{c}}$ where $\underline{c}=(c_0,\ldots,c_{2k-1})$ is given by $c_q=0$ if $q<p(i)$, $c_q=p(i)$ if $p(i)\leq q<2k-p(i)$, and $c_q=2k-p(i)$ if $2k-p(i)\leq q\leq 2k-1$.

Next, assume that $\alpha\in\Phi^+\setminus\Phi_{I_{\underline{d}}}$. We distinguish two subcases.

First, suppose that $\alpha=\varepsilon_i-\varepsilon_j$ for $1\leq i<j\leq n$ such that $p(i)<p(j)$; equivalently the root space $\mathfrak{g}_\alpha$ has components in the blocks $A_{p(i),p(j)}$ and $-{}^\delta A_{p(i),p(j)}$ of the representation of $\mathfrak{p}$ given in (\ref{new-classical-2}). In this situation, the set $\Phi_{\mathfrak{p}}(\alpha)$ consists of the following elements: $\varepsilon_{i'}-\varepsilon_{j'}$ for $1\leq i'<j'\leq n$ such that $p(i')\leq p(i)<p(j)\leq p(j')$; and $\varepsilon_{i'}+\varepsilon_{j'}$ for $1\leq i'\leq j'\leq n$ such that $p(i')\leq p(i)$. We conclude (in the light of (\ref{new-classical-2})) that we have $\mathfrak{p}(\mathfrak{g}_\alpha)=\mathfrak{i}^F_{\underline{c}}$ where $\underline{c}=(c_0,\ldots,c_{2k-1})$ is such that $c_q=0$ if $0\leq q<p(j)$, $c_q=p(i)$ if $p(j)\leq q<2k-p(i)$, and $c_q=2k-p(j)$ if $2k-p(i)\leq q\leq 2k-1$.

Second, suppose that $\alpha=\varepsilon_i+\varepsilon_j$ for $1\leq i\leq j\leq n$ such that $p(i)<k$; equivalently $\mathfrak{g}_\alpha$ has components in the blocks $B_{p(i),p(j)}$ and $B_{p(j),p(i)}$ of the representation of $\mathfrak{p}$ given in (\ref{new-classical-2}). Here, the set $\Phi_{\mathfrak{p}}(\alpha)$ consists of the elements: $\varepsilon_{i'}+\varepsilon_{j'}$ for $1\leq i'\leq j'\leq n$ such that $p(i')\leq p(i)$ and $p(j')\leq p(j)$; and $\varepsilon_{i'}-\varepsilon_{j'}$ for $1\leq i'<j'\leq n$ such that $p(i')\leq p(i)$ and $p(j')=p(j)=k$ (only in the case where $p(j)=k$). Thus, by (\ref{new-classical-2}), we see that $\mathfrak{p}(\mathfrak{g}_\alpha)=\mathfrak{i}^F_{\underline{c}}$ for $\underline{c}=(c_0,\ldots,c_{2k-1})$ such that $c_q=0$ if $0\leq q< 2k-p(j)$, $c_q=p(i)$ if $2k-p(j)\leq q<2k-p(i)$, and $c_q=p(j)$ if $2k-p(i)\leq q\leq 2k-1$. 

In each case, we obtain that $\mathfrak{p}(\mathfrak{g}_\alpha)$ is of the form
$\mathfrak{i}_{\underline{c}}^F$ for some sequence $\underline{c}$ with $\underline{c}=\underline{c}^*$. By (\ref{classical-7}) and Lemma \ref{lemma-classical-4},
this yields Proposition \ref{proposition-classical-4}\,{\rm (b)} in the case
where $\omega$ is symplectic.

\subsubsection{Types $B$ and $D$ cases}
\label{section-7-4-2}
In this section, we assume that the bilinear form $\omega$ is symmetric.
As in Section \ref{section-classical-1-4-4}, the Lie algebra $\mathfrak{so}(V,\omega)$ is identified with the orthogonal Lie algebra $\mathfrak{so}_m(\mathbb{C})$ and the parabolic subalgebra $\mathfrak{p}=\mathfrak{p}_F$ is the standard parabolic subalgebra corresponding to the set
of simple roots $I_{\underline{d}}=\{\alpha_i:i\in\{1,\ldots,n\}\setminus\{d_1,\ldots,d_{k-1}\}\}$,
for a sequence $\underline{d}=(d_0=0<d_1<\ldots<d_{k-1}\leq n)$, with $n=\lfloor\frac{m}{2}\rfloor$.
Moreover, in the case where $m$ is even, we may assume that $d_{k-1}\not=n-1$ (see (\ref{assumption-D})), so that $\mathfrak{p}$ coincides with the subalgebra $\mathfrak{p}_{\underline{d}}$ of blockwise upper triangular matrices described in (\ref{new-classical-3}).

Set by convention $d_k=n$. Given $i\in\{1,\ldots,n\}$, we let $p(i)\in\{1,\ldots,k\}$ be the unique number such that $d_{p(i)-1}<i\leq d_{p(i)}$. Then, the set $\Phi_{I_{\underline{d}}}$ consists of the roots: $\pm(\varepsilon_i-\varepsilon_j)$ for $1\leq i<j\leq n$ such that $p(i)=p(j)$; $\pm(\varepsilon_i+\varepsilon_j)$ for $1\leq i<j\leq n$ such that $p(i)=p(j)=k$; and, only in the case where $m$ is odd, i.e., $m=2n+1$: $\pm\varepsilon_i$ for $1\leq i\leq n$ such that $p(i)=k$.

We consider the elementary ideal $\mathfrak{p}(\mathfrak{g}_\alpha)$ for $\alpha\in\Phi^+\cup\Phi_{I_{\underline{d}}}$. We distinguish two cases.

First, assume that $\alpha\in\Phi_{I_{\underline{d}}}$.\ Thus, according to Lemma \ref{lemma-classical-2}, we may also assume that $\alpha\in I_{\underline{d}}$,
so $\alpha=\alpha_i$ for some $i\in\{1,\ldots,n\}\setminus\{d_1,\ldots,d_{k-1}\}$.
Then, each element in the root space $\mathfrak{g}_\alpha$ has components in the blocks $B_{p(i),2k-p(i)}$ and $B_{2k-p(i),p(i)}$ of the blockwise decomposition of $\mathfrak{p}$ given in (\ref{new-classical-3}). Applying Lemma \ref{lemma-classical-2}, we obtain that $\mathfrak{p}(\mathfrak{g}_\alpha)=\mathfrak{i}^F_{\underline{c}}$ where the sequence $\underline{c}=(c_0,\ldots,c_{2k-1})$ is defined by letting $c_q=0$ if $q<p(i)$, $c_q=p(i)$ if $p(i)\leq q<2k-p(i)$, and $c_q=2k-p(i)$ if $2k-p(i)\leq q\leq 2k-1$.

Next, assume that $\alpha\in\Phi^+\setminus\Phi_{I_{\underline{d}}}$. There are three subcases depending on the form of the root $\alpha$.

To start with, suppose that $\alpha=\varepsilon_i+\varepsilon_j$ for $1\leq i<j\leq n$ such that $p(i)<k$. Then, the root space $\mathfrak{g}_\alpha$ has components in the blocks $B_{p(i),p(j)}$ and $B_{p(j),p(i)}$ of the decomposition (\ref{new-classical-3}). The set $\Phi_{\mathfrak{p}}(\alpha)$ consists of the roots: $\varepsilon_{i'}+\varepsilon_{j'}$ for $1\leq i'<j'\leq n$ such that $p(i')\leq p(i)$ and $p(j')\leq p(j)$; $\varepsilon_{i'}-\varepsilon_{j'}$ for $1\leq i'<j'\leq n$ such that $p(i')\leq p(i)$ and $p(j')=p(j)=k$ (only in the case where $p(j)=k$); and $\varepsilon_{i'}$ for $1\leq i'\leq n$ such that $p(i')\leq p(i)$ (only in the case where $p(j)=k$ and $m=2n+1$). We easily obtain $\mathfrak{p}(\mathfrak{g}_\alpha)=\mathfrak{i}^F_{\underline{c}}$,
where the sequence $\underline{c}=(c_0,\ldots,c_{2k-1})$ is given by $c_q=0$ for $1\leq q<2k-p(j)$, $c_q=p(i)$ for $2k-p(j)\leq q<2k-p(i)$, and $c_q=p(j)$ for $2k-p(i)\leq q\leq 2k-1$.

Second, suppose that $\alpha=\varepsilon_i$ for $1\leq i\leq n$ such that $p(i)<k$ (this case occurs only if $m=2n+1$). So, the root space $\mathfrak{g}_\alpha$ has components in the blocks $B_{p(i),k}$ and $B_{k,p(i)}$ of the decomposition (\ref{new-classical-3}). In this situation, the set $\Phi_{\mathfrak{p}}(\alpha)$ comprises the following roots:
$\varepsilon_{i'}$ for $1\leq i'\leq n$ such that $p(i')\leq p(i)$;
$\varepsilon_{i'}+\varepsilon_{j'}$ for $1\leq i'<j'\leq n$ such that $p(i')\leq p(i)$; $\varepsilon_{i'}-\varepsilon_{j'}$ for $1\leq i'<j'\leq n$ such that $p(i')\leq p(i)$ and $p(j')=k$. Then, we can see that $\mathfrak{p}(\mathfrak{g}_\alpha)=\mathfrak{i}^F_{\underline{c}}$ for $\underline{c}=(c_0,\ldots,c_{2k-1})$ given by $c_q=0$ if $1\leq q<k$, $c_q=p(i)$ if $k\leq q<2k-p(i)$, and $c_q=k$ if $2k-p(i)\leq q\leq 2k-1$.

Third, suppose that $\alpha=\varepsilon_i-\varepsilon_j$ for $1\leq i<j\leq n$ such that $p(i)<p(j)$. Thus, the root space $\mathfrak{g}_\alpha$ has components in the blocks $B_{p(i),2k-p(j)}$ and $B_{2k-p(j),p(i)}$ of the decomposition (\ref{new-classical-3}). 
The study of $\mathfrak{p}(\mathfrak{g}_\alpha)$ in this case requires more care, in
particular we need to distinguish the two situations
\begin{itemize}
\item[\rm (i)] $d_{k-2}<\frac{m}{2}-1$; equivalently, $m=2n+1$ or ($m=2n$ and $d_{k-2}<n-1$);
\item[\rm (ii)] $d_{k-2}=\frac{m}{2}-1$; equivalently, $m=2n$ and $d_{k-2}=n-1$.
\end{itemize}
If $m=2n$, then (knowing that we assume $d_{k-1}\not=n-1$ in this case) condition {\rm (i)}
is equivalent to saying that $n-1\notin\{d_1,\ldots,d_{k-1}\}$, thus $\alpha_{n-1}\in I_{\underline{d}}$, whereas condition {\rm (ii)} is equivalent to saying that $\alpha_{n-1}\notin I_{\underline{d}}$.
Taking this into account, one can check that the set $\Phi_{\mathfrak{p}}(\alpha)$
consists of the following roots:
$\varepsilon_{i'}-\varepsilon_{j'}$ for $1\leq i'<j'\leq n$ such that $p(i')\leq p(i)$ and $p(j')\geq p(j)$; $\varepsilon_{i'}+\varepsilon_{j'}$ for $1\leq i'<j'< n$ such that $p(i')\leq p(i)$; 
only if {\rm (i)} holds or $j<n$:
$\varepsilon_{i'}+\varepsilon_{n}$ for $1\leq i'<n$ such that $p(i')\leq p(i)$; 
and only in the case where $m=2n+1$: $\varepsilon_{i'}$ for $1\leq i'\leq n$ such that $p(i')\leq p(i)$.
Let $\underline{c}=(c_0,\ldots,c_{2k-1})$ be given by $c_q=0$ for $0\leq q<p(j)$, $c_q=p(i)$ for $p(j)\leq q<2k-p(i)$, and $c_q=2k-p(j)$ for $2k-p(i)\leq q\leq 2k-1$.
If {\rm (i)} holds or $j<n$, then we obtain
$\mathfrak{p}(\mathfrak{g}_\alpha)=\mathfrak{i}_{\underline{c}}^F$.
If {\rm (ii)} holds and $j=n$, then, using the above description of the set $\Phi_{\mathfrak{p}}(\tilde\alpha)$ for $\tilde\alpha:=\varepsilon_i+\varepsilon_n$, we can see that 
$\mathfrak{p}(\mathfrak{g}_\alpha)+\mathfrak{p}(\mathfrak{g}_{\tilde\alpha})=\mathfrak{i}_{\underline{c}}^F$.

Finally, we have shown: for every root $\alpha\in\Phi^+\cup\Phi_{I_{\underline{d}}}$,
there is a sequence $\underline{c}$ with $\underline{c}=\underline{c}^*$ such that
\begin{equation}
\label{newnew-26}
\left\{
\begin{array}{ll}
\mathfrak{p}(\mathfrak{g}_\alpha)=\mathfrak{i}_{\underline{c}}^F & \mbox{if $d_{k-2}<\frac{m}{2}-1$ or $\alpha\notin\{\varepsilon_i-\varepsilon_n\}_{i=1}^{n-1}$,} \\
\mathfrak{p}(\mathfrak{g}_\alpha)+\mathfrak{p}(\mathfrak{g}_{\tilde\alpha})=\mathfrak{i}_{\underline{c}}^F & \mbox{if $d_{k-2}=\frac{m}{2}-1$ and $\alpha=\varepsilon_i-\varepsilon_n$,
for $\tilde{\alpha}:=\varepsilon_i+\varepsilon_n$.}
\end{array}
\right.
\end{equation}

Proposition \ref{proposition-classical-4}\,{\rm (b)}\,{\rm (i)} follows
by combining Lemma \ref{lemma-classical-4}, (\ref{classical-7}), and (\ref{newnew-26}).
It remains to prove Proposition \ref{proposition-classical-4}\,{\rm (b)}\,{\rm (ii)}.
So, we assume that $m=2n$ and $d_{k-2}=n-1$.

As before, given the ideal $\mathfrak{i}\subset\mathfrak{p}$, we write $\Phi(\mathfrak{i})=\{\alpha\in\Phi:\mathfrak{g}_\alpha\subset\mathfrak{i}\}$. 
Assume for the moment that the following condition holds:
\begin{equation}
\label{newnew-28}
i_0:=\max\{i:\varepsilon_i-\varepsilon_n\in\Phi(\mathfrak{i})\}\leq j_0:=\max\{i:\varepsilon_i+\varepsilon_n\in\Phi(\mathfrak{i})\}.
\end{equation}
Under this condition, we can see that, for every $i\in\{1,\ldots,n-1\}$, we have
\begin{equation}
\label{newnew-29}
\varepsilon_i-\varepsilon_n\in\Phi(\mathfrak{i})\ \Rightarrow\ \varepsilon_i+\varepsilon_n\in\Phi(\mathfrak{i}).
\end{equation}
Indeed, if $\varepsilon_i-\varepsilon_n\in\Phi(\mathfrak{i})$, then $i\leq i_0\leq j_0$ (by (\ref{newnew-28})). This implies that $\varepsilon_i+\varepsilon_n\in\Phi_{\mathfrak{p}}(\varepsilon_{j_0}+\varepsilon_n)$,
thus $\varepsilon_i+\varepsilon_n\in\Phi(\mathfrak{i})$ (see Remark \ref{remark-classical-6-new}), which establishes (\ref{newnew-29}). From Lemma \ref{lemma-classical-4}, (\ref{classical-7}), (\ref{newnew-26}), and (\ref{newnew-29}), we conclude that there is a sequence $\underline{c}$ with $\underline{c}=\underline{c}^*$ such that $\mathfrak{i}$ and $\mathfrak{i}_{\underline{c}}^F$ have the same nilpotent elements.
Whence
Proposition \ref{proposition-classical-4}\,{\rm (b)}\,{\rm (ii)} in this case.

Finally, it remains to treat the case where (\ref{newnew-28}) does not hold. As in Section \ref{section-classical-1-4-4}, we can find a basis $(v_1,\ldots,v_{2n})$ of the space $V$ satisfying (\ref{newnewnew-17}), and which is adapted to the flag $F$ in the sense that $F=(\langle v_i:1\leq i\leq d_p\rangle_{\mathbb{C}})_{p=0}^{k-1}$. The automorphism $\phi:V\to V$ given by
\[\phi(v_n)=v_{n+1},\ \phi(v_{n+1})=v_n,\ \mbox{ and }\phi(v_i)=v_i\mbox{ for $i\notin\{n,n+1\}$},\]
induces involutive automorphisms
\[\xi:SO(V,\omega)\to SO(V,\omega),\ g\mapsto \phi g \phi\quad\mbox{and}\quad d\xi:\mathfrak{so}(V,\omega)\to\mathfrak{so}(V,\omega).\]
We have $\tilde{F}=\phi(F)$ (with $\tilde{F}$ as in the statement of Proposition \ref{proposition-classical-4}\,{\rm (b)}\,{\rm (ii)} or as described in Remark \ref{remark-a-faire}), so that $\xi$ preserves the parabolic subgroup $P=P_F=P_{\tilde{F}}$ and $d\xi$ stabilizes the Lie algebra $\mathfrak{p}=\mathfrak{p}_F=\mathfrak{p}_{\tilde{F}}$. Thus $\tilde{\mathfrak{i}}:=d\xi(\mathfrak{i})$ is again an ideal of $\mathfrak{p}$. On the other hand, the automorphism $\xi$ exchanges the simple roots $\alpha_n$ and $\alpha_{n+1}$, hence it exchanges the roots $\varepsilon_i-\varepsilon_n$ and $\varepsilon_i+\varepsilon_n$. It follows that the ideal $\tilde{\mathfrak{i}}$ satisfies condition (\ref{newnew-28}) (because (\ref{newnew-28}) is not valid for $\mathfrak{i}$), thereby we can find a sequence $\underline{c}=\underline{c}^*$ such that $\tilde{\mathfrak{i}}$ and $\mathfrak{i}_{\underline{c}}^F$ have the same nilpotent elements. Hence, the sets of nilpotent elements of the ideals $\mathfrak{i}=d\xi(\tilde{\mathfrak{i}})$ and $\mathfrak{i}_{\underline{c}}^{\tilde{F}}=d\xi(\mathfrak{i}_{\underline{c}}^F)$ coincide. The proof of Proposition \ref{proposition-classical-4}\,{\rm (b)} is now complete.

\section{Calculations for classical groups}

\label{section-8}

By Proposition \ref{proposition-conclusion-section-5}, the proof of Theorem \ref{theorem-1} will be complete once we show:

\begin{proposition}
\label{proposition-section-8}
Let $G$ be one of the groups $SL(V)$ (for $V=\mathbb{C}^n$, $n\geq 2$), $Sp(V,\omega)$ (for $V=\mathbb{C}^{2n}$, $n\geq 2$, and $\omega:V\times V\to\mathbb{C}$ a symplectic form), or $SO(V,\omega)$ (for $V=\mathbb{C}^{m}$, $m\geq 3$, and $\omega:V\times V\to\mathbb{C}$ a nondegenerate symmetric bilinear form). Let $e\in\mathfrak{g}=Lie(G)$ be a distinguished nilpotent element. Then, property ${\rm P}(G,e)$ is satisfied.
\end{proposition}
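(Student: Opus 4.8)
The plan is to dispose of type $A$ at once and to handle types $B$, $C$, $D$ by a single induction based on the concrete picture of $\mathcal{P}_{e,\mathfrak{i}}$ furnished by Sections \ref{section-6}--\ref{section-7}. When $G=SL(V)$ the only distinguished nilpotent orbit is the regular one, so $e$ is regular; by Remark \ref{remark-regular} the variety $\mathcal{P}_{e,\mathfrak{i}}$ then has at most one point for every parabolic $P$ and every $P$-stable $\mathfrak{i}$, hence so does $(\mathcal{P}_{e,\mathfrak{i}})^S$, and $\mathrm{P}(SL(V),e)$ holds trivially.

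Now let $G=Sp(V,\omega)$ or $SO(V,\omega)$. Fix a standard triple $(e,h,f)$, let $S=\{\lambda(t):t\in\mathbb{C}^*\}$ be the torus attached to $h$, and let $V=\bigoplus_w V_w$ be the $h$-weight decomposition; by Lemma \ref{lemma-2}\,{\rm (b)}, $e(V_w)\subset V_{w+2}$ and $\omega$ restricts to a nondegenerate pairing $V_w\times V_{-w}\to\mathbb{C}$. Using Propositions \ref{proposition-classical-2} and \ref{proposition-classical-4}, and replacing $\mathfrak{i}$ by an $\mathfrak{i}^F_{\underline{c}}$ with $\underline{c}=\underline{c}^*$ (which leaves $\mathcal{P}_{e,\mathfrak{i}}$ unchanged), I identify $\mathcal{P}_{e,\mathfrak{i}}$ with the variety $\hat{\mathcal{F}}^\omega_{e,\underline{d},\underline{c}}$ of isotropic partial flags subject to the self-dual incidence conditions $e(\overline{W}_p)\subset\overline{W}_{c_p}$. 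A flag in this variety is $S$-fixed exactly when each of its terms is a sum of weight spaces, so $(\mathcal{P}_{e,\mathfrak{i}})^S$ is identified with the variety of \emph{graded} isotropic partial flags of $(V,\omega)$, of the prescribed dimensions, satisfying those incidence conditions. (When $\dim V$ is even and $d_{k-1}=\tfrac12\dim V$ one must keep track of the connected component; the automorphism $\xi$ of Section \ref{section-7-4-2} exchanges the two components while conjugating the data, so treating one of them suffices.) The task is thus reduced to the following statement, to be proved by induction on $\dim V$: for any finite-dimensional graded space $V=\bigoplus_w V_w$ equipped with a nondegenerate graded symmetric or symplectic form $\omega$ and a graded weight-$2$ nilpotent $\omega$-antiadjoint endomorphism $e$, every variety of graded isotropic partial flags with fixed dimensions and self-dual $e$-incidence conditions admits an affine paving.

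For the induction I would first record the elementary fact that if $\pi\colon X\to Y$ is a Zariski-locally trivial affine bundle and $Y$ admits an affine paving, then so does $X$ (preimages of cells are cells, and preimages of closed sets are closed, so the $\alpha$-partition property is preserved). Given the flag variety $X$, let $W_1$ be its smallest nonzero term and let $\pi\colon X\to Y$ be the morphism recording $W_1$; the incidence condition on $W_1$ forces $W_1\subset\ker e$ or $e(W_1)\subset W_1$, so in either case $e$ induces a weight-$2$ antiadjoint nilpotent endomorphism of the strictly smaller graded space $\overline{W}_1^{\,\perp}/W_1$ (with its induced nondegenerate form), and the remaining flag terms and incidence data descend to self-dual data there. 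I would then stratify $Y$ according to the relative position of the graded isotropic subspace $W_1$ with respect to the fixed flags $\ker e^j\cap V_w$, check that each stratum is an affine space and that the strata form an $\alpha$-partition of $Y$, and show that over each stratum $\pi$ restricts to a Zariski-locally trivial fibre bundle whose typical fibre is the flag variety in $\overline{W}_1^{\,\perp}/W_1$ --- which admits an affine paving by the induction hypothesis. Verifying that the structure group of this bundle acts on the cells of the fibre by affine transformations, one gets an $\alpha$-partition of $\pi^{-1}(\text{stratum})$ into affine bundles over the (affine) stratum, hence into affine spaces; assembling these over all strata and invoking the bundle lemma together with the $\alpha$-partition of $Y$ produces an affine paving of $X$. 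This yields $\mathrm{P}(G,e)$ in types $B$, $C$, $D$ and, with the type $A$ case, completes the proof.

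The main obstacle is the combinatorics concealed in the inductive step for the orthogonal and symplectic groups: one must check that, after passing to $\overline{W}_1^{\,\perp}/W_1$, the incidence data stays self-dual and of the allowed form; that the stratification of $Y$ by relative position genuinely consists of affine spaces and is an $\alpha$-partition; and that the induced family over each stratum has the structure needed to produce affine bundles (this is where the ``specialness'' of the relevant automorphism group enters). These are exactly the points that collapse in type $A$ but demand a careful case analysis --- heavier than the one in \cite{DeConcini-Lusztig-Procesi} for Springer fibers, partly because $W_1$ can here have dimension greater than $1$ and the sequence $\underline{c}$ must be tracked through every reduction --- together with the extra housekeeping needed to match connected components in the boundary case of type $D$.
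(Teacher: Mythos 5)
Your type $A$ argument and the reduction of types $B$--$D$ to the graded statement about $S$-fixed isotropic flags match the paper. But the inductive mechanism you propose for $B$, $C$, $D$ has two concrete failures. First, the fibre of your map $\pi\colon X\to Y$ over a fixed $W_1$ is \emph{not} an instance of your induction hypothesis in $\overline{W}_1^{\,\perp}/W_1$: an incidence condition with small $c_p$, say $c_p=0$ for some $p\geq 2$, reads $W_p\subset\ker e$, and in the quotient this becomes ``$\,\overline{W}_p/W_1$ is contained in the fixed subspace $(\ker e\cap W_1^\perp)/W_1$'', which is strictly smaller than $\ker\bar e$ in general (since $\bar e\bar v=0$ only means $ev\in W_1$); such a containment-in-a-fixed-subspace condition is not of the form $\bar e(\overline{W}_p)\subset\overline{W}_{c_p}$, so the induction does not close. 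Second, the strata of $Y$ given by the relative position of the graded isotropic space $W_1$ with respect to the subspaces $\ker e^j\cap V_w$ need not be affine spaces: already for $SO_9$ with the distinguished element of Jordan type $(5,3,1)$, the weight-zero space $E_0$ is $3$-dimensional with nondegenerate form, the graded isotropic lines in $E_0$ form a conic, and the relative-position strata are a two-point set and a copy of $\mathbb{C}^*$. So both pillars of your inductive step (affine strata downstairs, same-type fibres upstairs) break down, and it is exactly here that the real work of the proof lies.

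The paper's induction proceeds differently and avoids both problems. It keeps the ambient space (no quotient by $W_1$) and instead stratifies $(\mathcal{F}^\omega_{e,\underline{d},\underline{c}})^S$ by the weights at which the flag grows (the sets $\mathcal{X}_j$, resp.\ $\mathcal{X}_{\underline{j}}$, in the two cases $c_1=1$ and $c_1=0$); using that $e$ is distinguished (distinct Jordan block sizes of fixed parity) it shows that every flag in a nonempty stratum is forced to contain the highest-weight vector $v^{\ell}_{\mu_\ell-1}$ of one specific Jordan block, and then removes the two extremal vectors of that block, producing an isomorphism of each stratum with an analogous variety for a space $\tilde{V}$ of dimension $m-2$. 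Crucially, the element $\tilde e$ on $\tilde V$ need not be distinguished, so the paving of the smaller variety is not obtained by the naive induction hypothesis but by Lemma \ref{lemma-claim-2}, which combines the induction on $m$ with the reduction machinery of Sections \ref{section-4}--\ref{section-5}; and the verification that the incidence conditions transfer in both directions uses the self-duality $\underline{c}^*=\underline{c}$ and the case analysis $c_1=1$ versus $c_1=0$ (the latter requiring the auxiliary sequence $\underline{p}$) in an essential way. None of these ingredients appears in your outline, and without a replacement for them the proposed proof does not go through.
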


The remainder of this section is devoted to the proof of Proposition \ref{proposition-section-8}.

First, we deal with the case where $G=SL(V)$.
From \cite[Theorem 8.2.14\,{\rm (i)}]{Collingwood-McGovern}, we know that any distinguished nilpotent element $e\in \mathfrak{sl}(V)$ is regular.
Then, by Remark \ref{remark-regular}, the variety $\mathcal{P}_{e,\mathfrak{i}}$ is either empty or a single point. Therefore, property ${\rm P}(G,e)$ trivially holds in this case.

The study of the cases where $G=Sp(V,\omega)$ or $G=SO(V,\omega)$ is much more involved. We deal with these two cases simultaneously.
In the remainder of this section, we assume that the space $V=\mathbb{C}^m$ ($m\geq 1$) is equipped with a nondegenerate bilinear form $\omega$, which can be symmetric or antisymmetric.
Let $G\subset SL(V)$ be the subgroup of automorphisms that preserve $\omega$ and
let $\mathfrak{g}\subset End(V)$ be its Lie algebra, that is, the subspace of endomorphisms that are antiadjoint with respect to $\omega$. A nilpotent element $e\in\mathfrak{g}$ is an antiadjoint nilpotent endomorphism.
The proof displays into several subsections.

\subsection{Review on nilpotent elements in types $B$, $C$, $D$}

\label{section-recalls-nilpotent}

Let $h,f\in\mathfrak{g}$ be such that $\{e,h,f\}$ form a standard triple.
Let $\mathfrak{s}$ be the Lie subalgebra generated by $e,h,f$.
Then, the space $V$ decomposes as direct sums
\[V=\bigoplus_{i\in\mathbb{Z},\, i\geq 0}M(i)=\bigoplus_{j\in\mathbb{Z}}E_j\]
where
$E_j$ is the eigenspace for $h$ corresponding to the eigenvalue $j$ and where
each $M(i)$ is a direct sum (possibly zero) of simple $\mathfrak{s}$-modules of dimension $i+1$ (i.e., of highest weight $i$).
We have $e(E_j)\subset E_{j+2}$.
The subspaces $M(i)$ are pairwise orthogonal in $(V,\omega)$ and the restriction of $\omega$ to $M(i)$ is nondegenerate.

Each simple summand of $M(i)$ is a Jordan block of $e$ of size $i+1$. In particular, for $i\geq 0$,
the number of Jordan blocks of $e$ of size $\geq i+1$ coincides with
$\dim (E_i+E_{i+1})$.

The following statement reviews the characterization of admissible Jordan forms and of distinguished nilpotent elements in types $B$, $C$, $D$ (see \cite[\S 5.1 and \S 8.2]{Collingwood-McGovern}).

\begin{proposition}
\label{proposition-distinguished}
{\rm (a)} Assume that $\omega$ is symplectic. Then, $M(i)$ has an even number of summands whenever $i$ is even. Moreover, $e$ is distinguished if and only if $M(i)$ is zero or simple for all $i$ (in particular, the Jordan form of $e$ is of the form $\mu(e)=(2n_1>2n_2>\ldots>2n_r)$). \\
{\rm (a)} Assume that $\omega$ is symmetric. Then, $M(i)$ has an even number of summands whenever $i$ is odd. Moreover, $e$ is distinguished if and only if $M(i)$ is zero or simple for all $i$ (in particular, the Jordan form of $e$ is of the form $\mu(e)=(2n_1+1>2n_2+1>\ldots>2n_r+1)$).
\end{proposition}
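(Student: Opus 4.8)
The statement is a classical consequence of $\mathfrak{sl}_2(\mathbb{C})$-representation theory, and I sketch the argument (cf.\ \cite[\S 5.1 and \S 8.2]{Collingwood-McGovern}). Write each isotypic component as $M(i)\cong V_i\otimes W_i$, where $V_i$ is the simple $\mathfrak{s}$-module of dimension $i+1$ (with $\mathfrak{s}$ acting on the first tensor factor) and $W_i:=\mathrm{Hom}_{\mathfrak{s}}(V_i,V)$ is the multiplicity space; in particular $\dim W_i$ equals the number of Jordan blocks of $e$ of size $i+1$. The plan is to read off the parity of $\dim W_i$ from the symmetry type of $\omega$, and then to express the reductive part of the centralizer of $e$ in terms of the spaces $W_i$.

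First I would record the symmetry of the invariant form on $V_i$. Since $V_i$ is, up to isomorphism, the $i$-th symmetric power of the defining $2$-dimensional $\mathfrak{s}$-module, which carries a nondegenerate $\mathfrak{s}$-invariant symplectic form, the (unique up to scalar) nondegenerate $\mathfrak{s}$-invariant bilinear form $b_i$ on $V_i$ is symmetric when $i$ is even and alternating when $i$ is odd. Now $\omega|_{M(i)}$ is a nondegenerate $\mathfrak{s}$-invariant bilinear form, and by Schur's lemma every such form on $V_i\otimes W_i$ has the shape $b_i\otimes c_i$ for some bilinear form $c_i$ on $W_i$, which is necessarily nondegenerate. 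Comparing symmetry types, $c_i$ is alternating precisely when $\omega$ is symplectic and $i$ is even, or $\omega$ is symmetric and $i$ is odd; in those cases $\dim W_i$ must be even. This gives the first assertion of part (a) and of part (b).

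Next I would invoke the standard characterization that $e$ is distinguished in the semisimple Lie algebra $\mathfrak{g}$ if and only if $\mathfrak{z}_{\mathfrak{g}}(e,h,f):=\{x\in\mathfrak{g}:[x,e]=[x,h]=[x,f]=0\}$ is zero, equivalently $\mathfrak{z}_{\mathfrak{g}}(e)$ contains no nonzero semisimple element (see \cite[\S 8.2]{Collingwood-McGovern}). One has $\mathfrak{z}_{\mathfrak{g}}(e,h,f)=\mathrm{End}_{\mathfrak{s}}(V)\cap\mathfrak{g}$, and by Schur's lemma $\mathrm{End}_{\mathfrak{s}}(V)$ consists of the endomorphisms $\bigoplus_i(\mathrm{id}_{V_i}\otimes g_i)$ with $g_i\in\mathrm{End}(W_i)$; such an endomorphism lies in $\mathfrak{g}$ if and only if each $g_i$ is antiadjoint with respect to $c_i$. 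Hence $\mathfrak{z}_{\mathfrak{g}}(e,h,f)$ is identified with $\bigoplus_i\mathfrak{g}(W_i,c_i)$, where $\mathfrak{g}(W_i,c_i)=\mathfrak{so}(W_i,c_i)$ if $c_i$ is symmetric and $\mathfrak{g}(W_i,c_i)=\mathfrak{sp}(W_i,c_i)$ if $c_i$ is alternating. Since $\mathfrak{so}(W_i,c_i)=0\Leftrightarrow\dim W_i\leq 1$ and $\mathfrak{sp}(W_i,c_i)=0\Leftrightarrow\dim W_i=0$, combining with the parity constraint above shows that $\mathfrak{z}_{\mathfrak{g}}(e,h,f)=0$ if and only if $\dim W_i\leq 1$ for all $i$, i.e.\ every $M(i)$ is zero or simple. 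Finally, when $\omega$ is symplectic the surviving $W_i$ occur only for odd $i$, so $\mu(e)$ has distinct even parts, while when $\omega$ is symmetric they occur only for even $i$, so $\mu(e)$ has distinct odd parts.

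The one point requiring genuine care — mild here, since this is a review statement — is the bilinear-form bookkeeping: the factorization $\omega|_{M(i)}=b_i\otimes c_i$ and the resulting identification of $\mathfrak{z}_{\mathfrak{g}}(e,h,f)$ with the product $\bigoplus_i\mathfrak{g}(W_i,c_i)$ of orthogonal and symplectic Lie algebras. Granting this, the parity statements and the distinguished criterion follow at once, and all the ingredients (the $\mathfrak{sl}_2$-decomposition of $V$, the symmetry of the invariant form on $V_i$, and the characterization of distinguished nilpotent elements) are recalled in \cite[\S 5.1 and \S 8.2]{Collingwood-McGovern}.
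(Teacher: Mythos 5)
Your argument is correct, and it is essentially the argument the paper relies on: the paper states Proposition~\ref{proposition-distinguished} as a review fact with a citation to \cite[\S 5.1 and \S 8.2]{Collingwood-McGovern}, and your proof is the standard one from that source (factor $\omega|_{M(i)}=b_i\otimes c_i$ on the isotypic components, read off the parity of the multiplicity spaces, and identify the reductive centralizer $\mathfrak{z}_{\mathfrak{g}}(e,h,f)$ with $\bigoplus_i\mathfrak{g}(W_i,c_i)$, using that $e$ is distinguished iff this reductive centralizer vanishes). You also correctly handle the one point where care is needed, namely that $\mathfrak{sp}(W_i,c_i)=0$ forces $\dim W_i=0$ rather than $\leq 1$, which is reconciled with the statement ``$M(i)$ zero or simple'' precisely by the parity constraint you established first.
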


\subsection{Subvarieties $(\mathcal{F}_{e,\underline{d},\underline{c}}^\omega)^S$}

Let $n=\lfloor\frac{m}{2}\rfloor$ where, as above, $m=\dim V$.
Let $k\geq 2$ and let sequences of integers $\underline{d}=(d_0=0<d_1<\ldots<d_{k-1}\leq n)$ and $\underline{c}=(c_0\leq c_1\leq \ldots\leq c_{2k-1})$ with $0\leq c_p\leq p$ for all $p\in\{0,\ldots,2k-1\}$ and $\underline{c}^*=\underline{c}$, that is
\begin{equation}
\label{classicals-newnew-cdual}
c_p=c_p^*:=|\{q=1,\ldots,2k-1:c_q\geq 2k-p\}|\ \mbox{ for all $p=0,\ldots,2k-1$.}
\end{equation}
Recall the varieties of isotropic partial flags $\mathcal{F}_{\underline{d}}^\omega$
and $\mathcal{F}_{e,\underline{d},\underline{c}}^\omega$ introduced in Proposition \ref{proposition-classical-2} and Remark \ref{remark-x}.

Let $\lambda(t)\in G$ be the element given by $\lambda(t)(v)=t^jv$ whenever $v\in E_j$. Thus, $S:=\{\lambda(t):t\in\mathbb{C}^*\}$ is a subtorus corresponding to $h$ in the sense of Section \ref{section-2-1-2}.
The Levi subgroup $L:=Z_G(S)$ can be described as
\[L=\{g\in G:g(E_j)=E_j\ \mbox{ for all } j\}.\]

As usual, we denote by $(\mathcal{F}^\omega_{e,\underline{d},\underline{c}})^S\subset \mathcal{F}^\omega_{e,\underline{d},\underline{c}}$ the subvariety of the elements that are fixed by $S$. In the next statement, $e$ is any nilpotent element (not necessarily distinguished).

\begin{lemma}
\label{lemma-classicals-1}
The following conditions are equivalent: \\
{\rm (i)}
The variety $(\mathcal{F}_{e,\underline{d},\underline{c}}^\omega)^S$ admits an affine paving; \\
{\rm (ii)}
For every $L$-stable subvariety $\mathcal{Z}\subset(\mathcal{F}_{\underline{d}}^\omega)^S$, the variety $\mathcal{Z}\cap (\mathcal{F}_{e,\underline{d},\underline{c}}^\omega)^S$ admits an affine paving; \\
{\rm (iii)}
There are  $L$-stable subvarieties $\mathcal{Z}_1,\ldots,\mathcal{Z}_M\subset(\mathcal{F}_{\underline{d}}^\omega)^S$ such that the varieties $\mathcal{Z}_i\cap (\mathcal{F}_{e,\underline{d},\underline{c}}^\omega)^S$ (for $i=1,\ldots,M$) cover $(\mathcal{F}_{e,\underline{d},\underline{c}}^\omega)^S$ and admit affine pavings.
\end{lemma}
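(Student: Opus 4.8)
The plan is to deduce the three equivalences from one structural observation together with two elementary facts about $\alpha$-partitions. Write $Y:=(\mathcal{F}_{e,\underline{d},\underline{c}}^\omega)^S$. By Proposition~\ref{proposition-classical-2}, the isotropic flag variety $\mathcal{F}_{\underline{d}}^\omega$ (or, in the borderline type $D$ case, each of its two connected components) is a partial flag variety $G/P$, so Lemma~\ref{lemma-1} applies to it: every connected component of the fixed-point locus $(\mathcal{F}_{\underline{d}}^\omega)^S$ is a single $L$-orbit, hence is $L$-homogeneous and irreducible. Since $L=Z_G(S)$ is connected (being a Levi subgroup of $G$), an $L$-stable subvariety of $(\mathcal{F}_{\underline{d}}^\omega)^S$ is a union of $L$-orbits, hence a union of connected components; in particular it is both open and closed in $(\mathcal{F}_{\underline{d}}^\omega)^S$.

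The two facts about $\alpha$-partitions that I would isolate first are: (a) if a variety $Z$ admits an affine paving and $Z=Z'\sqcup Z''$ with $Z'$ and $Z''$ closed (equivalently, open), then $Z'$ and $Z''$ each admit an affine paving; indeed each cell of the paving of $Z$ is irreducible, hence connected, hence entirely contained in $Z'$ or in $Z''$, and intersecting the nested closed unions $X_1\cup\ldots\cup X_l$ with the closed set $Z'$ (resp. $Z''$) shows that the cells lying in $Z'$ (resp. $Z''$), listed in the induced order, form an $\alpha$-partition. (b) If $Z=Z_1\sqcup\ldots\sqcup Z_r$ with each $Z_s$ open and closed in $Z$ and admitting an affine paving, then the concatenation of these affine pavings (in the order $s=1,\ldots,r$) is an affine paving of $Z$: each of its nested unions is the union of a set of the form $Z_1\cup\ldots\cup Z_{s-1}$ (open and closed, hence closed, in $Z$) with a set closed in $Z_s$ (hence closed in $Z$), so is closed in $Z$. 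Both facts are routine once the $\alpha$-partition condition is written out.

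Granting this, I would link the three conditions as follows. For $(i)\Rightarrow(ii)$: an $L$-stable subvariety $\mathcal{Z}$ is open and closed in $(\mathcal{F}_{\underline{d}}^\omega)^S$, so $\mathcal{Z}\cap Y$ is open and closed in $Y$ and $Y=(\mathcal{Z}\cap Y)\sqcup(Y\setminus\mathcal{Z})$ exhibits $Y$ as a disjoint union of two closed subvarieties; by fact (a), $\mathcal{Z}\cap Y$ inherits an affine paving from that of $Y$. For $(ii)\Rightarrow(iii)$: take $M=1$ and $\mathcal{Z}_1=(\mathcal{F}_{\underline{d}}^\omega)^S$, so that $\mathcal{Z}_1\cap Y=Y$ covers $Y$ and admits an affine paving by $(ii)$. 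For $(iii)\Rightarrow(i)$: let $C_1,\ldots,C_r$ be the connected components of $(\mathcal{F}_{\underline{d}}^\omega)^S$ that meet $Y$, so $Y=\bigsqcup_{s=1}^r(C_s\cap Y)$ with each $C_s\cap Y$ open and closed in $Y$; for each $s$, since the sets $\mathcal{Z}_i\cap Y$ cover $Y$ and $C_s\cap Y\neq\emptyset$, some $\mathcal{Z}_i$ contains a point of $C_s$ and hence (by the structural observation) all of $C_s$, so $C_s\cap Y$ is open and closed in $\mathcal{Z}_i\cap Y$ and by fact (a) admits an affine paving; fact (b) applied to $Y=\bigsqcup_s(C_s\cap Y)$ then yields an affine paving of $Y$.

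The argument is essentially bookkeeping. The one genuinely needed ingredient is the connectedness of $L$, which forces the $L$-stable subvarieties of the fixed-point locus to be unions of its connected components and thus makes the intersections $\mathcal{Z}\cap Y$ behave like disjoint unions of clopen pieces of $Y$; I expect this to be the only substantive point. The sole place calling for a little care is checking that the $\alpha$-partition condition (closedness of the nested unions) survives the splitting in (a) and the concatenation in (b), which is precisely why I would prove those two facts before running the equivalences.
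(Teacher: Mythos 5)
Your proof is correct and follows essentially the same route as the paper: the key point in both is that, by Lemma~\ref{lemma-1}, the connected components of $(\mathcal{F}_{\underline{d}}^\omega)^S$ are single $L$-orbits, so $L$-stable subvarieties are clopen unions of components, after which the three implications are bookkeeping with affine pavings on clopen pieces. The only difference is cosmetic: you spell out the two routine facts about restricting and concatenating $\alpha$-partitions that the paper uses implicitly, and in (iii)$\Rightarrow$(i) you decompose along components of the ambient fixed-point set rather than of $(\mathcal{F}_{e,\underline{d},\underline{c}}^\omega)^S$ itself, which amounts to the same argument.
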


\begin{proof}
By Lemma \ref{lemma-1}, the fixed point set $(\mathcal{F}_{\underline{d}}^\omega)^S\subset\mathcal{F}_{\underline{d}}^\omega$ is a union of finitely many $L$-orbits, and every $L$-orbit of $(\mathcal{F}_{\underline{d}}^\omega)^S$ is a connected component of $(\mathcal{F}_{\underline{d}}^\omega)^S$ (hence is open and closed in $(\mathcal{F}_{\underline{d}}^\omega)^S$). Thus, for every $L$-stable subset $\mathcal{Z}\subset(\mathcal{F}_{\underline{d}}^\omega)^S$, the intersection $\mathcal{Z}\cap(\mathcal{F}_{e,\underline{d},\underline{c}}^\omega)^S$ is the union of some connected components of $(\mathcal{F}_{e,\underline{d},\underline{c}}^\omega)^S$. If {\rm (i)} holds, then each connected component of $(\mathcal{F}_{e,\underline{d},\underline{c}}^\omega)^S$
admits an affine paving, thus $\mathcal{Z}\cap(\mathcal{F}_{e,\underline{d},\underline{c}}^\omega)^S$ admits an affine paving, and this shows that {\rm (ii)} holds.

The implication {\rm (ii)}$\Rightarrow${\rm (iii)} is immediate. 

Assume that {\rm (iii)} holds. In order to show that {\rm (i)} holds, it suffices to check that every connected component $\mathcal{C}\subset(\mathcal{F}_{e,\underline{d},\underline{c}}^\omega)^S$ admits an affine paving.
There is an $L$-orbit $\mathcal{Z}\subset(\mathcal{F}_{\underline{d}}^\omega)^S$ such that $\mathcal{C}\subset \mathcal{Z}\cap(\mathcal{F}_{e,\underline{d},\underline{c}}^\omega)^S$ and we can find $i\in\{1,\ldots,M\}$ such that $\mathcal{Z}\cap(\mathcal{F}_{e,\underline{d},\underline{c}}^\omega)^S\subset \mathcal{Z}_i\cap(\mathcal{F}_{e,\underline{d},\underline{c}}^\omega)^S$. It follows that $\mathcal{C}$ is a connected component of $\mathcal{Z}_i\cap(\mathcal{F}_{e,\underline{d},\underline{c}}^\omega)^S$. Since $\mathcal{Z}_i\cap(\mathcal{F}_{e,\underline{d},\underline{c}}^\omega)^S$ admits an affine paving, we conclude that $\mathcal{C}$ admits an affine paving.\ The proof of the lemma is now complete.
\end{proof}

In order to show Proposition \ref{proposition-section-8} in the case of $Sp(V,\omega)$ and $SO(V,\omega)$, in view of Definition \ref{definition-PGe}, Proposition \ref{proposition-classical-4},
and Remark \ref{remark-x}, it is sufficient to establish the following statement.

\begin{proposition}
\label{proposition-claim-1}
Let $e\in\mathfrak{g}$ be distinguished. Then,
the variety $(\mathcal{F}^\omega_{e,\underline{d},\underline{c}})^S$ admits an affine paving.
\end{proposition}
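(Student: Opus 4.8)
The plan is to proceed by induction on $\dim V = m$ (or equivalently on $n$), following the scheme of De Concini--Lusztig--Procesi but adapted to the isotropic partial-flag setting. First I would reduce, via Lemma~\ref{lemma-classicals-1}, to the problem of covering $(\mathcal{F}^\omega_{e,\underline{d},\underline{c}})^S$ by finitely many pieces of the form $\mathcal{Z}\cap(\mathcal{F}^\omega_{e,\underline{d},\underline{c}})^S$, where $\mathcal{Z}$ ranges over $L$-stable subvarieties of $(\mathcal{F}^\omega_{\underline{d}})^S$; it then suffices to affine-pave each such piece. The key point is that an $S$-fixed isotropic flag $(W_0,\ldots,W_{k-1})$ must be \emph{graded}, i.e. each $W_p$ decomposes along the eigenspace decomposition $V=\bigoplus_j E_j$. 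So $(\mathcal{F}^\omega_{\underline{d}})^S$ is a partial flag variety of the Levi $L=\prod_j GL(E_j)$ (with the orthogonality constraints from $\omega$ pairing $E_j$ with $E_{-j}$), and its $L$-orbits are indexed by the possible "shapes" recording $\dim(W_p\cap E_j)$ for all $p,j$.

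The heart of the argument is the inductive step. Fix one such shape, giving an $L$-orbit $\mathcal{Z}$. I would single out the smallest nonzero piece of the flag interacting with the action of $e$ — concretely, consider the "top" eigenspace, or the first index $p$ where the flag meets the highest-weight part of a Jordan block — and peel it off. Since $e$ is distinguished, by Proposition~\ref{proposition-distinguished} each $M(i)$ is either zero or a single Jordan block, so the combinatorics of the eigenspaces $E_j$ is rigid: $\dim E_j$ and $\dim E_{j+1}$ determine the number of blocks of each size, and the "generic" behaviour of $e$ on graded subspaces is tightly controlled. The plan is: on the piece $\mathcal{Z}\cap(\mathcal{F}^\omega_{e,\underline{d},\underline{c}})^S$, stratify according to finitely many discrete invariants of the position of the flag relative to $e$ (e.g. the ranks of the induced maps $e\colon \overline{W}_p\cap E_j \to E_{j+2}/(\overline{W}_{c_p}\cap E_{j+2})$, or the dimensions of intersections $W_p\cap e^{-1}W_{c_p}\cap E_j$), so that each stratum is fibered, by forgetting the smallest subspace $W_1$ (or by quotienting $V$ by a suitable $e$-stable isotropic or coisotropic graded subspace determined by that stratum), over a variety of the same type but for a strictly smaller space $V'$ — either $V/(W_1\oplus W_1^{\perp})$ with its induced form, giving back a $Sp$ or $SO$ situation, or over an ordinary (type $A$) partial flag variety for one of the $GL(E_j)$-factors, which is handled by Proposition~\ref{proposition-classical-3} / the known $SL$ case. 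Each fiber should be an affine space (a space of suitable "completion" maps, once the discrete invariants are fixed), so by the inductive hypothesis and Bialynicki-Birula-type gluing (or the $\alpha$-partition bookkeeping of Section~\ref{section-3}) each stratum, hence $\mathcal{Z}\cap(\mathcal{F}^\omega_{e,\underline{d},\underline{c}})^S$, admits an affine paving. Summing over the finitely many shapes $\mathcal{Z}$ and invoking Lemma~\ref{lemma-classicals-1} gives the claim.

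The main obstacle, and the reason Section~\ref{section-8} is "quite involved" outside type $A$, is the bookkeeping forced by the isotropy and orthogonality conditions: when one peels off $W_1$ and passes to $W_1^{\perp}/W_1$, the completions $\overline{W}_p = V_{2k-1-p}^{\perp}$ and the self-dual condition $\underline{c}^*=\underline{c}$ must be tracked carefully so that the reduced data $(\underline{d}',\underline{c}')$ on $V'$ again satisfies $\underline{c}'^*=\underline{c}'$, and so that the fibration maps are genuinely algebraic affine bundles rather than merely set-theoretic. There is also the type-$D$ subtlety (the two connected components of $\mathcal{F}^\omega_{\underline{d}}$ when $d_{k-1}=m/2$, cf. Remark~\ref{remark-x} and Proposition~\ref{proposition-classical-4}\,{\rm (b)}\,{\rm (ii)}), which must be handled by choosing the stratification compatibly with the involution $\xi$ used there; and the low-rank base cases of the induction ($m\le 2$ or $n\le 1$) must be checked directly, where the flag variety degenerates to a point or $\mathcal{F}^\omega_{e,\underline{d},\underline{c}}$ is empty. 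I expect the construction of the explicit affine fibrations — identifying, after fixing discrete invariants, the remaining freedom with a vector space of homomorphisms — to be the technically heaviest part, but conceptually routine once the right invariants are chosen.
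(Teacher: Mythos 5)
The clearest gap is in your use of the inductive hypothesis. Your induction is on $\dim V$ for the statement with $e$ \emph{distinguished}, but after passing to a smaller orthogonal or symplectic space (you write $V/(W_1\oplus W_1^\perp)$, presumably meaning $W_1^\perp/W_1$, or the space obtained by removing some graded piece), the induced nilpotent element is in general no longer distinguished: its Jordan type can acquire repeated parts, depending on the position of the subspace removed. So ``by the inductive hypothesis'' is simply not available at the point where you invoke it. This is precisely why the paper, before any case analysis, proves Lemma \ref{lemma-claim-2}: the induction on $m$ is fed through property $\mathrm{P}(G,e)$ of Definition \ref{definition-PGe} and the reduction machinery of Sections \ref{section-4}--\ref{section-5} (Propositions \ref{proposition-section-4}, \ref{proposition-section-5-1}, \ref{proposition-section-5-2}), so as to deduce affine pavings of the fixed-point varieties attached to the smaller space for the \emph{arbitrary} (typically non-distinguished) nilpotent element induced there. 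Your plan never closes this loop. A second, related problem: since the subspace you propose to quotient by ($W_1$) varies with the point of the stratum, the ``smaller space of the same type'' and its nilpotent are not even well defined as the base of a fibration until one proves that specific vectors are forced into the flag on each stratum.

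That forced-vector step is the actual heart of Section \ref{section-8} and is absent from your outline: on each nonempty stratum one shows, using that $e$ is distinguished (so $E_j\cap\ker e$ is at most a line) and the self-duality $\underline{c}^*=\underline{c}$, that the highest-weight vector $v^{\ell}_{\mu_\ell-1}$ of one specific Jordan block lies in $V_1$ (case $c_1=1$), resp.\ that $v^{\ell_s}_{\mu_{\ell_s}-1}\in V_{p_s}$ for an index $p_s$ produced by an algorithm (case $c_1=0$). The paper then removes the \emph{fixed} two-dimensional span $\langle v^\ell_{\mu_\ell-1},v^\ell_{-\mu_\ell+1}\rangle$ --- not the varying $W_1$ --- and proves that each stratum is \emph{isomorphic} to a stratum of the analogous variety for $\tilde V$ (there is no affine-bundle step at this stage); checking that the isomorphism respects the conditions $e(\overline{W}_p)\subset\overline{W}_{c_p}$ in the case $c_1=0$ requires a four-case verification exploiting $\underline{c}^*=\underline{c}$ (e.g.\ it forces $c_{2k-1}=2k-1$, $c_{2k-p_s}\geq p_s$) and the nonemptiness of the stratum. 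Your proposed fibration ``by forgetting $W_1$'' with affine-space fibers of completion maps is exactly the content you defer as routine, and it is not clear the fibers are affine spaces once the isotropy constraints coupling $W_1$ to all the completions $\overline{W}_p$ are imposed. So although your global skeleton (induction on $\dim V$, $L$-stable stratification via Lemma \ref{lemma-classicals-1}) matches the paper, the reduction step as stated both misapplies the inductive hypothesis and leaves the essential geometric claims unproved.
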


The remainder of Section \ref{section-8} is devoted to the proof of 
Proposition \ref{proposition-claim-1}. The proof is made by induction on $m\geq 1$. If $m\in\{1,2\}$, then any distinguished nilpotent element in $\mathfrak{g}$ is regular and the property is true by Remark \ref{remark-regular}.
In what follows, let $m\geq 3$ such that Proposition \ref{proposition-claim-1} holds until the rank $m-1$. The nilpotent element $e\in\mathfrak{g}$ is now supposed to be distinguished.

\subsection{Notation}
\label{classicals-notation}

By Proposition \ref{proposition-distinguished}, the fact that $e$ is distinguished implies that the sizes of its Jordan blocks form a decreasing sequence
\[\mu_1>\mu_2>\ldots>\mu_r(>0)\]
where the $\mu_p$'s are even (resp. odd) numbers if $\omega$ is symplectic (resp. symmetric). 
Also by Proposition \ref{proposition-distinguished},
the space $V$ decomposes as
\[V=\bigoplus_{\ell=1}^r M(\ell)\]
where $M(\ell)$ is a simple $\mathfrak{s}$-module of dimension $\mu_\ell$.
Let $I_\ell=\{-\mu_\ell+1,-\mu_\ell+3,\ldots,\mu_\ell-1\}$.
There is a basis $\{v_{i}^\ell:i\in I_\ell\}$ of $M(\ell)$ such that
\[h(v_i^\ell)=iv_i^\ell\quad\mbox{and}\quad e(v_i^\ell)=\left\{\begin{array}{ll}
v_{i+2}^\ell & \mbox{if $i<\mu_\ell-1$,} \\ 0 & \mbox{if $i=\mu_\ell-1$.}
\end{array}\right.\
\]
The fact that $e,h$ are antiadjoint with respect to $\omega$ implies that
\begin{equation}
\label{omega-def}
\omega(v_i^\ell,v_j^q)\not=0\ \mbox{ if and only if }\ \ell=q\ \mbox{ and }\ i+j=0.
\end{equation}

\subsection{Induction hypothesis}
\label{section-classicals-1-1-5}
In this section,
we point out a preliminary fact, which is a consequence of the induction hypothesis. 

We focus on the $\ell$-th Jordan block $M(\ell)$ for $\ell\in\{1,\ldots,r\}$ and on its extremal vectors $v_{\mu_\ell-1}^\ell$ and $v_{-\mu_\ell+1}^\ell$. Set $\tilde{W}=\langle v_{\mu_\ell-1}^\ell,v_{-\mu_\ell+1}^\ell\rangle_\mathbb{C}$ and $\tilde{V}=\langle v_i^q:(q,i)\not=(\ell,\pm(\mu_\ell-1))\rangle_\mathbb{C}$, so that we have the orthogonal decomposition
\[V=\tilde{V}\oplus\tilde{W}.\]
Assume that $\mu_\ell\geq 2$, so that $\dim \tilde{W}=2$.
Recall that $\dim V=m$, thus 
\[\dim\tilde{V}=m-2.\]
The restriction of $\omega$ to $\tilde{V}$ (still denoted by $\omega$) is nondegenerate.
Let $\tilde{G}\subset SL(\tilde{V})$ be the subgroup of automorphisms preserving the form $\omega$ and let $\tilde{\mathfrak{g}}\subset End(\tilde{V})$ be the subspace of endomorphisms that are antiadjoint with respect to $\omega$. Let $\iota:\tilde{V}\to V$ denote the inclusion and let $\pi:V\to \tilde{V}$ denote the orthogonal projection.
The elements $e,h$ induce elements $\tilde{e}:=\pi e\iota$, $\tilde{h}:=\pi h\iota$ in $\tilde{\mathfrak{g}}$, and clearly, we can find $\tilde{f}\in\tilde{\mathfrak{g}}$ such that $\{\tilde{e},\tilde{h},\tilde{f}\}$ form a standard triple.
Let $\tilde\lambda(t):=\pi\lambda(t)\iota=\lambda(t)|_{\tilde{V}}$, so that the rank one torus $\tilde{S}:=\{\tilde{\lambda}(t):t\in\mathbb{C}^*\}\subset \tilde{G}$ corresponds to $\tilde{h}$ in the sense of Section \ref{section-2-1-2}.
We denote by $\tilde{\mathcal{F}}_{\underline{\tilde{d}}}^\omega$ and $(\tilde{\mathcal{F}}_{\tilde{e},\underline{\tilde{d}},\underline{\tilde{c}}}^\omega)^{\tilde{S}}$
 the analogues of the varieties $\mathcal{F}_{\underline{d}}^\omega$ and $(\mathcal{F}_{e,\underline{d},\underline{c}}^\omega)^S$ for the space $\tilde{V}$.

\begin{lemma}
\label{lemma-claim-2}
For any integer $\tilde{k}\geq 1$ and any sequences $\underline{\tilde{d}}=(\tilde{d}_0=0<\tilde{d}_1<\ldots<\tilde{d}_{\tilde{k}-1}\leq \lfloor\frac{m-2}{2}\rfloor)$ and $\underline{\tilde{c}}=(\tilde{c}_0\leq\ldots\leq\tilde{c}_{2\tilde{k}-1})$ with $0\leq \tilde{c}_p\leq p$ for all $p$,
the variety $(\tilde{\mathcal{F}}_{\tilde{e},\underline{\tilde{d}},\underline{\tilde{c}}}^\omega)^{\tilde{S}}$ admits an affine paving.
\end{lemma}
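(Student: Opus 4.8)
The plan is to obtain the affine paving of $(\tilde{\mathcal{F}}_{\tilde{e},\underline{\tilde{d}},\underline{\tilde{c}}}^\omega)^{\tilde{S}}$ from property $\mathrm{P}(\tilde{G},\tilde{e})$, and to derive $\mathrm{P}(\tilde{G},\tilde{e})$ from the induction hypothesis by running, one more time and at a strictly smaller rank, the reduction carried out in Sections~\ref{section-4}--\ref{section-5}. What makes this possible is purely dimensional: $\dim\tilde{V}=m-2<m$, hence every classical factor of every Levi subgroup of $\tilde{G}$ acts on a space of dimension $<m$. The obstacle to keep in mind is that $\tilde{e}$ is \emph{not} distinguished in $\tilde{\mathfrak{g}}$ in general --- deleting the two extremal vectors of the $\ell$-th Jordan block replaces it by a block of size $\mu_\ell-2$, which may equal $\mu_{\ell+1}$ --- so one cannot invoke Proposition~\ref{proposition-claim-1} for $\tilde{G}$ itself, and must instead pass through a minimal Levi subgroup.

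The first and main step is to establish $\mathrm{P}(\tilde{G},\tilde{e})$. Since $\tilde{G}\subset SL(\tilde{V})$ preserves $\omega$, it is a classical group of type $B$, $C$, or $D$, and the chain of reductions used in the proof of Proposition~\ref{proposition-conclusion-section-5} applies to the pair $(\tilde{G},\tilde{e})$: Proposition~\ref{proposition-section-4} together with Example~\ref{example-3} gives $\mathrm{P}(\hat{\tilde{G}},\tilde{e})\Rightarrow\mathrm{P}(\tilde{G},\tilde{e})$, where $\hat{\tilde{G}}\subset\tilde{G}$ is a minimal Levi subgroup in which $\tilde{e}$ is distinguished; then Propositions~\ref{proposition-section-5-1} and~\ref{proposition-section-5-2} reduce $\mathrm{P}(\hat{\tilde{G}},\tilde{e})$ to the validity of property $\mathrm{P}$ for distinguished nilpotent elements in almost-simple classical groups, and, after a central extension, to $SL(V')$, $Sp(V',\omega')$, and $SO(V',\omega')$ with $\dim V'\leq\dim\tilde{V}<m$. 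For $SL(V')$ a distinguished nilpotent element is regular, so $\mathrm{P}$ is trivial by Remark~\ref{remark-regular}. For $Sp(V',\omega')$ and $SO(V',\omega')$ with $\dim V'<m$, Proposition~\ref{proposition-claim-1} holds by the induction hypothesis, and hence --- as recalled at the beginning of this section, via Proposition~\ref{proposition-classical-4} and Remark~\ref{remark-x} --- property $\mathrm{P}$ holds for those groups and any distinguished nilpotent element. This yields $\mathrm{P}(\tilde{G},\tilde{e})$.

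It remains to translate $\mathrm{P}(\tilde{G},\tilde{e})$ into the stated affine paving; this step is routine. When $\tilde{k}=1$ the variety $\tilde{\mathcal{F}}_{\tilde{e},\underline{\tilde{d}},\underline{\tilde{c}}}^\omega$ is a single point or empty, so assume $\tilde{k}\geq 2$. Fix a flag $F\in\tilde{\mathcal{F}}_{\underline{\tilde{d}}}^\omega$. By Proposition~\ref{proposition-classical-4}\,{\rm (a)} (which imposes no self-duality condition on $\underline{\tilde{c}}$), the subspace $\mathfrak{i}_{\underline{\tilde{c}}}^F\subset Lie(\tilde{P}_F)$ is $\tilde{P}_F$-stable, and the $\tilde{G}$-equivariant isomorphism $\tilde{g}\tilde{P}_F\mapsto\tilde{g}(F)$ sends $\tilde{\mathcal{P}}_{\tilde{e},\mathfrak{i}_{\underline{\tilde{c}}}^F}$ onto the connected component of $\tilde{\mathcal{F}}_{\tilde{e},\underline{\tilde{d}},\underline{\tilde{c}}}^\omega$ through $F$; being $\tilde{S}$-equivariant, it identifies the corresponding $\tilde{S}$-fixed point sets. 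Hence that component's $\tilde{S}$-fixed set admits an affine paving by $\mathrm{P}(\tilde{G},\tilde{e})$. If $\omega$ is symmetric and $\tilde{d}_{\tilde{k}-1}=\frac{m-2}{2}$, the second connected component of $\tilde{\mathcal{F}}_{\underline{\tilde{d}}}^\omega$ is handled identically, starting from the companion flag $\tilde{F}$ of Remark~\ref{remark-a-faire} and the $\tilde{P}_F$-stable subspace $\mathfrak{i}_{\underline{\tilde{c}}}^{\tilde{F}}$. Since $(\tilde{\mathcal{F}}_{\tilde{e},\underline{\tilde{d}},\underline{\tilde{c}}}^\omega)^{\tilde{S}}$ is the disjoint union of the $\tilde{S}$-fixed sets of these at most two open-and-closed pieces, and a finite disjoint union of varieties with affine pavings has an affine paving, the lemma follows. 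The only genuine difficulty lies in the first step; once one grants that the machinery of Sections~\ref{section-4}--\ref{section-5} may be reapplied at lower rank, everything else is bookkeeping.
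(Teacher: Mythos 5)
Your proposal is correct and follows essentially the same route as the paper: the induction hypothesis (Proposition \ref{proposition-claim-1} up to rank $m-1$, transported through Proposition \ref{proposition-classical-4} and Remark \ref{remark-x}) gives property $\mathrm{P}$ for the smaller classical groups, Propositions \ref{proposition-section-5-1}, \ref{proposition-section-5-2} and \ref{proposition-section-4} then yield it for the Levi subgroups of $\tilde{G}$ and hence for $(\tilde{G},\tilde{e})$, and Proposition \ref{proposition-classical-4} with Remark \ref{remark-x} translates this into the stated affine paving. Your extra care about $\tilde{e}$ not being distinguished and about the two components in type $D$ only makes explicit what the paper's compressed argument leaves implicit.
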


\begin{proof}
By induction hypothesis, Proposition \ref{proposition-claim-1} holds until the rank $m-1$. From this fact and in view of Proposition  \ref{proposition-classical-4} and Remark \ref{remark-x}, 
we know that property $\mathbf{P}(\hat{G},\hat{e})$ (introduced in Definition \ref{definition-PGe}) holds
whenever $\hat{G}$ is a group of the form $SL_q(\mathbb{C})$ (for all $q\geq 1$; see the beginning of Section \ref{section-8}), $Sp_{2q}(\mathbb{C})$ (for $2\leq 2q\leq m-1$), or $SO_q(\mathbb{C})$
(for $1\leq q\leq m-1$), and $\hat{e}\in\hat{\mathfrak{g}}:=Lie(\hat{G})$ is a distinguished nilpotent element.  
By Propositions \ref{proposition-section-5-1} and \ref{proposition-section-5-2}, it follows that $\mathbf{P}(\hat{G},\hat{e})$ holds whenever $\hat{G}$ is a Levi subgroup of $\tilde{G}$.
Then, invoking Proposition \ref{proposition-section-4} (and again Proposition \ref{proposition-classical-4} and Remark \ref{remark-x}) we conclude that $(\tilde{\mathcal{F}}_{\tilde{e},\underline{\tilde{d}},\underline{\tilde{c}}}^\omega)^{\tilde{S}}$ admits an affine paving.
\end{proof}

\subsection{Proof of Proposition \ref{proposition-claim-1} in the case where $c_1=1$}
Note that the first terms of the sequence $\underline{c}=(c_0,c_1,\ldots,c_{2k-1})$ satisfy $c_0=0$ and $c_1\in\{0,1\}$. In this section, we show Proposition \ref{proposition-claim-1} in the case where $c_1=1$. The case where $c_1=0$ will be addressed in Section \ref{big-proof}.

\subsubsection{The subvarieties $\mathcal{X}_j\subset(\mathcal{F}_{e,\underline{d},\underline{c}}^\omega)^S$}
\label{section-8-5-1}

For every $j\in\mathbb{Z}$, we let
\[\mathcal{Z}_j=\{F=(V_0,V_1,\ldots,V_{k-1})\in(\mathcal{F}_{\underline{d}}^\omega)^S:E_j\cap V_1\not=0\ \mbox{and}\ E_{j'}\cap V_1=0\ \forall j'>j\}.\]
Clearly, the set $\mathcal{Z}_j$ is $L$-stable, it is empty for all but finitely many $j$, and we have 
\[(\mathcal{F}_{e,\underline{d},\underline{c}}^\omega)^S=\bigcup_{j\in\mathbb{Z}}\mathcal{Z}_j\cap(\mathcal{F}_{e,\underline{d},\underline{c}}^\omega)^S.\]
For $j\in\mathbb{Z}$, we set
\[\mathcal{X}_j=\mathcal{Z}_j\cap(\mathcal{F}_{e,\underline{d},\underline{c}}^\omega)^S.\]
According to Lemma \ref{lemma-classicals-1}, 
the following implication holds:
\begin{equation}
\label{implication1}
\mbox{$\mathcal{X}_j$ admits an affine paving for all $j$}\ \Rightarrow\ \mbox{$(\mathcal{F}_{e,\underline{d},\underline{c}}^\omega)^S$ admits an affine paving.}
\end{equation}
Our next goal is then to show that each subvariety $\mathcal{X}_j$ has an affine paving.

Fix $j\in\mathbb{Z}$ such that $\mathcal{X}_j$ is nonempty (otherwise there is nothing to prove). 
The following claim uses the notation of Section \ref{classicals-notation}.

\medskip
\noindent
{\it Claim:} There is $\ell\in\{1,\ldots,r\}$ such that $j=\mu_\ell-1$ and $\mu_\ell\geq 2$, and we have
\begin{equation}
\label{classicals-newnew-2}
v^\ell_{\mu_\ell-1}\in V_1\ \mbox{ for all $F=(V_0,\ldots,V_{k-1})\in \mathcal{X}_j$.}
\end{equation}

\begin{proof}[Proof of the Claim]
Take $F=(V_0,\ldots,V_{k-1})\in \mathcal{X}_j$
and let $v\in V_1\cap E_j\setminus\{0\}$. Note that $e(v)\in V_1\cap E_{j+2}=\{0\}$ (since $F\in\mathcal{Z}_j$), hence $v\in V_1\cap E_j\cap\ker e$. According to Section \ref{classicals-notation}, we have $E_j\cap\ker e\not=0$ only if $j=\mu_\ell-1$ for some $\ell\in\{1,\ldots,r\}$
and in this case  $E_j\cap\ker e=\langle v_{j}^\ell\rangle_\mathbb{C}$. Relation (\ref{classicals-newnew-2}) ensues.
By (\ref{classicals-newnew-2}), we get in particular that the vector $v^\ell_{\mu_\ell-1}$ is isotropic, which forces $\mu_\ell\geq 2$ (see (\ref{omega-def})).
\end{proof}

\subsubsection{The variety $\tilde{\mathcal{X}}_j$}
We apply the construction of Section \ref{section-classicals-1-1-5} 
to the choice of $\ell$ made in the Claim of Section \ref{section-8-5-1}
(note that we have $\mu_\ell\geq 2$ as required in Section \ref{section-classicals-1-1-5}). As in Section \ref{section-classicals-1-1-5}, we deal with the space $\tilde{V}=\langle v_i^q:(q,i)\not=(\ell,\pm(\mu_\ell-1)\rangle_\mathbb{C}$, the nilpotent element $\tilde{e}=\pi e\iota$ (where $\iota:\tilde{V}\to V$ and $\pi:V\to \tilde{V}$ are respectively the inclusion and the orthogonal projection), and the torus $\tilde{S}=\{\tilde\lambda(t):t\in\mathbb{C}^*\}$ where $\tilde\lambda(t)=\pi\lambda(t)\iota=\lambda(t)|_{\tilde{V}}$. 
For later use, we note that the definition of $\tilde{e}$ guarantees that
\begin{equation}
\label{eetilde-1}
\mathrm{Im}(e|_{\tilde{V}}-\tilde{e})\subset\langle v_{\mu_\ell-1}^\ell\rangle_\mathbb{C}.
\end{equation}
Let $\underline{\tilde{d}}=(\tilde{d}_0=0\leq \tilde{d}_1<\ldots<\tilde{d}_{k-1})$ be the sequence defined by 
\[\tilde{d}_p=d_p-1\ \mbox{ for all $p\in\{1,\ldots,k-1\}$.}\]
Corresponding to this sequence, we consider the varieties $\tilde{\mathcal{F}}_{\underline{\tilde{d}}}^\omega$,
$\tilde{\mathcal{F}}_{\tilde{e},\underline{\tilde{d}},\underline{c}}^\omega$, and
$(\tilde{\mathcal{F}}_{\tilde{e},\underline{\tilde{d}},\underline{c}}^\omega)^{\tilde{S}}$
(defined like $\mathcal{F}_{\underline{d}}^\omega$, $\mathcal{F}_{e,\underline{d},\underline{c}}^\omega$, and $(\mathcal{F}_{e,\underline{d},\underline{c}}^\omega)^S$, but for the space $\tilde{V}$).
By Lemma \ref{lemma-claim-2},
\begin{equation}
\label{newnewnew-33}
\mbox{the variety $(\tilde{\mathcal{F}}_{\tilde{e},\underline{\tilde{d}},\underline{c}})^{\tilde{S}}$ admits an affine paving.}
\end{equation}

\begin{remark}
\label{remark-8-new}
Hereafter we make a slight abuse of notation since in Section \ref{section-classicals-1-1-5}
the variety
$\tilde{\mathcal{F}}_{\tilde{e},\underline{\tilde d},\underline{c}}^\omega$ is considered for an increasing sequence $\underline{\tilde{d}}$ whereas, here, the  sequence $\underline{\tilde{d}}$ may satisfy $\tilde{d}_0=\tilde{d}_1$. However, this is harmless since the definition of $\tilde{\mathcal{F}}_{\tilde{e},\underline{\tilde d},\underline{c}}^\omega$ still makes sense and the proof of Lemma \ref{lemma-claim-2} remains valid. Actually, if $\tilde{d}_1=\tilde{d}_0$, then we have $\tilde{\mathcal{F}}_{\tilde{e},\underline{\tilde{d}},\underline{c}}^\omega=\tilde{\mathcal{F}}_{\tilde{e},\underline{d}',\underline{{c}}'}^\omega$ where $\underline{d}':=(\tilde{d}_0=0,\tilde{d}_2,\tilde{d}_3,\ldots,\tilde{d}_{k-1})$ and $\underline{c}':=(c_0,c_2-1,c_3-1,\ldots,c_{2k-2}-1)$, so that we retrieve the situation of an increasing sequence $\underline{d}'$.
A similar abuse of notation will be made in Section \ref{big-proof} below.
\end{remark}

The set
\[\tilde{\mathcal{Z}}_j:=\{(W_0,\ldots,W_{k-1})\in(\tilde{\mathcal{F}}_{\underline{\tilde{d}}}^\omega)^{\tilde{S}}: E_{j'}\cap W_1=0\ \forall j'>j\}\]
is clearly stable by the subgroup $\tilde{L}:=Z_{\tilde{G}}(\tilde{S})$. Hence, (\ref{newnewnew-33}) and Lemma \ref{lemma-classicals-1} imply that
\begin{equation}
\label{classicals-newnewnew-31}
\mbox{the subvariety $\tilde{\mathcal{X}}_j:=\tilde{\mathcal{Z}}_j\cap(\tilde{\mathcal{F}}_{\tilde{e},\underline{\tilde{d}},\underline{c}}^\omega)^{\tilde{S}}$ admits an affine paving.}
\end{equation}

\subsubsection{An isomorphism between $\mathcal{X}_j$ and $\tilde{\mathcal{X}}_j$}
\label{section-8-5-3}
In view of (\ref{classicals-newnew-2}), we have $\mathcal{X}_j=\mathcal{Z}'_j\cap(\mathcal{F}_{e,\underline{d},\underline{c}}^\omega)^S$
where we denote
\[\mathcal{Z}'_j=\{(V_0,\ldots,V_{k-1})\in\mathcal{Z}_j:v_{\mu_\ell-1}^\ell\in V_1\}.\]
The maps
\[\Phi:\mathcal{Z}'_j\to\tilde{\mathcal{Z}}_j,\ F\mapsto (V_0\cap \tilde{V},\ldots,V_{k-1}\cap\tilde{V})\]
and 
\[\Psi:\tilde{\mathcal{Z}}_j\to \mathcal{Z}'_j,\ (W_0,\ldots,W_{k-1})\mapsto(W_0,\langle v_{\mu_\ell-1}^\ell\rangle_\mathbb{C}+W_1,\ldots,\langle v_{\mu_\ell-1}^\ell\rangle_\mathbb{C}+W_{k-1})\]
are mutually inverse isomorphisms of algebraic varieties. We claim that 
\begin{equation}
\label{newnewnew-35}
\Phi(\mathcal{X}_j)=\tilde{\mathcal{X}}_j\quad(\mbox{i.e., 
$\Phi$ restricts to an isomorphism $\mathcal{X}_j\stackrel{\sim}{\to}\tilde{\mathcal{X}}_j$).}
\end{equation}
To see this, take $F=(V_0,\ldots,V_{k-1})\in\mathcal{Z}_j'$ and $\Phi(F)=(W_0,\ldots,W_{k-1})\in\tilde{\mathcal{Z}}_j$. 

%

\medskip
\noindent
{\it Claim 1:} If $F$ belongs to $\mathcal{F}_{e,\underline{d},\underline{c}}^\omega$,
then $\Phi(F)$ belongs to $\tilde{\mathcal{F}}_{\tilde{e},\underline{\tilde{d}},\underline{c}}^\omega$.

\begin{proof}[Proof of Claim 1]
As in Proposition \ref{proposition-classical-4}, we consider the completions of $F$ and $\Phi(F)$ defined by
\[(\overline{V}_0,\ldots,\overline{V}_{2k-1}):=(V_0,V_1,\ldots,V_{k-1},V_{k-1}^\perp,\ldots,V_1^\perp,V_0^\perp)\]
and
\[(\overline{W}_0,\ldots,\overline{W}_{2k-1}):=(W_0,W_1,\ldots,W_{k-1},W_{k-1}^{\tilde\perp},\ldots,W_1^{\tilde\perp},W_0^{\tilde\perp}),\]
where the symbols $\perp$ and $\tilde\perp$ stand for the orthogonals in the spaces $(V,\omega)$ and $(\tilde{V},\omega)$, respectively.
Note that
\begin{equation}
\label{newnewnew-36}
\overline{W}_p=\overline{V}_p\cap\tilde{V}=\pi(\overline{V}_p)\ \mbox{ for all $p\in\{0,\ldots,2k-1\}$}
\end{equation}
and
\begin{equation}
\label{newnewnew-37}
\overline{V}_0=0,\ \ \overline{V}_{2k-1}=V,\ \ \mbox{and}\ \ \overline{V}_p=\overline{W}_p\oplus\langle v_{\mu_\ell-1}^\ell\rangle_{\mathbb{C}}\ \mbox{ for all $p\in\{1,\ldots,2k-2\}$.}
\end{equation}
The assumption that $F$ belongs to $\mathcal{F}_{e,\underline{d},\underline{c}}^\omega$
reads as
\begin{equation}
\label{newnewnew-38}
e(\overline{V}_p)\subset \overline{V}_{c_p}\ \mbox{ for all $p\in\{1,\ldots,2k-1\}$.}
\end{equation}
From (\ref{newnewnew-36}) and (\ref{newnewnew-38}), we derive
\[\tilde{e}(\overline{W}_p)=\pi (e(\overline{V}_p\cap \tilde{V}))
\subset \pi(\overline{V}_{c_p})=\overline{W}_{c_p}\ \mbox{ for all $p\in\{1,\ldots,2k-1\}$,}\]
whence $\Phi(F)\in \tilde{\mathcal{F}}_{\tilde{e},\underline{\tilde{d}},\underline{c}}^\omega$.
\end{proof}

\noindent
{\it Claim 2:} If $\Phi(F)$ belongs to $\tilde{\mathcal{F}}_{\tilde{e},\underline{\tilde{d}},\underline{c}}^\omega$, then $F$ belongs to $\mathcal{F}_{e,\underline{d},\underline{c}}^\omega$.

\begin{proof}[Proof of Claim 2]
The assumption on $\Phi(F)$ implies that
\begin{equation}
\label{newnewnew-39}
\tilde{e}(\overline{W}_p)\subset \overline{W}_{c_p}\ \mbox{ for all $p\in\{1,\ldots,2k-1\}$}.
\end{equation}
On the one hand, for $p\in\{1,\ldots,2k-2\}$, using (\ref{newnewnew-37}), 
the fact that $v_{\mu_\ell-1}^\ell\in\ker e$, (\ref{eetilde-1}), (\ref{newnewnew-39}), and the fact that $c_p\geq c_1=1$, we obtain
\[e(\overline{V}_p)=e(\overline{W}_p+\langle v_{\mu_\ell-1}^\ell\rangle_\mathbb{C})
\subset \tilde{e}(\overline{W}_p)+\langle v_{\mu_\ell-1}^\ell\rangle_\mathbb{C}
\subset \overline{W}_{c_p}+\langle v_{\mu_\ell-1}^\ell\rangle_\mathbb{C}=\overline{V}_{c_p}.\]
On the other hand,
by (\ref{classicals-newnew-cdual}), we get in particular
\[|\{q=1,\ldots,2k-1:c_q\geq 2k-1\}|=c_1=1,\]
which forces $c_{2k-1}=2k-1$ (because the sequence $\underline{c}$ is nondecreasing).
Therefore, the condition $e(\overline{V}_{2k-1})\subset \overline{V}_{c_{2k-1}}(=V)$ is trivially satisfied. Altogether,
we have checked that 
$e(\overline{V}_p)\subset\overline{V}_{c_p}$ for all $p\in\{1,\ldots,2k-1\}$,
whence $F\in\mathcal{F}_{e,\underline{d},\underline{c}}^\omega$.
\end{proof}

Relation (\ref{newnewnew-35}) now follows from Claims 1--2 and the equalities
$\mathcal{X}_j=\mathcal{Z}_j'\cap\mathcal{F}_{e,\underline{d},\underline{c}}^\omega$ and 
$\tilde{\mathcal{X}}_j=\tilde{\mathcal{Z}}_j\cap\tilde{\mathcal{F}}_{\tilde{e},\underline{\tilde{d}},\underline{c}}^\omega$.

\subsubsection{Conclusion}
From (\ref{classicals-newnewnew-31})--(\ref{newnewnew-35}),
we obtain that the variety $\mathcal{X}_j$ admits an affine paving for every $j\in\mathbb{Z}$.
In view of (\ref{implication1}), this guarantees that  $(\mathcal{F}_{e,\underline{d},\underline{c}}^\omega)^S$
admits an affine paving, which completes the proof of Proposition \ref{proposition-claim-1} in the case where $c_1=1$.

\subsection{Proof of Proposition \ref{proposition-claim-1} in the case where $c_1=0$}
\label{big-proof}
The proof in this case is more involved.

\subsubsection{The subvarieties $\mathcal{X}_{\underline{j}}\subset(\mathcal{F}_{e,\underline{d},\underline{c}}^\omega)^S$}

\label{section-8-6-1}

For every sequence $\underline{j}=(j_1,\ldots,j_{k-1})\in\mathbb{Z}^{k-1}$,
we define
\begin{eqnarray*}
\mathcal{Z}_{\underline{j}} & = & \big\{(V_0,\ldots,V_{k-1})\in(\mathcal{F}_{\underline{d}}^\omega)^S:
V_p\cap E_{j_p}\not=V_{p-1}\cap E_{j_p}\\ & & \mbox{and}\ V_p\cap E_{j'}=V_{p-1}\cap E_{j'}\ \forall j'<j_p,\ \forall p\in\{1,\ldots,k-1\}\big\}.
\end{eqnarray*}
(Note a difference with the definition of $\mathcal{Z}_j$ in Section \ref{section-8-5-1},
where the second condition is required for $j'>j$.)
Thus $\mathcal{Z}_{\underline{j}}$ is an $L$-stable subvariety of 
$(\mathcal{F}_{\underline{d}}^\omega)^S$, which is empty for all but finitely many sequences $\underline{j}$, and we have
\[(\mathcal{F}_{e,\underline{d},\underline{c}}^\omega)^S=\bigcup_{\underline{j}\in\mathbb{Z}^{k-1}}\mathcal{Z}_{\underline{j}}\cap(\mathcal{F}_{e,\underline{d},\underline{c}}^\omega)^S.\]
Let
\[\mathcal{X}_{\underline{j}}=\mathcal{Z}_{\underline{j}}\cap(\mathcal{F}_{e,\underline{d},\underline{c}}^\omega)^S.\]
By Lemma \ref{lemma-classicals-1}, we get the following implication:
\begin{equation}
\label{newnewnewnew-40}
\mbox{$\mathcal{X}_{\underline{j}}$ admits an affine paving for all $\underline{j}$\quad$\Rightarrow$\quad
$(\mathcal{F}_{e,\underline{d},\underline{c}}^\omega)^S$ admits an affine paving.}
\end{equation}

Hereafter, we fix a sequence $\underline{j}\in\mathbb{Z}^{k-1}$ such that the variety $\mathcal{X}_{\underline{j}}$ is nonempty.
We construct a sequence $\underline{p}=(p_1,\ldots,p_{s})$ (depending on $\underline{j}$) by the following algorithm:
\begin{itemize}
\item Set $p_1=1$;
\item Assume that we have constructed $p_1<p_2<\ldots<p_t\leq k-1$.
\begin{itemize}
\item If there is $p\in\{p_t+1,\ldots,k-1\}$ such that $j_p<j_{p_t}$ and $c_p<p_t$, then denote by $p_{t+1}$ the smallest $p$ with these properties;
\item Otherwise, set $s=t$, $\underline{p}=(p_1,\ldots,p_t)$, and stop the algorithm.
\end{itemize}
\end{itemize}
Finally we get a sequence of integers
\[1=p_1<p_2<\ldots<p_s\leq k-1.\]
We describe some properties of the sequences $\underline{j}$, $\underline{p}$ and the subvariety $\mathcal{X}_{\underline{j}}$.

\medskip
\noindent
{\it Claim 1:}
For all $t\in\{1,\ldots,s\}$, we have
\[j_{p_t}<j_p\ \mbox{ for all $p\in\{1,2,\ldots,p_t-1\}$.}\] 

\begin{proof}[Proof of Claim 1]
We argue by induction on $t\in\{1,\ldots,s\}$ with immediate initialization for $t=1$.
Assume that Claim 1 is valid until the rank $t\in\{1,\ldots,s-1\}$.
From the definition of $p_{t+1}$,
using also that the sequence $\underline{c}$ is nondecreasing, we have
\[c_p\leq c_{p_{t+1}}<p_t\ \mbox{ whenever $p_t\leq p<p_{t+1}$}.\]
Then, the minimality of $p_{t+1}$ (in the definition of the sequence $\underline{p}$) reads as
\[j_p\geq j_{p_t}>j_{p_{t+1}}\ \mbox{ whenever $p_t\leq p<p_{t+1}$}.\]
By induction hypothesis, we also have $j_p> j_{p_t}>j_{p_{t+1}}$ for $1\leq p<p_t$.
Hence Claim~1 is valid until the rank $t+1$.
\end{proof}

\noindent
{\it Claim 2:} For all $t\in\{1,\ldots,s\}$, the following conditions are satisfied: \\
{\rm (a)} For all $(V_0,\ldots,V_{k-1})\in\mathcal{Z}_{\underline{j}}$ we have
\[V_p\cap E_{j'}=0\ \mbox{ whenever $1\leq p<p_t$ and $j'\leq j_{p_t}$,}\quad\mbox{and}\quad
V_{p_t}\cap E_{j_{p_t}}\not=0.\]
{\rm (b)} There is $\ell_t\in\{1,\ldots,r\}$ such that $j_{p_t}=\mu_{\ell_t-1}$ and
$\mu_{\ell_t}\geq 2$, and we have
\begin{equation}
\label{classicals-newnew-6}
v_{\mu_{\ell_t}-1}^{\ell_t}\in V_{p_t}\ \mbox{ for all $F=(V_0,\ldots,V_{k-1})\in\mathcal{X}_{\underline{j}}$}
\end{equation}
(with the notation of Section \ref{classicals-notation}).

\begin{proof}[Proof of Claim 2]
Part {\rm (a)} is an easy consequence of Claim 1, it remains to
show part {\rm (b)}.
Take $v\in V_{p_{t}}\cap E_{j_{p_{t}}}\setminus\{0\}$. 
Since $F\in\mathcal{F}^\omega_{e,\underline{d},\underline{c}}$,
we get $e(v)\in V_{c_{p_{t}}}\cap E_{j_{p_t}+2}$.
In the case where $t=1$, we have by definition ${p_1}=1$ and by assumption $c_1=0$, hence $e(v)\in V_0=\{0\}$. In the case where $t>1$, 
the construction of the sequence $\underline{p}$ guarantees that $c_{p_t}<p_{t-1}$ and $j_{p_t}<j_{p_{t-1}}$. In fact, we may note that the integers $j_{p_t},j_{p_{t-1}}$
have the same parity (both are even if $\omega$ is symmetric and odd if $\omega$ is symplectic, see Section \ref{classicals-notation}). Thus
\[j_{p_t}+2\leq j_{p_{t-1}}.\]
Then, part {\rm (a)} implies that $V_{c_{p_t}}\cap E_{j_{p_t}+2}=0$. In both cases, we conclude that $e(v)=0$, whence
\[v\in V_{p_t}\cap E_{j_{p_t}}\cap \ker e.\]
From Section \ref{classicals-notation}, we know that $E_{j_{p_t}}\cap \ker e$ is nonzero only if $j_{p_t}=\mu_{\ell_t}-1$ for some $\ell_t\in\{1,\ldots,r\}$, and in this case we have $E_{j_{p_t}}\cap\ker e=\langle v_{j_{p_t}}^{\ell_t}\rangle_\mathbb{C}$.
Whence $v_{j_{p_t}}^{\ell_t}\in V_{p_t}$. Since $V_{p_t}$ is an isotropic space, $v_{j_{p_t}}^{\ell_t}$ must be an isotropic vector, thus $j_{p_t}\not=0$ (see (\ref{omega-def})) and so $\mu_{\ell_t}\geq 2$. The proof of Claim 2 is complete.
\end{proof}

Finally, we note that the last term $p_{s}$ of the sequence $\underline{p}$ 
has the following characterization:
\[\mbox{for all $p\in\{p_{s}+1,\ldots,k-1\}$ such that $j_p<j_{p_s}$, we have $c_p\geq p_{s}$}.\]
Since the sequence $\underline{c}$ is nondecreasing, this can be rephrased as follows: 
there is $p_0\in\{p_{s}+1,\ldots,k\}$ such that
\begin{equation}
\label{classicals-newnew-7bis}
 \left\{\begin{array}{ll}
 \mbox{$j_p\geq j_{p_{s}}$ and $c_p<p_{s}$} & \mbox{for $p_{s}\leq p<p_0$,} \\[1mm]
\mbox{$c_p\geq p_{s}$} & \mbox{for $p_0\leq p\leq k-1$.}\end{array}\right.
\end{equation}

\subsubsection{The variety $\tilde{\mathcal{X}}_{\underline{j}}$} We apply the construction of Section \ref{section-classicals-1-1-5} to the number $\ell:=\ell_s$ given in Claim 2\,{\rm (b)} of Section \ref{section-8-6-1}. Specifically, we deal with the space 
$\tilde{V}=\langle v_i^q:(q,i)\not=(\ell_s,\pm(\mu_{\ell_s}-1))\rangle_\mathbb{C}$,
the nilpotent element $\tilde{e}=\pi e\iota$ (where again $\iota:\tilde{V}\to V$ and $\pi:V\to\tilde{V}$ denote the inclusion and the orthogonal projection), and the torus $\tilde{S}=\{\tilde\lambda(t):t\in\mathbb{C}^*\}$ where $\tilde\lambda(t)=\pi\lambda(t)\iota=\lambda(t)|_{\tilde{V}}$.
Note in particular that the definition of $\tilde{e}$ yields
\begin{equation}
\label{eetilde-2}
\mathrm{Im}(e|_{\tilde{V}}-\tilde{e})\subset\langle v_{\mu_{\ell_s}-1}^{\ell_s}\rangle_\mathbb{C}.
\end{equation}
Let $\underline{\tilde{d}}=(\tilde{d}_0=0<\tilde{d}_1<\ldots<\tilde{d}_{p_s-1}\leq\tilde{d}_{p_s}<\ldots<\tilde{d}_{k-1})$ be the sequence given by
\[\tilde{d}_p=\left\{\begin{array}{ll}
d_p & \mbox{if $0\leq p<p_s$},\\
d_p-1 & \mbox{if $p_s\leq p\leq k-1$}.\\
\end{array}\right.\]
Corresponding to this sequence, we consider the varieties $\tilde{\mathcal{F}}_{\underline{\tilde{d}}}^\omega$, $\tilde{\mathcal{F}}_{\tilde{e},\underline{\tilde{d}},\underline{c}}^\omega$, and $(\tilde{\mathcal{F}}_{\tilde{e},\underline{\tilde{d}},\underline{c}}^\omega)^{\tilde{S}}$
(the analogues of $\mathcal{F}_{\underline{d}}^\omega$, $\mathcal{F}_{e,\underline{d},\underline{c}}^\omega$, and $(\mathcal{F}_{e,\underline{d},\underline{c}}^\omega)^S$ for the space $\tilde{V}$; see also Remark \ref{remark-8-new}). By Lemma \ref{lemma-claim-2},
\begin{equation}
\label{newnewnew-43}
\mbox{the variety $(\tilde{\mathcal{F}}_{\tilde{e},\underline{\tilde{d}},\underline{c}}^\omega)^{\tilde{S}}$
admits an affine paving.}
\end{equation}
Set
\begin{eqnarray*}
\tilde{\mathcal{Z}}_{\underline{j}} & = & \{(W_0,\ldots,W_{k-1})\in(\tilde{\mathcal{F}}_{\underline{\tilde{d}}}^\omega)^{\tilde{S}}: W_p\cap E_{j_p}\not=W_{p-1}\cap E_{j_p}\ \forall p\not=p_s, \\
 & & \mbox{and}\ W_p\cap E_{j'}=W_{p-1}\cap E_{j'}\ \forall j'<j_p,\ \forall p\in\{1,\ldots,k-1\}\}.
\end{eqnarray*}
Thus $\tilde{\mathcal{Z}}_{\underline{j}}$ is stable by $\tilde{L}:=Z_{\tilde{G}}(\tilde{S})$ and, in view of (\ref{newnewnew-43}) and Lemma \ref{lemma-classicals-1}, we have:
\begin{equation}
\label{newnewnew-44}
\mbox{the variety $\tilde{\mathcal{X}}_{\underline{j}}:=\tilde{\mathcal{Z}}_{\underline{j}}\cap(\tilde{\mathcal{F}}_{\tilde{e},\underline{\tilde{d}},\underline{c}}^\omega)^{\tilde{S}}$ admits an affine paving.}
\end{equation}

\subsubsection{An isomorphism between $\mathcal{X}_{\underline{j}}$ and $\tilde{\mathcal{X}}_{\underline{j}}$}
By (\ref{classicals-newnew-6}), we have
\[\mathcal{X}_{\underline{j}}=\mathcal{Z}'_{\underline{j}}\cap(\mathcal{F}_{e,\underline{d},\underline{c}}^\omega)^S\]
where
\[\mathcal{Z}'_{\underline{j}}:=\{(V_0,\ldots,V_{k-1})\in\mathcal{Z}_{\underline{j}}:v_{\mu_{\ell_s}-1}^{\ell_s}\in V_{p_s}\}.\]
We define the maps
\[\Phi:\mathcal{Z}'_{\underline{j}}\to\tilde{\mathcal{Z}}_{\underline{j}},\ (V_0,\ldots,V_{k-1})\mapsto (V_0\cap\tilde{V},\ldots,V_{k-1}\cap\tilde{V})\]
and
\[\begin{array}{l}\Psi:\tilde{\mathcal{Z}}_{\underline{j}}\to\mathcal{Z}'_{\underline{j}},\ \\(W_0,\ldots,W_{k-1})\mapsto 
(W_0,\ldots,W_{p_s-1},W_{p_s}+\langle v_{\mu_{\ell_s}-1}^{\ell_s}\rangle_\mathbb{C},\ldots,W_{k-1}+\langle v_{\mu_{\ell_s}-1}^{\ell_s}\rangle_\mathbb{C}).
\end{array}\]
It is straightforward to check that $\Phi$ and $\Psi$ are well defined, algebraic, and in fact mutually inverse isomorphisms. We claim that
\begin{equation}
\label{newnewnew-45}
\Phi(\mathcal{X}_{\underline{j}})=\tilde{\mathcal{X}}_{\underline{j}}\ \ (\mbox{so that $\Phi$ restricts to an isomorphism $\mathcal{X}_{\underline{j}}\stackrel{\sim}{\to}\tilde{\mathcal{X}_{\underline{j}}}$}).
\end{equation}
The remainder of this subsection is devoted to the verification of (\ref{newnewnew-45}). 
Fix $F=(V_0,\ldots,V_{k-1})\in\mathcal{Z}'_{\underline{j}}$ and 
let $\Phi(F)=(W_0,\ldots,W_{k-1})$. Specifically, it is sufficient to show:

\medskip
\noindent
{\it Claim 1:} If $F$ belongs to $\mathcal{F}_{e,\underline{d},\underline{c}}^\omega$, then $\Phi(F)$ belongs to $\tilde{\mathcal{F}}_{\tilde{e},\underline{\tilde{d}},\underline{c}}^\omega$.

\medskip
\noindent
{\it Claim 2:} If $\Phi(F)$ belongs to $\tilde{\mathcal{F}}_{\tilde{e},\underline{\tilde{d}},\underline{c}}^\omega$,
then $F$ belongs to $\mathcal{F}_{e,\underline{d},\underline{c}}^\omega$.

\begin{proof}[Proof of Claim 1]
Let
$(\overline{V}_0,\ldots,\overline{V}_{2k-1})$ and $(\overline{W}_0,\ldots,\overline{W}_{2k-1})$
denote the completions of $F$ and $\Phi(F)$ in the sense of Proposition 
\ref{proposition-classical-4}, that is,
\[(\overline{V}_0,\ldots,\overline{V}_{2k-1}):=(V_0,V_1,\ldots,V_{k-1},V_{k-1}^\perp,\ldots,V_1^\perp,V_0^\perp)\]
and
\[(\overline{W}_0,\ldots,\overline{W}_{2k-1}):=(W_0,W_1,\ldots,W_{k-1},W_{k-1}^{\tilde\perp},\ldots,W_1^{\tilde\perp},W_0^{\tilde\perp}),\]
where $\perp$ and $\tilde\perp$ respectively denote the orthogonals in the spaces $(V,\omega)$ and $(\tilde{V},\omega)$. It is easy to see that
\begin{equation}
\label{newnewnew-46}
\overline{W}_p=\overline{V}_p\cap\tilde{V}=\pi(\overline{V}_p)\ \mbox{ for all $p\in\{0,\ldots,2k-1\}$}
\end{equation}
and
\begin{equation}
\label{newnewnew-47}
\overline{V}_p=\left\{\begin{array}{ll}
\overline{W}_p & \mbox{for $0\leq p<p_s$,} \\
\overline{W}_p\oplus\langle v_{\mu_{\ell_s}-1}^{\ell_s}\rangle_{\mathbb{C}} & \mbox{for $p_s\leq p<2k-p_s$,} \\
\overline{W}_p\oplus\langle v_{\mu_{\ell_s}-1}^{\ell_s},v_{-(\mu_{\ell_s}-1)}^{\ell_s}\rangle_{\mathbb{C}} & \mbox{for $2k-p_s\leq p\leq 2k-1$.}
\end{array}\right.
\end{equation}
The assumption that $F$ belongs to $\mathcal{F}_{e,\underline{d},\underline{c}}^\omega$ means that
\begin{equation}
\label{newnewnew-48}
e(\overline{V}_p)\subset \overline{V}_{c_p}\ \mbox{ for all $p\in\{1,\ldots,2k-1\}$.}
\end{equation}
Using (\ref{newnewnew-46}), (\ref{newnewnew-48}), and the definition of $\tilde{e}=\pi e\iota$, we obtain that
\[\tilde{e}(\overline{W}_p)=\pi(e(\overline{V}_p\cap\tilde{V}))\subset\pi(\overline{V}_{c_p})=\overline{W}_{c_p}\ \mbox{ for all $p\in\{1,\ldots,2k-1\}$.}\]
Whence $\Phi(F)\in\tilde{\mathcal{F}}_{\tilde{e},\underline{\tilde{d}},\underline{c}}^\omega$.
\end{proof}

\begin{proof}[Proof of Claim 2]
Here, we assume that
\begin{equation}
\label{newnewnew-49}
\tilde{e}(\overline{W}_p)\subset \overline{W}_{c_p}\ \mbox{ for all $p\in\{1,\ldots,2k-1\}$}
\end{equation}
and we need to show the inclusion
\begin{equation}
\label{newnewnew-50}
e(\overline{V}_p)\subset \overline{V}_{c_p}\ \mbox{ for all $p\in\{1,\ldots,2k-1\}$.}
\end{equation}
Let $p\in\{1,\ldots,2k-1\}$. 
Recall the element $p_0\in\{p_s+1,\ldots,k\}$ from (\ref{classicals-newnew-7bis}).
We distinguish four cases.

\medskip
\noindent
{\it Case 1:} $1\leq p<p_0$.

By Claim 1 of Section \ref{section-8-6-1} and (\ref{classicals-newnew-7bis}), we have
\[j_q\geq j_{p_s}\ \mbox{ for all $q\in\{1,\ldots,p\}$}.\]
Since $\Phi(F)\in \tilde{\mathcal{Z}}_{\underline{j}}$, this implies
\[\overline{W}_p=W_p\subset\bigoplus_{j'\geq j_{p_s}}E_{j'}\cap \tilde{V}.\]
Since $e$ and $\tilde{e}$ coincide on $\bigoplus_{j'\geq j_{p_s}}E_{j'}\cap \tilde{V}$, using also (\ref{newnewnew-47}), the fact that $v_{\mu_{\ell_s}-1}^{\ell_s}\in\ker e$, and (\ref{newnewnew-49}), we derive\[e(\overline{V}_p)=e(\overline{W}_p)=\tilde{e}(\overline{W}_p)\subset\overline{W}_{c_p}\subset\overline{V}_{c_p}.\]
This shows (\ref{newnewnew-50}) in Case 1.

\medskip
\noindent
{\it Case 2:} $p_0\leq p\leq k-1$.

By (\ref{classicals-newnew-7bis}), we get $c_p\geq p_s$ in this case, whence
\begin{equation}
\label{newnewnew-51}
\langle v_{\mu_{\ell_s}-1}^{\ell_s}\rangle_{\mathbb{C}}\subset V_{p_s}=\overline{V}_{p_s}\subset \overline{V}_{c_p}.
\end{equation}
Combining (\ref{newnewnew-47}), the fact that $v_{\mu_{\ell_s}-1}^{\ell_s}\in\ker e$, (\ref{eetilde-2}), (\ref{newnewnew-49}), and (\ref{newnewnew-51}), we obtain 
\[e(\overline{V}_p)=e(\overline{W}_p)\subset\tilde{e}(\overline{W}_p)+\langle v_{\mu_{\ell_s}-1}^{\ell_s}\rangle_\mathbb{C}\subset\overline{W}_{c_p}+\overline{V}_{c_p}=\overline{V}_{c_p}.\]
This proves (\ref{newnewnew-50})  in Case 2.

\medskip
\noindent
{\it Case 3:} $k\leq p< 2k-p_s$.

We claim that the following alternative is valid:
\begin{equation}
\label{newnewnew-52}
c_k\geq p_s\quad\mbox{or}\quad\mu_{\ell_s}=2.
\end{equation}
We show (\ref{newnewnew-52}) by arguing by contradiction, so assuming that $c_k<p_s$ and $\mu_{\ell_s}\geq 3$ (it was already noticed that $\mu_{\ell_s}\geq 2$, see Claim 2\,{\rm (b)} in Section \ref{section-8-6-1}). 
Take an element $F'=(V'_0,\ldots,V'_{k-1})\in\mathcal{X}_{\underline{j}}$
(it was assumed that the set $\mathcal{X}_{\underline{j}}$ is nonempty).
On the one hand, the fact that $c_k<p_s$ implies 
\begin{equation}
\label{newnewnew-53-bis}
v_{\mu_{\ell_s}-1}^{\ell_s}\notin V'_{c_k}=\overline{V}'_{c_k}
\end{equation}
(by Claim 2 in Section \ref{section-8-6-1}).
It also implies that
\[j_q\geq j_{p_s}\ \mbox{ for all $q\in\{1,\ldots,k-1\}$}\]
(see (\ref{classicals-newnew-7bis}) and Claim 1 in Section \ref{section-8-6-1}), hence
\begin{equation}
\label{newnewnew-53}
V'_{k-1}\subset\bigoplus_{j'\geq j_{p_s}}E_{j'}
\end{equation}
(since $F'\in\mathcal{Z}_{\underline{j}}$).
On the other hand, the fact that $\mu_{\ell_s}\geq 3$ implies that the vector $v_{\mu_{\ell_s}-3}^{\ell_s}$ is orthogonal to $E_{j'}$ for all $j'\geq j_{p_s}$ (see (\ref{omega-def}), recalling that $j_{p_s}=\mu_{\ell_s}-1$), thus (\ref{newnewnew-53}) implies
\[v_{\mu_{\ell_s}-3}^{\ell_s}\in V'^\perp_{k-1}=\overline{V}'_k.\]
Since $F'\in\mathcal{F}_{e,\underline{d},\underline{c}}^\omega$, this yields
\[v_{\mu_{\ell_s}-1}^{\ell_s}=e(v_{\mu_{\ell_s}-3}^{\ell_s})\in e(\overline{V}'_k)\subset\overline{V}'_{c_k},\]
a contradiction to (\ref{newnewnew-53-bis}). Therefore, we have shown (\ref{newnewnew-52}).

We now distinguish two cases depending on the alternative in (\ref{newnewnew-52}).
First, assume that $c_k\geq p_s$. Thus, $c_p\geq p_s$.
This property together with (\ref{newnewnew-47}), the fact that $v_{\mu_{\ell_s}-1}^{\ell_s}\in\ker e$, (\ref{eetilde-2}), and (\ref{newnewnew-49}), implies that 
\[e(\overline{V}_p)=e(\overline{W}_p)\subset\tilde{e}(\overline{W}_p)+\langle v_{\mu_{\ell_s}-1}^{\ell_s}\rangle_\mathbb{C}\subset\overline{W}_{c_p}+\overline{V}_{p_s}\subset\overline{V}_{c_p}.\]
Second, assume that $\mu_{\ell_s}=2$. This property guarantees that the space $\tilde{V}=\langle v_i^\ell:\ell\not=\ell_s\rangle_\mathbb{C}$ is $e$-stable, so $\tilde{e}=e|_{\tilde{V}}$. Thus, by (\ref{newnewnew-47}) and (\ref{newnewnew-49}), we get
\[e(\overline{V}_p)=e(\overline{W}_p)=\tilde{e}(\overline{W}_p)\subset\overline{W}_{c_p}\subset\overline{V}_{c_p}.\]
In both situations, we obtain that $e(\overline{V}_p)\subset\overline{V}_{c_p}$, hence (\ref{newnewnew-50}) holds in Case 3.

\medskip
\noindent
{\it Case 4:} $2k-p_s\leq p\leq 2k-1$.

First, we check the following relation
\begin{equation}
\label{newnewnew-57}
c_{2k-p_s}\geq p_s.
\end{equation}
To show (\ref{newnewnew-57}), choose any element $F'=(V'_0,\ldots,V'_{k-1})\in\mathcal{X}_{\underline{j}}$ (recall that the set $\mathcal{X}_{\underline{j}}$ is assumed to be nonempty). In view of (\ref{omega-def}) and Claim 2 in Section \ref{section-8-6-1}, we then have
\[v_{-(\mu_{\ell_s}-1)}^{\ell_s}\in V'^\perp_{p_s-1}=\overline{V}'_{2k-p_s}.\]
Knowing that $F'\in\mathcal{F}_{e,\underline{d},\underline{c}}^\omega$, this implies
that $e(v_{-(\mu_{\ell_s}-1)}^{\ell_s})\in \overline{V}'_{c_{2k-p_s}}$, whence
\[v_{\mu_{\ell_s}-1}^{\ell_s}=e^{\mu_{\ell_s}-1}(v_{-(\mu_{\ell_s}-1)}^{\ell_s})\in\overline{V}'_{c_{2k-p_s}}\]
(because the space 
$\overline{V}'_{c_{2k-p_s}}$ is in particular $e$-stable). Invoking again Claim 2 in Section \ref{section-8-6-1}, we conclude that (\ref{newnewnew-57}) holds true.

Our next claim is that the following inclusion holds:
\begin{equation}
\label{newnewnew-58}
e(v_{-(\mu_{\ell_s}-1)}^{\ell_s})\in\overline{V}_{c_{2k-p_s}}.
\end{equation}
In the case where $\mu_{\ell_s}=2$, then (\ref{newnewnew-47}) and (\ref{newnewnew-57}) imply
\[e(v_{-(\mu_{\ell_s}-1)}^{\ell_s})=v_{\mu_{\ell_s}-1}^{\ell_s}\in\overline{V}_{p_s}\subset \overline{V}_{c_{2k-p_s}}\]
so that (\ref{newnewnew-58}) holds in this case. It remains to show (\ref{newnewnew-58})
in the case where $\mu_{\ell_s}\geq 3$.
On the one hand, in view of (\ref{newnewnew-52}), this relation forces $c_k\geq p_s$, hence $c_{p_0}\geq p_s$ (see (\ref{classicals-newnew-7bis})).
By (\ref{classicals-newnew-cdual}), we get
\[p_s\leq c_{p_0}=c_{p_0}^*=|\{q=1,\ldots,2k-1:c_q\geq 2k-p_0\}|.\]
The sequence $\underline{c}$ being nondecreasing, we derive
\begin{equation}
\label{newnewnew-59}
c_{2k-p_s}\geq 2k-p_0.
\end{equation}
On the other hand, 
by Claim 1 in Section \ref{section-8-6-1} and (\ref{classicals-newnew-7bis}), it turns out that
$j_q\geq j_{p_s}$ for all $q\in\{1,\ldots,p_0-1\}$,
whence (since $F\in\mathcal{Z}_{\underline{j}}$)
\[V_{p_0-1}\subset\bigoplus_{j'\geq j_{p_s}}E_{j'}.\]
By (\ref{omega-def}), we deduce
\begin{equation}
\label{newnewnew-60}
e(v_{-(\mu_{\ell_s}-1)}^{\ell_s})=v_{-(\mu_{\ell_s}-3)}^{\ell_s}\in V_{p_0-1}^\perp=\overline{V}_{2k-p_0.}
\end{equation}
Relation (\ref{newnewnew-58}) now follows by comparing (\ref{newnewnew-59}) and (\ref{newnewnew-60}).

Finally, using (\ref{newnewnew-47}), the fact that $v_{\mu_{\ell_s}-1}^{\ell_s}\in\ker e$, (\ref{eetilde-2}), (\ref{newnewnew-49}), (\ref{newnewnew-57}), (\ref{newnewnew-58}), and the fact that $c_{2k-p_s}\leq c_p$ (since $2k-p_s\leq p$), we compute
\begin{eqnarray*}
e(\overline{V}_p) & = & e(\overline{W}_p+\langle v_{-(\mu_{\ell_s}-1)}^{\ell_s}\rangle_\mathbb{C})\subset\tilde{e}(\overline{W}_p)+\langle v_{\mu_{\ell_s}-1}^{\ell_s},e(v_{-(\mu_{\ell_s}-1)}^{\ell_s})\rangle_\mathbb{C} \\
 & \subset & \overline{W}_{c_p}+\overline{V}_{c_{2k-p_s}}\subset\overline{V}_{c_p}.
\end{eqnarray*}
This establishes (\ref{newnewnew-50}) in Case 4 and completes the proof of Claim 2.
\end{proof}

\subsubsection{Conclusion}
The proof of Proposition \ref{proposition-claim-1} in the case where $c_1=0$ is achieved 
by combining (\ref{newnewnewnew-40}), (\ref{newnewnew-44}), and (\ref{newnewnew-45}). This completes the proof of Proposition \ref{proposition-claim-1}.
Therefore, Proposition \ref{proposition-section-8} is established, and so the proof of Theorem \ref{theorem-1} is complete.


\end{document}